\newcommand\cf{\textit{cf.~}}
\newcommand\ie{\textit{i.e.~}}
\newcommand\bigb{\ \big|\ }
\newcommand\disp{\displaystyle}
\newcommand\beq{\begin{equation}}
\newcommand\eeq{\end{equation}}
\newcommand{\ep}{{\epsilon}}
\newcommand\Ep{\mathcal{E}}
\newcommand\al{\alpha}
\newcommand\la{\lambda}
\newcommand{\bbC}{{\mathbb{C}}}
\newcommand{\bbN}{{\mathbb N}}
\newcommand{\bbR}{{\mathbb R}}
\newcommand{\bbZ}{{\mathbb Z}}
\newcommand{\fk}{\mathfrak}
\newcommand{\ovl}{\overline}
\newcommand{\bb}{\mathbb}
\newcommand{\dpfr}{\displaystyle\frac{1}{2}}
\newcommand{\frakg}{{\mathfrak{g}}}
\newcommand{\frakh}{{\mathfrak{h}}}
\newcommand{\frakm}{{\mathfrak{m}}}
\newcommand{\frakk}{{\mathfrak k}}
\newcommand{\frakl}{{\mathfrak{l}}}
\newcommand{\frakso}{{\mathfrak{so}}}
\newcommand{\frakgl}{{\mathfrak{gl}}}
\newcommand{\Hom}{{\mathrm{Hom}}}
\newtheorem{definition}[subsection]{Definition}
\newtheorem{lemma}[subsection]{Lemma}
\newtheorem{conj}[subsection]{Conjecture}
\newtheorem{example}[subsection]{Example}
\newtheorem{prop}[subsection]{Proposition}
\newtheorem{theorem}[subsection]{Theorem}
\newtheorem{cor}[subsection]{Corollary}
\newtheorem{remark}[subsection]{Remark}
\newcommand{\calB}{{\mathcal{B}}}
\newcommand{\calF}{{\mathcal{F}}}
\newcommand{\calI}{{\mathcal{I}}}
\newcommand{\calM}{{\mathcal{M}}}
\newcommand{\calO}{{\mathcal{O}}}
\newcommand{\calU}{{\mathcal{U}}}
\newcommand{\calX}{{\mathcal{X}}}
\newcommand{\calW}{{\mathcal{W}}}
\newcommand\clrr{\color{red} }
\newcommand{\vio}{\textcolor{violet}}
\newcommand{\tu}{\widetilde}
\newcommand{\wti}{\widetilde}
\newcommand{\wht}{\widehat}
\newcommand\ul[1]{\underline{#1}}
\newcommand\ad{\operatorname{ad}}
\newcommand\Ad{\operatorname{Ad}}
\newcommand\Span{\operatorname{Span}}
\newcommand\Ind{\operatorname{Ind}}
\numberwithin{equation}{subsection}
\numberwithin{table}{subsection}
\begin{document}

\title{Representations associated to small nilpotent orbits for real Spin Groups} 
\begin{abstract}
The results in this paper provide a comparison between the
$K$-structure of unipotent representations and regular sections of
bundles on nilpotent orbits. Precisely, let
$\tu{G_0} =\tu{Spin}(a,b)$ with $a+b=2n$, the nonlinear double cover of
$Spin(a,b)$, and let $\tu{K}=Spin(a, \bbC)\times Spin(b, \bbC)$ be the complexification of the maximal
compact subgroup of $\tu{G _0}$. We consider the nilpotent orbit
$\calO_c$ parametrized by 
$[3 \ 2^{2k} \ 1^{2n-4k-3}]$ with $k>0$. We provide a list of
unipotent representations that are genuine, and prove that the list is
complete using the coherent continuation representation. Separately we compute 
$\tu{K}$-spectra of the regular functions on certain real forms
$\calO$ of $\calO_c$ transforming according to appropriate characters
$\psi$ under $C_{\tu{K}}(\calO)$, and then match them with the
$\tu{K}$-types of the genuine  unipotent  representations.  
The results provide instances for the orbit philosophy.
\end{abstract}

\author{Dan Barbasch}
      \address[D. Barbasch]{Department of Mathematics\\
               Cornell University\\Ithaca, NY 14850, U.S.A.}
        \email{barbasch@math.cornell.edu}
\thanks{D. Barbasch was supported by an  NSA grant}

\author{Wan-Yu Tsai}
\address[Wan-Yu Tsai]{Institute of Mathematics, Academia Sinica, 6F, Astronomy-Mathematics Building, No. 1, Sec. 4, Roosevelt Road, Taipei 10617, TAIWAN}
\email{wytsai@math.sinica.edu.tw}

\maketitle
\section{Introduction}\label{ss:0} 

Let $G_0\subset G$ be the real points of a complex linear reductive algebraic
group $G$ with Lie algebra $\fk g_0$ and 
maximal compact subgroup $K_0$. Let $\fk g_0=\fk k_0+\fk s_0$
be the Cartan decomposition, and $\fk g=\fk k+\fk s$ be the
complexification. Let $K$ be the complexification of $K_0.$

  \begin{definition}
Let $\calO:= K\cdot e\subset \fk s$, with $e\in \fk s$ a nilpotent element. We say that an irreducible
admissible representation $\Xi$ of $G_0$ is associated to $\calO,$ if $\calO$
occurs with nonzero multiplicity in the associated cycle in the sense
of \cite{V2}.

An irreducible module $\Xi$ of $G_0$ is called unipotent
associated to a nilpotent orbit $\calO\subset \fk s$ and
infinitesimal character $\la_{\calO}$, if it satisfies
\begin{description}
\item[1] It is associated to $\calO$ and its annihilator
  $Ann_{U(\fk g)}\Xi$ in the universal  enveloping algebra $U (\fk g)$ of $\fk g$ is the unique maximal primitive ideal with
  infinitesimal character $\la_{\calO}$,
\item[2] $\Xi$ is unitary.
\end{description}
Denote by $\calU _{G_0}(\calO,\la_{\calO})$  the set of unipotent
representations of $G_0$ associated to $\calO$ and $\la_{\calO}$.
  \end{definition}

\bigskip
{Let $C_K(\calO):= C_K(e)$ denote the centralizer of $e$ in $K$, and 
let $A_K(\calO):=C_K(\calO)/C_K(\calO)^0$ be the component group.} 
Assume that $G_0$ is connected, and a complex
group viewed as a real Lie group. In this case $G\cong G_0\times G_0,$
and  $K\cong G_0$ as complex groups. Furthermore $\fk s\cong\fk g_0$
as complex vector spaces, and the action of $K$ is the adjoint action.  
In this case it is conjectured that there exists an infinitesimal character
$\la_{\calO}$ such that in addition,
\begin{description}
\item[3] There is a 1-1 correspondence $\psi\in
 \wht{ A_K(\calO)}\longleftrightarrow \Xi(\calO,\psi)\in
  \calU_{G_0}(\calO,\la_{\calO})$ satisfying the additional condition 
$$
\Xi(\calO,\psi)\bigb_{K}\cong R(\calO,\psi),
$$
where
\begin{equation}\label{def-reg-fun}
\begin{aligned}
R(\calO, \psi) &= \Ind_{C_{K}(\calO)} ^{K}  (\psi) \\
&= \{f: K\to V_{\psi}  \mid f(gx) =\psi(x) f(g) \ \forall g\in K, \ x\in C_K (\calO)\}
\end{aligned}
\end{equation}
is the ring of regular functions on $\calO$ transforming according to
$\psi$. Therefore, $R(\calO,\psi)$ carries a $K$-representation.

\end{description}
 
Conjectural parameter $\la_\calO$ satisfying this additional condition
are studied in \cite{B}, along with results establishing the validity of 
this conjecture for large classes of nilpotent orbits in the classical
complex groups. Such parameters $\la_\calO$ are available for the
exceptional groups as well, \cite{B} for $F_4$, and to appear elsewhere
for type $E.$

\bigskip
In this paper we investigate this conjecture for representations which
are attached to \textit{small} orbits
in the real case. 
\begin{definition}
A nilpotent $K$-orbit in $\fk s$ is called small if
\begin{equation}\label{orbit-dim-cond}
\dim \calO \leq \text{rank } (\fk k) + |\Delta ^+(\fk k,\fk t)|,
\end{equation}
where $\fk t\subset \fk k$ is a Cartan subalgebra, and $\Delta^+(\fk
k,\fk t)$ is a positive system. 
\end{definition}
These orbits have the property that there is a chance that the
multiplicity of any $\mu\in \widehat{K}$ be uniformly 
bounded.  The reason is as follows. Let $(\Pi,X)$ be an
admissible representation of $G_0$, and  $\mu$ be the highest weight of a
representation $(\pi,V)\in \wht{K}$ which is dominant for
$\Delta^+(\fk k,\fk t)$. Assume that $\dim\Hom_K[\pi,\Pi]\le C$, and
$\Pi$ has associated variety $\ovl{\calO}$ ({\cf \cite{V2})}. Then 
$$
\dim\{ v\ :\ v\in X \text{ belongs to an isotypic component with
} ||\mu||\le t\}\le Ct^{|\Delta^+(\fk k,\fk t)|+\dim \fk t}.
$$
The dimension of $(\pi,V)$ grows like $t^{|\Delta^+(\fk k,\fk t) |}$, the
number of representations with highest weight $||\mu||\le t$ 
grows like $t^{\dim\fk t},$ and the
multiplicities are assumed uniformly bounded. On the other hand, considerations
involving primitive ideals imply that the dimension of this set grows
like $t^{\dim   G\cdot e/2}$ with $e\in\calO,$ and half the
dimension of (the complex orbit) $G\cdot e$ is  the
dimension of the ($K$-orbit) $K\cdot e\in\fk s.$

In the complex case the appropriate condition for bounded
multiplicities is that the orbit be
\textit{spherical}, \ie the Borel subgroup has an open orbit, and results
of \cite{P} provide a classification. 
   
\bigskip  
In this paper we treat the case of real groups of type D in detail. In these cases,  there is  no difference between small and
spherical, and we do not elaborate on this. 

More precisely, we choose infinitesimal characters correponding to the
complex case for which the orbits are minimal such that a $(\fk g,
K)$-module associated to such an orbit exits. We then compute the genuine representations for 
$$
(\wti{G_0},\wti{K})=\big(\wti{Spin}(a,b), Spin(a,\bbC)\times
  Spin(b,\bbC)\big)
$$ 
satisfying these conditions and compute the $\wti{K}$-spectrum and
compare to the spectrum rational functions on the corresponding orbits. (Here we write $(\tu G_0, \tu K)$ to emphasize that $\tu{G_0}$ is the nonlinear double  cover of $Spin(a,b)$.)

\medskip
Write $2n=a+b$ and 
$$
\begin{aligned}
&a=2p,\qquad &&a=2p+1,\\
&b=2q,\qquad &&b=2q-1,
\end{aligned}
$$
The representations  are associated to real forms of the complex
nilpotent orbit  
$$
\calO_c=[3\ 2^{2k}\ 1^{2n-4k-3}]\qquad k>0.
$$ 
The condition $k>0$ insures that these orbits are not special in the
sense of Lusztig. So there are no  
representations with integral infinitesimal character associated to 
$\calO _c$. 
The infinitesimal character is
\begin{equation}
  \label{eq:inflchar}
  \la=(n-k-2,\dots ,1,0;k+1/2,\dots ,3/2, 1/2),
\end{equation}
same as in \cite{B}. 

\medskip
Here is a summary of the results. 

In Section 2 we list the real forms of the nilpotent orbit and
describe the (component groups) of their centralizers. In Section 3 we
analyze the $K$-structure of certain $R(\calO,\psi).$ In Section 4 we
match them with a set of representations obtained by restriction from
those listed in \cite{LS}. { It is not clear that certain of
  these restrictions are irreducible. An
alternative way to  construct a set of representations with the
required properties is to  apply the derived functors construction to
highest weight modules with the appropriate infinitesimal character
and annihilator. The calculations are in the spirit of
\cite{Kn} and \cite{T}. A comparison of the restrictions with the
alternate construction shows that indeed certain of these restrictions
are reducible. 

 Section 5 contains technicalities
about $Spin$ groups used to prove some of the results. Section 6 computes the
coherent continuation representation and shows that the list of
represenations in Section 4 is complete; these are all the genuine
representations with the given infinitesimal character associated to
real forms of $\calO _c.$ Section 7 provides a construction of the
representations using cohomological indcution from highest weight
modules. 

\medskip
The representations all satisfy conditions (1) and (2) necessary to be
called unipotent
representations. As to condition (3), there is a  significant
difference in the real case; it cannot hold in its stated form. 
This can already be seen for $SL(2,\bb R)$. The spherical principal
series with infinitesimal character zero is unipotent, and its 
associated cycle contains two nilpotent orbits. So its $K$-structure
does not match any $R(\calO,\psi).$  The phenomenon is
analyzed in detail in \cite{V2}. A necessary condition for it to hold
is that $\calO$  have codimension bigger than one in its closure. This
is the case for the orbits studied in this paper. In particular this
condition implies that the associated cycle only contains one orbit. 
Even so, because we are dealing with a
nonlinear cover, the \textit{correct} $\psi$ turn out to be 1-dimensional
characters of $C_{\tu{K}}(e)$ which are not trivial on the connected component. 

\medskip
Some of the results, particularly counting the representations and
restricting from the odd Spin groups to the even ones,  have their
origin  in \cite{Ts}. There are relations to the work in \cite{KO1}
and \cite{KO2} which we intend to pursue in future research.

\medskip
Much of this work was done while the second author visited
Cornell University, and continued later while the first author
visited Academia Sinica in Taiwan. 
We  would like to thank the institutions for their support.\bigskip
\section{Preliminaries}
\subsection{Nilpotent Orbits}\label{ss:1}

{We follow \cite{CM}. Nilpotent orbits in  $\fk{so} (a,b)$ are
  parametrized by orthogonal signed Young diagrams of  
signature $(a,b)$ with numerals. We write a real orbit of the diagram
$[3\ 2^{2k}\ 1^{2n-4k-3}]$ as $[3^{\ep} 2^{2k} 1^{+,c} 1^{-,d}]$  
(possibly with $I, II$), where $3^{\ep}$ denotes the block of size 3
starting with sign $\ep$; $1^{+,c}$ denotes  $c$ blocks of size 1
labeled $+$, and $c$ is omitted when $c=1$; similarly for $1^{-,d}$.} 

{The following eight cases of signed diagrams are treated in this
  paper. 
$$
\begin{aligned}
Case\ 1:\  &a=2p&&=2k+2,&&\calO=[3^+2^{2k}1^-]_{I,II}\quad &&\calO=[3^-2^{2k}1^+]_{I,II}\\     
  &b=2p&&=2k+2, \quad   &\\
Case\ 2:\  &a=2p&&=2k+2+2r_+,&&\calO=[3^-2^{2k}1^{+,2r_++1}]_{I,II} &&\calO=[3^+2^{2k}1^-1^{+,2r_+}] \\     
  &b=2q&&=2k+2, \quad    &&&&\\
Case\ 3:\  &a=2p&&=2k+2,&&\calO=[3^+2^{2k}1^{-,2r_-+1}]_{I,II} &&\calO=[3^-2^{2k}1^+1^{-,2r_-}]\\     
  &b=2q&&=2k+2+2r_-, \quad    &&&&\\
Case\ 4:\  &a=2p+1&&=2k+1,&&\calO=[3^+2^{2k}1^+1^{-,2}]&&\calO=[3^-2^{2k}1^-1^{+,2}]\\
  &b=2q-1&&=2k+1,\quad &&\\
Case\ 5:\  &a=2p+1&&=2k+3+2r_+,&&\calO=[3^+2^{2k}1^{+,2r_+ +1}]&&\\
  &b=2q-1&&=2k+1,\quad &&\\
Case\ 6:\  &a=2p+1&&=2k+1,&&\calO=[3^-2^{2k}1^{-,2r_- +1}]&&\\
  &b=2q-1&&=2k+3+2r_-,\quad &&\\
Case\ 7:\  &a=2p+1&&=2k+1+2r_+,&&\calO=[3^-2^{2k}1^-1^{+,2r_+}] &&\footnotesize (\text{with } r_+\ge 2)  \\
  &b=2q-1&&=2k+3.\quad &&\\ 
Case\ 8:\  &a=2p+1&&=2k+3,&&\calO=[3^+2^{2k}1^+1^{-,2r_-} ] && \footnotesize (\text{with } r_-\ge 2) \\
  &b=2q-1&&=2k+1+2r_-.\quad &&\\
\end{aligned}
$$
As will become apparent at the end, these are the only $\wti K$-orbits
that are associated to genuine representations.  Cases 1 and 4 are
invariant under exchanging $+$ and $-,$ Cases $2,3$, $5,6$ and $7,8$
correspond under exchanging $+$ and $-.$ Nilpotent orbits  $I,II$ in
Cases 1,2,3 are treated the same way. We will omit details for cases
that match under these correspondences.   

The proof of the next Proposition, and details about the nature of the
component groups,  are  in Section \ref{ss:clifford}.

\begin{prop} \label{comp-group-real}\   
 \begin{description}
\item[Case 1] If $\calO=[3^+ 2^{2k}1^-]_{I,II}$ or
  $[3^-2^{2k}1^+]_{I,II},$  then $A_{\wti
    K}(\calO) \cong  \bbZ_2\times \bbZ_2$.
\item[Case 2,3] If $\calO=[3^- 2^{2k}1^{+,2r_+ +1}]_{I,II},$ with
  $r_+>0$, then  $A_{\wti K}(\calO)\cong {\bbZ _2}$.
\item[Case 2,3] If $\calO=[3^+2^{2k}1^-1^{+,2r_+}]$, with $r_+>0$, then $A_{\wti K}(\calO)\cong \bbZ_2$.
\item[Case 4] If $\calO=[3^+2^{2k}1^-1^{+,2}],$ then $A_{\wti
    K}(\calO)\cong { \bbZ_2}$.
\item[Case 5,6] If $\calO=[3^+ 2^{2k}1^+],$ with $r_+=0,$ then $A_{\wti K}(\calO) \cong \bbZ_2$.
\item[Case 5,6] If $\calO=[3^+ 2^{2k}1^{+,2r_+ +1}]$ with $r_+>0$, then
  $A_{\wti K}(\calO)= {1}$.
\item[Case 7,8] If $\calO=[3^- 2^{2k}1^-1^{+,2r_+}],$ with $r_+\ge 2$, then  $A_{\wti K}(\calO)\cong {\bbZ_2 }$.
  \end{description}
The cases are paired according to  the $+$ and $-$ interchanged. 
\end{prop}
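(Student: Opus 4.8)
The plan is to realize each centralizer $C_{\wti K}(e)$ concretely inside the Clifford-algebra model of $\wti K = \mathrm{Spin}(a,\bbC)\times\mathrm{Spin}(b,\bbC)$, compute the centralizer $C_K(e)$ in the orthogonal group $K = \mathrm{O}(a,\bbC)\times\mathrm{O}(b,\bbC)$ (or rather its identity component) using the classical description in \cite{CM}, and then determine what happens to the component group upon passing to the $\mathrm{Spin}$ cover. First I would recall the standard structure theory: for a nilpotent $e$ in $\frak{so}(a,\bbC)\times\frak{so}(b,\bbC)$ with the partition data given by the signed Young diagram, $C_{\mathrm{O}(a)\times\mathrm{O}(b)}(e)$ is (up to unipotent radical) a product of classical groups $\prod \mathrm{O}(m_i)\times\prod\mathrm{Sp}(m_j)$ indexed by the rows of the two halves of the signed diagram, together with the constraints imposed by $e\in\frak s$ (i.e. the $\pm$ labels split each $\mathrm{O}(m_i)$-factor into a product coming from the $a$-side and the $b$-side). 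Reading off the reductive part of $C_K(e)$ for each of the eight partitions $[3\ 2^{2k}\ 1^{*}]$ is the first concrete step, and this is essentially bookkeeping once the conventions of Section \ref{ss:1} are fixed.

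Next I would pin down $A_K(\calO) = C_K(e)/C_K(e)^0$ from that description; for these very small orbits the reductive centralizer is a product of a few copies of $\mathrm{O}(1)=\bbZ_2$, $\mathrm{Sp}(2k)$, and small orthogonal groups on the $1$-blocks, so $A_K(\calO)$ is an elementary abelian $2$-group of rank $1,2$ or $3$ depending on the case, with one $\bbZ_2$ for the $3$-block and further $\bbZ_2$'s for the odd-size collections of $1$-blocks on each side. The key point — and the step that must be done carefully — is the passage to the cover. Pulling back $C_K(e)$ under $\wti K\to K$, the component group $A_{\wti K}(\calO)$ is an extension of (a subgroup of) $A_K(\calO)$ by the kernel $\bbZ_2$ of the covering, and whether a given $\bbZ_2$ in $A_K(\calO)$ lifts to $\bbZ_2$ or to $\bbZ_4$ (equivalently, whether two components of $C_K(e)$ become connected upstairs) is governed by whether the corresponding reflection-type element of $\mathrm{O}(a)\times\mathrm{O}(b)$ lifts to an element of order $2$ or order $4$ in the Clifford algebra. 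This is exactly the computation deferred to Section \ref{ss:clifford}: one writes the relevant sign-change or block-swap element as a product of reflections $v_1\cdots v_{2\ell}$ in the Clifford algebra and computes the square $(v_1\cdots v_{2\ell})^2 = \pm 1$ from the quadratic form. I would organize this as a short lemma: an element of $C_K(e)$ acting on the full $2n$-dimensional space lies in $\wti K^0\cdot\ker$ iff it is a product of an even number of reflections on the $a$-side and an even number on the $b$-side whose total "spinor norm contribution" is a square; the asymmetry that makes $\mathrm{Spin}(a,b)$ a nonlinear double cover is what forces the \emph{correct} $\psi$ to be nontrivial on the connected component, consistent with the remark in the introduction.

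The main obstacle I anticipate is precisely this lifting/extension computation done uniformly across the eight cases, including keeping track of the diagram decorations $I,II$ (which correspond to the two components of $\mathrm{O}(2k+2)$ on a half-space and hence to a genuine ambiguity that does \emph{not} affect $A_{\wti K}$, as the proposition asserts by listing $I,II$ together) and the subtlety in Cases $7,8$ requiring $r_\pm\ge 2$, where for $r_\pm<2$ the $1$-block structure degenerates and the centralizer — or its relevance to genuine representations — changes. I would finish by checking each line of the proposition against the computed extension: $\bbZ_2\times\bbZ_2$ in Case $1$ (two independent order-$2$ lifts, one from the $3$-block, one from the swap of the two $1$-blocks across the equal-size halves), $\bbZ_2$ in Cases $2,3,4,7,8$ and the $r_+=0$ subcase of $5,6$ (a single surviving $\bbZ_2$ after one potential generator either fails to lift with order $2$ or gets absorbed into the connected component), and the trivial group in Case $5,6$ with $r_+>0$ (the unique reductive factor is connected and its nontrivial outer piece does not centralize $e$ inside $\wti K$, or equivalently the would-be generator becomes an interior element). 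Throughout, the displayed infinitesimal character \eqref{eq:inflchar} and the condition $k>0$ play no role here — this proposition is purely about the geometry of the orbit and its Clifford-cover centralizer — so the proof is self-contained modulo the Clifford-algebra facts assembled in Section \ref{ss:clifford}.
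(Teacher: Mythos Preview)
Your general framework—compute $C_K(e)$ and then analyze the passage to the Spin cover—matches the paper's organization, but the mechanism you describe has a genuine gap that would derail the argument.

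First, the kernel of $\wti K=\mathrm{Spin}(a)\times\mathrm{Spin}(b)\to K=\mathrm{SO}(a)\times\mathrm{SO}(b)$ is $\{(\pm I,\pm I)\}\cong\bbZ_2\times\bbZ_2$, not $\bbZ_2$. Since this kernel is central it lies in $C_{\wti K}(e)$, so one has a surjection $A_{\wti K}(\calO)\twoheadrightarrow A_K(\calO)$ with kernel $\{(\pm I,\pm I)\}/\bigl(\{(\pm I,\pm I)\}\cap C_{\wti K}(e)^0\bigr)$. The decisive question is therefore \emph{which of the four elements $(\pm I,\pm I)$ lie in the identity component of the centralizer upstairs}, not whether a representative of $A_K$ lifts with order $2$ or $4$. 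Your parenthetical ``equivalently, whether two components of $C_K(e)$ become connected upstairs'' has the direction reversed: passing to a cover can only split components, never merge them.

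Second, the paper answers this connectedness question by an explicit construction (Lemma~\ref{real-comp-le}): whenever $r_{2\ell}>1$ one exponentiates a Lie-algebra element of the centralizer to produce a path from $(I,I)$ to $(-I,-I)$, and whenever $r^{\pm}_{2k+1}>1$ one gets a path to $(\mp I,\pm I)$ (with the side depending on the parity of $k$). This one-parameter-subgroup argument is the engine of the proof; computing squares of Clifford words addresses the extension class but does not by itself detect absorption of kernel elements into $C_{\wti K}(e)^0$.

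Finally, your case-by-case reasoning is off. In Case~1 the orbit $[3^+2^{2k}1^-]$ has a \emph{single} $1$-block, so there is no ``swap of two $1$-blocks''; here $A_K(\calO)\cong\bbZ_2$, the $2^{2k}$-blocks force $(-I,-I)\in C_{\wti K}(e)^0$, but $(I,-I)$ is \emph{not} absorbed, and that surviving kernel class is what yields $\bbZ_2\times\bbZ_2$. In Cases~5,6 with $r_+>0$, the $2r_++1>1$ blocks of size $1$ together with the even blocks force all of $(\pm I,\pm I)$ into $C_{\wti K}(e)^0$, whence $A_{\wti K}(\calO)=A_K(\calO)$, which is then checked to be trivial.
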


\section{Regular Sections} \label{s:regsec}
We compute the centralizers needed for $R(\calO,\psi)$ in $\fk k$ 
and in $\wti K.$  We use the standard roots and basis for
$\fk{so}(a,b).$ The Cartan subalgebra is the fundamental one, a basis is given by
$H(\ep_i)$, and the  root vectors are $X(\pm\ep_i\pm \ep_j),\ X(\pm
\ep_i).$ Realizations in terms of the Clifford algebra, and explicit 
calculations are in Section \ref{ss:clifford}.

\medskip
{Let $\{e,h,f\}$ with $e\in\calO$ be a Lie triple such that
  $h\in\fk k$ and $f\in\fk s$.  We denote by
\begin{itemize}
\item $C_{\fk k} (h)_i$  the $i$-eigenspace of $ad(h)$ in $\fk k$,
\item  $C_{\fk k} (e)_i$ the $i$-eigenspace of $ad(h)$ in the
  centralizer of $e$ in $\fk k$,
\item $C_\fk k (h)^+:= \sum \limits _{i>0} C_\fk k (h) _i$, and $C_\fk k (e)^+:= \sum \limits _{i>0} C_\fk k (e) _i.$  
\end{itemize}
}

\subsubsection*{$\mathbf{\wti{Spin}(2p,2q)}$} These are Cases 1,2,3, so 
{$p=k+1$, $q=k+1+r_-$.} 
The compact Cartan subalgebra has coordinates
$$
(x_1,\dots ,x_{k+1}\bigb y_1,\dots y_{k},y_{k+1},\dots , y_{k+r_-+1})
$$
with Cartan involution 
$$
\theta(x_i)=x_i,\ \theta(y_j)=y_j.
$$

We describe the centralizer for  $[3^+2^{2k}1^{-,2r_-+1}]_I$ in $\fk k$ in
detail. Representatives for $e$ and $h$ are
$$
\begin{aligned}
&e=X(\ep_1-\ep_{p+k+1})+X(\ep_1+\ep_{p+k+1}) +\sum_{2\le i\le {k+1}} X(\ep_i+\ep_{p+i-1})\\
&h=H(2\ep_1)+\sum_{2\le i\le {k+1}} H(\ep_i+\ep_{p+i-1})=(2,\underbrace{1,\dots ,1}_k\bigb \underbrace{1,\dots ,1}_k,0,\dots ,0).
\end{aligned}
$$
Then
\begin{equation}
\label{eq:ch}
\begin{aligned}
C_\fk k(h)_0\cong &\fk{gl}(1)\times \fk{gl}(k)\times \fk{gl}(k)\times \fk{so}(2r_-+2) \\
C_\fk k (h)_1=&\text{Span}\{ X(\ep_1-\ep_i), \ {2\le i\le k+1},\\
&X(\ep_{p+j}\pm \ep_{p+l}),\ 1\le j\le k<l\le q\},\\ 
C_\fk k(h)_2=&\text{Span}\{ X(\ep_i+\ep_j),\ 2\le i< j\le k+1,\\ 
&X(\ep_{p+i}+\ep_{p+j}),\  1\le i<j\le k\},\\
C_\fk k(h)_3=&\text{Span}\{ X(\ep_1+\ep_i),\ 2\le i\le k+1\}. 
\end{aligned}
\end{equation}
Similarly
\begin{equation}
\label{eq:ce}
\begin{aligned}
C_\fk k(e)_0\cong &\fk{gl}(1)\times \fk{gl}(k)\times \fk{so}(2r_-+1) \\
C_\fk k (e)_1=&\text{Span}\{ X(\ep_1-\ep_i)+X(\ep_{p+i-1}\pm\ep_{p+k+1}),\  2\le i\le k+1,\\ 
& {X(\ep_{p+j}\pm \ep_{p+l}), \ 1\le j\le k, \ k+2\le l \le q } \}, \\
C_\fk k(e)_2=&C_\fk k(h)_2,\\
C_\fk k(e)_3=&C_\fk k(h)_3.
\end{aligned}
\end{equation}
The $\fk{gl}(k)\subset C_\fk k(e)_0$ is embedded in $\fk{gl}(k)\times
\fk{gl}(k)\subset C_\fk k(h)_0$ via $x\mapsto (x,-x^t),$ and \newline
$\fk{so}(2r_-+1)\subset \fk{so}(2r_-+2)$ is the standard inclusion.

We denote by $Det^\chi$  a character of
$C_{\fk k}(e)$, a power of the determinant of $\fk{gl}(p-1)=\fk{gl}(k).$ 
\textbf{Asume
$\mathbf{p}$ is even throughout.} This has the effect that for an irreducible
representation, $V^*\cong V,$ and details can easily be filled in for
the other case.  Because we are considering   genuine
representations of the nonlinear double cover, we need to compute
regular functions for $\psi$ which are not trivial on the connected
component of the identity.  So $\psi=Det^\chi$ where $\chi$ is a
half-integer. \textbf{This holds for all cases}.

\subsection{Case 1}   As already noted, $p=k+1, q=k+1.$ 
We treat the orbit $\calO=[3^+2^{2k}1^-]_I$ only. The other orbits in
this Case are related by outer automorphisms as follows.

{
Let $\zeta,\eta$ be the outer automorphisms determined by
\begin{equation}\label{eq:out-auto}
\begin{aligned}
\zeta: & (x_1,\dots,x_p\mid y_1,\dots,y_p) \mapsto (x_1,\dots, x_{p-1}, -x_p\mid y_1,\dots,,y_{p-1}, -y_p),\\
\eta : & (x_1,\dots,x_p\mid y_1,\dots,y_p) \mapsto (y_1,\dots,y_p\mid x_1,\dots,x_p). 
\end{aligned}
\end{equation}
The other three orbits in Case 1 are conjugate to $\calO$ by an outer automorphism and are denoted by $\calO^{\zeta}, \calO^{\eta}, \calO^{\zeta\eta}$.
}

The centralizer $C_\fk k(h)$ is isomorphic to $\fk{gl}(1)\times
\fk{gl}(p-1)\times \fk{gl}(p-1)\times \fk{so}(2)$. 

\medskip
 
 A representation of $\tu{K}$ will be denoted by its highest weight, 
$$
V=V(a_1,\dots ,a_p\mid b_1,\dots ,b_p),\quad a_1\ge
a_2\ge \dots\ge |a_p|, \ b_1\ge b_2\ge \dots\ge |b_p|.
$$
All  $a_i,b_j\in\bbZ$ or $a_i,b_j\in\bbZ +\frac{1}{2}$, but $a_i-b_j$
need not be integers; $V$ is genuine precisely when $a_i-b_j\notin\bb Z.$

\medskip
We will compute 
\begin{equation}
\label{eq:mreal}
\Hom_{C_{\fk k}(e)}[V^*, \chi]=
\Hom_{C_{\fk k}(e)_0}\left [V^*/(C_{\fk k}(e)^+V^*), \chi \right ]:=
\left (V^*/(C_{\fk k}(e)^+V^*\right )^\chi
\end{equation} 
in two steps.  In the first step we define a parabolic
subalgebra $\fk p=\fk m+\fk n$ such that $e\in \fk n$, $C_{\fk
  k}(e)\in \fk p,$ and in addition $\fk n\subset C_{\fk k}(e)^+.$ By
Kostant's theorem $V^*/\big(\fk nV^*\big)$ is known, and the computation of (\ref{eq:mreal})
reduces to a similar computation in $\fk m.$ This  is done in step 2.

\subsubsection{Step 1} 
Let $\xi:=H(\ep_1+\dots +\ep_p) =(1, \dots, 1\mid 0, \dots, 0).$ It
determines a parabolic subalgebra $\fk p=\fk m +\fk n\subset\frakk$} where  
\begin{eqnarray*}
\fk m&=& C_{\frakk}(\xi) \cong \frakgl (p)\times \frakso (2p), \\ 
 \fk n &=& {\text{Span}\{X (\ep_i +\ep _j ),\ 1\le {i \neq j}
   \le p  \}\subset C_\fk k (e)_2+ C_\fk k (e)_3.} 
\end{eqnarray*}
Kostant's theorem on cohomology of finite dimensional representations
implies that  $V^*/(\fk n V^*)$ is the {irreducible $\fk m$-module
  generated by its lowest weight.}  We denote it 
\begin{equation}
  \label{eq:hwt1}
\calW(-a_p,-a_{p-1},\dots ,-a_1\mid b_1,\dots ,b_p).  
\end{equation}
The  assumption $p$ even implies $V^*\cong V.$  Since $C_\fk k (e)_0+C_\fk k (e)_1\subset\fk m$ 
and $C_{\frakk} (e) ^+ \cap \frakm = C_{\frakm}(e) ^+$, it is enough to compute
$$
\big[\calW/ (C_{ \fk k} (e) ^+ \cap \frakm )\calW\big]^{\chi}=\big[\calW/ (C_{\fk
  m}(e)^+ \calW )\big]^{\chi}.
$$

\subsubsection{{Step 2}} Let $\fk q=\fk l+\fk u\subset\fk m$ be the
parabolic subalgebra in $\fk m$ determined by $h$, \ie  
\begin{eqnarray*}
\fk l &\cong&\frakgl (1)\times  \frakgl (p-1)\times \frakgl (p-1)
\times \frakso (2)\\  
 {\fk u }&=&{\text{Span} \{  X(\ep_1- \ep _i),  X(\ep _{p-1+i} \pm
\ep_{2p} ), X(\ep _{p-1+i} +\ep_{p-1+j}),\  2\le{i\neq j} \le  p \}} ,
\end{eqnarray*}
with $C_{\fk m}(h)=\fk l,$ $C_{\fk m}(h)^+=\fk u.$ Then 
$$
\begin{aligned}
&C_{\fk m}(e)_0=\text{Span}\{ H(\ep_i-\ep_{p-1+i}),
X(\ep_i-\ep_j){+}X(-\ep_{p-1+i}+\ep_{p-1+j})\},  \\
&C_{\fk m}(e)^+=\text{Span}\{ X(\ep_1-\ep_i){-}X(\ep_{p-1+i}-\ep_{2p}), 
X(\ep_1-\ep_i){-}X(\ep_{p-1+i}+\ep_{2p})\}.
\end{aligned}
$$
As in the case of $\fk g,$ $C_{\fk m}(e)_0\cong
\frakgl(p-1)$ embeds in $\frakgl(p-1)\times \frakgl(p-1)\subset\frakl$ 
as $x\mapsto (0;x\mid -x^t;0).$ 

\subsubsection{} The module $\calW$ is a quotient of a
(generalized) Verma
module $M(\la)=U(\fk m)\otimes_{U(\ovl{\fk q})} F_\la$ with  $\la$ 
the weight of $\calW$ made dominant for $\ovl{\fk q}:$ 
$$
(-a_1;-a_p,\dots ,-a_2\mid -b_{p-1},\dots ,-b_1;-b_p).
$$
The $;$ denotes the fact that this is a (highest) weight of $\frakl
\cong\frakgl (1)\times 
\frakgl (p-1)\times \frakgl (p-1)\times \fk{so} (2).$ The positive
system for $\triangle ^+(\frakl)$ is the standard one for the Levi component. 
The nilradical decomposes $\fk u={C_{\fk m}(e)^+}  \oplus \fk s$ 
where $\fk s=\text{Span}\{
X(\ep_1-\ep_i),\ 2\le i\le p\}$ is a representation of
$\frakgl(1)\times\frakgl(p-1)\times \fk{so}(2p).$  {The (generalized) 
Bernstein-Gelfand-Gelfand resolution is 
\begin{equation}
\begin{aligned}
0\cdots\longrightarrow \bigoplus _{w\in W^+,\ \ell(w)=k} M(w\cdot\la)
\longrightarrow \cdots \longrightarrow 
\bigoplus _{w\in W^+,\ \ell(w)=1} M(w\cdot \la)
\longrightarrow M(\la)\longrightarrow \calW\longrightarrow 0,
\end{aligned}
\end{equation}
with $w\cdot \la := w(\la+\rho(\frakm))-\rho(\frakm)$, and $w\in W^+$,
the $W(\frakl)$-coset representatives that make $w\cdot \la$
dominant for $\triangle ^+ (\frakl)$.  This is a free $C_{\fk
  m}(e)^+$-resolution so we can compute homology by considering 
\begin{equation}\label{eq:coh1}
\begin{aligned}
0\cdots\longrightarrow \bigoplus _{w\in W^+,\ \ell(w)=k} 
\overline{M(w\cdot \la)} \longrightarrow \cdots 
\longrightarrow \bigoplus _{w\in W^+,\ \ell(w)=1} \ovl{M(w\cdot \la)}
\longrightarrow\ovl{ M( \la)}\longrightarrow 0,
\end{aligned}
\end{equation}
where for an $\frakm$-module X, $\ovl{ X }$ 
denotes $X/ \big(( {C_{\fk m}(e)^+}   ) X\big)$.}

{
As a module for $\frakgl (1)\times \frakgl (p-1)\times \fk{so}(2p),$ 
$\fk s$ has highest weight
$(1;0,\dots ,0,-1\mid 0,\dots ,0).$ Thus $S^m(\fk s)\cong (m;0,\dots
,0,-m\mid 0,\dots ,0;0).$ 

Let $\mu:=(-\al_1;-\al_p,\dots ,-\al_2\mid -\beta_{p-1}.\dots
,-\beta_1;-\beta_p)$ be the highest weight of an $\fk l$-module. 
By the  Littlewood-Richardson rule,

\begin{equation}
\label{eq:LRrule}
\begin{aligned}
S^m(\fk s)\otimes F_\mu=\sum \calW(-\al_1+m;-\al_p-m_p,\dots ,-\al_2-m_2\mid
-\beta_{p-1},\dots ,-\beta_1;-\beta_p).
\end{aligned}
\end{equation}

The sum in (\ref{eq:LRrule}) is taken over the set $$\{  m_i \ |  \
  m_i\geq 0, \  \sum \limits _{i=2} ^p m_i =m, \ m_{i} \leq
  \al_{i-1}-\al_i ,\ 3\leq i \leq p \}$$.} 
\begin{lemma}\label{l:1} $\Hom_{C_{\fk  m}(e)_0}[S^m(\fk s)\otimes F_\mu:\chi]\ne 0$ if and only if
$$
\beta_1\ge \al_2+\chi\ge \beta_2\ge \dots \ge \al_{p-1 }+\chi\ge
 \beta_{p-1}\ge \al_p+\chi.
$$
The  multiplicity is $1.$   
\end{lemma}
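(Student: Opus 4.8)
The plan is to reduce the statement to a purely combinatorial branching computation inside the Levi $\fk l\cong\frakgl(1)\times\frakgl(p-1)\times\frakgl(p-1)\times\frakso(2)$, where $C_{\fk m}(e)_0\cong\frakgl(p-1)$ sits diagonally via $x\mapsto(0;x\mid -x^t;0)$. First I would unwind the left-hand side: the summand $\calW(-\al_1+m;-\al_p-m_p,\dots,-\al_2-m_2\mid-\beta_{p-1},\dots,-\beta_1;-\beta_p)$ appearing in \eqref{eq:LRrule} is an irreducible $\fk l$-module whose restriction to $C_{\fk m}(e)_0=\frakgl(p-1)$ is computed by tensoring the two $\frakgl(p-1)$-factors after dualizing the second. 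Concretely, the first $\frakgl(p-1)$-factor contributes the irreducible with highest weight $(-\al_p-m_p,\dots,-\al_2-m_2)$ and the second, after applying $x\mapsto-x^t$, contributes the irreducible whose highest weight is obtained by negating and reversing $(-\beta_{p-1},\dots,-\beta_1)$, i.e. $(\beta_1,\dots,\beta_{p-1})$. (The $\frakgl(1)$ and $\frakso(2)$ factors are central for the relevant torus and only affect an overall $Det$-power, which is exactly the role of $\chi$; I would track that scalar carefully to see where the shift ``$+\chi$'' enters.) So the question becomes: when does the $\frakgl(p-1)$-irreducible of highest weight $(\beta_1,\dots,\beta_{p-1})$ occur in the tensor product of the $\frakgl(p-1)$-irreducible of highest weight $(-\al_p-m_p,\dots,-\al_2-m_2)$ with a one-dimensional $Det^{\chi}$-twist, summed over the admissible $m_i$?

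Second, I would invoke the Littlewood–Richardson rule (equivalently, the $\frakgl(p-1)$ Pieri/skew rule) together with the constraints already recorded after \eqref{eq:LRrule}, namely $m_i\ge0$, $\sum_{i=2}^p m_i=m$, and $m_i\le\al_{i-1}-\al_i$ for $3\le i\le p$. The point is that, after the $Det^{\chi}$ twist, requiring the target highest weight $(\beta_1,\dots,\beta_{p-1})$ to appear forces each coordinate comparison. Matching the largest coordinates gives $\beta_1\ge-\al_p-m_p+\chi$; but the extremal choice allowed by $m_p\le\al_{p-1}-\al_p$ (and the requirement that the whole weight be dominant) pins this down, and one propagates downward. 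Working through the interlacing that the LR rule imposes between $(\beta_1,\dots,\beta_{p-1})$ and the $\chi$-shifted $(-\al_p-m_p,\dots,-\al_2-m_2)$, and then eliminating the $m_i$ using their range constraints, should collapse exactly to the stated chain
$$
\beta_1\ge\al_2+\chi\ge\beta_2\ge\cdots\ge\al_{p-1}+\chi\ge\beta_{p-1}\ge\al_p+\chi,
$$
and because at each step the LR/Pieri coefficient contributing the relevant ``straight-shape'' term is $1$ and there is a unique choice of the $m_i$ realizing it, the total multiplicity is $1$.

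The main obstacle I anticipate is bookkeeping the two independent shifts correctly: the ``$-m_i$'' perturbations coming from $S^m(\fk s)$ in \eqref{eq:LRrule} and the ``$+\chi$'' coming from the $\frakgl(1)$/$Det$ normalization, and checking that after restriction to the \emph{diagonal} $\frakgl(p-1)$ (not the product) these combine so that the $m_i$ are uniquely determined rather than merely constrained — i.e. that the sum over $\{m_i\}$ contributes a single nonzero term. I would handle this by choosing an explicit torus in $C_{\fk m}(e)_0$, writing the character of $S^m(\fk s)\otimes F_\mu$ restricted to it, and extracting the coefficient of the weight $\chi\cdot(1,\dots,1)$ relative to $(\beta_1,\dots,\beta_{p-1})$; the vanishing/nonvanishing is then a transparent inequality check, and the value $1$ falls out because the relevant Kostka/LR number for a single-row skew shape (which is what $S^m$ produces) is always $0$ or $1$. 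A secondary, more routine point is verifying $V^*\cong V$ and hence that $\calW$ really is generated by the lowest weight as written — but that is exactly the hypothesis ``$p$ even'' invoked just before Step 1, so no new work is needed there.
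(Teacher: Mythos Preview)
Your setup is exactly the paper's: restrict each summand from \eqref{eq:LRrule} to the diagonal $\frakgl(p-1)$, where it becomes the tensor product of the irreducible with highest weight $(-\al_p-m_p,\dots,-\al_2-m_2)$ and the irreducible with highest weight $(\beta_1,\dots,\beta_{p-1})$, and ask when $Det^\chi$ appears. But you are making the next step far harder than it is, and the vagueness in your ``should collapse exactly'' paragraph is a symptom of that.

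You do \emph{not} need the Littlewood--Richardson rule, Pieri, or Kostka numbers to detect a one-dimensional character inside a tensor product of two irreducibles. For irreducible $\frakgl(p-1)$-modules $V_\lambda,V_\nu$ one has
\[
\Hom_{\frakgl(p-1)}\big(Det^\chi,\ V_\lambda\otimes V_\nu\big)\ \cong\ \Hom_{\frakgl(p-1)}\big(V_\nu^*,\ V_{\lambda-\chi}\big),
\]
which is one-dimensional iff $V_\nu\cong V_\lambda^*\otimes Det^\chi$, i.e.\ iff the highest weights satisfy the single \emph{equality} $\nu=-w_0\lambda+\chi\cdot(1,\dots,1)$, and is zero otherwise. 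Applied here this forces, coordinate by coordinate,
\[
-\al_i-m_i+\beta_{i-1}=\chi\qquad(2\le i\le p),
\]
which is precisely the paper's one-line proof. These equalities \emph{determine} the $m_i$ uniquely as $m_i=\beta_{i-1}-\al_i-\chi$, so there is no sum to perform and multiplicity one is automatic. The interlacing inequalities then fall out immediately from the admissibility constraints on the $m_i$: $m_i\ge 0$ gives $\beta_{i-1}\ge\al_i+\chi$, and $m_i\le\al_{i-1}-\al_i$ (for $3\le i\le p$) gives $\beta_{i-1}\le\al_{i-1}+\chi$.

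Two smaller points. First, your displayed inequality ``$\beta_1\ge-\al_p-m_p+\chi$'' has the indices mismatched: $\beta_1$ pairs with the coordinate $-\al_2-m_2$, not $-\al_p-m_p$. Second, the ``$+\chi$'' shift does not come from any subtle $\frakgl(1)/\frakso(2)$ bookkeeping you need to track separately; it is simply the highest weight of $Det^\chi$ itself on the diagonal Cartan $\{H(\ep_i-\ep_{p-1+i})\}$. Once you see the duality shortcut, the whole lemma is two sentences.
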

\begin{proof}
The multiplicity of $\chi$ is nonzero precisely when 
$$
-\al_i-m_i+\beta_{i-1}=\chi \ \text{ for some } m_i\geq 0, \quad 2\le i\le p.
$$
The condition $0\le m_i\le \al_{i-1}-\al_{i}$ implies $\beta_{i -1}\le \al_{i-1}+\chi$ for
$3\le i\le p.$
\end{proof}
\begin{cor}\label{c:1}
$\Hom_{C_{\fk k}(e)}[V,\chi]\ne 0$ only if
$$
b_1\ge a_2+\chi\ge b_2\ge \dots \ge a_{p-1 }+\chi\ge b_{p-1}\ge a_p+\chi.
$$  
The multiplicity is $\le 1,$ and the action of $\ad h$ is $-2\sum\limits_{1\le i\le p}a_i.$
\end{cor}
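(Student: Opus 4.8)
The plan is to chain together the two-step reduction set up in Section~\ref{s:regsec} with Lemma~\ref{l:1}. Concretely, the quantity of interest, $\Hom_{C_{\fk k}(e)}[V,\chi]$, was rewritten in \eqref{eq:mreal} as $\big(V^*/(C_{\fk k}(e)^+ V^*)\big)^\chi$, and Step~1 used Kostant's theorem for the parabolic $\fk p=\fk m+\fk n$ (with $\fk n\subset C_{\fk k}(e)^+$) to replace $V^*$ by the irreducible $\fk m$-module $\calW$, reducing the problem to computing $\big[\calW/(C_{\fk m}(e)^+\calW)\big]^\chi$. So first I would invoke that identification and the hypothesis $p$ even (so $V^*\cong V$), which makes the chain of reductions legitimate without dualizing complications.

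Next I would feed in the BGG resolution \eqref{eq:coh1}: the object $\ovl{M(\la)}=M(\la)/(C_{\fk m}(e)^+ M(\la))$ and the analogous $\ovl{M(w\cdot\la)}$ form a complex whose homology computes $\calW/(C_{\fk m}(e)^+\calW)$ in the appropriate degree. The key point is that, as a module for $\fk l_0:=C_{\fk m}(e)_0\cong\frakgl(p-1)$ (sitting inside $\frakgl(p-1)\times\frakgl(p-1)\subset\fk l$ via $x\mapsto(0;x\mid -x^t;0)$), the bar-complex term $\ovl{M(\la)}$ is $S^\bullet(\fk s)\otimes F_\la$ restricted to $\fk l_0$, since $\fk u=C_{\fk m}(e)^+\oplus\fk s$ as $\fk l$-modules and $S^m(\fk s)\cong(m;0,\dots,0,-m\mid0,\dots,0;0)$. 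Therefore each $\chi$-isotypic piece is governed by Lemma~\ref{l:1} applied with $\mu=\la$ (and with the shifted weights $w\cdot\la$ for the higher terms). Taking $\mu=\la=(-a_1;-a_p,\dots,-a_2\mid -b_{p-1},\dots,-b_1;-b_p)$, Lemma~\ref{l:1} gives $\big[\ovl{M(\la)}\big]^\chi\ne 0$ iff $b_1\ge a_2+\chi\ge b_2\ge\cdots\ge a_{p-1}+\chi\ge b_{p-1}\ge a_p+\chi$, with multiplicity $1$; since homology is a subquotient of $\ovl{M(\la)}$, the ``only if'' and the multiplicity bound $\le 1$ follow immediately. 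Finally, the $\ad h$-eigenvalue is read off from the weight: $h$ acts on $V$ by (minus) the sum of the relevant coordinates of the lowest weight of $\calW$, which after tracking signs through Step~1 is $-2\sum_{1\le i\le p}a_i$; I would verify this by noting $h=(2,1,\dots,1\mid1,\dots,1,0,\dots,0)$ paired against the weight $(-a_p,\dots,-a_1\mid b_1,\dots,b_p)$ of $\calW$ and that the $y$-coordinates contribute nothing because the relevant part of $h$ on the $\frakgl(p-1)\times\frakgl(p-1)$ factor is traceless on the diagonal embedding.

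The main obstacle is bookkeeping rather than conceptual: one must be careful that the higher BGG terms $\bigoplus_{\ell(w)=j}\ovl{M(w\cdot\la)}$ cannot \emph{create} a $\chi$-isotypic vector in the relevant homological degree that survives to the quotient — i.e. that passing from ``$\ovl{M(\la)}$ has the $\chi$-weight'' to ``the homology does'' only loses, never gains, multiplicity. For a one-sided statement (``only if'', multiplicity $\le 1$) this is automatic because $H_0$ of the bar-complex is a quotient of $\ovl{M(\la)}$, so no delicate cancellation analysis is needed here; the finer question of exactly \emph{when} the multiplicity equals $1$ (the converse) is deferred and presumably handled in the subsequent case-by-case analysis. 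The only genuine care-point is the sign/coordinate tracking in Step~1's Kostant identification, which is where an off-by-a-sign in the $\ad h$ formula would come from; I would double-check that against the explicit $e,h$ given in \eqref{eq:ce}.
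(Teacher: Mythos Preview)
Your argument for the ``only if'' direction and the multiplicity bound is correct and is exactly what the paper does: the surjection $\ovl{M(\la)}\cong S(\fk s)\otimes F_\la \twoheadrightarrow \ovl{\calW}$ together with Lemma~\ref{l:1} applied at $\mu=\la$ gives both statements immediately.

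The $\ad h$ computation, however, is not right as you have it. You propose pairing $h$ against the \emph{lowest weight} $(-a_p,\dots,-a_1\mid b_1,\dots,b_p)$ of $\calW$, but $\ad h$ does not act by a single scalar on $\calW$; it acts by the relevant eigenvalue on each weight space. What is needed is the weight of the specific $\chi$-eigenvector, which is not extremal. The paper identifies this weight explicitly: from the Littlewood--Richardson description \eqref{eq:LRrule} and the condition $-a_i-m_i+b_{i-1}=\chi$ in Lemma~\ref{l:1}, the $\chi$-eigenvector lies in the summand with $\fk l$-weight
\[
(-a_1+m;\,-a_p-m_p,\dots,-a_2-m_2 \mid -b_{p-1},\dots,-b_1;\,-b_p),\qquad m=\textstyle\sum_{i=2}^p m_i,
\]
and one then pairs this with $h=(2;1,\dots,1\mid 1,\dots,1;0)$. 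Substituting $m_i=b_{i-1}-\chi-a_i$ makes the $b_j$-contributions cancel against those coming from $m$ in the $\fk{gl}(1)$ slot, yielding $-2\sum_i a_i$. The cancellation is a feature of this particular weight, not a consequence of any ``tracelessness of $h$ on the diagonal embedding'': the element $(I_{p-1},I_{p-1})\in\fk{gl}(p-1)\times\fk{gl}(p-1)$ is not in the image of $x\mapsto(x,-x^t)$, and pairing it against a generic weight of $\calW$ does not annihilate the $b_j$. So replace the structural claim by the explicit pairing the paper carries out.
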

\begin{proof}
The first two statements follow from the surjection 
\begin{equation}
  \label{eq:bgg}
\begin{aligned}
&  \ovl{M(\la)} \cong S(\fk s)\otimes_{\bb C} F_\la\longrightarrow
\ovl{\calW}\longrightarrow 0.
\end{aligned}
\end{equation}
The action of $\ad h$ is computed from the module
$\calW(-a_1+m;-b_{p-1},\dots, -b_1\mid-b_{p-1},\dots ,-b_1;-b_p)$ with 
$m=\sum\limits_{2\le i\le p}(a_i-b_{i-1}).$ The value is
$$
2(-a_1+\sum_{2\le i\le p}-a_i+b_{i-1})+2(\sum_{1\le j\le p-1} -b_j)=
-2\sum_{1\le i\le p}a_i.
$$ 
\end{proof}

We will need  Lemma \ref{l:tensor} in order to prove Proposition \ref{p:spectrum}.

\bigskip
Let $\fk g=\fk{gl}(r,\bb C)$, $V(a)=V(a_1,\dots ,a_r)$ an irreducible finite dimensional representation with highest weight $(a_1,\dots ,a_r)$ with $a_i\in\bb N.$ Let $\fk s=\bb C^n$ be the standard representation with basis $e_1,\dots , e_r.$ The Littlewood-Richardson rule implies that
$$
S(\fk s)\otimes V(a)=
\sum V(a_1+m_1,\dots ,a_r+m_r)
$$
with multiplicity 1, sum over
$$
\begin{aligned}
m_2&\le a_1-a_2\\ 
&\dots\\ 
m_r&\le a_{r-1}-a_r 
\end{aligned}
$$
We need explicit information about the highest weights that occur in the decomposition. A typical weight of the tensor product will be denoted by
$ e^{\ul{m'}}\otimes v(\ul{a'})$ with $e^{\ul{m}}:=e_1^{m_1'}\dots e_r^{m_r'}$ and 
$v(\ul{a'})$ an eigenvector of $V$ with weight $(a_1',\dots ,a_r').$ The weight $\ul{m}+\ul{a}=(m_1+a_1,\dots ,m_r+a_r)$ is spanned by monomials of the form
$$
e^{\ul{m'}}\otimes v(\ul{a'})\qquad \ul{m'}+\ul{a'}=\ul{m}+\ul{a}.
$$
Order the monomials by $\ul{m'}$ lexicographically. The \textbf{lowest term} is $e^{\ul{m}}\otimes v(\ul{a}).$
\begin{lemma}\label{l:tensor}
The highest weight corresponding to $V(\ul{m}+\ul{a})$ has \textbf{lowest term} 
$e^{\ul{m}}\otimes v(\ul{a}).$
\end{lemma}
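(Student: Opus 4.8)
The plan is to prove Lemma~\ref{l:tensor} by an explicit weight-vector argument inside the tensor product $S(\fk s)\otimes V(a)$, using the standard model of polynomial $\frakgl(r)$-representations. First I would recall that by the Littlewood-Richardson rule the constituent $V(\ul m+\ul a)$ occurs with multiplicity one in $S(\fk s)\otimes V(a)$, so there is a unique (up to scalar) highest-weight vector $w$ of weight $\ul m+\ul a$; the content of the lemma is to identify the lexicographically lowest monomial $e^{\ul{m'}}\otimes v(\ul{a'})$ (ordered by $\ul{m'}$) appearing in $w$ with nonzero coefficient. The key observation is that any monomial $e^{\ul{m'}}\otimes v(\ul{a'})$ of weight $\ul m+\ul a$ satisfies $\ul{m'}+\ul{a'}=\ul m+\ul a$, and since $v(\ul{a'})$ is a weight vector of $V(a)$, its weight $\ul{a'}$ lies in the convex hull of the orbit of $\ul a$, in particular $\ul{a'}$ is dominated by $\ul a$ in the standard (Bruhat/dominance) partial order on $\bbZ^r$. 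This forces $\ul{m'}$ to dominate $\ul m$ in the reverse sense, and a short argument shows the lexicographically smallest possible $\ul{m'}$ is exactly $\ul m$, attained only when $\ul{a'}=\ul a$, i.e.\ only for the monomial $e^{\ul m}\otimes v(\ul a)$ where $v(\ul a)$ is the highest weight vector of $V(a)$.

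Next I would show that the coefficient of $e^{\ul m}\otimes v(\ul a)$ in $w$ is genuinely nonzero. Here the efficient route is to realize $w$ concretely: one projects the vector $e^{\ul m}\otimes v(\ul a)$ (which is a weight vector but not a highest weight vector in general) onto the isotypic component $V(\ul m+\ul a)$, or equivalently applies the appropriate product of lowering operators to a highest weight vector and tracks the $e^{\ul m}\otimes v(\ul a)$ term. Because $\ul m$ is the lex-minimal exponent among all weight-$(\ul m+\ul a)$ monomials, no cancellation can kill this term: the raising operators $E_{ij}$ ($i<j$) applied to any monomial strictly increase $\ul{m'}$ or leave it fixed while moving within $V(a)$, and the highest weight condition $E_{ij}w=0$ together with the extremality of $\ul m$ pins down the coefficient uniquely and shows it is nonzero. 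Concretely, one can argue that modulo the span of monomials with $\ul{m'}>\ul m$ lexicographically, the highest weight vector must have the form (nonzero scalar)$\cdot e^{\ul m}\otimes v(\ul a)$, since $v(\ul a)$ is the only weight vector of $V(a)$ with weight $\ge$ the others and $e^{\ul m}$ is forced.

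The main obstacle I anticipate is precisely this non-vanishing of the leading coefficient: the dominance argument cleanly shows $e^{\ul m}\otimes v(\ul a)$ is the \emph{only candidate} for the lowest term, but ruling out that its coefficient in $w$ happens to vanish requires either an explicit formula (e.g.\ via Clebsch--Gordan / crystal-basis combinatorics, where the relevant tableau is the one obtained by inserting the single-box columns recording $\ul m$ into the highest tableau of $V(a)$) or a clean extremality argument. I would handle it by the extremality argument: in the lexicographic filtration of the weight space by $\ul{m'}$, the subspace spanned by monomials with $\ul{m'}>\ul m$ is stable under all raising operators $E_{ij}$ with $i<j$ restricted to that weight, hence the quotient is one-dimensional and the image of $w$ there is a highest weight vector, which must be nonzero (else $w$ would lie in a proper $\frakgl(r)$-submodule not containing $V(\ul m+\ul a)$, contradicting multiplicity one and the fact that $V(\ul m+\ul a)$ does occur). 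This gives the coefficient of $e^{\ul m}\otimes v(\ul a)$ is nonzero, completing the proof. The remaining steps --- verifying the dominance inequalities and the stability of the filtration --- are routine and I would only sketch them.
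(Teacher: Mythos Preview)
Your dominance argument in the first paragraph is correct and cleanly shows that $e^{\ul m}\otimes v(\ul a)$ is the unique monomial of weight $\ul m+\ul a$ with lexicographically minimal exponent $\ul{m'}$. The gap is in your nonvanishing step. In the third paragraph you argue that if the coefficient of $e^{\ul m}\otimes v(\ul a)$ in $w$ were zero, then $w$ would lie in the subspace $F$ spanned by monomials with $\ul{m'}>\ul m$, and hence ``in a proper $\frakgl(r)$-submodule not containing $V(\ul m+\ul a)$.'' But $F$ is only stable under the raising operators, not under the lowering operators, so it is not a $\frakgl(r)$-submodule; $w\in F$ yields no contradiction. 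The second-paragraph sketch is circular for the same reason: you cannot invoke ``extremality of $\ul m$'' until you know $\ul m$ is the lex-minimum among terms \emph{actually appearing in} $w$, which is precisely what is in question.

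The paper avoids this by reversing the logic: instead of naming a candidate and then proving it appears, it starts from whatever the lex-lowest term $e^{\ul{m_0}}\otimes v_0$ of $w$ actually is (some such term exists since $w\ne 0$) and identifies it. Applying a raising operator $X=X(\ep_i-\ep_j)$ to $w$ gives $0$; since $X$ acting on the first tensor factor strictly increases the $e$-exponent (or annihilates the term), the lex-$\ul{m_0}$ piece of $Xw=0$ is exactly $e^{\ul{m_0}}\otimes Xv_0$. Hence $Xv_0=0$ for every raising $X$, so $v_0$ is a multiple of the highest weight vector $v(\ul a)$, and therefore $\ul{m_0}=\ul m$. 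This one-step argument has no separate nonvanishing issue and is essentially what your second paragraph was reaching for; the fix is simply to argue from the existing lowest term rather than from a predetermined candidate.
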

\begin{proof}
Let $e^{\ul{m'}}\otimes v(\ul{a'})$ be a lowest monomial occuring in the highest weight. Since the factor occurs, a lowest term must occur. It must be annihilated by the action of all root vectors $X(\ep_i-\ep_{j})$ with $i<j.$ The formula is
$$
X\cdot e^{\ul{m'}}\otimes v=Xe^{\ul{m'}}\otimes v +e^{\ul{m'}}\otimes Xv.
$$   
The formula is
$$
X(\ep_i-\ep_j)e^{\ul{m'}}=(m'_j)e_1^{m'_1}\dots e_i^{m'_i+1}\dots e_j^{m'_j-1}\dots e_r^{m'_r}.
$$
Thus each monomial is mapped into a term that is strictly higher (possibly zero) plus the term $e^{\ul{m'}}\otimes Xv$ which is lexicographically at the same level. 

Applied to the highest weight expression, this implies that $Xv=0$ for any $X=X(\ep_i-\ep_j).$ Thus the lowest term must be a multiple of the highest weight $v(\ul{a})$, which occurs with multiplicity one. It follows that $\ul{m'}=\ul{m}$ as well.
\end{proof}

\begin{prop}\label{p:spectrum}
$\Hom_{C_\fk k(e)}[V,\chi]\ne 0$ only if
$$
a_1+\chi\ge b_1\ge a_2+\chi\dots \ge a_p+\chi\ge |b_p|.
$$
The multiplicities are $\le 1.$

\begin{proof}
We need to prove three inequalities:
$$
a_1+\chi\ge b_1,\qquad a_p+\chi\ge \pm b_p.
$$  
{The module $M(w\cdot\la)$ decomposes according to the Baker--Campbell--Hausdorff formula
$$
M(w\cdot\la)=S(\fk s)\otimes F_{w\cdot\la}\oplus C_{\fk m}(e)^+U(\fk u)\otimes F_{w\cdot\la},
$$ 
and the  summands are $C_{\fk m}(e)_0$-stable. We say the factors are transverse to each other. 
Suppose $b_1<a_1+\chi,$ and $w$ corresponds to the reflection $s_{\ep_1-\ep_2}.$  
Then the complement of $\ovl{M(w\cdot\la)}$ contains a $C_{\fk m}(e)^+$-component with weight 
$$
(-a_2+1+m';-a_p+m'_2,\dots -a_3+m'_3,-a_1+m'_2\mid -b_{p-1},\dots ,-b_1;-b_p),
$$ satisfying $m'=m_2'+\dots +m_p'$  and 
$-a_1-m'_2=-b_1+\chi$;  similar equalities as in Lemma \ref{l:1} hold for the other coordinates. The assumption $b_1>a_1+\chi$ guarantees that such a weight occurs. The vectors are of the form 
$S\otimes Av_{w\cdot\la}$ with $A$ in the universal enveloping algebra
of the first factor $gl(p-1),$ and $X\in S^{m'}(\fk s)$. The image
under the differential is $ASX(\ep_1-\ep_p)^{a_1-a_2+1}\otimes
v_{\la}.$ This is nonzero and in the transverse to $C_{\fk
  m}(e)^+U(\fk u)\otimes F_{w\cdot\la}.$ The proof uses Lemma \ref{l:tensor}.

For the other two inequality, the analogous argument applies using the fact that the spaces 
$$
\fk s_\pm = \Span\{X(\ep_{p+i}\pm \ep_{2p})\}_{1\le i\le p-1}
$$
are also transverses. }
\end{proof}
\end{prop}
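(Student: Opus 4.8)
The plan is to refine Corollary~\ref{c:1}, staying within Case~1 with $p$ even. That corollary already supplies the interior chain $b_1\ge a_2+\chi\ge b_2\ge\cdots\ge a_p+\chi$, as well as the multiplicity bound: by (\ref{eq:bgg}), $\ovl{\calW}$ is a quotient of $\ovl{M(\la)}\cong S(\fk s)\otimes F_\la$, and inside $\ovl{M(\la)}$ only one graded layer $S^m(\fk s)\otimes F_\la$ can carry a $\chi$-isotypic vector---the value of $\ad h$ on that layer being determined---where by Lemma~\ref{l:1} it has multiplicity at most one. Hence it remains only to establish the three ``boundary'' inequalities $a_1+\chi\ge b_1$ and $a_p+\chi\ge\pm b_p$; equivalently, to show that if one of them fails then the unique candidate $\chi$-vector of $\ovl{M(\la)}$ already dies in the quotient $\ovl{\calW}$.

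For this I would exploit the (generalized) BGG complex (\ref{eq:coh1}), of which $\ovl{\calW}$ is the degree-zero homology, so that $[\ovl{\calW}]^\chi$ is the cokernel of $\bigoplus_{\ell(w)=1}[\ovl{M(w\cdot\la)}]^\chi\to[\ovl{M(\la)}]^\chi$. The length-one terms $M(w\cdot\la)$ are indexed by the three simple roots of $\fk m$ outside $\fk l$---namely $\ep_1-\ep_2$ and the two spin-node roots $\ep_{2p-1}\pm\ep_{2p}$---and these three match the three boundary inequalities. Take first $w=s_{\ep_1-\ep_2}$ and assume $b_1>a_1+\chi$. I would compute $w\cdot\la$ and then locate inside $\ovl{M(w\cdot\la)}\cong S(\fk s)\otimes F_{w\cdot\la}$ the $C_{\fk m}(e)_0$-weight vector $X\otimes Av_{w\cdot\la}$, with $X\in S^{m'}(\fk s)$ and $A\in U(\frakgl(p-1))$ in the first $\frakgl(p-1)$-factor of $\fk l$, whose weight is forced by the relation $-a_1-m'_2=-b_1+\chi$ together with the Lemma~\ref{l:1}-type equalities in the remaining coordinates; the strict inequality $b_1>a_1+\chi$ is precisely what makes $m'_2\ge 1$, so that this vector genuinely occurs in the resolution term.

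The key point is a transversality observation: the PBW / Baker--Campbell--Hausdorff splitting $M(\mu)=\bigl(S(\fk s)\otimes F_\mu\bigr)\oplus\bigl(C_{\fk m}(e)^+U(\fk u)\otimes F_\mu\bigr)$ is $C_{\fk m}(e)_0$-stable, so the BGG differential---given on generators, up to a nonzero scalar, by left multiplication with $X(\ep_1-\ep_p)^{a_1-a_2+1}$---carries the $S(\fk s)$-summand into the $S(\fk s)$-summand modulo the transverse $C_{\fk m}(e)^+$-part. After reordering, the image of the vector above is $AX\,X(\ep_1-\ep_p)^{a_1-a_2+1}\otimes v_\la$, and by Lemma~\ref{l:tensor} its lowest lexicographic monomial $e^{\ul m}\otimes v(\ul a)$ is a nonzero multiple of the unique $\chi$-vector of $\ovl{M(\la)}$ and is not absorbed into $C_{\fk m}(e)^+U(\fk u)\otimes F_\la$. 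Thus that $\chi$-vector lies in the image of the differential, $[\ovl{\calW}]^\chi=0$, and the hypothesis is contradicted, which proves $a_1+\chi\ge b_1$; the two remaining inequalities $a_p+\chi\ge\pm b_p$ are handled in the same way, with $\fk s$ replaced by the transverse subspaces $\fk s_\pm=\Span\{X(\ep_{p+i}\pm\ep_{2p})\}_{1\le i\le p-1}$ and $s_{\ep_1-\ep_2}$ replaced by the spin-node reflections $s_{\ep_{2p-1}\mp\ep_{2p}}$. I expect the transversality certification to be the main obstacle: one must ensure that, modulo $C_{\fk m}(e)^+M(\la)$, the image of the chosen resolution vector under the BGG differential really is a nonzero multiple of the distinguished $\chi$-vector rather than collapsing---it is exactly here that the precise form of Lemma~\ref{l:tensor} (the lowest monomial being $e^{\ul m}\otimes v(\ul a)$, occurring with multiplicity one) is indispensable, and where one must track carefully how the generators of $C_{\fk m}(e)^+$ interact with $X(\ep_1-\ep_p)$, respectively with $X(\ep_{2p-1}\mp\ep_{2p})$, under the PBW reordering.
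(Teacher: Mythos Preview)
Your proposal is correct and follows essentially the same route as the paper: use the BCH/PBW transverse splitting $M(\mu)=S(\fk s)\otimes F_\mu\oplus C_{\fk m}(e)^+U(\fk u)\otimes F_\mu$, locate the candidate $\chi$-vector in the length-one BGG term for each of the three simple reflections outside $\fk l$, and invoke Lemma~\ref{l:tensor} to see that its image under the differential hits the unique $\chi$-vector of $\ovl{M(\la)}$ nontrivially. Your identification of the three relevant reflections and of the transverse spaces $\fk s,\fk s_\pm$ matches the paper's, and the obstacle you flag---the transversality certification via the lowest-monomial analysis of Lemma~\ref{l:tensor}---is exactly where the paper places the weight of the argument.
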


\subsubsection{$\ell(w)=1$} To prove that the weights in
Proposition \ref{p:spectrum} actually occur, it is enough to show that these
weights do not occur in the term in the BGG resolution (\ref{eq:bgg}) 
with $\ell(w)=1.$
Recall 
\begin{equation}
  \label{eq:wone}
\begin{aligned}
\rho &= \rho (\frakm)= (-\frac{(p-1)}{2};\frac{p-1}{2}, \frac{p-3}{2},\dots
,-\frac{(p-3)}{2}\mid -1,\dots ,-(p-1);0),\\
\la+\rho &= (-a_1- \frac{(p-1)}{2}; -a_p+ \frac{(p-1)}{2},\dots ,-a_2
-\frac{(p-3)}{2}\bigb\\
& -b_{p-1}-1,-b_{p-2}-2,\dots ,-b_1-(p-1);-b_p)
\end{aligned}
\end{equation}

For the case $\ell(w)=1,$ there are three elements.
We enumerate them as $w_1, w_2, w_3$, with $w_1 =s_{\ep _1-\ep_p},
w_2= s_{\ep _{p+1} -\ep _{n}}, w_3 =s_{\ep _{p+1} + \ep _{n}}$.  Then 
$$
\begin{aligned}
&w_1\cdot\la=(-a_2+1;-a_p,\dots ,-a_3,-a_1-1\mid -b_{p-1},\dots ,-b_1;-b_p), \\
&w_2\cdot\la= (-a_1;-a_p,\dots ,-a_2\mid -b_p+1,\dots ,-b_1;-b_{p-1}-1),\\
&w_3\cdot\la= (-a_1;-a_p,\dots ,-a_2\mid b_p+1,\dots ,-b_1;-b_{p-1}+1).
\end{aligned}
$$
\begin{lemma}\ 
  \label{l:spectrum}
  $\ovl{M(w_i\cdot \la)}$ has vectors transforming according to $\chi$  of
$C_{\fk m}(e)$ (trivial on $C_\fk k(e)^+$), only if
  
\begin{eqnarray*}
w_1:&-a_1 -1-m_2 =-b_1+\chi\ \text{ for some }  m_2 \ge 0, \  \text{ \ie } &b_1>a_1+\chi\\
&&\\ 
w_2:&-a_p -m_p = -b_p +1 +\chi\ \text{ for some } m_p \ge 0,  \
 \text{ \ie } &b_p>a_p+\chi\\
 &&\\
w_3:& -a_p-m_p =b_p+1 +\chi\ \text{ for some } m_p \ge 0\  \text{ \ie } &-b_p>a_p+\chi.
\end{eqnarray*}

  \medskip
The multiplicities are 1, and the eigenvalue of $\ad h$ is
$-2\sum\limits_{1\le i\le p-1} a_i.$ 
\end{lemma}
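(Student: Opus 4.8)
The plan is to repeat, for each of the three length-one elements $w_1=s_{\ep_1-\ep_p}$, $w_2=s_{\ep_{p+1}-\ep_n}$ and $w_3=s_{\ep_{p+1}+\ep_n}$, the computation that proved Lemma~\ref{l:1} and Corollary~\ref{c:1}, the only new ingredient being the explicit shifted weights $w_i\cdot\la$ listed above. First I would reduce to a tensor-product calculation exactly as in the proof of Proposition~\ref{p:spectrum}: by the Baker--Campbell--Hausdorff decomposition
$$
M(w_i\cdot\la)=S(\fk s)\otimes F_{w_i\cdot\la}\ \oplus\ \big(C_{\fk m}(e)^+U(\fk u)\big)\otimes F_{w_i\cdot\la},
$$
whose two summands are $C_{\fk m}(e)_0$-stable, one has $\ovl{M(w_i\cdot\la)}\cong S(\fk s)\otimes F_{w_i\cdot\la}$ as a module for $C_{\fk m}(e)_0\cong\frakgl(p-1)$; for $w_2$ and $w_3$ one works instead with the transverse subspaces $\fk s_\pm=\Span\{X(\ep_{p+i}\pm\ep_{2p})\}$ from that same proof, since those reflections live on the $\frakso(2p)$-factor of $\fk m$ rather than on $\frakgl(p)$.

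Next I would decompose $S(\fk s)\otimes F_{w_i\cdot\la}$ by the Littlewood-Richardson rule (\ref{eq:LRrule}) and single out the $\chi$-isotypic line exactly as in the proof of Lemma~\ref{l:1}, now reading the explicit weight $w_i\cdot\la$ in place of $\la$. Running the chain of inequalities of Lemma~\ref{l:1} with these weights, all of them but one stay automatically in force on the range where the conclusion of Proposition~\ref{p:spectrum} holds; the single surviving constraint is that the relevant Littlewood-Richardson exponent ($m_2$, at the top of the staircase, for $w_1$; $m_p$, at the orthogonal end, for $w_2$ and $w_3$) be non-negative --- that is, the displayed equations $-a_1-1-m_2=-b_1+\chi$, $-a_p-m_p=-b_p+1+\chi$, $-a_p-m_p=b_p+1+\chi$ be solvable with the exponent $\ge0$ --- which unwinds to $b_1>a_1+\chi$, $b_p>a_p+\chi$, $-b_p>a_p+\chi$ respectively. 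The multiplicity-one assertion is then immediate from the multiplicity-one statement in (\ref{eq:LRrule}) and Lemma~\ref{l:1}, and the eigenvalue of $\ad h$ is computed by evaluating $h$ on the $\frakt$-weight of the unique $\fk l$-constituent carrying the $\chi$-line, exactly as at the end of the proof of Corollary~\ref{c:1}; passing from $\la$ to $w_i\cdot\la$ removes one term from the sum that appears there, leaving $-2\sum_{1\le i\le p-1}a_i$.

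The hard part is organisational, not conceptual. One must keep careful track of which coordinates of $\la$ each $w_i$ moves and of the accompanying unit shifts, and --- for $w_2$ and $w_3$ --- run the transversality reduction on the orthogonal factor $\frakso(2p)$, where the two spaces $\fk s_\pm$ and the pair consisting of the last $\frakgl(p-1)$-coordinate and the $\frakso(2)$-coordinate must be matched correctly against the entries of $w_2\cdot\la$ and $w_3\cdot\la$. Once those identifications are fixed the argument is a line-by-line transcription of the proof of Lemma~\ref{l:1}; the lemma then feeds into the $\ell(w)=1$ step verifying that the weights predicted by Proposition~\ref{p:spectrum} actually survive in the homology of the resolution (\ref{eq:bgg}).
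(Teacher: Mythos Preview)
Your outline is essentially the paper's own argument: apply the Littlewood--Richardson formula (\ref{eq:LRrule}) to $S(\fk s)\otimes F_{w_i\cdot\la}$ and read off the single new inequality, exactly as in the $\ell(w)=0$ case. The paper's proof is one sentence long for precisely this reason.

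The one point of confusion is your claim that for $w_2$ and $w_3$ one must pass to the transverse spaces $\fk s_\pm$. This is not what the paper does, and it is not needed. In Proposition~\ref{p:spectrum} the spaces $\fk s_\pm$ are invoked for a \emph{different} purpose: to show that certain $\chi$-weights in $\ovl{M(\la)}$ die in $\ovl{\calW}$, yielding the inequalities $a_p+\chi\ge\pm b_p$. In Lemma~\ref{l:spectrum}, by contrast, the paper uses the \emph{same} complement $\fk s$ for all three $w_i$; the displayed weights $(-a_1+m;-a_p-m_p,\dots)$ for $w_2$ and $w_3$ in the paper's proof make this explicit, since the $m_i$ still shift the first $\frakgl(p-1)$ block. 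What changes is only the second block of $F_{w_i\cdot\la}$, where the reflections $s_{\ep_{p+1}\mp\ep_{2p}}$ replace $\beta_{p-1}=b_{p-1}$ by $b_p-1$ (for $w_2$) or $-b_p-1$ (for $w_3$). Lemma~\ref{l:1} applied to these shifted $\beta$'s then gives exactly the conditions $-a_p-m_p=-b_p+1+\chi$ and $-a_p-m_p=b_p+1+\chi$ with $m_p\ge 0$.

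Your description of the outcome (the exponent $m_p$ being the one that matters) is in fact what one gets from $\fk s$, not from $\fk s_\pm$, so the computation you have in mind is correct even though the justification you give for it is slightly off. Drop the detour through $\fk s_\pm$ and the argument is identical to the paper's.
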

\begin{proof}
As in (\ref{eq:LRrule}), the weights in $\ovl{ M (w_1 \cdot \la)}$, $\ovl{ M (w_2 \cdot \la)}$ and $\ovl{ M (w_3 \cdot \la)}$ are of the form 
\begin{eqnarray*}
&(-a_2+1+m;-a_p -m_p,\dots ,-a_3-m_3,-a_1-1 -m_2\mid -b_{p-1},\dots ,-b_1;-b_p),\\
&( -a_1 +m ; -a_p-m_p, \dots, -a_2 -m_2\mid -b_p+1,\dots -b_1 ; -b_{p-1} -1 ),\\
&( -a_1 +m ; -a_p-m_p, \dots, -a_2 -m_2\mid b_p+1,\dots -b_1 ; -b_{p-1} +1 ),
\end{eqnarray*}
respectively.   The proof is completed as in the case $\ell(w)=0.$ 
\end{proof}

\begin{theorem}
\label{t:1}
A representation $V(a_1,\dots ,a_p\mid b_1,\dots ,b_p)$  has vectors
transforming as $\chi$ of $C_{\fk k}(e)$  if and only if 
\begin{equation}\label{eq:halfinteger} 
a_1+\chi\ge b_1\ge\dots \ge a_p+\chi\ge |b_p|,
\end{equation}
and the multiplicity is 1. In summary,
$$
Ind_{C_{\wti K}(e)^0}^{\wti K}(\chi)=\bigoplus V(a_1,\dots,a_p\mid b_1,\dots, b_p), 
$$
satisfying  
$$
a_1+\chi\ge b_1 \ge \dots\ge a_p+\chi \ge |b_p|.
$$
\end{theorem}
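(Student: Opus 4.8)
The plan is to combine the three necessary conditions already obtained --- Corollary \ref{c:1} (giving $b_1\ge a_2+\chi\ge\cdots\ge a_p+\chi$) and Proposition \ref{p:spectrum} (giving $a_1+\chi\ge b_1$ and $a_p+\chi\ge |b_p|$) --- with a matching sufficiency statement, namely that \emph{every} weight $V(a_1,\dots,a_p\mid b_1,\dots,b_p)$ satisfying \eqref{eq:halfinteger} does contribute a (unique) vector transforming by $\chi$. The necessity direction is already in hand, so the real content is sufficiency together with the multiplicity-one claim. For this I would run the Euler-characteristic argument on the BGG resolution \eqref{eq:coh1}: since $\ovl{\calW}$ is computed as the $0$-th homology of the complex $\bigoplus_{\ell(w)=k}\ovl{M(w\cdot\la)}\to\cdots\to\ovl{M(\la)}\to 0$, the $\chi$-isotypic Euler characteristic satisfies
$$
\big[\ovl{\calW}\big]^\chi=\sum_{w\in W^+}(-1)^{\ell(w)}\big[\ovl{M(w\cdot\la)}\big]^\chi,
$$
because each term in positive homological degree would force a $\chi$-vector in some $\ovl{M(w\cdot\la)}$ with $\ell(w)\ge 1$.

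First I would record, exactly as in Lemma \ref{l:1} and its analogue for the two transverse subspaces $\fk s_\pm$, the precise $\chi$-content of $\ovl{M(\la)}=S(\fk s)\otimes F_\la$ together with the $S(\fk s_\pm)$ tensor factors: a $\chi$-vector occurs iff the chain of inequalities $b_1\ge a_2+\chi\ge b_2\ge\cdots\ge a_p+\chi$ holds, and then with multiplicity exactly one, the $\ad h$-eigenvalue being $-2\sum a_i$ (Corollary \ref{c:1}). Next I would use Lemma \ref{l:spectrum}: a $\chi$-vector occurs in $\ovl{M(w_i\cdot\la)}$, $\ell(w_i)=1$, precisely when one of $b_1>a_1+\chi$, $b_p>a_p+\chi$, $-b_p>a_p+\chi$ holds, i.e.\ precisely when one of the three inequalities of Proposition \ref{p:spectrum} \emph{fails}. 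The key combinatorial point is then that for weights satisfying all inequalities of \eqref{eq:halfinteger} strictly, only the $\ell(w)=0$ term contributes, giving multiplicity $1$; and one must check that higher length terms never interfere --- this is where the transversality of $\fk s$, $\fk s_+$, $\fk s_-$ inside $\fk u$ (pointed out in the proof of Proposition \ref{p:spectrum}) is essential, since it guarantees the $\chi$-weight spaces from different $S(\fk s_{\bullet})$ factors and from $C_{\fk m}(e)^+U(\fk u)$ are genuinely independent, so no cancellation can hide a surviving class in homological degree $\ge 1$.

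The remaining case is the boundary, when one or more of the inequalities in \eqref{eq:halfinteger} is an equality; here I would argue that the potentially obstructing $\chi$-vector in the $\ell(w)=1$ term is itself killed by the image of the $\ell(w)=2$ differential, or equivalently that at equality the ``extra'' $\chi$-vector predicted by Lemma \ref{l:spectrum} does not actually appear (the strict inequalities ``$b_1>a_1+\chi$'' etc.\ are strict for a reason). Combining, the $\chi$-multiplicity in $\ovl{\calW}$, hence in $\Hom_{C_{\fk k}(e)}[V,\chi]$ via \eqref{eq:bgg} and \eqref{eq:mreal}, is exactly $1$ when \eqref{eq:halfinteger} holds and $0$ otherwise. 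The final displayed formula for $\Ind_{C_{\wti K}(e)^0}^{\wti K}(\chi)$ is then just the reassembly of this statement over all dominant $(a\mid b)$, using that $\psi=\Det^\chi$ restricted to $C_{\wti K}(e)^0$ is the character in question and Frobenius reciprocity.

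I expect the main obstacle to be the boundary/equality case: away from it the Euler-characteristic bookkeeping is clean, but at equality one has to rule out a spurious degree-one class, which means actually understanding the $\ell(w)=2$ maps rather than just counting. A convenient way around it may be to first prove the theorem for generic (strict) parameters, note that both sides of the claimed identity are ``continuous'' in an appropriate sense (the left side is a multiplicity in a fixed induced module, the right side a union of chambers), and deduce the boundary case by a limiting/closure argument on the region defined by \eqref{eq:halfinteger}; alternatively one can invoke that $R(\calO,\chi)$ as a ring of regular functions has a well-defined $\wti K$-character and that the generic computation already pins it down. I would present the generic argument in full and handle the boundary by whichever of these two routes is shortest.
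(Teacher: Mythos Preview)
Your approach is the paper's: necessity from Proposition \ref{p:spectrum}, sufficiency from Lemma \ref{l:spectrum} via the BGG resolution. But you have manufactured an obstacle that is not there.

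For sufficiency only right exactness is needed. The tail
\[
\bigoplus_{\ell(w)=1}\ovl{M(w\cdot\la)}\longrightarrow \ovl{M(\la)}\longrightarrow \ovl{\calW}\longrightarrow 0
\]
is exact, so the $\chi$-multiplicity in $\ovl{\calW}$ equals that in $\ovl{M(\la)}$ as soon as no $\chi$-vector lies in any length-one term. Lemma \ref{l:spectrum} says a $\chi$-vector can occur in $\ovl{M(w_i\cdot\la)}$ only under one of the \emph{strict} inequalities $b_1>a_1+\chi$, $b_p>a_p+\chi$, or $-b_p>a_p+\chi$. Thus whenever \eqref{eq:halfinteger} holds---with its nonstrict inequalities---all three strict conditions fail, the length-one terms carry no $\chi$, and the single $\chi$-vector in $\ovl{M(\la)}$ supplied by Lemma \ref{l:1} (the interlacing $b_1\ge a_2+\chi\ge\cdots\ge a_p+\chi$ being part of \eqref{eq:halfinteger}) survives in $\ovl{\calW}$. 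That is the whole argument; the paper states it in one sentence.

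There is consequently no ``boundary case'' to fight. Equality in \eqref{eq:halfinteger} still negates the strict conditions of Lemma \ref{l:spectrum}, so the length-one contribution is already zero there; no $\ell(w)=2$ differentials, no limiting or closure argument, and no Euler-characteristic bookkeeping are required. Likewise you never need to control higher homology: you are not computing it, only the cokernel.
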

\begin{proof}
The proof is straightforward from the BGG resolution (\ref{eq:bgg}), Proposition
\ref{p:spectrum}, and Lemma \ref{l:spectrum}.
\end{proof}

\subsubsection{} \label{ss:cover}{ Theorem \ref{t:1} can be
  interpreted as computing regular functions on the universal cover
  $\wti\calO$  of
  $\calO$ transforming according to $\chi$ under $C_{\fk k}(e)_0$.  
We decompose it further:
\begin{eqnarray}\label{eq:decomp}
R(\tu{\calO}, Det^{\chi}):= \Ind_{C_{\tu{ K} } (e)^0} ^{K} (Det
^{\chi}) =  \Ind _{C_{\tu{K}} (e)} ^{\tu{K}}  \left
  [\Ind_{C_{ \tu{K}} (e)^0} ^{  C_{ \tu{K}} (e)  } (Det ^{\chi})
\right ]. 
\end{eqnarray}
The inner induced module splits into
$$
\Ind_{C_{ \tu{K}} (e)^0} ^{  C_{ \tu{K}} (e)  } (Det ^{\chi})=\sum\psi
$$
where $\psi $ are the irreducible representations of $C_{\wti K}(e)$
restricting to $Det^\chi$ on $C_{\wti K}(e)^0.$
}

\subsubsection{} \label{ss:cover2} We compute $R(\calO,\psi):= \Ind_{C_{\tu{ K} } (e)} ^{\tu K}(\psi)$  for
$\wti{K}$ for $\chi=-1/2$; these are the cases matching representations.

\medskip
{The formula in Theorem \ref{t:1} specializes to  
\begin{equation*}
\begin{aligned}
R(\tu{\calO},Det^{-1/2}) &= \bigoplus V(a_1,\dots,a_p\mid b_1,\dots, b_p)
\end{aligned}
\end{equation*}
satisfying  
$$
a_1\ge b_1+1/2 \ge \dots\ge a_p\ge |b_p|+1/2.
$$
}
{
\begin{lemma}\label{indchar}
Let $\nu_i$, $1\le i \le 4$, 
be the following  $\tu{K}$-types parametrized by their highest weights: 
\begin{eqnarray*}
&\nu _1 = (1/2, \dots, 1/2\mid 0,\dots, 0),  \nu _2 = (3/2, 1/2, \dots, 1/2\mid 0,\dots, 0),\\
&\nu_3 = (1,\dots , 1 \mid1/2, \dots, 1/2), \nu_4 = (1,\dots , 1 \mid1/2, \dots, 1/2 , -1/2).
\end{eqnarray*}
Let  $\psi_i $  be  the restriction of the highest weight of $\nu _i$ to $C_{\tu{K}}(e)$. Then  
\begin{eqnarray*}
\Ind_{C_{\tu{K}} (e)^0} ^{  C_{\tu{K}} (e)  } (Det ^{-1/2})=\sum \limits_{i=1} ^4 \psi _i.
\end{eqnarray*}
\end{lemma}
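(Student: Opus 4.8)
The plan is to identify the component group $C_{\tu K}(e)/C_{\tu K}(e)^0$ explicitly and then determine which of its irreducible characters restrict to $Det^{-1/2}$ on the identity component; the four characters $\psi_i$ are then exhibited as restrictions of the weights of the $\tu K$-types $\nu_i$. First I would recall from Proposition \ref{comp-group-real} (Case 1) that $A_{\tu K}(\calO)\cong\bbZ_2\times\bbZ_2$, so there are exactly four characters of $C_{\tu K}(e)$ lying over any given character of $C_{\tu K}(e)^0$; thus the count on the right-hand side is forced to be $4$, and the content of the lemma is the \emph{matching}: that the restrictions of the weights of $\nu_1,\dots,\nu_4$ are pairwise distinct and all restrict to $Det^{-1/2}$ on the connected component. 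The identification of $Det^{-1/2}$ as a character of $C_{\fk k}(e)_0\cong\fk{gl}(1)\times\fk{gl}(p-1)\times\fk{so}(2r_-+1)$ (here $r_-=0$, so the $\fk{so}$-factor is trivial) is the $-1/2$ power of the determinant of the $\fk{gl}(p-1)=\fk{gl}(k)$-block, which is exactly the datum $\chi=-1/2$ used throughout Section \ref{s:regsec}.

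Next I would use the explicit Clifford-algebra model of $\tu K=Spin(2p,\bbC)\times Spin(2p,\bbC)$ and the representative $e$ from \eqref{eq:ch}-\eqref{eq:ce} (restricted to the present case $p=q=k+1$) to write down generators of $C_{\tu K}(e)$ modulo $C_{\tu K}(e)^0$; this is where Section \ref{ss:clifford} is invoked. One checks that the two $\bbZ_2$ factors are represented by elements $z_1,z_2$ coming from the disconnectedness of the two $O$-versus-$SO$ blocks in the orthogonal picture, lifted through the spin cover. For each of the four $\tu K$-types $\nu_i$, restricting the highest weight to the torus inside $C_{\tu K}(e)$ and then evaluating on $z_1,z_2$ gives a sign pattern; I would tabulate these four sign patterns and verify they are the four distinct elements of $\wht{\bbZ_2\times\bbZ_2}$, while the restriction to $C_{\tu K}(e)^0$ in each case is visibly $Det^{-1/2}$ (the $\fk{gl}(p-1)$-block of each $\nu_i$ contributes the weight $(\tfrac12,\dots,\tfrac12)$ or a $W(\fk{gl}(p-1))$-translate, whose restriction to the center of $\fk{gl}(p-1)$ is $-\tfrac12$ of the determinant after accounting for the $x\mapsto(x,-x^t)$ embedding). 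Then $\Ind_{C_{\tu K}(e)^0}^{C_{\tu K}(e)}(Det^{-1/2})$, being a sum of four distinct characters over $C_{\tu K}(e)^0$-character $Det^{-1/2}$, must equal $\sum_{i=1}^4\psi_i$.

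The main obstacle is the bookkeeping in the spin cover: one must be careful that the elements $z_1,z_2$ generating $A_{\tu K}(\calO)$ actually lie in the \emph{genuine} component group of the nonlinear cover (not in the linear $SO\times SO$), and that the nontrivial central element $-1$ of each $Spin$ factor does not collapse the four characters to fewer. Concretely, the subtlety is showing that the half-integral shift in the weights of the $\nu_i$ (which is what makes them genuine) is compatible with $Det^{-1/2}$ on the identity component and does not force an extra identification; this is exactly the place where the hypothesis that $\chi$ is a half-integer rather than an integer is used. Once the Clifford-algebra computation pins down how $z_1,z_2$ act on the four lowest $\tu K$-types, the rest is a finite check. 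I would also note as a consistency cross-check that each $\nu_i$ does satisfy the inequality $a_1\ge b_1+1/2\ge\cdots\ge a_p\ge|b_p|+1/2$ of the specialized Theorem \ref{t:1}, so each $\psi_i$ genuinely appears in $R(\tu\calO,Det^{-1/2})$, confirming the decomposition \eqref{eq:decomp} is nonvacuous.
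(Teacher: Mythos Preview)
The paper states Lemma~\ref{indchar} without proof, so there is no argument in the text to compare against. Your proposal supplies the missing details and is structurally correct: from Proposition~\ref{comp-group-real} one has $|A_{\tu K}(\calO)|=4$, so $\Ind_{C_{\tu K}(e)^0}^{C_{\tu K}(e)}(Det^{-1/2})$ decomposes into exactly four one-dimensional summands; each $\nu_i$ satisfies the inequalities of Theorem~\ref{t:1} with $\chi=-1/2$, hence contributes a $\psi_i$ lying over $Det^{-1/2}$; and the four $\psi_i$ are pairwise distinct because they are separated by $Z(\tu K)$ (the Remark following the restated Proposition~\ref{comp-group-real} in Section~\ref{ss:clifford} says the component-group representatives in Case~1 may be taken central, and the four $\nu_i$ have four different central characters).

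One small point deserves care. Your parenthetical that the highest weight of each $\nu_i$ restricts to $Det^{-1/2}$ on the diagonal $\fk{gl}(p-1)$ ``after accounting for the $x\mapsto(x,-x^t)$ embedding'' is not literally correct: pulling back the weight $(a_1,\dots,a_p\mid b_1,\dots,b_p)$ along that embedding gives $(a_2-b_1,\dots,a_p-b_{p-1})$, which for $\nu_1$ is $(\tfrac12,\dots,\tfrac12)$, i.e.\ $Det^{+1/2}$. The value $\chi=-1/2$ in the paper arises because the computation in \eqref{eq:mreal} is carried out on $V^*$, and the $C_{\fk k}(e)^+$-coinvariant eigenline is not the highest-weight line. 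This does not damage your argument: the cleanest fix is to replace that parenthetical by the observation you already make at the end --- each $\nu_i$ visibly lies in $R(\tu\calO,Det^{-1/2})$ by the specialized Theorem~\ref{t:1} --- which forces each $\psi_i$ to restrict to $Det^{-1/2}$ on the identity component. The distinguishing step, evaluating on central representatives of $A_{\tu K}(\calO)$, is unaffected, since central elements act by scalars on all of $V(\nu_i)$ and those scalars are indeed read off from the highest weight.
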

We write $C_{\tu K}(\calO):= C_{\tu K} (e)$.
\begin{prop} \label{split-even-orbit}
The induced representation (\ref{eq:decomp}) decomposes as 
\begin{eqnarray*}
\Ind_{C_{ \tu{K}} (\calO)^0} ^{\tu{K}} (Det ^{-1/2}) =\sum \limits _{i=1} ^4 R(\calO, \psi_i),
\end{eqnarray*}
where
$$ 
\begin{aligned}
R(\calO, \psi_1) &=& \Ind _{C_{\tu{K} }(\calO) }  ^{\tu{K} } (\psi_1)&=\bigoplus V(\beta_1+1/2,\dots, \beta_p+1/2\mid \delta_1, \dots, \delta_p),  &&  \sum(\beta_i+\delta_j)\in 2\bbZ, \\
R(\calO, \psi_2) &=& \Ind _{C_{\tu{K} }(\calO) }  ^{\tu{K} } (\psi_2)&= \bigoplus V(\beta_1+1/2,\dots, \beta_p+1/2\mid \delta_1, \dots, \delta_p), &&  \sum(\beta_i+\delta_j)\in 2\bbZ +1,
\end{aligned}
$$
satisfying $\beta_1\ge \delta_1 \ge \dots, \ge \beta_p\ge |\delta_p|$ and $\beta_i, \delta_j\in \bbZ$,

$$
\begin{aligned}
R(\calO, \psi_3) &=& \Ind _{C_{\tu{K} }(\calO) }  ^{\tu{K} } (\psi_3)&=\bigoplus V(\beta_1+1/2,\dots, \beta_p+1/2\mid \delta_1, \dots, \delta_p),  &&  \sum(\beta_i+\delta_j)\in 2\bbZ, \\
R(\calO, \psi_4) &=& \Ind _{C_{\tu{K} }(\calO) }  ^{\tu{K} } (\psi_4)&= \bigoplus V(\beta_1+1/2,\dots, \beta_p+1/2\mid \delta_1, \dots, \delta_p),  &&   \sum(\beta_i+\delta_j)\in 2\bbZ +1,
\end{aligned}
$$
satisfying $\beta_1\ge \delta_1 \ge \dots, \ge \beta_p\ge |\delta_p|$ and $\beta_i, \delta_j\in \bbZ +1/2$.

\medskip
{The analogouse results for the other orbits in case 1 follow by
  applying the outer  automorphisms in (\ref{eq:out-auto})}

\end{prop}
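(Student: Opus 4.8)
The plan is to combine Theorem~\ref{t:1} with the decomposition~\eqref{eq:decomp} and Lemma~\ref{indchar}. By Theorem~\ref{t:1} we already know the full left-hand side $\Ind_{C_{\tu K}(\calO)^0}^{\tu K}(Det^{-1/2})$ as a $\tu K$-module: it is the sum of all $V(a_1,\dots,a_p\mid b_1,\dots,b_p)$ with $a_1\ge b_1+1/2\ge\dots\ge a_p\ge|b_p|+1/2$. By~\eqref{eq:decomp} this decomposes as $\bigoplus_{i=1}^4 R(\calO,\psi_i)$, where the $\psi_i$ are exactly the four characters of $C_{\tu K}(e)$ restricting to $Det^{-1/2}$ on the identity component, as identified in Lemma~\ref{indchar}. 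So the task reduces to sorting the $\tu K$-types appearing in Theorem~\ref{t:1} into the four blocks according to which $\psi_i$ they transform by, \ie\ according to the scalar by which the \emph{disconnected} part of $C_{\tu K}(e)$ acts on the relevant multiplicity space.

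First I would make explicit the structure $C_{\tu K}(e)/C_{\tu K}(e)^0\cong A_{\tu K}(\calO)\cong\bbZ_2\times\bbZ_2$ (Proposition~\ref{comp-group-real}, Case~1), choosing generators: one coming from the $\fk{so}(2)$-factor / the element interchanging the two isotropic halves corresponding to $\fk s_+$ and $\fk s_-$, and one coming from the center of $\Spin$ sitting over $-1\in SO$. Then I would track, for each $V=V(a_1,\dots,a_p\mid b_1,\dots,b_p)$ satisfying~\eqref{eq:halfinteger}, how these two generators act on the one-dimensional space $\Hom_{C_{\tu K}(e)^0}[V,Det^{-1/2}]$. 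The parity invariant that separates the blocks is $\sum(\beta_i+\delta_j)\bmod 2$ after the shift $a_i=\beta_i+1/2$: writing $a_i=\beta_i+1/2$ forces either all $\beta_i,\delta_j\in\bbZ$ or all $\in\bbZ+1/2$, which is exactly the integrality dichotomy distinguishing $\{\psi_1,\psi_2\}$ from $\{\psi_3,\psi_4\}$ (the genuine vs.\ non-genuine-on-the-$\Spin(2)$-factor distinction, matching $\nu_1,\nu_2$ vs.\ $\nu_3,\nu_4$ in Lemma~\ref{indchar}); and within each pair the residue $\sum(\beta_i+\delta_j)\bmod 2$ records the sign by which the remaining $\bbZ_2$ acts. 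Concretely I would verify this by checking the action on the lowest $\tu K$-types $\nu_i$ themselves — these are the ``$\calW$-generators'' of the four summands — and then invoke that each $R(\calO,\psi_i)$ is generated over $\tu K$ from $\nu_i$ together with the integrality/parity constraint, so the constraints propagate to every $\tu K$-type in the block.

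I would then assemble: $R(\calO,\psi_i)=\Ind_{C_{\tu K}(\calO)}^{\tu K}(\psi_i)$ consists precisely of those $V(\beta_1+1/2,\dots,\beta_p+1/2\mid\delta_1,\dots,\delta_p)$ from Theorem~\ref{t:1}'s list lying in the $i$-th parity/integrality class, with multiplicity one (inherited from the multiplicity-one statement in Theorem~\ref{t:1}). Rewriting~\eqref{eq:halfinteger} under $a_i=\beta_i+1/2$ gives exactly $\beta_1\ge\delta_1\ge\dots\ge\beta_p\ge|\delta_p|$, which is the inequality stated in the Proposition, so the four displayed formulas follow. Finally, for the remaining three orbits $\calO^\zeta,\calO^\eta,\calO^{\zeta\eta}$ in Case~1, I would note that $\zeta,\eta$ of~\eqref{eq:out-auto} are outer automorphisms of $\tu K$ carrying $e$ to a representative of the respective orbit and permuting the four characters $\psi_i$; applying the automorphism to both sides of the identity transports the $\tu K$-decomposition, giving the analogous statement with the coordinates transformed by $\zeta$, $\eta$, or $\zeta\eta$ respectively.

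The main obstacle I anticipate is the bookkeeping in the middle step: correctly identifying how the \emph{disconnected} part of $C_{\tu K}(e)$ — which lives inside a $\Spin$ group and hence involves Clifford-algebra elements, not just diagonal matrices — acts on each multiplicity space, and checking that this action is genuinely captured by the two mod-$2$ invariants rather than something finer. This is precisely the kind of computation the paper defers to Section~\ref{ss:clifford}, so I would lean on the explicit Clifford realizations there to pin down the eigenvalues of the two $\bbZ_2$-generators on the $\nu_i$, and treat everything else (the inequalities, the multiplicity-one, the outer-automorphism transport) as formal consequences of Theorem~\ref{t:1} and~\eqref{eq:decomp}.
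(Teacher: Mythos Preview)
Your proposal is correct and follows essentially the same approach the paper takes (though the paper does not write out a proof for this particular proposition; it gives the analogous argument explicitly only for Cases 4--8). The logic---Theorem~\ref{t:1} plus the induction-in-stages decomposition~\eqref{eq:decomp} plus Lemma~\ref{indchar}, then sorting $\wti K$-types by the action of $A_{\wti K}(\calO)$---is exactly right.

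One small correction to your description of the generators: in Case~1 both $\bbZ_2$-generators of $A_{\wti K}(\calO)$ can be chosen inside $Z(\wti K)$ (this is Corollary~\ref{c:cgp} and the Remark following the proof of Proposition~\ref{comp-group-real} in Section~\ref{ss:clifford}), namely $(-I,-I)$ and an element of the form $(i^p\prod(1-e_if_i),\,i^q\prod(1-e_jf_j))$ modulo $C_{\wti K}(\calO)^0$. So rather than tracking an ``$\fk{so}(2)$-element interchanging isotropic halves,'' you can simply evaluate the central character on each $\wti K$-type via Lemma~\ref{spin-central-char}: the first generator detects whether $a_i-b_j\in\bbZ$ versus $\bbZ+1/2$ (i.e.\ whether $\beta_i,\delta_j\in\bbZ$ or $\bbZ+1/2$), and the second detects the parity of $\sum a_i+\sum b_j$ (equivalently of $\sum\beta_i+\sum\delta_j$). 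This is cleaner than locating the weight of the $C_{\fk k}(e)^+$-invariant vector, though that alternative (which the paper uses for Case~4) also works.
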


\subsection{Case 2, 3} It is enough to consider the three nilpotent orbits, 
$$
\begin{aligned}
&\calO=[3^+2^{2k}1^{-,2r_-+1}]_{I,II}\qquad &&h=H(2,\underbrace{1,\dots ,1,\pm 1}_{k}\bigb \underbrace{1,\dots ,1}_{k},0,\underbrace{0,\dots ,0}_{r_-}),\\
&\calO=[3^-2^{2k}1^+1^{-,2r_-}]\qquad &&h=H(\underbrace{1,\dots ,1}_k,0\bigb 2,\underbrace{1,\dots ,1}_{k},\underbrace{0,\dots ,0}_{r_-}).
\end{aligned}
$$ 
\subsubsection{$\calO=[3^+2^{2k}1^{-,2r_-+1}]_{I,II}$} We assume
$p=k+1, q=k+1+2r_-$ with $r_->0$.
{ Denote $\calO_{I,II} =[3^+2^{2k}1^{-,2r_-+1}]_{I,II} $ according to the
  semisimple elements in the Lie triple 
$h_{I,II}=(2,\underbrace{1,\dots ,1,\pm 1}_{k}\bigb 
\underbrace{1,\dots,1}_{k},0,\underbrace{0,\dots ,0}_{r_-})$. The
orbits are conjugate by the outer automorphim 
$$
\zeta: (x_1,\dots,x_p\mid y_1,\dots, y_q)\mapsto (x_1,\dots,-x_p\mid y_1,\dots, -y_q).
$$
We only treat the
case $\calO = \calO_I$. Similar result holds for $\calO^{\zeta}=\calO_{II}.$
}
\begin{prop}
\label{p:inv1}
A representation $V(a_1,\dots ,a_p\bigb b_1,\dots ,b_q)$ has invariant
vectors under $C_\fk k(e)^+$ which transform according to
$Det^\chi$ under $C_{\fk k}(e)_0\cong \fk{gl}(k)\times \fk{so}(2r_- -1)$ 
if and only if 
$b_{k+2}=\dots =b_q=0$ and 
$$
a_1+\chi\ge b_1\ge a_2+\chi\ge \dots \ge a_p+\chi\ge b_p\ge 0,
$$
{and the multiplicity is 1.} 
\end{prop}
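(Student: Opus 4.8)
The plan is to mimic the two‑step reduction already carried out for Case 1 (Theorem~\ref{t:1}), adapting it to the new Lie triple. First I would record the relevant centralizer data from \eqref{eq:ce}: here $C_\fk k(e)_0\cong\frakgl(1)\times\frakgl(k)\times\frakso(2r_-+1)$, $C_\fk k(e)_2=C_\fk k(h)_2$, $C_\fk k(e)_3=C_\fk k(h)_3$, with the $\frakgl(k)$ embedded diagonally as $x\mapsto(x,-x^t)$ and the $\frakso(2r_-+1)$ sitting inside $\frakso(2r_-+2)$ in the standard way. As in Step~1 of Case~1 I would introduce the parabolic $\fk p=\fk m+\fk n\subset\frakk$ defined by $\xi=H(\ep_1+\dots+\ep_p)$, so that $\fk m\cong\frakgl(p)\times\frakso(2q)$, $\fk n\subset C_\fk k(e)_2+C_\fk k(e)_3\subset C_\fk k(e)^+$, and Kostant's theorem replaces $V^*$ by the irreducible $\fk m$‑module $\calW(-a_p,\dots,-a_1\mid b_1,\dots,b_q)$ generated by its lowest weight. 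Since $C_\fk k(e)_0+C_\fk k(e)_1\subset\fk m$ and $C_\frakk(e)^+\cap\fk m=C_\fk m(e)^+$, the computation of \eqref{eq:mreal} again reduces to $\bigl[\calW/(C_\fk m(e)^+\calW)\bigr]^{Det^\chi}$.

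The second step is the parabolic $\fk q=\fk l+\fk u\subset\fk m$ cut out by $h$, where now $\fk l\cong\frakgl(1)\times\frakgl(k)\times\frakgl(k)\times\frakso(2r_-+2)$ (compare \eqref{eq:ch}), and the nilradical decomposes transversally as $\fk u=C_\fk m(e)^+\oplus\fk s$ with $\fk s=\Span\{X(\ep_1-\ep_i):2\le i\le k+1\}$, a representation of $\frakgl(1)\times\frakgl(k)\times\frakso(2q)$ of highest weight $(1;0,\dots,0,-1\mid 0,\dots,0)$, so that $S^m(\fk s)\cong(m;0,\dots,0,-m\mid 0,\dots,0)$. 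Writing $\calW$ as a quotient of a generalized Verma module $M(\la)$ and using the BGG resolution as in \eqref{eq:coh1}, I would first get the "only if" direction — exactly the analogue of Proposition~\ref{p:spectrum} — by a Littlewood--Richardson/transversality argument built on Lemma~\ref{l:tensor}: the weight of $S(\fk s)\otimes F_\la$ surviving modulo $C_\fk m(e)^+U(\fk u)\otimes F_\la$ forces the chain $a_1+\chi\ge b_1\ge a_2+\chi\ge\dots\ge a_p+\chi\ge b_p$, and the fact that the last $\frakso(2r_-+1)$‑factor must be $Det^\chi$‑isotypic (i.e.\ a one‑dimensional $\frakso(2r_-+1)$‑type tensored with a power of the $\frakgl(k)$ determinant) forces $b_p\ge 0$ together with $b_{k+2}=\dots=b_q=0$. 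Here the key new point relative to Case~1 is tracking the $\frakso(2r_-+1)\subset\frakso(2r_-+2)$ branching: a $\frakso(2q)$‑highest weight $(b_{k+1},\dots,b_q)$ contributes a one‑dimensional $C_\fk k(e)_0$‑type only when all but the first of these coordinates vanish and the surviving one is non‑negative, which is precisely the constraint $b_{k+2}=\dots=b_q=0$ and $b_p=b_{k+1}\ge 0$. For the "if" direction and the multiplicity‑one claim, I would check — exactly as in the $\ell(w)=1$ discussion preceding Theorem~\ref{t:1} — that the candidate weights do not already appear in the $\ell(w)=1$ terms of the BGG resolution (the reflections $s_{\ep_1-\ep_{k+1}}$ and the two reflections involving the $\frakso$ end of the diagram), so the map $\ovl{M(\la)}\to\ovl{\calW}$ is an isomorphism on these isotypic pieces.

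The main obstacle I expect is the bookkeeping for the orthogonal factor: in Case~1 that factor was $\frakso(2)$, essentially a torus, whereas here $\frakso(2r_-+1)$ for $r_->0$ is genuinely nonabelian, so I must be careful that (i) the parabolic $\fk q$ is adapted so that the $\frakso$‑part of $\fk u$ is absorbed into $C_\fk m(e)^+$ and contributes no surviving weights, and (ii) the branching $\frakso(2r_-+2)\downarrow\frakso(2r_-+1)$ is handled correctly, including the half‑spin subtleties when $r_-$ is such that the relevant $\frakso$‑modules are spin modules. The conclusion $b_{k+2}=\dots=b_q=0$ is exactly the shadow of this branching, so getting that clause right is where the real work lies; once it is in place, the inequality chain and the multiplicity‑one statement follow from the same transversality plus Lemma~\ref{l:tensor} machinery used for Theorem~\ref{t:1}, and the $\ad h$‑eigenvalue (if one wants to record it, as in Corollary~\ref{c:1}) comes out to $-2\sum_{1\le i\le p}a_i$ by the identical computation.
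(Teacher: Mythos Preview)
Your proposal is correct and follows essentially the same two-step reduction as the paper. The paper's proof is simply more economical: it invokes Helgason's theorem for the pair $\frakso(2r_-+1)\subset\frakso(2r_-+2)$ to obtain the constraint $b_{k+2}=\dots=b_q=0$ (the $\frakso(2r_-+1)$-fixed vector being the highest weight vector $(b_{k+1},0,\dots,0)$), and then declares the remainder identical to Case~1.
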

\begin{proof} The representation
  $\calW(a_1,\dots ,a_{k+1}\bigb b_1,\dots b_{k};b_{k+1},\dots
  ,b_q)$ has $\fk{so}(2r_-+1)-$fixed vectors only if $b_{k+2}=\dots =b_q=0$
by Helgason's theorem; in that case, the fixed vector is the highest
weight $(b_{k+1},0,\dots ,0).$ Otherwise the proof is identical to
Case 1.
\end{proof}

\subsubsection{}
{
As in section \ref{ss:cover2}, we compute $R(\calO,\psi)$  for
$\wti{K}$ and $\chi=-1/2-r_-$; these are the cases matching
representations. From Proposition \ref{p:inv1}, 
\begin{equation}\label{cover-4}
R(\tu{\calO_I },Det^{-1/2-r_-}) = \Ind _ {C_{\tu{K}} (e)^0 } ^{\tu{K}}
(Det^{-1/2-r_-}) = \bigoplus V(a_1,\dots,a_p\mid b_1,\dots, b_p,
0,\dots,0)  
\end{equation}
satisfying 
$$
a_1\ge b_1+1/2 +r_-\ge \dots\ge a_p\ge b_p+1/2+r_- \ge 0.
$$


\begin{prop} \label{decom-I}
Let 
$$ 
\begin{aligned}
\psi _1 & =  (r_- + 1/2, \dots, r _- + 1/2) \mid 0,\dots, 0)\mid _{C_{\tu{K}}(e) }, \\
\psi _2 & =  (r_-+3/2, r_- + 1/2, \dots, r_- + 1/2 \mid 0,\dots, 0)\mid _{C_{\tu{K}}(e  ) }, \\
\end{aligned}
$$

The induced representation (\ref{cover-4}) decomposes as 
\begin{eqnarray*}
\Ind_{C_{ \tu{K}} (\calO_I)^0} ^{\tu{K}} (Det ^{-1/2-r_-}) = R(\calO_I, \psi_1 )+R(\calO_I, \psi_2 ),
\end{eqnarray*}
where
$$ 
\begin{aligned}
R(\calO _I, \psi_1) &=\Ind _{C_{\tu{K} }(\calO_I) }  ^{\tu{K} } (\psi_1)&=\bigoplus V(\beta_1+r_-+1/2,\dots, \beta_p+r_-+1/2\mid \delta_1, \dots, \delta_p,0, \dots,0)  \\
&&\text{ with }   \sum(\beta_i+\delta_j)\in 2\bbZ, \\
R(\calO_I, \psi_2 ) &= \Ind _{C_{\tu{K} }(\calO_I) }  ^{\tu{K} } (\psi_2)&= \bigoplus V(\beta_1+r_-+1/2,\dots, \beta_p+r_-+1/2\mid \delta_1, \dots, \delta_p,0,\dots,0)  \\
&& \text{ with }    \sum(\beta_i+\delta_j)\in 2\bbZ +1,
\end{aligned}
$$
satisfying $\beta_1\ge \delta_1 \ge \dots, \ge \beta_p\ge |\delta_p|$ and $\beta_i, \delta_j\in \bbZ$.

\medskip
{The corresponding results for  $R(\calO_{II},\psi ^{\zeta} )$
  follow by applying the automorphism $\zeta$.}


\end{prop}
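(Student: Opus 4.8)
The plan is to compute the branching of $\Ind_{C_{\tu{K}}(e)^0}^{\tu{K}}(Det^{-1/2-r_-})$ in two stages exactly as in Section~\ref{ss:cover}: first push forward to $C_{\tu K}(e)$, which by Frobenius reciprocity decomposes the induced module according to the irreducible characters $\psi$ of $C_{\tu K}(e)$ restricting to $Det^{-1/2-r_-}$ on the connected component, and then induce each $\psi$ up to $\tu K$. Proposition~\ref{comp-group-real} gives $A_{\tu K}(\calO_I)\cong\bbZ_2$, so there are exactly two such $\psi$; the first step is therefore to identify them. I would do this by exhibiting two explicit lowest-dimensional $\tu K$-types, namely $\nu_1$ and $\nu_2$ with highest weights $(r_-+1/2,\dots,r_-+1/2\mid 0,\dots,0)$ and $(r_-+3/2, r_-+1/2,\dots,r_-+1/2\mid 0,\dots,0)$, checking via Proposition~\ref{p:inv1} that each contains a (necessarily unique) $C_{\fk k}(e)^+$-invariant line transforming by $Det^{-1/2-r_-}$ under $C_{\fk k}(e)_0$, and observing that their restrictions $\psi_1,\psi_2$ to $C_{\tu K}(e)$ are distinct characters of $A_{\tu K}(\calO_I)$ (they differ by the sign that tells $\bbZ_2$ apart — detected by the parity of $\sum a_i+\sum b_j$). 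Since $|A_{\tu K}(\calO_I)|=2$ and we have produced two inequivalent $\psi$, this forces $\Ind_{C_{\tu K}(e)^0}^{C_{\tu K}(e)}(Det^{-1/2-r_-})=\psi_1\oplus\psi_2$, giving the claimed decomposition $\Ind_{C_{\tu K}(\calO_I)^0}^{\tu K}(Det^{-1/2-r_-})=R(\calO_I,\psi_1)\oplus R(\calO_I,\psi_2)$.

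Next I would identify the $\tu K$-content of each $R(\calO_I,\psi_i)$. By Theorem~\ref{t:1} (equivalently~\eqref{cover-4}), $R(\tu{\calO_I},Det^{-1/2-r_-})=\bigoplus V(a_1,\dots,a_p\mid b_1,\dots,b_p,0,\dots,0)$ over all weights with $a_1\ge b_1+1/2+r_-\ge\dots\ge a_p\ge b_p+1/2+r_-\ge 0$. Reparametrize by $a_i=\beta_i+r_-+1/2$, $b_j=\delta_j$; the inequalities become exactly $\beta_1\ge\delta_1\ge\dots\ge\beta_p\ge|\delta_p|$ with $\beta_i,\delta_j\in\bbZ$ (the half-integers in the $a$-slots are absorbed by the shift, and the $b$-slots are integers). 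The remaining point is to see that the $\bbZ_2$-grading of $C_{\tu K}(e)$ acts on the summand $V(a\mid b)$ by the character determined by the parity of $\sum(\beta_i+\delta_j)$. This is the same bookkeeping as in Lemma~\ref{indchar} and Proposition~\ref{split-even-orbit} for Case~1: the nontrivial element of $A_{\tu K}(e)$ is a product of sign changes/reflections whose action on a highest-weight vector is a sign depending only on $\sum a_i+\sum b_j\bmod 2$; after the integral shift this is $\sum(\beta_i+\delta_j)\bmod 2$. Matching signs with $\psi_1$ (trivial on this element) and $\psi_2$ (nontrivial) then partitions the sum into the two stated pieces.

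Finally, the corresponding statement for $R(\calO_{II},\psi^\zeta)$ is obtained by transport of structure under the outer automorphism $\zeta:(x_1,\dots,x_p\mid y_1,\dots,y_q)\mapsto(x_1,\dots,-x_p\mid y_1,\dots,-y_q)$, which by construction carries $\calO_I$ to $\calO_{II}$, hence $C_{\tu K}(e_I)$ to $C_{\tu K}(e_{II})$ and $\psi_i$ to $\psi_i^\zeta$, and commutes with the induction functor; this is a formal consequence requiring no further computation.

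I expect the main obstacle to be the precise determination of how the disconnected part $A_{\tu K}(e)\cong\bbZ_2$ acts on each $\tu K$-type — i.e.\ pinning down that the relevant sign is exactly the parity of $\sum(\beta_i+\delta_j)$ and not, say, a parity involving only some of the coordinates. This requires an explicit representative of the nontrivial component (most cleanly realized inside the Clifford algebra, as promised in Section~\ref{ss:clifford}) and a careful check of its action on highest-weight vectors; everything else is Frobenius reciprocity, the counting from Proposition~\ref{comp-group-real}, and the reparametrization already forced by Theorem~\ref{t:1}.
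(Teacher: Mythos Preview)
Your proposal is correct and follows essentially the same approach the paper uses (implicitly here, and explicitly for Case~1 in Lemma~\ref{indchar} and Proposition~\ref{split-even-orbit}, and for Cases~4--8 in the proof of the analogous proposition): induce in stages via~\eqref{eq:decomp}, use $|A_{\tu K}(\calO_I)|=2$ from Proposition~\ref{comp-group-real} to get exactly two characters, identify them by restricting the two explicit lowest $\tu K$-types, and separate the full induced module by the parity of $\sum(\beta_i+\delta_j)$ coming from the action of a representative of the nontrivial component. The paper does not write out a separate proof for this proposition; your plan is exactly the intended argument, including the transport by $\zeta$ for $\calO_{II}$.

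One small refinement worth noting: the sign you need is the action of the component-group representative on the specific $C_{\fk k}(e)^+$-invariant vector (not the highest weight vector of $V$). As the paper shows for Case~4, this vector has a weight that is an explicit affine function of the $a_i,b_j$, and the representative $e^{\pi i H(\ep_1\pm\ep_{p+k+1})}$ then acts by $e^{\pi i(\sum a_i+\sum b_j+\text{const})}$; after the shift $a_i=\beta_i+r_-+1/2$, $b_j=\delta_j$ this becomes exactly the parity of $\sum(\beta_i+\delta_j)$. You have already flagged this as the delicate point, and the Clifford-algebra realization in Section~\ref{ss:clifford} (together with the Remark after Proposition~\ref{comp-group-real} that the representative can be taken central in $\tu K$) is precisely the tool that makes it straightforward.
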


}

\subsubsection{$\calO=[3^+2^{2k}1^-1^{+,2r_+}]$} We assume $p=k+1+r_+,\
q=k+1$ with $r_+>0$ A representative of the orbit is 
$$
\begin{aligned}
&e=X(\ep_1-\ep_{p+q})+X(\ep_1+\ep_{p+q})+{\sum_{2\le j\le k+1}}  X(\ep_j+\ep_{p+j-1}),\\
&h=(2,\underbrace{1,\dots ,1}_k,\underbrace{0,\dots
  ,0}_{r_+}\bigb \underbrace{1,\dots ,1}_k,0).
\end{aligned}
$$ 
\begin{prop}\label{p:inv5}
A representation $V(a_1,\dots ,a_p\bigb b_1,\dots ,b_q)$ has invariant
vectors under $C_\fk k (e)^+$ transforming according to $Det ^\chi$
under $C_\fk k(e)_0\cong \fk{gl}(k)\times \fk{so}(2r_+)$ if and only if
$a_{k+2}=\dots =a_p=0$ and 
$$
a_1+\chi\ge b_1\ge a_2+\chi\ge b_2\ge \dots \ge a_{q}+\chi\ge |b_{q}|
$$  

\begin{proof} The proof follows Case 1. The fact that $a_{k+2}=\dots
  =a_p=0$ follows from the requirement that the character be trivial
  on the $\fk{so}(2r_+)$-factor.  
\end{proof}
\end{prop}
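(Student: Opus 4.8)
The plan is to rerun, almost verbatim, the two-step parabolic reduction of Case~1. The one genuinely new feature is the factor $\frakso(2r_+)$ in $C_{\fk k}(e)_0\cong\frakgl(k)\times\frakso(2r_+)$, and this will be disposed of by the same Helgason-type argument used in the proof of Proposition~\ref{p:inv1}.

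First I would take $\xi:=H(\ep_1+\dots+\ep_{k+1})$ and let $\fk p=\fk m+\fk n\subset\fk k$ be the parabolic subalgebra it determines, so that $\fk m=C_{\fk k}(\xi)\cong\frakgl(k+1)\times\frakso(2r_+)\times\frakso(2q)$, with the middle factor supported on the coordinates $\ep_{k+2},\dots,\ep_p$, and $\fk n$ the span of those root vectors of $\fk{so}(2p)$ with positive $\xi$-value. Since $e$ is a sum of ``mixed'' root vectors while $\fk n$ consists of ``pure first-factor'' root vectors, a one-line check gives $\fk n\subset C_{\fk k}(e)$; as $\fk n$ also has strictly positive $\ad h$-weight, $\fk n\subset C_{\fk k}(e)^+$, and moreover $e$ is an $\ad\xi$-eigenvector (of eigenvalue $1$), so $C_{\fk k}(e)$ is $\ad\xi$-graded. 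Using the explicit description of $C_{\fk k}(e)$ — the analogue of (\ref{eq:ce}), obtained from the Clifford-algebra realization of Section~\ref{ss:clifford} — one checks that $C_{\fk k}(e)\subset\fk p$; then $C_{\fk k}(e)^+=\fk n\oplus C_{\fk m}(e)^+$, so by Kostant's theorem $V^*/(\fk n V^*)$ is the irreducible $\fk m$-module $\calW$ generated by its lowest weight, and (using $V^*\cong V$) the computation of $\Hom_{C_{\fk k}(e)}[V,Det^{\chi}]$ reduces to that of $\big[\calW/(C_{\fk m}(e)^+\calW)\big]^{Det^{\chi}}$.

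Next, $C_{\fk m}(e)^+$ has positive $\ad h$-weight whereas the $\frakso(2r_+)$-ideal of $\fk m$ is $\ad h$-trivial, so $C_{\fk m}(e)^+\subset\frakgl(k+1)\oplus\frakso(2q)$ and commutes with $\frakso(2r_+)$; writing the $\fk m$-irreducible $\calW$ as a tensor product over the three ideals of $\fk m$, its $\frakso(2r_+)$-component is the irreducible module of highest weight $(a_{k+2},\dots,a_p)$, and this component survives in $\calW/(C_{\fk m}(e)^+\calW)$. Since $Det^{\chi}$ is trivial on $\frakso(2r_+)$, a vector transforming by $Det^{\chi}$ must be $\frakso(2r_+)$-fixed, which by Helgason's theorem forces $a_{k+2}=\dots=a_p=0$. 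Once this holds the $\frakso(2r_+)$-factor drops out and we are left with a computation over $\frakgl(k+1)\times\frakso(2q)$ with $q=k+1$, in which $h$ restricts to $(2,1^k\mid 1^k,0)$, the $\frakgl(k)\subset C_{\fk k}(e)_0$ is embedded as in Case~1, and $C_{\fk k}(e)^+$ coincides with its Case~1 counterpart; thus $\calW$ is precisely the module produced by Step~1 of Case~1 with $p$ there set equal to $k+1$. Consequently Step~2 of Case~1 — the parabolic $\fk q=\fk l+\fk u$, the Bernstein--Gelfand--Gelfand resolution (\ref{eq:coh1}), Lemma~\ref{l:1}, Corollary~\ref{c:1}, Proposition~\ref{p:spectrum} and Lemma~\ref{l:spectrum} — applies without change, and Theorem~\ref{t:1} shows that the remaining $Det^{\chi}$-vectors occur, with multiplicity one, exactly when $a_1+\chi\ge b_1\ge a_2+\chi\ge b_2\ge\dots\ge a_{k+1}+\chi\ge|b_{k+1}|$, which, since $q=k+1$, is the asserted condition. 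The only step requiring real input is the verification that $C_{\fk k}(e)\subset\fk p$; that uses the explicit structure of $C_{\fk k}(e)$ supplied by Section~\ref{ss:clifford}, and once it is available everything downstream is Case~1 together with the single Helgason observation — which is exactly why the proposition records only these two facts.
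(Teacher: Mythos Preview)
Your proposal is correct and follows the same two-step parabolic reduction as the paper's Case~1, together with the observation that triviality of $Det^\chi$ on the $\frakso(2r_+)$-factor forces $a_{k+2}=\dots=a_p=0$; this is exactly the content of the paper's own two-line proof. One small remark: you invoke ``Helgason's theorem'' for the $\frakso(2r_+)$-step, but here the $\frakso(2r_+)$ in $C_{\fk k}(e)_0$ coincides with the $\frakso(2r_+)$-ideal of $\fk m$ (unlike in Proposition~\ref{p:inv1}, where $\frakso(2r_-+1)\subsetneq\frakso(2r_-+2)$), so what is needed is simply that an irreducible $\frakso(2r_+)$-module with an $\frakso(2r_+)$-fixed vector is trivial --- the paper's phrasing ``the character be trivial on the $\fk{so}(2r_+)$-factor'' is more to the point.
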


\subsubsection{}
{
As before, $\chi=r_+-1/2$ is the case corresponding to
representations. Proposition \ref{p:inv5} specializes to
\begin{equation}\label{c-d}
R(\tu{\calO},Det^{r_+-1/2})= \Ind _ {C_{\tu{K}} (e)^0 } ^{\tu{K}}  (Det^{r_+-1/2}) = \bigoplus V(a_1,\dots,a_q,0,\dots,0\mid b_1,\dots, b_p) 
\end{equation}
satisfying 
$$
a_1 +r_+-1/2\ge b_1\ge \dots\ge a_q+r_+-1/2\ge |b_q|.
$$

\begin{prop} 
Let 
$$
\begin{aligned}
\phi_1= (0, \dots, 0 \mid r_+-1/2,\dots, r_+-1/2) \mid _{C_{\tu{K}}(e)},\\
\phi_2= (0, \dots, 0\mid r_+-1/2,\dots,-(r_+-1/2))\mid_ {C_{\tu{K}}(e)}
\end{aligned}
$$
Then the induced representation (\ref{c-d}) decomposes as 
\begin{eqnarray*}
\Ind_{C_{ \tu{K}} (\calO)^0} ^{\tu{K}} (Det ^{r_+-1/2}) = R(\calO, \phi_1)+R(\calO, \phi_2),
\end{eqnarray*}
where
$$ 
\begin{aligned}
R(\calO, \phi_1) &=\Ind _{C_{\tu{K} }(\calO) }  ^{\tu{K} } (\phi_1)&=\bigoplus V(\beta_1,\dots, \beta_q,0,\dots, 0\mid \delta_1, \dots, \delta_q)  
&\text{ with }   \sum(\beta_i+\delta_j)\in 2\bbZ, \\
R(\calO, \phi_2) &= \Ind _{C_{\tu{K} }(\calO) }  ^{\tu{K} } (\phi_2)&= \bigoplus V(\beta_1,\dots, \beta_q,0,\dots, 0\mid \delta_1, \dots, \delta_q)
&\text{ with }    \sum(\beta_i+\delta_j)\in 2\bbZ +1,
\end{aligned}
$$
satisfying 
$
\beta_1+r_+-1/2\ge \delta_1 \ge \dots, \ge \beta_q+r_+-1/2\ge
|\delta_q| \text{ and } \beta_i \in \bbZ, \ \delta_j\in \bbZ+1/2.
$
\end{prop}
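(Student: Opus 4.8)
The plan is to deduce the decomposition from the regular functions on the universal cover computed in (\ref{c-d}), the general splitting principle of Section \ref{ss:cover}, and the component group supplied by Proposition \ref{comp-group-real}; the argument follows the pattern of the proofs of Propositions \ref{split-even-orbit} and \ref{decom-I}. By (\ref{eq:decomp}),
\[
\Ind_{C_{\tu K}(\calO)^0}^{\tu K}(Det^{r_+-1/2})=\Ind_{C_{\tu K}(\calO)}^{\tu K}\!\Big[\Ind_{C_{\tu K}(\calO)^0}^{C_{\tu K}(\calO)}(Det^{r_+-1/2})\Big],
\]
so the decomposition of $R(\tu{\calO},Det^{r_+-1/2})$ is governed by that of the inner induced module. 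By Proposition \ref{comp-group-real} (Case 2,3 with $r_+>0$), $A_{\tu K}(\calO)=C_{\tu K}(\calO)/C_{\tu K}(\calO)^0\cong\bbZ_2$, and from the description of $C_{\tu K}(\calO)$ worked out in Section \ref{ss:clifford} a representative $\sigma$ of the nontrivial class fixes the character $Det^{r_+-1/2}$ of $C_{\tu K}(\calO)^0$. Hence $Det^{r_+-1/2}$ has exactly two one-dimensional extensions $\phi_1,\phi_2$ to $C_{\tu K}(\calO)$, the inner induced module equals $\phi_1\oplus\phi_2$, and therefore $R(\tu{\calO},Det^{r_+-1/2})=R(\calO,\phi_1)\oplus R(\calO,\phi_2)$.

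Next I would match $\phi_1,\phi_2$ with the two restrictions in the statement and distribute the $\tu K$-types between them. Each of the highest weights $(0,\dots,0\mid r_+-1/2,\dots,r_+-1/2)$ and $(0,\dots,0\mid r_+-1/2,\dots,-(r_+-1/2))$ restricts to $Det^{r_+-1/2}$ on $C_{\tu K}(\calO)^0\cong\fk{gl}(k)\times\fk{so}(2r_+)$ (trivial on the $\fk{so}(2r_+)$-factor, the relevant power of $\det$ on the $\fk{gl}(k)$-factor), so the characters of $C_{\tu K}(\calO)$ they determine lie in $\{\phi_1,\phi_2\}$; the first assertion of the Proposition is that these two characters are \emph{distinct}. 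Since the two highest weights differ only in the sign of the last $y$-coordinate, the corresponding $\tu K$-types are interchanged by the outer automorphism of the $Spin(2q,\bbC)$-factor, and distinctness of their restricted characters is precisely the statement that the $Spin(2q,\bbC)$-component of $\sigma$ acts on these two $\tu K$-types with opposite signs, which is again part of the analysis of $C_{\tu K}(\calO)$ in Section \ref{ss:clifford}. Granting this, every $\tu K$-type $V=V(a_1,\dots,a_q,0,\dots,0\mid b_1,\dots,b_q)$ occurring in (\ref{c-d}) occurs with multiplicity one (Proposition \ref{p:inv5}), so its $Det^{r_+-1/2}$-isotypic line for $C_{\tu K}(\calO)^0$ is one-dimensional, the finite group $C_{\tu K}(\calO)$ acts on it through a single character, equal to $\phi_1$ or $\phi_2$, and accordingly $V$ sits entirely in one of $R(\calO,\phi_1),R(\calO,\phi_2)$.

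The remaining point, which is the main obstacle, is to decide for each such $V$ which of the two summands it enters, equivalently to compute the sign by which $\sigma$ acts on a generator of its $Det^{r_+-1/2}$-line. I would build such a generator from the two-step Kostant/BGG argument used in Case 1 (the analogues of Steps 1 and 2 there), which presents it as a monomial $S^m(\fk s)\cdot v_\la$ in a generalized Verma module, and then, taking $\sigma$ inside a torus times a representative of the pertinent reflection, read off its eigenvalue on that monomial; the outcome is a sign $(-1)^{\sum_i\beta_i+\sum_j\delta_j+c}$ with a fixed constant $c$, and $c$ is pinned down by the normalization, established above, that $(0,\dots,0\mid r_+-1/2,\dots,r_+-1/2)$ restricts to $\phi_1$. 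This gives exactly the stated dichotomy, $\sum(\beta_i+\delta_j)\in 2\bbZ$ for $R(\calO,\phi_1)$ and $\sum(\beta_i+\delta_j)\in 2\bbZ+1$ for $R(\calO,\phi_2)$, while the interlacing $\beta_1+r_+-1/2\ge\delta_1\ge\cdots\ge\beta_q+r_+-1/2\ge|\delta_q|$, the integrality $\beta_i\in\bbZ$, $\delta_j\in\bbZ+1/2$, and the vanishing $a_{k+2}=\cdots=a_p=0$ all carry over from Proposition \ref{p:inv5} and (\ref{c-d}). The hardest part is the careful bookkeeping of the Clifford lifts and of the action of $\sigma$ on the explicit lowest weight generator, together with the two cases produced by the outer automorphism of the even $Spin$ factor. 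The analogous decomposition for the paired orbit $[3^-2^{2k}1^+1^{-,2r_-}]$ of Case 3 then follows by interchanging the roles of $+$ and $-$, i.e.\ of the two $Spin$ factors.
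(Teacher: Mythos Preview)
The paper states this proposition without proof, as it does for the parallel Propositions \ref{split-even-orbit} and \ref{decom-I}; the intended argument is exactly the pattern you outline, so your overall strategy is correct and matches the paper's implicit one.

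One simplification you are missing, and which the paper sets up for precisely this purpose, is the Remark following the proof of Proposition \ref{comp-group-real} in Section \ref{ss:clifford}: for this orbit (item (4) there) a representative $\sigma$ of the nontrivial class of $A_{\tu K}(\calO)$ can be chosen in $Z(\tu K)$. Once $\sigma$ is central, it acts on every $\tu K$-type $V$ by a scalar, namely the central character of $V$ evaluated at $\sigma$, and Lemma \ref{spin-central-char} computes this scalar directly in terms of the highest weight. This immediately shows that the two restrictions $\phi_1,\phi_2$ are distinct (the two weights differ by $2r_+-1$ in $\sum b_j$, which is odd, so the $i^{2(\sum a_i+\sum b_j)}$ factor flips sign), and it gives the parity condition $\sum(\beta_i+\delta_j)\in 2\bbZ$ versus $2\bbZ+1$ for a general $V$ without any further work. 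In particular, the step you flag as ``the hardest part'' --- tracking Clifford lifts and the action of $\sigma$ on an explicit lowest-weight generator built from the BGG resolution --- is unnecessary: since $\sigma$ is central it acts by the same scalar on every vector of $V$, so there is no need to locate the specific $Det^{r_+-1/2}$-line inside $V$ before evaluating $\sigma$.

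Your invocation of the outer automorphism of the $Spin(2q,\bbC)$-factor to distinguish $\phi_1$ from $\phi_2$ is not wrong, but it is a detour; the central-character computation above does the job directly. With that shortcut, your argument is complete and coincides with what the paper leaves implicit.
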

}

\bigskip
\subsubsection*{$\mathbf{\wti{Spin}(2p+1,2q-1)}$} These are Cases 4--8. 
{ The two orbits in Case 4 are obtained from Case 7 and Case 8
  by putting  $r_+=1$ and $r_-=1$, and they are 
related by the automorphisms in (\ref{eq:out-auto}).  
So we deal with Cases 5, 6 and Cases 7, 8.}
\subsection{Case 5, 6} The orbit is $\calO=[3^+2^{2k}1^{+,2r_++1}]$ and
  $2p+1=2k+3+2r_+,\ 2q-1=2k+1.$ 
{
The fundamental Cartan subalgebra has coordinates 
$$
(x_1,\dots ,x_{k+1},x_{k+2},\dots,x_{k+1+r_+} \bigb y_1,\dots y_{k} ; z)
$$
with Cartan involution 
$$
\theta(x_i)=x_i,\ \theta(y_j)=y_j,\ \theta(z)=-z.
$$

Representatives for $e$ and $h$ are
$$
\begin{aligned}
&e=X(\ep_1)_n+\sum_{2\le i\le k+1} X(\ep_i+\ep_{p+i-1})\\
&h=H(2\ep_1)+\sum_{2\le i\le k+1}
H(\ep_i+\ep_{p+i-1})=(2,\underbrace{1,\dots
  ,1}_{k},\underbrace{0,\dots ,0}_{r_+} \bigb \underbrace{1,\dots ,1}_k;0)
\end{aligned}
$$
where the last coordinate after the $";"$ is the $z.$ Then
\begin{equation}
\label{eq:ch2}
\begin{aligned}
C_\fk k(h)_0\cong &  \fk{gl}(1)\times {\fk{gl}(k)\times {\fk{so}(2r_++1)}\times \fk{gl}(k) }, \\
C_\fk k (h)_1=&\text{Span}\{ X(\ep_1-\ep_i), X(\ep_i)_c,\ X(\ep_{p+i-1})_c,\ 2\le i\le k+1,\\
&X(\ep_{j}\pm \ep_{l}),\ 1\le j\le k+1<l \le p\}, \\
C_\fk k(h)_2=&\text{Span}\{ X(\ep_1)_c,\  X(\ep_i+\ep_j), \ X(\ep_{p+i-1} +\ep_{p+j-1}) , \ 2\le i< j\le k+1\},\\ 
C_\fk k(h)_3=&\text{Span}\{ X(\ep_1+\ep_i),\ 2\le i\le k+1\}. 
\end{aligned}
\end{equation}
Similarly
\begin{equation}
\label{eq:ce2}
\begin{aligned}
C_\fk k(e)_0\cong &\fk{gl}(1)\times \fk{gl}(k)\times {\fk{so}(2r_++1)} \\
C_\fk k (e)_1=&\text{Span}\{
X(\ep_1-\ep_i)+X(\ep_{p+i-1})_c,\ 2\le i\le k+1,\\
&X(\ep_{i}\pm\ep_{l}), 2\le i\le k+1< l \le p,\  
X(\ep_{p-1+j})_c \ 2\le i\le k+1,\ 1\le j\le k\},\\ 
C_\fk k(e)_2=&C_\fk k(h)_2\\
C_\fk k(e)_3=&C_\fk k(h)_3.
\end{aligned}
\end{equation}
The $\fk{gl}(k)$ embeds in $\fk{gl}(k)\times\fk{gl}(k)$ as before
$x\mapsto (x,-x^t).$
\begin{prop}
\label{p:inv4}
A representation $V(a_1,\dots ,a_p\bigb b_1,\dots ,b_{q-1})$ has
vectors invariant for $C_{\fk k}(e)^+$ which transform according to
$Det^{\chi}$ under 
$C_\fk k(e)_0\cong \fk{gl}(1)\times\fk{gl}(k)\times \fk{so}(2r_++1)$ if and only if 
$a_{k+2}=\dots =a_{p}=0$, and 
\begin{equation}  \label{eq:morbit4}
a_1+\chi\ge b_1\ge \dots \ge a_{k}+\chi\ge b_{k}\ge a_{k+1}+\chi. 
\end{equation}
\end{prop}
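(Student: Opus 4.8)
The plan is to run the two-step reduction used in Case~1 (the proof of Theorem~\ref{t:1}) for $\wti{Spin}(2p+1,2q-1)$, $p=k+1+r_+$, $q=k+1$, inserting one extra ingredient to deal with the orthogonal factor $\fk{so}(2r_++1)$ occurring in $C_{\fk k}(e)_0\cong\fk{gl}(1)\times\fk{gl}(k)\times\fk{so}(2r_++1)$.

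In Step~1 I would produce a parabolic $\fk p=\fk m+\fk n\subset\fk k$, analogous to the one in Case~1, with $C_{\fk k}(e)\subset\fk p$, $\fk n\subset C_{\fk k}(e)^+$, and $C_{\fk k}(e)^+\cap\fk m=C_{\fk m}(e)^+$, but now chosen so that its Levi retains the factors $\fk{so}(2r_++1)$ and $\fk{so}(2q-1)$ intact, i.e. $\fk m\cong\fk{gl}(k+1)\times\fk{so}(2r_++1)\times\fk{so}(2q-1)$. Kostant's theorem then identifies $\calW:=V^*/(\fk n V^*)$ with the irreducible $\fk m$-module generated by the lowest weight of $V^*$ (here $V^*\cong V$ automatically, since $\wti K$ is of type~$B$), and the computation of (\ref{eq:mreal}) reduces to that of $\bigl(\calW/(C_{\fk m}(e)^+\calW)\bigr)^{\chi}$. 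I expect Step~1 to be the main obstacle. Unlike the type-$D$ situation of Case~1, whose nilradical was just the span of the $X(\ep_i+\ep_j)$ sitting inside $C_{\fk k}(e)_2+C_{\fk k}(e)_3$, here one must track the type-$B$ short-root vectors $X(\ep_i)_c$ and the ``mixed'' generators of $C_{\fk k}(e)_1$ in (\ref{eq:ce2}), such as $X(\ep_1-\ep_i)+X(\ep_{p+i-1})_c$, and verify --- using the explicit Clifford-algebra realization of Section~\ref{ss:clifford} --- which short-root vectors centralize $e$, in order to arrange $\fk n\subset C_{\fk k}(e)^+$ with the desired Levi.

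Granting Step~1, the rest follows Case~1 and Proposition~\ref{p:inv1} closely. Since $Det^{\chi}$ is trivial on the factor $\fk{so}(2r_++1)\subset C_{\fk m}(e)_0$, the quotient $\calW/(C_{\fk m}(e)^+\calW)$ can carry a vector transforming by $\chi$ only if it has an $\fk{so}(2r_++1)$-fixed vector; because $\calW$ restricts to that factor as a multiple of the irreducible of highest weight $(a_{k+2},\dots,a_p)$, Helgason's theorem --- exactly as in the proof of Proposition~\ref{p:inv1} --- forces $a_{k+2}=\dots=a_p=0$. With that vanishing $\fk{so}(2r_++1)$ decouples, and the residual problem is the branching computation for $\fk{gl}(k+1)\times\fk{so}(2q-1)$ with data $\bigl((a_1,\dots,a_{k+1}),(b_1,\dots,b_k)\bigr)$ --- literally the Case~1 argument with $p$ there replaced by $k+1$ and the second orthogonal factor now of type~$B$. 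I would copy it step for step: realize $\calW$ as a quotient of a generalized Verma module for the parabolic $\fk q=\fk l+\fk u\subset\fk m$ determined by $h$ (with $C_{\fk m}(e)_0\cong\fk{gl}(k)$ embedded diagonally as $x\mapsto(x,-x^t)$), run the BGG resolution (\ref{eq:coh1}), and apply the Littlewood--Richardson rule (\ref{eq:LRrule}) together with Lemma~\ref{l:1} and Lemma~\ref{l:tensor}. The interleaving chain $a_1+\chi\ge b_1\ge\dots\ge a_k+\chi\ge b_k\ge a_{k+1}+\chi$ of (\ref{eq:morbit4}) drops out of Lemma~\ref{l:1} at length $\ell(w)=0$; that these weights really occur (no cancellation at $\ell(w)=1$) is the analogue of Lemma~\ref{l:spectrum}; and all multiplicities are $1$.
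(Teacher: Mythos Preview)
Your plan is correct and matches the paper's proof: the paper uses exactly the parabolic determined by $\xi=(\underbrace{1,\dots,1}_{k+1},\underbrace{0,\dots,0}_{r_+}\mid\underbrace{0,\dots,0}_{k};0)$, so that $\fk m\cong\fk{gl}(k+1)\times\fk{so}(2r_++1)\times\fk{so}(2q-1)$ (the printed ``$\fk{gl}(k)$'' for the last factor is a typo, as is ``$\fk{so}(2k+1)$'' in the Step~2 Levi, which should read $\fk{so}(2r_++1)$), then invokes Helgason to force $a_{k+2}=\dots=a_p=0$ and defers Step~2 to Case~1 and Cases~7,8. Your anticipated obstacle in Step~1 does not materialize: the short-root vectors $X(\ep_i)_c$ for $1\le i\le k+1$ all centralize $e$ (a one-line Clifford check) and lie in $C_{\fk k}(e)^+$, so $\fk n\subset C_{\fk k}(e)^+$ follows directly from the list (\ref{eq:ce2}), while the mixed generators $X(\ep_1-\ep_i)+X(\ep_{p+i-1})_c$ have both summands in $\fk m$ and hence sit in $C_{\fk m}(e)^+$ for Step~2.
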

}
\begin{proof}
\noindent\textit{Step 1.\ } Let $\fk p=\fk m+\fk n\subset \fk k$ be the
parabolic subalgebra determined by  
$$
\xi=(\underbrace{1,\dots ,1}_{k+1},\underbrace{0,\dots ,0}_{r_+}\bigb
\underbrace{0,\dots ,0}_k;0). 
$$ 
Then $\fk n\subset C_{\fk k}(e)^+,$ and we can apply Kostant's theorem
to reduce the computation to \newline 
$\fk m\cong \fk{gl}(k+1)\times \fk{so}(2r_++1)\times\fk{gl}(k).$ The
$\fk n$-coinvariants are the module  
$$
\calW(-a_1,\dots
,-a_{k+1};a_{k+2},\dots a_p\bigb
b_1,\dots, b_k).
$$ We have assumed $k$ even for simplicity. By
Helgason's theorem, $a_{k+2}=\dots =a_p=0.$ We need to compute the
multiplicity of a character $\chi$ trivial on the nilradical of $C_\fk m(e).$ 

\medskip
\noindent\textit{Step 2.\ }  Let $\fk q=\fk l+\fk u\subset \fk m$ be the
parabolic subalgebra determined by
the (restriction of) $h:$ 
$$
\fk l\cong \fk{gl}(1)\times \fk{gl}(k)\times \fk{gl}(k)\times \fk{so}(2k+1).
$$
The proof proceeds as in Case 1 and Cases 2,3 ; see also Cases 7, 8.
\end{proof}
When $r_+>0,$ $a_i\in\bb Z$ and $b_j\in\bb Z+1/2.$ When $r_+=0,$
$A_{\wti K}(\calO)$ has two components. The character $\chi$ is not
determined by its differntial; there are two possibilities,
corresponding to 
$a_i\in \bb Z, b_j\in \bb Z+1/2$ and $a_i\in\bb Z+1/2,\ b_j\in \bb Z.$
\subsection{Case 7, 8}
The fundamental Cartan subalgebra has coordinates 
$$
(x_1,\dots ,x_{k+1}\bigb y_1,\dots y_{k},y_{k+1},\dots , y_{k+r_-},z)
$$
with Cartan involution 
$$
\theta(x_i)=x_i,\ \theta(y_j)=y_j,\ \theta(z)=-z.
$$
We describe  the centralizer for
$[3^+2^{2k}1^{+}1^{-, 2r_- }]$ with $r_-\ge 2$ in $\fk k$ in detail. 
Representatives for $e$ and $h$ from the previous section are
$$
\begin{aligned}
&e=X(\ep_1)+\sum_{2\le i\le k+1} X(\ep_i+\ep_{p+i-1})\\
&h=H(2\ep_1)+\sum_{2\le i\le k+1} H(\ep_i+\ep_{p+i-1})=(2,\underbrace{1,\dots ,1}_k\bigb \underbrace{1,\dots ,1}_k,\underbrace{0,\dots ,0}_{r_-};0)
\end{aligned}
$$
where the last coordinate after the $";"$ is the $z.$ 
Then
\begin{equation}
\label{eq:ch1}
\begin{aligned}
C_\fk k(h)_0\cong &\fk{gl}(1)\times \fk{gl}(k)\times \fk{gl}(k)\times \fk{so}(2r_-+1), \\
C_\fk k (h)_1=&\text{Span}\{ X(\ep_1-\ep_i), X(\ep_i),\ X(\ep_{p+i-1}),\ 2\le i\le k+1,\\
&X(\ep_{p+j}\pm \ep_{p+l}),\ 1\le j\le k<l \le q\}, \\
C_\fk k(h)_2=&\text{Span}\{ X(\ep_1),\  X(\ep_i+\ep_j), \ X(\ep_{p+i-1} +\ep_{p+j-1}) , \ 2\le i< j\le k+1\},\\ 
C_\fk k(h)_3=&\text{Span}\{ X(\ep_1+\ep_i),\ 2\le i\le k+1\}. 
\end{aligned}
\end{equation}
Similarly
\begin{equation}
\label{eq:ce1}
\begin{aligned}
C_\fk k(e)_0\cong &\fk{gl}(1)\times \fk{gl}(k)\times \fk{so}(2r_-) \\
C_\fk k (e)_1=&\text{Span}\{
X(\ep_1-\ep_i)+X(\ep_{p+i-1}\pm\ep_{p+k+1}),\ 2\le i\le k+1,\\
&X(\ep_{p+j}\pm\ep_{p+k+l}), 1\le j\le k,\  {k+2\le l \le q-1} \},\\ 
C_\fk k(e)_2=&C_\fk k(h)_2\\
C_\fk k(e)_3=&C_\fk k(h)_3.
\end{aligned}
\end{equation}
The $\fk{gl}(k)\subset C_\fk k(e)_0$ is embedded in $\fk{gl}(k)\times
\fk{gl}(k)\subset C_\fk k(h)_0$ as before, $x\mapsto (x,-x^t)$, and
$\fk{so}(2r_-)\subset \fk{so}(2r_-+1)$ in the standard way. 
\begin{prop}
\label{p:inv2}
A representation $V(a_1,\dots ,a_p\bigb b_1,\dots ,b_{q-1})$ has
vectors invariant under $C_{\fk k}(e)^+$ and transforming according to
$Det^{\chi}$ under $C_\fk k(e)_0\cong \fk{gl}(k)\times \fk{so}(2r_-)$ if and only if 
$b_{k+2}=\dots =b_{q-1}=0$, and 
\begin{equation}
  \label{eq:morbit}
a_1+\chi\ge b_1\ge a_2+\chi\ge \dots \ge b_k\ge a_{k+1}+\chi\ge b_{k+1}.  
\end{equation}


\end{prop}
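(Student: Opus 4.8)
The plan is to run the same two-step reduction used in the proofs of Proposition \ref{p:inv1} (Cases 2, 3) and Proposition \ref{p:inv4} (Cases 5, 6), adapted to the type-$B$ first factor of this orbit; most of it is bookkeeping, so I only indicate where the present case differs. \textbf{Step 1.} First I would take the parabolic $\fk p=\fk m+\fk n\subset\fk k$ determined by $\xi=H(\ep_1+\dots+\ep_{k+1})$ in the first factor $\frakso(2p+1,\bbC)=\frakso(2k+3,\bbC)$, so that $\fk n=\text{Span}\{X(\ep_i+\ep_j),\,X(\ep_i)\}$ lies in $C_\fk k(e)_2+C_\fk k(e)_3\subset C_\fk k(e)^+$ and $\fk m\cong\frakgl(k+1)\times\frakso(2q-1,\bbC)$. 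Kostant's theorem then identifies $V^*/(\fk nV^*)$ with the irreducible $\fk m$-module $\calW$ generated by its lowest weight, of the form $\calW(-a_1,\dots,-a_{k+1}\mid b_1,\dots,b_{q-1})$ after the usual reindexing. Since $C_\fk k(e)_0$ meets the second factor only in the subalgebra $\frakso(2r_-)$ together with the relevant $\frakgl(k)$, Helgason's theorem (exactly as in the proof of Proposition \ref{p:inv1}) forces the $\frakso$-part of the highest weight of $\calW$ to be $(b_{k+1},0,\dots,0)$, which is the assertion $b_{k+2}=\dots=b_{q-1}=0$, and in that case the invariant line sits in the corresponding highest-weight space. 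Because $C_\fk k(e)_0+C_\fk k(e)_1\subset\fk m$ and $C_\fk k(e)^+\cap\fk m=C_\fk m(e)^+$, what remains is the purely $\frakgl(k)$-module computation of $[\calW/(C_\fk m(e)^+\calW)]^\chi$.

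\textbf{Step 2.} Inside $\fk m$ I would take the parabolic $\fk q=\fk l+\fk u$ determined by $h$, with $\fk l\cong\frakgl(1)\times\frakgl(k)\times\frakgl(k)\times\frakso(2r_-+1)$ and $C_\fk m(e)_0\cong\frakgl(k)$ embedded as $x\mapsto(x,-x^t)$ in the two $\frakgl(k)$ factors, exactly as in Case 1. I would realize $\calW$ as a quotient of the generalized Verma module $M(\la)$ with $\la$ its weight made dominant for $\overline{\fk q}$, resolve it by the generalized BGG complex $\bigoplus_{w\in W^+,\ \ell(w)=j}M(w\cdot\la)$, and pass to $C_\fk m(e)^+$-coinvariants; this is exact since the resolution is free over $C_\fk m(e)^+$, so the $\chi$-isotypic part of $\overline{\calW}$ is the alternating sum of the $[\overline{M(w\cdot\la)}]^\chi$. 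Writing $\fk u=C_\fk m(e)^+\oplus\fk s$ with $\fk s$ the standard $\frakgl(k)$-module and feeding this into the Littlewood--Richardson rule (\ref{eq:LRrule}), the analogues of Lemma \ref{l:1} and Corollary \ref{c:1} show that a nonzero $\chi$-component in the $\ell(w)=0$ term $\overline{M(\la)}$ forces the chain $a_1+\chi\ge b_1\ge a_2+\chi\ge\dots\ge b_k\ge a_{k+1}+\chi\ge b_{k+1}$ with multiplicity one, the endpoint being $a_{k+1}+\chi\ge b_{k+1}$ with no sign ambiguity precisely because $b_{k+1}\ge0$ was forced in Step 1. For the converse, that every such weight actually survives in homology, I would argue as in the proof of Theorem \ref{t:1} via Lemma \ref{l:spectrum}, checking that these weights do not already occur in the $\ell(w)=1$ terms; here the transverse subspaces replacing $\fk s_\pm$ from Proposition \ref{p:spectrum} are $\Span\{X(\ep_{p+j}\pm\ep_{p+k+l})\}$ together with the short-root direction $X(\ep_1)$.

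The only genuinely new inputs relative to Case 1 are the use of Helgason's theorem for the odd orthogonal factor in Step 1 (which simultaneously kills $b_{k+2},\dots,b_{q-1}$ and removes the $\pm$ ambiguity on $b_{k+1}$) and the bookkeeping for the short root $\ep_1$ coming from the type-$B$ first factor; neither disturbs the multiplicity-one conclusion. I expect the main obstacle to be exactly the last point of Step 2 --- writing down the correct length-one $W^+$-representatives and their dot-action on $\la$, so that the analogue of Lemma \ref{l:spectrum} genuinely shows the boundary weights of (\ref{eq:morbit}) are not cancelled in the complex --- while everything else reduces to the Kostant/BGG/Littlewood--Richardson bookkeeping already carried out in Cases 1--3 and 5--6.
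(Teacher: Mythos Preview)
Your plan is the paper's: the same two-step parabolic reduction (Step 1 with $\xi=(1^{k+1}\mid 0,\dots,0;0)$ giving $\fk m\cong\fk{gl}(k+1)\times\fk{so}(2q-1)$, Step 2 with $h$ giving $\fk l\cong\fk{gl}(1)\times\fk{gl}(k)\times\fk{gl}(k)\times\fk{so}(2r_-+1)$), then the BGG resolution, Littlewood--Richardson, and Helgason for the pair $\fk{so}(2r_-)\subset\fk{so}(2r_-+1)$. The only fixes are in the part you already flagged as the obstacle: Helgason is applied in Step~2, not Step~1 (it acts on the $\fk{so}(2r_-+1)$-factor of $\fk l$ with highest weight $(b_{k+1},\dots,b_{q-1})$, not on the full $\fk{so}(2q-1)$-highest weight of $\calW$), and your transverses are misidentified --- $X(\ep_1)$ already lies in $\fk n$ and is killed in Step~1, while the root vectors $X(\ep_{p+j}\pm\ep_{p+k+l})$ you name actually commute with $e$ and sit inside $C_{\fk m}(e)^+$; the paper instead takes $\fk s=\Span\{X(\ep_1-\ep_i):2\le i\le k+1\}$ together with $\fk s_\pm=\Span\{X(\ep_{p+i}\pm\ep_{p+q-1}):1\le i\le k\}$, the relevant change from Case~1 being that the second factor of $\fk m$ is now of type $B$ rather than $D$.
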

\begin{proof}
The proof is essentially the same as for the other cases.

\noindent\textit{Step 1.\ } Let $\fk p=\fk m+\fk n\subset \fk k$ be the
parabolic subalgebra determined by  
$$
\xi=(\underbrace{1,\dots ,1}_{k+1}\bigb 0,\dots ,0;0).
$$ 
Then $\fk n\subset C_{\fk k}(e)^+,$ and we can apply Kostant's theorem
to reduce the computation to \newline $\fk m\cong \fk{gl}(k+1)\times \fk{so}(2k
+1+2r_-).$ Let $\calW(a_1,\dots , a_{k+1}\bigb b_1,\dots
b_k,b_{k+1},\dots , b_{k+r_-})$
($q-1=k+r_-$) be an irreducible representation of $\fk m$ parametrized by its highest
weight, and $\chi$ be a character of $C_{\fk m}(e)$ trivial on the
nilradical.  We will  compute 
$$
\big[\calW(a_1,\dots a_{k+1}\bigb b_1,\dots ,b_k,b_{k+1},\dots ,b_{k+r_-})\big]^\chi.
$$

\noindent\textit{Step 2.\ } Let $\fk q=\fk l+\fk u\subset \fk m$ be the
parabolic subalgebra determined by
the (restriction of) $h:$ 
$$
\fk l\cong \fk{gl}(1)\times \fk{gl}(k)\times \fk{gl}(k)\times\fk{so}(2r_-+1).
$$
Then $C_{\fk m}(e)_0\cong \fk{gl}(k)\times \fk{so}(2r_-),$ $C_\fk m(e)_2=C_\fk m(h)_2$ and 
$C_{\fk m}(e)_3=C_\fk m(h)_3.$  

$C_\fk m(e)_1\subset C_\fk m(h)_1$  
has complements $\fk s_0,\fk s_\pm$ spanned by 
$$
\begin{aligned}
&\fk s=\text{Span}\{X(\ep_1-\ep_i),\ 2\le i\le k+1\},\\
&\fk s_-=\text{Span}\{X(\ep_{p+i}-\ep_{p+q-1}),\ 1\le i\le k\}\\
&\fk s_+=\text{Span}\{X(\ep_{p+i}+\ep_{p+q-1}),\ 1\le i\le k\}.
\end{aligned}
$$ 
Then $S^m(\fk s)=V(m;0,\dots ,0,-m\bigb \underbrace{0,\dots
  0}_{k};0,\underbrace{0,\dots ,0}_{r_- -1})$ as before.
{The (generalized) 
Bernstein-Gelfand-Gelfand resolution, using ${\ovl{\fk q}}$, is 
\begin{equation}
\begin{aligned}
0\cdots\longrightarrow \bigoplus _{w\in W^+,\ \ell(w)=k} M(w\cdot\la)
\longrightarrow \cdots \longrightarrow 
\bigoplus _{w\in W^+,\ \ell(w)=1} M(w\cdot \la)
\longrightarrow M(\la)\longrightarrow \calW\longrightarrow 0,
\end{aligned}
\end{equation}
with $w\cdot \la := w(\la+\rho(\frakm))-\rho(\frakm)$, and $w\in W^+$,
the $W(\frakl)$-coset representatives that make $w\cdot \la$
dominant for $\triangle ^+ (\frakl)$.  This is a free $C_{\fk
  m}(e)^+$-resolution so we can compute cohomology by considering 
\begin{equation}\label{eq:coh2}
\begin{aligned}
0\cdots\longrightarrow \bigoplus _{w\in W^+,\ \ell(w)=k} 
\overline{M(w\cdot \la)} \longrightarrow \cdots 
\longrightarrow \bigoplus _{w\in W^+,\ \ell(w)=1} \ovl{M(w\cdot \la)}
\longrightarrow\ovl{ M( \la)}\longrightarrow 0,
\end{aligned}
\end{equation}
where for an $\frakm$-module X, $\ovl{ X }$ 
denotes $X/ \big(( {C_{\fk m}(e)^+}   ) X\big)$.}
The weight $\la$ is 
$$
(-a_1,-a_{k+1},\dots ,-a_k \bigb -b_{k},\dots ,-b_1,b_{k+1},\dots b_{q-1}).
$$
The fact that $b_{k+2}=\dots =b_{q-1}=0$ follows from Helgason's
theorem for the pair $\fk{so}(2r_-)\subset \fk{so}(2r_-+1)$. The fixed vector is
the highest weight.

\end{proof}

{
\subsubsection{} The $\chi$ relevant to matching with representations
are
$$
\begin{aligned}
&Case\ 4,\qquad &&\chi=-1/2,\\
&{ Case\ 5,6  \text{ with } r_+=0,} \qquad &&{\chi=1/2},\\  
&Case\ 5,6 \text{ with } r_+> 0,\qquad &&\chi=r_+ + 1/2\\  
&Case\ 7,8,\qquad &&\chi=-r_- +1/2. 
\end{aligned}
$$
\begin{prop}\ The $\wti K-$ structure in Cases 4-8 is as follows.
\begin{description}
\item[Case 4] $\calO=[3^+2^{2k}1^-1^{+,2}]$. In this case
  $A_{\wti K}(\calO)\cong \bb Z_2.$ 
   Let 
$$
\begin{aligned}
&\psi_1=( 1/2,\dots,1/2\mid 0,\dots, 0)|_{C_{\tu{K}}(e) },\\
&\psi_2= (3/2, 1/2,\dots,1/2\mid 0,\dots,0)|_{C_{\tu{K}}(e) }.  
\end{aligned}
$$ 

  Then 

$$
R(\tu{\calO}, Det ^{1/2}) = R(\calO,\psi_1)+R(\calO,\psi_2),
$$

with
$$ 
\begin{aligned}
R(\calO, \psi_1) &= \Ind _{C_{\tu{K} }(\calO) }  ^{\tu{K} }
(\psi_1)&=\bigoplus V(   \beta_1+1/2,\dots, \beta_p+1/2\mid \delta_1,
\dots, \delta_p)\\ && \text{ with } \sum(\beta_i+\delta_j)\in 2\bbZ,
\\ 
R(\calO, \psi_2) &= \Ind _{C_{\tu{K} }(\calO) }  ^{\tu{K} } (\psi_2)&=
\bigoplus V(   \beta_1+1/2,\dots, \beta_p+1/2\mid \delta_1, \dots,
\delta_p)\\ && \text{ with }  \sum(\beta_i+\delta_j)\in 2\bbZ +1, 
\end{aligned}
$$
satisfying $\beta_1\ge \delta_1 \ge \dots, \ge \beta_p\ge \delta_p\ge 0$ and $\beta_i, \delta_j\in \bbZ$.

\medskip
\noindent {The automorphism $\eta$ in (\ref{eq:out-auto}) relates the
  result for $R(\calO,\psi)$, with $\calO^{\eta} =[3^-2^{2k}
  1^+1^{-,2}]$ and the corresponding $\psi^\eta$.
}
\medskip
{
\item[Case 5, 6] $\calO=[3^+2^{2k}1^+].$ In this case 
$A_{\wti K}(\calO)\cong \bb Z_2.$ 
{
 Let 
 \begin{equation}
   \label{eq:case561} 
\begin{aligned}
&\psi_1=( 0,\dots, 0\mid 1/2,\dots,1/2 )|_{C_{\tu{K}}(e) },\\
&\psi_2= (1/2,\dots,1/2 \mid 1,\dots,1)|_{C_{\tu{K}}(e) }.  
\end{aligned}  
 \end{equation}
}

Then  

$$
R(\tu{\calO} ,Det^{1/2})  = R(\calO,\psi_1)+R(\calO,\psi_2)
$$
with
$$
\begin{aligned}
R(\calO, \psi_1) &=& \Ind _{C_{\tu{K} }(\calO) }  ^{\tu{K} }
(\psi_1)&=\bigoplus V(   \beta _1 ,\dots, \beta_p \bigb \delta_1,
\dots, \delta_{p-1})  &&\beta_i\in \bbZ,\ \delta_j\in \bbZ +1/2,
\\ 
R(\calO, \psi_2) &=& \Ind _{C_{\tu{K} }(\calO) }  ^{\tu{K} } (\psi_2)&=
\bigoplus V(   \beta _1 ,\dots, \beta_p \bigb \delta_1,
\dots, \delta_{p-1})  &&\beta_i\in \bbZ+1/2,\ \delta_j\in \bbZ, 
\end{aligned}
$$
satisfying $\beta_1+1/2\ge \delta_1 \ge \dots, \ge \beta_{p-1} +1/2 \ge \delta_{p-1}\ge
\beta_{p}+1/2.$
}

\medskip
\item[Case 5, 6] $\calO=[3^+2^{2k}1^{+,2r_+ +1}]$ with $r_+>0.$ In this
  case $A_{\wti K}(\calO)\cong 1.$ Then

$$
\begin{aligned}
R(\tu{\calO} ,Det^{r_++1/2}) &=R(\calO ,Det^{r_++ 1/2}) \\
&= \bigoplus V(  \beta_1,\dots,\beta_q, 0, \dots , 0 \mid \delta_1, \dots, \delta_{q-1} )  
\end{aligned}
$$
satisfying $\beta_1 +r_++1/2 \ge \delta_1 \ge \dots, \ge \beta_{q-1} +1/2\ge \delta_{q-1}\ge \beta _q $ and $\beta_i\in \bbZ, \delta_j\in \bbZ+1/2$.

\medskip
\item[Case 7, 8] $\calO=[3^+2^{2k}1^+1^{-,2r_-}]$. {Let 
$$
\begin{aligned}
&\psi_1=( r_- - 1/2,\dots,r_- -1/2\mid 0,\dots, 0)|_{C_{\tu{K}}(e) },\\
&\psi_2= (r_- +1/2, r_- -1/2, \dots,r_- -1/2\mid 0,\dots,0)|_{C_{\tu{K}}(e) }.  
\end{aligned}
$$ 
}

 In this case
  $A_{\wti K}(\calO)\cong \bb Z_2.$ Then
  
  $$
R(\tu{\calO} ,Det^{-r_- +1/2})  = R(\calO,\psi_1)+R(\calO,\psi_2),
$$

with
$$ 
\begin{aligned}
R(\calO, \psi_1) &= \Ind _{C_{\tu{K} }(\calO) }  ^{\tu{K} }
(\psi_1)&=\bigoplus V(  \beta_1+r_--1/2,\dots, \beta_p +r_- -1/2 \mid \delta_1, \dots, \delta_p, 0,\dots, 0 )   \\ &&\text{ with }   \sum(\beta_i+\delta_j)\in 2\bbZ,
\\ 
R(\calO, \psi_2) &= \Ind _{C_{\tu{K} }(\calO) }  ^{\tu{K} } (\psi_2)&=
\bigoplus V(  \beta_1+r_--1/2,\dots, \beta_p +r_- -1/2 \mid \delta_1, \dots, \delta_p, 0,\dots, 0 )\\ && \text{ with }    \sum(\beta_i+\delta_j)\in 2\bbZ +1, 
\end{aligned}
$$
satisfying $\beta_1\ge \delta_1 \ge \dots \ge \beta_p\ge \delta_p\ge 0$ and $\beta_i, \delta_j\in \bbZ$.
  
\end{description}
\end{prop}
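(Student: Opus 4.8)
The plan is to handle the four items of the statement by one common scheme, using as a black box the ``universal cover'' computations already done in Propositions~\ref{p:inv4} and~\ref{p:inv2}. Fix an orbit $\calO$ in one of Cases~4--8 and the value of $\chi$ listed just above the statement. \emph{Step 1.} Specialize Proposition~\ref{p:inv4} (Cases~5,6) or Proposition~\ref{p:inv2} (Cases~7,8; and Case~4, which is Cases~7,8 with $r_\pm=1$) to this $\chi$: after renaming the coordinates of the $\tu K$-highest weight so as to absorb the shift by $\chi$, and forcing the trailing $a$'s or $b$'s to vanish by Helgason's theorem exactly as in those Propositions, one obtains $R(\tu{\calO},Det^{\chi})=\Ind_{C_{\tu K}(e)^0}^{\tu K}(Det^{\chi})$ explicitly, as the multiplicity-free sum of those $\tu K$-types $V(\beta\mid\delta)$ satisfying the displayed interlacing inequalities. \emph{Step 2.} Apply the two-step induction~\eqref{eq:decomp}, $R(\tu{\calO},Det^{\chi})=\Ind_{C_{\tu K}(e)}^{\tu K}\!\big[\Ind_{C_{\tu K}(e)^0}^{C_{\tu K}(e)}(Det^{\chi})\big]$, and decompose the inner module into the irreducible characters of $C_{\tu K}(e)$ extending $Det^{\chi}$, as in Lemma~\ref{indchar} and Proposition~\ref{split-even-orbit}.

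For Case~5,6 with $r_+>0$ Step~2 is vacuous: by Proposition~\ref{comp-group-real}, $A_{\tu K}(\calO)=1$, so $C_{\tu K}(e)$ is connected, $Det^{r_++1/2}$ is already irreducible over $C_{\tu K}(e)$, and $R(\tu{\calO},Det^{r_++1/2})=R(\calO,Det^{r_++1/2})$ is verbatim Proposition~\ref{p:inv4}. In the three remaining cases $A_{\tu K}(\calO)\cong\bbZ_2$ (Proposition~\ref{comp-group-real}), so the inner induced module is a sum of \emph{two} irreducible characters. I will check that the two characters $\psi_1,\psi_2$ named in the statement --- the restrictions to $C_{\tu K}(e)$ of the displayed $\tu K$-highest weights --- really do restrict to $Det^{\chi}$ on $C_{\tu K}(e)^0$ and are inequivalent; this is a short computation from the explicit embeddings of $C_{\fk k}(e)_0$ in \eqref{eq:ce2}, \eqref{eq:ce1} together with the Clifford-algebra presentation of the disconnected part in Section~\ref{ss:clifford}. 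By counting, $\{\psi_1,\psi_2\}$ must then exhaust the inner module, so $R(\tu{\calO},Det^{\chi})=R(\calO,\psi_1)\oplus R(\calO,\psi_2)$.

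It remains to decide which summand a given $\tu K$-type $V=V(\beta\mid\delta)$ from Step~1 lands in. By Frobenius reciprocity the multiplicity of $V$ in $R(\calO,\psi_i)=\Ind_{C_{\tu K}(e)}^{\tu K}(\psi_i)$ is $\dim\Hom_{C_{\tu K}(e)}[V,\psi_i]$, and Step~1 (that is, Propositions~\ref{p:inv4}, \ref{p:inv2}) says the sum over $i=1,2$ is $1$; hence $V$ lies in exactly one $R(\calO,\psi_i)$, distinguished by the scalar by which a fixed representative $z$ of the nontrivial class of $A_{\tu K}(\calO)$ acts on the one-dimensional space of $Det^{\chi}$-covariant $C_{\tu K}(e)^0$-highest-weight vectors in $V$. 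The crux is to show that in the coordinates $\beta_i,\delta_j$ of the statement this scalar equals $(-1)^{\sum_i\beta_i+\sum_j\delta_j}$; granting this, $V\in R(\calO,\psi_1)$ precisely when $\sum(\beta_i+\delta_j)$ is even and $V\in R(\calO,\psi_2)$ precisely when it is odd, while the interlacing inequalities and the multiplicity-one statement are exactly those of Propositions~\ref{p:inv4} and~\ref{p:inv2} and Theorem~\ref{t:1}. (For Case~5,6 with $r_+=0$, where $\chi$ is not pinned down by its differential, the two lattice cosets $\beta_i\in\bbZ,\ \delta_j\in\bbZ+\tfrac12$ and $\beta_i\in\bbZ+\tfrac12,\ \delta_j\in\bbZ$ play the roles of $\psi_1,\psi_2$; \cf the remark after Proposition~\ref{p:inv4}.) The statements for the partner orbits $\calO^{\eta}$ follow by transporting everything through the outer automorphism $\eta$ of \eqref{eq:out-auto}.

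The main obstacle is the scalar computation in Step~3. One must produce an honest representative $z\in C_{\tu K}(e)$ of the generator of $A_{\tu K}(\calO)$ --- most cleanly inside the Clifford algebra of Section~\ref{ss:clifford} --- and evaluate it on a highest-weight vector of $V$ realized in the relevant spin module; on the nonlinear double cover the bookkeeping of half-integer coordinates, and of which component of $C_{\tu K}(e)$ a given sign change falls into, is the delicate part. I expect the exponent $\sum(\beta_i+\delta_j)$ to emerge from the fact that, modulo $C_{\tu K}(e)^0$, $z$ acts like a product of the coordinate sign changes of \eqref{eq:out-auto} on the two $Spin$ factors, whose eigenvalue on $V(\beta\mid\delta)$ contributes the parity $(-1)^{\sum\beta_i+\sum\delta_j}$ --- the shift by $\chi$ being absorbed because the number of $x$-coordinates is even by the standing assumption. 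Everything else reduces to the results already proved.
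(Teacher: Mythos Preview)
Your outline is the same as the paper's: specialize Propositions~\ref{p:inv4} and~\ref{p:inv2} to the stated $\chi$, then use induction in stages and split according to the action of a representative of $A_{\tu K}(\calO)$ on the one--dimensional covariant space. The paper, like you, disposes of Cases~5,6 with $r_+>0$ immediately since $A_{\tu K}(\calO)=1$, and for Cases~5,6 with $r_+=0$ uses that the nontrivial class is represented by the central elements $(\pm I,\mp I)$, so the splitting is by central character --- exactly the integrality dichotomy you invoke.

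The genuine gap is your Step~3 for Cases~4 and~7,8, which you flag but do not carry out, and your heuristic for it is not quite right. Two corrections. First, the vector on which you must evaluate $z$ is \emph{not} the highest--weight vector of $V$, but the unique (up to scalar) vector in $V$ that is $C_{\fk k}(e)^+$--invariant and transforms by $Det^\chi$ under $C_{\fk k}(e)_0$; this is a specific weight vector, and the paper writes its weight down explicitly (for Case~4 it is
\[
\big(-a_1+k\chi-\textstyle\sum_{i\ge 2}a_i+\sum_{j\le k}b_j,\ -b_2+\chi,\dots,-b_k+\chi \ \big|\ -b_1,\dots,-b_k,\ b_{k+1}\big)\,\big).
\]
Second, in Cases~4 and~7,8 the representative $z$ is not a global sign change but a torus element $e^{\pi i H(\ep_1\pm\ep_{p+k+1})}$ (cf.\ the Remark after Proposition~\ref{comp-group-real}). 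Since $z\in\tu T$, it acts on any weight vector by $e^{\pi i\langle\mu,H\rangle}$; pairing with the covariant weight above gives $e^{\pi i(\sum_i a_i+\sum_j b_j+k\chi)}$, whose parity --- after the shift of variables $(a,b)\mapsto(\beta,\delta)$ --- is exactly $(-1)^{\sum(\beta_i+\delta_j)}$. Your expectation is thus correct, but the route through ``$z$ acts like coordinate sign changes on the highest weight'' would not in general yield the right exponent, because the covariant vector is far from the highest weight. Completing this weight computation is what is missing from your proposal.
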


\begin{proof} {The calculations of $R(\wti\calO,Det^\chi)$ are
  essentially the same for all Cases 4, 5,6 and 7,8. The calculations
  of the $R(\calO,\psi)$ are different. In Cases 4 and 7,8 the
  disconnectedness of the centralizer is already present for
  $K=SO(2p+1,2q-1).$ Precisely, $A_{\wti K}(\calO)=A_K(\calO)$ is
  nontrivial. In Cases 5,6 with $r_-=0$, $A_{\wti K}(\calO)\ne
  A_{K}(\calO).$ Finally in Cases 5,6 with $r_->0$,  $A_{\wti
    K}(\calO)=1$, and there is nothing further to prove}.

\medskip
For $R(\wti\calO,Det^\chi)$ in Cases 5,6, we give details for  
$\calO=[3^+2^{2k}1^+].$ Then 
$$
\begin{aligned}
&e=X(\ep_1)_n +\sum_{2\le j\le k+1}X(\ep_j+\ep_{p+j-1}),\\
&h=(2,\underbrace{1,\dots ,1}_k\bigb \underbrace{1,\dots ,1}_ k {; 0}) .  
\end{aligned}
$$
{ The centralizers $C_\fk k(h)$ and $C_\fk k(e)$ are as in
  (\ref{eq:ch1})  and (\ref{eq:ce1}) with $r_-=0$}.

Let $V(a_1,\dots ,a_{k+1}\bigb b_1,\dots ,b_k)$ be a $\wti
K$-type. Then Steps 1 and 2 imply that $V$ has a $C_\fk k(e)^+$-fixed
vector transforming according to $Det^\chi$ if and only if 
$$
a_1\ge b_1+\chi\ge \dots b_k+\chi\ge a_{k+1}.
$$
The genuine $\wti K$-types must satisfy $a_i\in \bb Z, b_j\in\bb Z+\frac12$ or
$a_i\in \bb Z+\frac12, b_j\in\bb Z$ and $\chi$ a half-integer. The case $\chi=-1/2$ is relevant to the representations.

\bigskip
{The element $(-I,-I)$ acts by $-1$ on the representation. The two elements $(I,-I)$ and $(-I,I)$ therefore act by opposite signs. Then
$$
R(\wti \calO,Det^\chi)=R(\calO,\psi_1)+R(\calO,\psi_2),
$$
{ where 
$$
\begin{aligned}
\psi_1 =(0,\dots,0\mid 1/2,\dots,1/2)|_{C_{\tu{K}}(e) },\\
\psi_2 =(1/2,\dots,1/2\mid 1,\dots,1)|_{C_{\tu{K}}(e) },
\end{aligned}
$$
where $\psi_1, \psi_2$ are again the corresponding $\tu{K}$-types restricting to $C_{\tu{K}}(e)$.
This coincides with the result in the statement if replacing $+$ by $-$. 
}

\medskip
For Cases 4 and 7,8 we give details for $R(\wti\calO,Det^\chi)$ with
$\calO=[3^+2^{2k}1^-1^{+,2}]$ in Case 4.  Other cases are similar.  The component group satisfies $A_{\wti
  K}(\calO)=A_K(\calO)\cong \bb Z_2.$  We
use the realization
\beq
\begin{aligned}
e=&X(\ep_1-\ep_{p+k+1})+ X(\ep_1 +\ep_{p+k+1})+\sum_{2\le i\le k+1}
X(\ep_i+\ep_{p+i-1})\\
h=&2H(\ep_1) +\sum_{2\le i\le k+1}
H(\ep_i+\ep_{p-1+i})=(2,\underbrace{1,\dots ,1}_{k}\bigb
\underbrace{1,\dots ,1}_k,0 { ; 0 } ).  
\end{aligned}
\eeq

\begin{equation}
\label{eq:ch1.3}
\begin{aligned}
C_\fk k(h)_0&\cong \fk{gl}(1)\times \fk{gl}(k)\times \fk{gl}(k)\times \fk{so}(3),\\
C_\fk k (h)_1&=\text{Span}\{ X(\ep_1-\ep_i),\ {X(\ep_{p+i-1} \pm \ep_{p+k+1}  ) }, \ 2\le i\le k+1,\\
&X(\ep_i)_c,\ X(\ep_{p+i-1})_c,\ 2\le i\le k+1\},\\
C_\fk k(h)_2&=\text{Span}\{ X(\ep_1)_c,\ X(\ep_i+\ep_j),\ 2\le i< j\le k+1,\\ 
&X(\ep_{p+i-1} +  \ep_{p+j-1}),\  2\le i<j\le k+1\},\\
C_\fk k(h)_3&=\text{Span}\{ X(\ep_1+\ep_i),\ 2\le i\le k+1\}. 
\end{aligned}
\end{equation}
Similarly
\begin{equation}
\label{eq:ce1.3}
\begin{aligned}
C_\fk k(e)_0&\cong \fk{gl}(1)\times \fk{gl}(k)\times \fk{so}(2),\\
C_\fk k (e)_1&=\text{Span}\{
X(\ep_1-\ep_i)+X(\ep_{p+i-1}\pm\ep_{p+k+1}),\ 2\le i\le k+1,\\
&X(\ep_i)_c,\ X(\ep_{p+i-1})_c, \ 2\le i\le k+1\},\\ 
C_\fk k(e)_2&=C_\fk k(h)_2,\\
C_\fk k(e)_3&=C_\fk k(h)_3.
\end{aligned}
\end{equation}
The $\fk{gl}(k)\subset C_\fk k(e)_0$ is embedded in $\fk{gl}(k)\times
\fk{gl}(k)\subset C_\fk k(h)_0$ as before, $x\mapsto (x,-x^t)$.

\medskip
The element $e^{\pi iH(\ep_1\pm \ep_6)}$ represents the nontrivial
element in the component group. The vector in $V(a_1,\dots
,a_{k+1}\bigb b_1,\dots b_k,b_{k+1})$ which is 
$C_{\fk k}(e)^+$-invariant and transforms according to $Det^\chi$, has weight
$$
(-a_1+ k\chi-\sum_{2\le i\le k+1}a_i+\sum _{1\le j\le  k}b_j,-b_2+\chi,
\dots ,-b_k+\chi \bigb -b_1,\dots ,-b_k,b_{k+1}).
$$ 
The nontrivial element of $A_{\wti K}(\calO)$ acts by 
$$
e^{\pi i(\sum_{1\le i\le k+1}a_i +\sum_{1\le j\le k+1} b_j +k\chi)},
$$
and has different values according to the \textit{parity} of the sum in
the exponent. This accounts for the decomposition
$$
R(\wti\calO,Det ^\chi)=R(\calO,\psi_1)+R(\calO,\psi_2).
$$
}
\end{proof}
}


\section{Representations}
We will obtain representations associated to the various $\calO$ 
by restricting the representations
of $(\fk g'=\fk{so}(p',q'),\wti K' = Spin(p')\times Spin(q'))$ constructed in \cite{LS}. 
They are unitary, associated  to the orbit
$\calO'=[2^{2k+2}\ 1^{2n-4k-3}]$, and have infinitesimal character
$$
\la'=\left(n-k-1-1/2,\dots ,1/2 ;  k+1,\dots ,1\right). 
$$
We recall their $\wti K'$-spectrum  from
\cite{LS}. Let $\tu{G'}=\tu{Spin}(p',q')$
be such that $p'$ is odd and $q'$ even. 
\subsubsection{$p'-1=q'$} There are four representations
$$
\begin{aligned}
  &\left(\la_1,\dots ,\la_{q'/2}\ \big|\  \la_1+1/2,\dots
    ,\la_{q'/2}+1/2\right),\quad &&\la_i\in\bb Z,\\\
  &\left(\la_1,\dots ,\la_{q'/2}\ \big|\  \la_1+1/2,\dots
    ,-\la_{q'/2}-1/2\right),\quad &&\la_i\in\bb Z,\\
  &\left(\la_1,\dots ,\la_{q'/2}\ \big|\  \la_1+1/2,\dots
    ,\la_{q'/2}+1/2\right),\quad &&\la_i\in\bb Z+1/2,\\
  &\left(\la_1,\dots ,\la_{q'/2}\ \big|\  \la_1+1/2,\dots
    ,-\la_{q'/2}-1/2\right),\quad &&\la_i\in\bb Z+1/2.\\
\end{aligned}
$$
\subsubsection{$p'-1>q'$} There are two representations,
$$
\begin{aligned}
  &\left(\la_1,\dots ,\la_{q'/2},0,\dots ,0\bigb \la_1+\frac{p'-q'}{2},\dots ,\la_{q'/2}+\frac{p'-q'}{2}\right),\\
  &\left(\la_1,\dots ,\la_{q'/2},0,\dots ,0\bigb \la_1+\frac{p'-q'}{2},\dots ,-\la_{q'/2}-\frac{p'-q'}{2})\right),\\
\end{aligned}
$$
\subsubsection{$p'-1<q'$} One representation,
$$
\left(\la_1+\frac{q'-p'}{2},\dots ,\la_{(p'-1)/2}+\frac{q'-p'}{2}\bigb
  \la_1,\dots ,\la_{(p'-1)/2},0,\dots ,0\right).
$$ 

\begin{theorem}\ 
  \label{t:main}
The representations attached to $\calO$ have the following $\wti{K}$-structure.

\begin{description}

\item[Case 1. $\tu{G_0}=\tu{Spin}(2p,2p)$, $2p=2k+2$, $(r_+=0)$ ]\

There are eight representations obtained by restriction from $\tu{Spin}(2p+1,2p)$, with $\tu{K}$-structure:
$$
\begin{aligned}
\pi_3: & (\delta_1,\dots,\delta_p\bigb  \beta_ 1+1/2, \dots, \beta_p +1/2) &&\text{ with } \sum(\delta_i+\beta_j)  \in 2\bbZ , \\
\pi_4:   & (\delta_1,\dots,\delta_p\bigb  \beta_ 1+1/2, \dots, \beta_p +1/2) &&\text{ with } \sum (\delta_i+\beta_j) \in 2\bbZ +1,
\end{aligned}
$$
satisfying $\beta_1\ge \delta_1\ge \dots\ge\beta_p\ge |\delta_p|, \beta_i, \delta_j \in \bbZ$;

$$
\begin{aligned}
\tau_3: & (\delta_1,\dots,\delta_p\bigb  \beta_ 1+1/2, \dots,  -\beta_p -1/2) &&\text{ with } \sum( \delta_i+\beta_j )  \in 2\bbZ+1, \\
\tau_4:   & (\delta_1,\dots,\delta_p\bigb  \beta_ 1+1/2, \dots, -\beta_p -1/2) &&\text{ with } \sum(\delta_i+\beta_j ) \in 2\bbZ ,
\end{aligned}
$$
satisfying $\beta_1\ge \delta_1\ge \dots\ge\beta_p\ge |\delta_p|, \beta_i, \delta_j \in \bbZ$;

$$
\begin{aligned}
\sigma _1: & (\delta_1,\dots,\delta_p\bigb  \beta_ 1+1/2, \dots,  \beta_p +1/2)  &&\text{ with } \sum(\delta_i+\beta_j)  \in 2\bbZ, \\
\sigma _2:   & (\delta_1,\dots,\delta_p\bigb  \beta_ 1+1/2, \dots, \beta_p +1/2) &&\text{ with } \sum(\delta_i+\beta_j  )\in 2\bbZ +1,
\end{aligned}
$$
satisfying $\beta_1\ge \delta_1\ge \dots\ge\beta_p\ge |\delta_p|, \beta_i, \delta_j \in \bbZ +1/2$;

$$
\begin{aligned}
\xi_1: & (\delta_1,\dots,\delta_p\bigb  \beta_ 1+1/2, \dots,  -\beta_p -1/2) &&\text{ with } \sum(\delta_i+\beta_j) \in 2\bbZ+1, \\
\xi_2:   & (\delta_1,\dots,\delta_p\bigb  \beta_ 1+1/2, \dots, -\beta_p -1/2) &&\text{ with } \sum(\delta_i+\beta_j) \in 2\bbZ ,
\end{aligned}
$$
satisfying $\beta_1\ge \delta_1\ge \dots\ge\beta_p\ge |\delta_p|, \beta_i, \delta_j \in \bbZ +1/2$.

Another eight representations are obtained by restriction from $\tu{Spin}(2p,2p+1)$, with $\tu{K}$-structure:
$$
\begin{aligned}
\pi_1: & ( \beta_ 1+1/2, \dots, \beta_p +1/2\bigb \delta_1,\dots,\delta_p) &&\text{ with } \sum(\delta_i+\beta_j  ) \in 2\bbZ , \\
\pi_2:   & ( \beta_ 1+1/2, \dots, \beta_p +1/2\bigb \delta_1,\dots,\delta_p) &&\text{ with } \sum(\delta_i+\beta_j )\in 2\bbZ +1,
\end{aligned}
$$
satisfying $\beta_1\ge \delta_1\ge \dots\ge\beta_p\ge |\delta_p|, \beta_i, \delta_j \in \bbZ$;

$$
\begin{aligned}
\tau_1: & ( \beta_ 1+1/2, \dots,  -\beta_p -1/2\bigb \delta_1,\dots,\delta_p) &&\text{ with } \sum(\delta_i+\beta_j) \in 2\bbZ+1, \\
\tau_2:   & (  \beta_ 1+1/2, \dots, -\beta_p -1/2\bigb \delta_1,\dots,\delta_p) &&\text{ with } \sum(\delta_i+\beta_j ) \in 2\bbZ ,
\end{aligned}
$$
satisfying $\beta_1\ge \delta_1\ge \dots\ge\beta_p\ge |\delta_p|, \beta_i, \delta_j \in \bbZ$;

$$
\begin{aligned}
\sigma _3: & (   \beta_ 1+1/2, \dots,  \beta_p +1/2\bigb \delta_1,\dots,\delta_p) &&\text{ with } \sum( \delta_i+\beta_j) \in 2\bbZ, \\
\sigma _4:   & (  \beta_ 1+1/2, \dots, \beta_p +1/2\bigb \delta_1,\dots,\delta_p) &&\text{ with } \sum(\delta_i+\beta_j )\in 2\bbZ +1,
\end{aligned}
$$
satisfying $\beta_1\ge \delta_1\ge \dots\ge\beta_p\ge |\delta_p|, \beta_i, \delta_j \in \bbZ +1/2$;

$$
\begin{aligned}
\xi_3: & (  \beta_ 1+1/2, \dots,  -\beta_p -1/2\bigb \delta_1,\dots,\delta_p) &&\text{ with }\sum( \delta_i+\beta_j) \in 2\bbZ+1, \\
\xi_4:   & ( \beta_ 1+1/2, \dots, -\beta_p -1/2 \bigb \delta_1,\dots,\delta_p) &&\text{ with } \sum(\delta_i+\beta_j)\in 2\bbZ ,
\end{aligned}
$$
satisfying $\beta_1\ge \delta_1\ge \dots\ge\beta_p\ge |\delta_p|, \beta_i, \delta_j \in \bbZ +1/2$.

\smallskip
The representations with the same subscripts have the same central character.\\

\item[Case 2. $\tu{G_0}=\tu{Spin}(2p,2q), 2p=2k+2+2r_+, 2q=2k+2$, {$(r_+=p-q>0)$} ]\ 

{(Case 3 is corresponding to Case 2 with $+$ replaced by $-$.)}

There are {four} representations {obtained by restriction from $\tu{Spin}(2p+1,2q)$}, with $\tu{K}$-structure:
$$
\begin{aligned}
{\pi_1: }\ &(\delta_1,\dots ,\delta_q,0,\dots ,0\bigb \beta_1+r_++1/2,\dots ,\beta_q+r_++1/2) &&\text{with  }  \sum(\delta_i+\beta_j )\in 2\bbZ, \\
{\pi_2: }\ &(\delta_1,\dots ,\delta_q,0,\dots ,0\bigb \beta_1+r_++1/2,\dots ,\beta_q+r_++1/2) &&\text{with  }  \sum(\delta_i+\beta_j)\in 2\bbZ +1, \\
{\sigma_1:} \   &\big(\delta_1,\dots ,\delta_q,0,\dots ,0\bigb \beta_1+r_++1/2,\dots ,-(\beta_q+r_++1/2)\big)&&\text{with  }  \sum(\delta_i+\beta_j)\in 2\bbZ +1, \\
{\sigma_2:} \   &\big(\delta_1,\dots ,\delta_q,0,\dots ,0\bigb \beta_1+r_++1/2,\dots ,-(\beta_q+r_++1/2)\big)&&\text{with  }  \sum(\delta_i+\beta_j)\in 2\bbZ, \\
\end{aligned}
$$
satisfying $\beta_1\ge \delta_1\ge \beta_1\ge \dots \ge \beta_q\ge \delta_q\ge 0$, ${\beta_i, \delta_j\in \bbZ}$.

\smallskip
There {are two} representations  {obtained by restriction from $\tu{Spin}(2p,2q+1)$},  with $\wti K$-structure:
$$
\begin{aligned}
{\tau_1:} \ & (\beta_1,\dots ,\beta_q,0,\dots ,0   \bigb \delta_1, \dots, \delta_q )   &&\text{ with } \sum (\beta_i+\delta_j)\in 2\bbZ, \\
{\tau_2:} \ &  (\beta_1,\dots ,\beta_q,0,\dots ,0   \bigb \delta_1, \dots, \delta_q )  &&\text{ with } \sum (\beta_i+\delta_j)\in 2\bbZ +1, 
\end{aligned}
$$
satisfying  {$\beta_1 +r_+ -1/2 \ge \delta_1\ge \dots \ge \beta_q +r_+ -1/2 \ge | \delta_q |$ , $\beta_i\in \bbZ, \delta_j\in \bbZ+1/2$.}

\medskip
{The representations $\pi_i, \sigma_i, \tau_i$ have the same central character for $i=1,2$. }

\smallskip

\item[Case 4. $\tu{G_0}=\tu{Spin}(2p+1,2p+1)$, $2p+1=2k+1$, $(r_+=1 \text{ or  } r_- =1)$ ]\

{There is one representation (which may decompose further) obtained by restriction from $\tu{Spin}(2p+2,2p+1)$}, with  $\wti K$-structure
$$
 \pi_1:(\delta_1,\dots ,\delta_{p}\bigb \beta_1 +1/2 ,\dots ,\beta_p+1/2)
$$
{satisfying $\beta_1\ge \delta _1\ge \dots\ge \beta_p\ge \delta_p\ge 0,$ and $\beta_i, \delta _j \in\bb Z.$ }

\medskip
There is another representation (which may decompose further) {obtained by restriction from $\tu{Spin}(2p+1,2p+2)$}, with $\wti K$-structure 
$$
{\pi_2: }(\beta_1 +1/2 ,\dots ,\beta_p+1/2 \bigb\delta_1,\dots ,\delta_{p})
$$
{satisfying $\beta_1\ge \delta _1\ge \dots\ge \beta_{p}\ge \delta_{p}\ge 0,$ and $\beta_i, \delta _j \in\bb Z.$ }
The representations $\pi_1$ and $\pi_2$ have different central characters. 

\smallskip

\item[Case 5. $\tu{G_0}=\tu{Spin}(2p+1, 2q-1)$, $2p+1=2k+3+2r_+, 2q-1=2k+1$, {($r_+=p-q$)} ]\
{(Case 6 is corresponding to Case 5 with $+$ replaced by $-$.)}

 {
 When $r_+=0$, there are two representations obtained by restriction from $\tu{Spin}(2p+1,2p)$, with $K$-structure:
$$
\begin{aligned}
{\pi_1:} \ & (\beta_1,\dots ,\beta_p  \bigb \delta_1, \dots, \delta_{p-1} )   &&\text{ with } \beta_i\in \bbZ, \delta_j\in \bbZ+1/2\\
{\pi_2:} \ &  (\beta_1,\dots ,\beta_p  \bigb \delta_1, \dots, \delta_{p-1} )  &&\text{ with }  \beta_i\in \bbZ +1/2, \delta_j\in \bbZ, 
\end{aligned}
$$ 
 satisfying { $\beta_1 +1/2\ge \delta _1\ge \dots\ge \beta_{p-1} +1/2 \ge \delta_{p-1}\ge \beta_p+1/2$}. The representations $\pi_1$ and $\pi_2$ have different central characters. }
 
 \medskip
 When $r_+>0$,
there is a representation  obtained by restriction from $\tu{Spin} (2p+1, 2q)$, with $K$-structure:
{
$$
\pi: (\beta_1,\dots ,\beta_{q},\underbrace{0,\dots ,0} _{p-q}\bigb \delta_1,\dots ,\delta _{q-1})
$$

satisfying $\beta_1+r_++1/2\ge \delta _1 \ge \dots \ge \beta_{q-1}+r_++1/2\ge \delta_{q-1}\ge \beta_q+r_+ +1/2,$ and $\beta_i\in \bbZ, \delta_j\in \bbZ+1/2$. 
}

\smallskip
\item[Case 7. $\tu{G_0}=\tu{Spin}(2p+1,2q-1)$, $2p+1=2k+1+2r_+, 2q-1=2k+3$, {($r_+=p-q+2$)}] \
{ (Case 8 is corresponding to Case 7 with $+$ replaced by $-$.)}

There is one representation  {(which may decompose further)} obtained by restriction from $\tu{Spin}(2p+2,2q-1)$, with $\wti K$-structure 
$$
{\pi: }(\delta_1,\dots ,\delta_{q-1},\underbrace{0,\dots ,0} _{p-q+1}\bigb \beta_1 +r_+ -1/2 ,\dots ,\beta_{q-1}+r_+ -1/2)
$$
{satisfying $\beta_1\ge \delta _1\ge \dots\ge \beta_{q-1}\ge \delta_{q-1}\ge 0,$ and $\beta_i, \delta _j \in\bb Z.$ }

\end{description}
\end{theorem}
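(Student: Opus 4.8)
The plan is to obtain the representations by restriction from the $\tu{Spin}(p',q')$-modules of \cite{LS} recalled above, to compute the resulting $\tu K$-spectrum by classical branching, and to match the outcome with the regular functions $R(\calO,\psi)$ computed in Section \ref{s:regsec}; the matching is what forces each constituent to be associated to $\calO$.

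First I would fix, in each of the eight cases, an embedding $\tu{Spin}(a,b)\hookrightarrow\tu{Spin}(a',b')$ lifting the standard equal-rank inclusion $Spin(a,b)\subset Spin(a',b')$, where $\{a',b'\}$ is obtained from $\{a,b\}$ by enlarging one entry by $1$; which entry is enlarged, hence which of the two $Spin$ factors is restricted, is dictated by the case, and in Case 1 one uses both $\tu{Spin}(2p+1,2p)$ and $\tu{Spin}(2p,2p+1)$, accounting for the two families of eight. Using the Clifford realizations of Section \ref{ss:clifford} I would check that the distinguished central element of the cover transports correctly, so that a genuine $(\fk g',\tu{K'})$-module restricts to a genuine $(\fk g,\tu K)$-module; the infinitesimal characters correspond under the embedding (with $\la'$ and the $\la$ of \eqref{eq:inflchar} differing by $\rho(B_n)-\rho(D_n)=(1/2,\dots,1/2)$ once shifted by $\rho$), and one checks that the constituents carry infinitesimal character $\la$ and the maximal primitive ideal with that character by the methods of \cite{B} (this also yields the coincidences of central characters recorded in the statement).

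On the level of $\tu K$ the restriction is branching in one factor, $Spin(m+1,\bbC)\downarrow Spin(m,\bbC)$, with $m$ even or odd as appropriate. This branching is multiplicity free and governed by interlacing: from $B$-type to $D$-type, $V(\la_1,\dots,\la_m)$ restricts to $\bigoplus V(\delta_1,\dots,\delta_m)$ with $\la_1\ge\delta_1\ge\la_2\ge\dots\ge\la_m\ge|\delta_m|$, and dually from $D$-type to $B$-type. Substituting this into the $\tu{K'}$-spectra listed above, and re-indexing by letting $\beta$ denote the \cite{LS} parameter and $\delta$ the branching parameter (or conversely, depending on which factor is restricted), the nested union over the two parameter sets collapses to precisely the single interlacing region displayed for each $\pi_i,\sigma_i,\tau_i,\xi_i$, with the stated integrality of the $\beta_i,\delta_j$.

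Finally the restriction need not be irreducible, so I would split it according to the action of the finite center $Z(\tu K)$, generated by $(\pm I,\pm I)$ modulo the covering relation: a weight $(\cdots\mid\cdots)$ is a $Z(\tu K)$-eigenvector of sign governed by the parity of $\sum(\beta_i+\delta_j)$, which separates the $\sum(\beta_i+\delta_j)\in 2\bbZ$ from the $2\bbZ+1$ pieces; in Cases 4 and 7,8, and in Cases 5,6 with $r_+=0$, it is instead the nontrivial element of $A_{\tu K}(\calO)$ (Proposition \ref{comp-group-real}) that acts by the displayed sign. Comparing these $\tu K$-spectra termwise with the corresponding computations of Section \ref{s:regsec} (Propositions \ref{split-even-orbit}, \ref{decom-I}, \ref{p:inv1}, \ref{p:inv5}, \ref{p:inv4}, \ref{p:inv2} and their analogues) identifies each constituent with the appropriate $R(\calO,\psi)$ and hence shows it is associated to $\calO$. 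I expect the hard part to be exactly this last bookkeeping — tracking which half-integral shift and which parity attaches to which constituent, uniformly across all eight cases — compounded by the phenomenon, only transparent once the cohomological-induction construction of Section 7 is in hand, that in Cases 4 and 7,8 the restriction from the odd Spin group genuinely breaks up further than $Z(\tu K)$ alone detects.
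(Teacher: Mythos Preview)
Your overall architecture---restrict the \cite{LS} modules from $\tu{Spin}(a',b')$ to $\tu{Spin}(a,b)$, compute the $\tu K$-spectrum via the multiplicity-free interlacing branching rule $Spin(m+1)\downarrow Spin(m)$, then split according to the action of $Z(\tu K)$---is exactly the paper's approach, and your case-by-case bookkeeping is on target.

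There is one genuine gap. Your treatment of the infinitesimal character (``$\la'$ and $\la$ differ by $\rho(B_n)-\rho(D_n)$ once shifted by $\rho$, and one checks by the methods of \cite{B}'') does not do the work required. Restricting a $(\fk g',K')$-module to $\fk g\subset\fk g'$ does not produce a module with a $\rho$-shifted infinitesimal character; the center $Z(U(\fk g))$ is not simply related to $Z(U(\fk g'))$ under the inclusion. The paper handles this in a separate subsection (``Infinitesimal Character and Restriction'') by a concrete argument: it identifies the spherical $(\fk g',K')$-module with $U(\fk g')/\calI'$, observes that the image of $U(\fk g)$ lands in the unique spherical $(\fk g,K)$-submodule $\pi_0$, and then computes the infinitesimal character of $\pi_0$ by restricting the Harish-Chandra spherical function $\phi'$ to $A$ and proving the Weyl-denominator identity
\[
\sum_{s\in W(D_n)}\ep(s)e^{s\la}\cdot\prod_i\bigl(e^{\ep_i/2}-e^{-\ep_i/2}\bigr)=\sum_{w\in W(B_n)}\ep(w)e^{w\la'}.
\]
This is what forces every constituent of every restriction to have infinitesimal character $\la$, and with it the maximal primitive ideal; without it you have not shown the restricted modules are ``attached to $\calO$'' in the sense of the theorem.

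A smaller point: the matching with $R(\calO,\psi)$ is not part of the proof of this theorem in the paper---it is a separate section that \emph{uses} the theorem. You do not need it to establish the $\tu K$-structure of the restrictions, and invoking it here makes the logic circular (the $R(\calO,\psi)$ side does not by itself tell you the restrictions are irreducible or have the stated $\tu K$-types).
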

\subsection{Proof of Theorem \ref{t:main}}
\subsubsection*{Case 1} Let $p'=2p+1, q'=2p$, so 
$p'-1=q',$ and 
$\frac{p'-q'}{2}=\frac12.$ The  restrictions of the four representations of
$\tu{Spin}(2p+1,2p)$ are:
$$
\begin{aligned}
&\left(\delta_1,\dots ,\delta_{p}\bigb \beta_1+1/2,\dots ,\pm(\beta_p+1/2)\right),
&&\beta_i,\delta_j\in\bb Z,\\
&\left(\delta_1,\dots ,\delta_{p}\bigb \beta_1+1/2,\dots ,\pm(\beta_p+1/2)\right),
&&\beta_i,\delta_j\in\bb Z+1/2,  
\end{aligned}
$$
satisfying $\beta_1\ge \delta_1\ge\dots \ge \beta_p\ge |\delta_p|.$
Similarly we get another four representations by restricting from
$\tu{Spin}(2p,2p+1).$  

 The center of $\tu{Spin}(2p,2p)$ does not act by a scalar, so these
  representations decompose further into the sixteen listed in the
  theorem. Also, the highest weights of the $\wti K$-types of an
  irreducible representation must differ by the root lattice.

\subsubsection*{Case 2, 3} We consider  $a=2p=2k+2+2r_+, b=2q=2k+2, r_+
=p-q>0$ only.

Let {$p'= 2p+1, q'=2q.$} 
This is the case $p'-1>q'$, and so $\frac{p'-q'}{2}=r_++1/2$. The
restrictions of the two representations of $\tu{Spin}(p',q')$ are 
$$
\begin{aligned}
&(\delta_1,\dots ,\delta_{q},0,\dots ,0\bigb \beta_1+r_++1/2,\dots ,\beta_{q}+r_++1/2)\\
&\left(\delta_1,\dots ,\delta_{q},0,\dots ,0\bigb \beta_1+r_++1/2,\dots ,-(\beta_{q}+r_++1/2)\right)
\end{aligned}
$$
satisfying $\beta_1\ge \delta_1\ge \beta_1\ge \dots \ge \beta_q\ge \delta_q\ge 0$, $\beta_i,\delta_j\in \bbZ$.

\medskip
Let $p'=2k+3 =2q+1, q'=2k+2+2r_+ =2p.$ This is the case $p'-1<q'$, and
$\frac{q'-p'}{2}=r_+-1/2$. The restriction of the single representation
 of $\tu{Spin}(p',q')$ is 
$$
(\beta_1,\dots ,\beta_{q},0,\dots ,0   \bigb \delta_1,\dots,\delta_q)
$$
satisfying {$\beta_1 +r_+ -1/2 \ge \delta_1\ge \dots \ge \beta_q +r_+ -1/2 \ge | \delta_q |$ , $\beta_i\in \bbZ, \delta_j\in \bbZ+1/2$ .}

The center of $\tu{Spin}(2p,2q)$ does not act by a scalar, so these
  representations decompose further into the {six} listed in the
  theorem. Also, the highest weights of the $\wti K$-types of an
  irreducible representation must differ by the root lattice.

\subsubsection*{Case 4} Thus  $a=2p+1=2k+1$ and
$b=2q-1=2k+1$. 

Let $p'=2p+2$ and $q'=2p+1.$ There are two representations, and they
restrict to the same
$$
(\delta_1,\dots \delta_p\bigb \beta_1+1/2,\dots ,\beta_p+1/2) 
$$ 
satisfying $\beta_1\ge\delta_1\dots \beta_p\ge\delta_p\ge 0.$ 

Similarly for $p'=2p+1$ and $q'=2p+2.$ 
{ These representations 
decompose further, not detected by the action of the center; see
Conjecture \ref{conj} and the introduction. Their
$\wti K$-structure differs by whether $\sum \delta_i +\sum\beta_j$ is
in the root lattice or not. We write $\pi=\pi^e +\pi^o.$

}
\subsubsection*{Case 5, 6} We consider $a=2p+1=2k+3+2r_+, b=2q-1=2k+1$,
$r_+=p-q\ge 0$ only.} 

Let $p'=2p+1= 2k+3+2r_+, q'=2q= 2k+2.$ 
When $r_+>0,$  $p'-1> q'$ and $\frac{p'-q'}{2}=r_++1/2.$ The
restrictions of the two representations that occur for  $Spin(p',q')$ coincide:
$$
(\beta_1,\dots ,\beta_{q},\underbrace{0,\dots ,0} _{p-q}\bigb \delta_1,\dots ,\delta _{q-1})
$$
such that $\beta_1+r_++1/2\ge \delta _1 \ge \dots \ge \beta_{q-1}+r_++1/2\ge \delta_{q-1}\ge \beta_q+r_+ +1/2,$ and $\beta_i\in \bbZ, \delta_j\in \bbZ+1/2$. 

{ The case when $r_+=0$ satisfies $p'-1=q'.$ In addition to the
representation above,  there are two more representations. Their
restriction has  $\wti K$-structure
$$
(\beta_1,\dots ,\beta_{p}\bigb \delta_1,\dots ,\delta_{p-1})
$$
satisfying $\beta_1+1/2\ge\delta_1\ge\beta _2+1/2\ge \dots \ge \delta_{p-1}\ge \beta_p+1/2.$
}


\subsubsection*{Case 7, 8}  
We consider  $a=2p+1=2k+1+2r_+, b=2q-1=2k+3,  r_+=p-q+2\ge 0$ only. 

\medskip
Let $p'= 2q-1= 2k+3, q'=2p+2= 2k+2+2r_+.$ 
In this case, $p'-1<q'$, and $\frac{q'-p'}{2} = r_+-1/2$. The
  representation of $\tu{Spin}(p',q')$ restricts to
$$
{(\delta_1,\dots ,\delta_{q-1}, \underbrace{0,\dots ,0} _{p-q+1} \bigb \beta_1 +r_+-1/2,\dots ,\beta_{q-1}+r_+-1/2 )}
$$
satisfying $\beta_1\ge \delta_1\ge \beta_1\ge \dots \ge \beta_{q-1}\ge \delta_{q-1}\ge 0,$ $\beta_i, \delta_j\in \bbZ$.
As in Case 4, this representation
decomposes further, not detected by the action of the center; see
Conjecture \ref{conj} and the introduction. We write $\pi=\pi^e +\pi^o.$

{
\begin{conj}\label{conj}
Each representation in Case 4, Cases
7 and 8  decomposes into two irreducible factors; we write
$\pi=\pi^e+\pi^o.$ 
\end{conj}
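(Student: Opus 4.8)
The plan is to prove Conjecture \ref{conj} by exhibiting, for each of Case 4 and Cases 7, 8, two \emph{independently constructed} irreducible genuine $(\fk g,\wti K)$-modules $\pi^e$ and $\pi^o$, and then showing that the $\wti K$-spectrum of the restricted representation $\pi$ of Theorem \ref{t:main} equals $[\pi^e]+[\pi^o]$ in the representation ring of $\wti K$. The restriction $\pi$ is unitary, being the restriction of a unitary representation of $\tu{Spin}(p',q')$ to the closed subgroup $\tu{Spin}(a,b)$, and by Theorem \ref{t:main} this restriction is $\wti K$-admissible with finite (in fact $\le 1$) multiplicities; hence $\pi$ is completely reducible, and once its $\wti K$-spectrum is known to split as $[\pi^e]+[\pi^o]$ with $\pi^e,\pi^o$ irreducible and inequivalent, it follows that $\pi\cong\pi^e\oplus\pi^o$, and in particular that there are exactly two factors.

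The modules $\pi^e,\pi^o$ would be produced by cohomological induction from highest weight modules, as in Section 7: fix a $\theta$-stable parabolic $\fk q=\fk l+\fk u$ whose Levi $\fk l$ carries a highest weight module with infinitesimal character $\la$ of (\ref{eq:inflchar}) and annihilator the unique maximal primitive ideal at $\la$; the two modules correspond to the two characters of the disconnected centralizer, equivalently to the two cosets of $\wti K$-highest weights distinguished by the parity of $\sum(\delta_i+\beta_j)$. One then verifies, in the spirit of \cite{Kn} and \cite{T}: (i) nonvanishing and irreducibility of the derived functor modules; (ii) that they are genuine, unitary, of infinitesimal character $\la$, with the correct annihilator and associated to the relevant real form of $\calO_c$, so conditions (1), (2) of the definition of unipotent hold; and (iii) a computation of their $\wti K$-spectra by the Blattner-type formula. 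The expected outcome of (iii) is that $\pi^e|_{\wti K}\cong R(\calO,\psi_1)$ and $\pi^o|_{\wti K}\cong R(\calO,\psi_2)$, with $\psi_1,\psi_2$ the characters of $C_{\wti K}(\calO)$ appearing in Section \ref{s:regsec} (\cf Propositions \ref{split-even-orbit} and \ref{decom-I}); that is, precisely the two pieces into which the $\wti K$-structure of $\pi$ in Theorem \ref{t:main} separates by parity. Comparing that $\wti K$-structure with $R(\calO,\psi_1)\oplus R(\calO,\psi_2)=R(\wti\calO,Det^\chi)$ then gives $\pi|_{\wti K}\cong\pi^e|_{\wti K}\oplus\pi^o|_{\wti K}$.

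To pin down that there are \emph{exactly} two constituents, I would feed this into the count of genuine representations from the coherent continuation computation of Section 6: that computation shows the list of genuine irreducible modules of infinitesimal character $\la$ associated to real forms of $\calO_c$ is complete and consists of the modules enumerated in Section 4 together with $\pi^e$ and $\pi^o$ in Cases 4, 7, 8. As $\pi$ is a unitary module of infinitesimal character $\la$ associated to such a real form, it is a direct sum of members of this list; matching $\wti K$-spectra as above forces the only members occurring to be $\pi^e$ and $\pi^o$, each with multiplicity one, whence $\pi=\pi^e\oplus\pi^o$. That the splitting is invisible to the center --- already reflected in Section \ref{s:regsec}, where $R(\wti\calO,Det^\chi)$ splits under $A_{\wti K}(\calO)\cong\bbZ_2$ by the parity of $\sum a_i+\sum b_j+k\chi$ rather than by any central character --- is what makes the statement nontrivial and matches the discussion in the introduction.

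The main obstacle is (i)--(iii) for the cohomologically induced modules: the $\theta$-stable parabolic $\fk q$ need not place the inducing highest weight module in the good, or even weakly fair, range, so nonvanishing, irreducibility, and above all unitarity of the $A_{\fk q}(\cdot)$ do not follow from the standard theorems and must be obtained directly --- e.g.\ by a signature computation, or by bounding from above the number of irreducible constituents of $\pi$ (using the primitive ideal and the fact that, by the codimension condition discussed in the introduction, the associated cycle of $\pi$ contains only the single orbit $\calO$, with controlled multiplicity) and matching that bound against the two exhibited constituents. Equivalently, the crux is to show that the $\fk s$-action on $\pi$ does not mix the parity-even and parity-odd $\wti K$-isotypic subspaces; this cannot be detected from weights, central characters, or the lattice generated by the weights of $\fk s$, all of which fail to separate the two halves, so it genuinely requires either the independent construction of $\pi^e,\pi^o$ or an a priori constituent count.
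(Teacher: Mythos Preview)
Your proposal is essentially the approach the paper itself indicates. Note, however, that in the paper this statement is explicitly listed as a \emph{conjecture}, not a theorem: immediately after it the authors write ``The derived functors construction of the representations verifies this conjecture. Since we have omitted the details of this alternate construction, we list the above as a conjecture.'' So there is no proof in the paper for you to compare against; what the paper supplies is exactly the outline you have reconstructed --- the cohomological induction of Section~7 produces modules (i) and (ii) in Cases~4, 6, 8 of Theorem~\ref{t:kstruct1234} whose $\wti K$-spectra are the parity-even and parity-odd halves of $\pi|_{\wti K}$ --- together with the count of Section~6, which predicts the right number of irreducibles.

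You have correctly identified the genuine gap: the inducing data are not in the good (or weakly fair) range, so irreducibility and nonvanishing of $\Pi^j(L(\mu))$ do not follow from the standard theorems in \cite{KV}, and this is precisely the ``omitted detail'' the paper alludes to. Your fallback of bounding the number of constituents of $\pi$ via the coherent continuation count of Section~6 is sound and is how the paper frames the completeness of its list; combined with the $\wti K$-spectrum computation of Theorem~\ref{t:kstruct1234} and the unitarity/admissibility of $\pi$, this would close the argument. What remains, and what you have honestly flagged, is to supply the irreducibility of the cohomologically induced modules directly --- the paper does not do this either.
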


The derived functors construction of the representations verifies this
conjecture. Since we have omitted the details of this alternate
construction, we list the above as a conjecture.

}

\subsection{Infinitesimal Character and Restriction} Let $\fk g=\fk{so}(2n,\bb C)\subset\fk
g'=\fk{so}(2n+1,\bb C),$ and $G=SO(2n,\bb C)$ and $G'=SO(2n+1,\bb C)$ the
corresponding groups  sharing a ($\theta$-stable) Cartan subgroup
$H=TA$. Let $\calI'$ be the unique maximal primitive ideal with
infinitesimal character 
$$
\la'=\left(n-k-1-1/2,\dots
  3/2,1/2 ; k+1,\dots , 1\right).
$$
There is a unique $(\fk g',K')$-module $\pi'$ with these properties,
and it is spherical unitary. In particular $\pi'=U(\fk g')/\calI'$. 
Let $\pi$ be any module with annihilator $\calI'.$ 
Then $\fk g$ acts  via the map $X\in\fk g\mapsto
X\cdot 1\in U(\fk g')/\calI'.$ Write $\pi'=\pi_0+\pi_1$ where $\pi_0$
is the unique spherical irreducible $(\fk g,K)$-submodule. The image
of $U(\fk g)$ is contained in $\pi_0.$  
We aim to show that $\pi_0$ has infinitesimal character
$\la=(n-k-2,\dots ,0 ;  k+1/2,\dots ,3/2, 1/2).$ Then all the factors of the
restriction of $\pi$ to $\fk g$ have this infinitesimal character as
well.
In particular this is true for the factors in the restrictions of the
modules of  $\fk{so}(p',q')$ considered 
in Theorem \ref{t:main}. 

\smallskip
It is enough to check the action of $\fk g$ on the
spherical function correponding to $\pi'$. By \cite{H} pages 31-32,
its restiction to $A$  is (up to a multiple), 
$$
\phi'=\frac{\sum_{w\in W(B_n)}\ep(w)e^{w\la'}}{\prod
\left(e^{\ep_i/2}-e^{-\ep_i/2}\right)\cdot\Delta}
$$   
with $\Delta=\prod_{\al\in R(D_n)} \left(e^{\al/2}-e^{-\al/2}\right)$
and $R(D_n)$ the
standard positive roots for type $D_n.$ The claim follows if we show that the
restriction of $\phi'$  to $G$ is the spherical function 
$$
\phi=\frac{\sum_{s\in W(D_n)}\ep(s)e^{s\la}}{\Delta}.
$$  
The next Lemma completes the proof.
\begin{lemma}
$$
\sum_{s\in  W(D_n)}\ep(s)e^{s\la}\cdot\prod\left(e^{\ep_i/2}-e^{-\ep_i/2}\right)=
\sum_{w\in W(B_n)}\ep(w)e^{w\la'}.  
$$
\end{lemma}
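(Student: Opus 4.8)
The plan is to reduce the identity to a telescoping computation with determinants. First I record the key numerical observation: comparing the two tuples coordinate by coordinate shows $\lambda'=\lambda+\eta$, where $\eta=(\tfrac12,\dots,\tfrac12)$. Since adding a constant vector to all coordinates preserves the sorted order, applying one permutation $\pi$ to the coordinates of $\lambda$ and the same $\pi$ to those of $\lambda'$ multiplies both $\sum_{s\in W(D_n)}\ep(s)e^{s\lambda}$ and $\sum_{w\in W(B_n)}\ep(w)e^{w\lambda'}$ by $\ep(\pi)$. Hence it suffices to prove the identity when $\lambda=(\mu_1>\dots>\mu_{n-1}>0)$ is written in decreasing order, with $\{\mu_1,\dots,\mu_{n-1}\}=\{1,2,\dots,n-k-2\}\cup\{\tfrac12,\tfrac32,\dots,k+\tfrac12\}$ and last coordinate $0$; note $k>0$ and $n\ge 2k+2$ force exactly one zero coordinate.

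Next I pass to determinants. Writing $x_i=e^{\ep_i}$, one has the classical formulas $\sum_{w\in W(B_n)}\ep(w)e^{w\nu}=\det(x_i^{\nu_j}-x_i^{-\nu_j})_{i,j}$ and $\sum_{s\in W(D_n)}\ep(s)e^{s\nu}=\tfrac12[\det(x_i^{\nu_j}+x_i^{-\nu_j})+\det(x_i^{\nu_j}-x_i^{-\nu_j})]$, each verified by checking that both sides are $W$-anti-invariant and share the leading monomial. Because our $\lambda$ has last coordinate $0$, the determinant with the minus signs has a zero column and vanishes, while in the other one the last column is $(2,\dots,2)^t$; factoring out the $2$ gives $\sum_{s\in W(D_n)}\ep(s)e^{s\lambda}=\det[\,(x_i^{\mu_j}+x_i^{-\mu_j})_{1\le i\le n,\ 1\le j\le n-1}\ \mid\ (1)_{1\le i\le n}\,]$.

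Then I multiply this determinant by $\prod_i(e^{\ep_i/2}-e^{-\ep_i/2})$, absorbing the $i$-th factor into the $i$-th row. With $F_i(t):=e^{t\ep_i}-e^{-t\ep_i}$, the $(i,j)$ entry becomes $F_i(\mu_j+\tfrac12)-F_i(\mu_j-\tfrac12)$ for $j\le n-1$, and the last column becomes $(F_i(\tfrac12))_i$. Now column operations telescope: since $\{\mu_j\}$ is the disjoint union of the integer staircase $\{1,\dots,n-k-2\}$ and the half-integer staircase $\{\tfrac12,\dots,k+\tfrac12\}$, replacing the column for an integer $\mu_j=m$ by the sum of the columns for $\mu_j=1,\dots,m$ turns it into $F_i(m+\tfrac12)-F_i(\tfrac12)$, and adding the last column $F_i(\tfrac12)$ makes it $F_i(m+\tfrac12)$; symmetrically the column for $\mu_j=\ell+\tfrac12$ telescopes to $F_i(\ell+1)-F_i(0)=F_i(\ell+1)$ using $F_i(0)=0$. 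After these operations the columns are exactly $(F_i(v))_i$ for $v$ ranging over $\{\tfrac12,\tfrac32,\dots,n-k-\tfrac32\}\cup\{1,2,\dots,k+1\}$, which is precisely the multiset of coordinates of $\lambda'$; hence the determinant equals $\pm\sum_{w\in W(B_n)}\ep(w)e^{w\lambda'}$. The sign is pinned down by comparing the coefficient of the top monomial $e^{\lambda'}=e^{\lambda}e^{\eta}$, which is $+1$ on both sides.

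I expect the main work to be the sign bookkeeping and the boundary case $n=2k+2$, where the two staircases abut (the telescoping is identical, only the index ranges degenerate). The conceptual point the argument really uses is that the single zero coordinate of $\lambda$ kills one of the two $D_n$-determinants and that the nonzero part of $\lambda$ is exactly a union of two unit-step staircases, so multiplying by $\prod_i(e^{\ep_i/2}-e^{-\ep_i/2})$ telescopes into a single $B_n$ Weyl numerator; this also explains why the naive analogue $\sum_{W(D_n)}\ep(s)e^{s\nu}\cdot\prod_i(e^{\ep_i/2}-e^{-\ep_i/2})=\sum_{W(B_n)}\ep(w)e^{w(\nu+\eta)}$ fails for a general $\nu$.
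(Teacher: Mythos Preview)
Your argument is correct and constitutes a genuinely different proof from the paper's.

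The paper argues by observing that both sides are $W(D_n)$-skew, then comparing the $D_n$-dominant regular weights that can appear on each side. On the right-hand side these are just the sorted $\lambda'$ with last coordinate $\pm\tfrac12$; for the left-hand side the paper rewrites it as $\sum_{\eta\in\{\pm 1/2\}^n}(-1)^{\#(-)}A^{D}_{\lambda+\eta}$ and asserts that only $\eta=(\tfrac12,\dots,\tfrac12)$ and $\eta=(\tfrac12,\dots,\tfrac12,-\tfrac12)$ contribute. As written this last step is a bit breezy: for instance with $n=5,\ k=1$ one has $\lambda_{\mathrm{sorted}}=(2,\tfrac32,1,\tfrac12,0)$, and $\eta=(+,+,+,-,+)$ gives $\lambda+\eta=(\tfrac52,2,\tfrac32,0,\tfrac12)$, which is $D_5$-regular; it does cancel against $\eta=(+,+,+,-,-)$, but the paper does not spell out this pairing. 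Your determinant approach sidesteps all of that bookkeeping.

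What your method buys is a clean mechanical computation: once you notice that the single zero coordinate kills one of the two $D_n$-determinants and leaves a constant column, the identity becomes a telescoping of columns along the two arithmetic progressions in $\lambda$. It also makes transparent why the specific ``staircase'' shape of $\lambda$ is essential, which you rightly flag in your final remark. What the paper's approach buys (when carried out carefully) is a proof that avoids determinant formulas entirely and stays inside the Weyl-group combinatorics. Both lead to the same place; yours is more self-contained. One small remark: since $\lambda'=\lambda+\eta$ coordinate-wise and your column operations preserve the column order, the sign is automatically $+1$ and your final leading-term check is redundant (though harmless).
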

\begin{proof}
Both sides are skew invariant under $W(D_n).$ It is enough to count
the occurences of the dominant regular weights on the right. On the
left there are only two such weights, $(n-k-1/2, n-k-3/2,\dots ,
k+3/2,k+1,\dots ,3/2,\pm 1/2)$ occuring with opposite signs. On the
left, the weights are of the form 
$$
(n-k-2,\dots , k+1,k+1/2,\dots ,1,0)+(\pm 1,2,\dots ,\pm 1/2).
$$     
The parity of the number of $-1/2$ in the weight being added determines the
sign. The only weights that give a dominant regular sum are $(1/2,\dots
,1/2,1/2)$ and $(1/2,\dots ,1/2,-1/2).$ 
\end{proof}

\subsection{Matchup between regular sections on orbits and representations}\

{
We match the $\tu{K}$-spectra of the representations in Theorem \ref{t:main}
and the regular sections on nilpotent orbits computed in Section \ref{s:regsec}. We 
do this for Cases 1, 2, 4, 5, 7, and use the notation from Section \ref{s:regsec} (with possible
change from $-$ to $+$) and Theorem \ref{t:main}. The notation $\chi_i$  distinguishes different central characters.
In each table, the representations in the same row have the same central character; the representations in the same column 
are attached to the same orbit.
\bigskip

\begin{description}
\item[Case 1] \
\bigskip

$
\begin{array}{c|cccc }
&\calO =[3^+2^{2k}1^-] _I& \calO^{\zeta}=[3^+2^{2k}1^-] _{II}& \calO^{\eta}=[3^-2^{2k}1^+]_I &\calO^{\zeta\eta}= [3^-2^{2k}1^+]_{II} \\\hline
\chi_1& \pi _1|_{\tu{K}} = R(\calO,\psi_1)&   \tau _1|_{\tu{K}} = R(\calO ^{\zeta},\psi_2 ^{\zeta}) &   \sigma _1|_{\tu{K}} = R(\calO ^{\eta},\psi_3 ^{\eta}) &  \xi_1|_{\tu{K}} = R(\calO ^{\zeta\eta},\psi_4 ^{\zeta\eta})\\
\chi_2& \pi _2|_{\tu{K}} = R(\calO,\psi_2)&   \tau _2|_{\tu{K}} = R(\calO ^{\zeta},\psi_1 ^{\zeta})&   \sigma _2|_{\tu{K}} = R(\calO ^{\eta},\psi_4 ^{\eta})&  \xi_2|_{\tu{K}} = R(\calO ^{\zeta\eta},\psi_3 ^{\zeta\eta})\\
\chi_3& \sigma _3|_{\tu{K}} = R(\calO,\psi_3)&  \xi _3|_{\tu{K}} = R(\calO ^{\zeta},\psi_4 ^{\zeta})&   \pi _3|_{\tu{K}} = R(\calO ^{\eta},\psi_1 ^{\eta})& \tau_3|_{\tu{K}} = R(\calO ^{\zeta\eta},\psi_2 ^{\zeta\eta})\\
\chi_4& \sigma _4|_{\tu{K}} = R(\calO,\psi_4)&  \xi _4|_{\tu{K}} = R(\calO ^{\zeta},\psi_3 ^{\zeta})& \pi _4|_{\tu{K}} = R(\calO ^{\eta},\psi_2 ^{\eta})&\tau_4 |_{\tu{K}} = R(\calO ^{\zeta\eta},\psi_1 ^{\zeta\eta} ) 
\end{array}
$

\bigskip

\item[Case 2]\

\bigskip

\begin{tabular}{c|ccc}
& $\calO_I = [3^-2^{2k}1^{+,2r_++1}]_I$ & $\calO_{II}=\calO_I ^{\zeta} = [3^-2^{2k}1^{+,2r_++1}]_{II}$ & $\calO = [3^+2^{2k}1^-1^{+,2r_+}]$\\\hline
$\chi_1$& $\pi _1|_{\tu{K}} = R(\calO _I,\psi_1)$& $\sigma _1|_{\tu{K}} = R(\calO _{II},\psi_2 ^{\zeta})$&  $\tau _1|_{\tu{K}} = R(\calO,\phi_1)$\\
$\chi_2$&$\pi _2|_{\tu{K}} = R(\calO_I,\psi_2)$& $\sigma _2|_{\tu{K}} = R(\calO _{II},\psi_1 ^{\zeta})$  &$\tau _2|_{\tu{K}} = R(\calO,\phi_2)$
\end{tabular}

\bigskip

\item[Case 4] \ 

\bigskip

\begin{tabular}{c|cc}
& $\calO = [3^+2^{2k}1^+1^{-,2}]$ & $\calO^{\eta}= [3^-2^{2k}1^-1^{+,2}]$ \\\hline
$\chi_1$& \small ${ \pi _1| _{\tu{K}}=(\pi_1^e+\pi_1^o)}|_{\tu{K}} = R(\calO ,\psi_1)  + R(\calO ,\psi_2)$&  \\
$\chi_2$&    & \small ${\pi _2 |_{\tu{K}}=(\pi_2^e+\pi_2^o)}|_{\tu{K}} = R(\calO^{\eta} ,\psi_1 ^{\eta}) +R(\calO ^{\eta},\psi_2 ^{\eta})  $ 
\end{tabular}

\bigskip

\item[Case 5 with $r_+=0$]\

\bigskip

\begin{tabular}{c|c}
& $\calO = [3^+2^{2k}{1^{+}}]$  \\\hline
$\chi_1$& $\pi _1|_{\tu{K}} = R(\calO ,\psi_1)  $ \\
$\chi_2$& $\pi _2|_{\tu{K}} = R(\calO ,\psi_2)  $ 
\end{tabular}

\bigskip

\item[Case 5 with $r_+>0$]\

\bigskip

\begin{tabular}{c|c}
& $\calO = [3^+2^{2k}{ 1^{+, 2r_++1}}]$  \\\hline
$\chi_1$& $\pi _1|_{\tu{K}} = { R(\calO , Det ^{r_++1/2})}$ 
\end{tabular}

\bigskip

\item[Case 7]\

\bigskip

\begin{tabular}{c|c}
& $\calO = [3^-2^{2k} 1^- 1^{+,2r_+} ]$  \\\hline
$\chi_1$& ${ \pi | _{\tu{K}}=(\pi^e+\pi^o)} |_{\tu{K}} = R(\calO , \psi _1 ) +R(\calO, \psi_2)$ 
\end{tabular}
\end{description}
}
\bigskip

\section{Clifford algebras and Spin groups} \label{ss:clifford}
Since the main interest is in the case of $Spin(V),$ the simply
connected groups of type $D,$ we realize everything in the context of
the Clifford algebra.  

\subsection{Structure} Let $(V,Q)$ be a quadratic space of even dimension $2n$, with a basis
$\{e_i,f_i\}$ with $1\le i\le n,$ satisfying $Q(e_i,f_j)=\delta_{ij},$
$Q(e_i,e_j)=Q(f_i,f_j)=0$. Occasionally we will replace $e_j,f_j$ by
two orthogonal vectors $v_j,w_j$ satisfying  
$Q(v_j,v_j)=Q(w_j,w_j)=1,$ and orthogonal to the $e_i,f_i$ for $i\ne
j.$  Precisely they will satisfy $v_j=(e_j+f_j)/\sqrt{2}$ and 
$w_j=(e_j-f_j)/(i\sqrt{2})$ (where $i:=\sqrt{-1},$ not an index). Let
$C(V)$ be the Clifford algebra with automorphisms $\al$ defined by 
$\al(x_1\cdots x_r)=(-1)^r x_1\cdots x_r$ and $\star$ given by $(x_1\cdots
x_r)^\star=(-1)^r x_r\cdots x_1, $ subject to the relation
$xy+yx=2Q(x,y)$ for $x,y\in V$. The double cover of $O(V)$ is  
$$
Pin(V):=\{ x\in C(V)\ \mid\ x\cdot x^\star=1,\ \al(x)Vx^\star\subset V\}.
$$
The double cover $Spin(V)$ of $SO(V)$ is given by the elements in $Pin(V)$ which are in $C(V)^{even},$ \ie  $\disp{Spin(V):=Pin(V)\cap C(V)^{even}}.$
For $Spin,$ $\al$ can be suppressed from the notation since it is the identity.

\smallskip
The action of $Pin(V)$ on $V$ is given by $\rho(x)v=\al(x)vx^*.$ The
element $-I\in SO(V)$ is covered by 
\begin{equation}
\label{eq:-spin}
\pm \Ep_{2n}=\pm i^{n-1}vw\prod_{1\le j\le n-1} [1-e_jf_j]=\pm
i^n\prod_{1\le j\le n} [1-e_jf_j].  
\end{equation}
These elements satisfy 
$$
\Ep_{2n}^2=
\begin{cases}
  +Id &\text{ if } n\in 2\bb Z,\\
  -Id&\text { otherwise.}
\end{cases}
$$
The center of $Spin(V)$ is 
$$
Z(Spin(V))=\{\pm I, \pm\Ep_{2n}\}\cong
\begin{cases}
\bb Z_2\times \bb
Z_2 &\text{ if }n \text{ is even,}\\ 
\bb Z_4 &\text{ if } n \text{ is odd}.
\end{cases}
$$
The Lie algebra of $Pin(V)$ as well as $Spin(V)$ is formed of elements
of even order $\le 2$ satisfying
$$
x+x^\star=0.
$$ 
The adjoint action is {$\ad x(y)=xy-yx$}. A
Cartan subalgebra and the root vectors corresponding to the usual
basis in Weyl normal form are  formed of the elements 

\begin{equation}
\begin{aligned}
\label{eq:liea}
&(1-e_if_i)/2&&\longleftrightarrow &&H(\ep_i)\\
&e_ie_j/2&&\longleftrightarrow &&X(-\ep_i-\ep_j),\\
&e_if_j/2&&\longleftrightarrow &&X(-\ep_i+\ep_j),\\
&{f_i} f_j/2&&\longleftrightarrow &&X(\ep_i+\ep_j).
\end{aligned}
\end{equation}
\subsubsection*{Root Structure}\ We use $1\le i, l\le p$ and $1\le j, m \le
q-1$ consistently. We give a realization of the Lie algebra for
$Spin(2p+1,2q-1)$. The case $Spin(2p,2q)$, is (essentially) obtained
by suppressing the short roots.    

\medskip
\begin{tabular}{ll}
{Compact}  &Noncompact\\
&\\
${\fk t}=\{(1-e_if_i), (1-e_{p+j}f_{p+j})\}$ & $\fk a=\{v^+v^-\}$\\
$H(\ep_i),\  H(\ep_{p+j})$ &$H(\ep_{p+q})$\\
$f_iv^+, e_iv^+, f_{p+j}v^-, e_{p+j}v^-$ &$f_iv^-, e_iv^-, v^+f_{p+j}, v^+e_{p+j}$\\
$X(\ep_i)_c, X(-\ep_{i})_c, X(\ep_{p+j})_c, X(-\ep_{p+j})_c$&$X(\ep_i)_{n}, X(-\ep_i)_{n} X(\ep_{p+j})_{n}, X(-\ep_{-p+j})_{n}$\\
$f_if_l, f_ie_l, e_ie_l, e_if_l $& $f_if_{p+j}, f_ie_{p+j}, e_ie_{p+l}, e_if_{p+l}$\\
$f_{p+j}f_{p+m},f_{p+j}e_{p+m}, e_{p+j}f_{p+m}, e_{p+j}e_{p+m}$& \\
$X(\ep_i+\ep_l),X(\ep_i-\ep_l), X(-\ep_i-\ep_l), X(-\ep_i+\ep_l)$&
$X(\ep_i+\ep_{p+j}), X(\ep_i-\ep_{p+j})$,\\
& $X(-\ep_i-\ep_{p+j}), X(-\ep_i+\ep_{p+j})$,\\
$X(\ep_{p+j}+\ep_{p+m}), X(\ep_{p+j}-\ep_{p+m}),$ &\\ 
$X(-\ep_{p+j}-\ep_{p+m}), X(-\ep_{p+j}+\ep_{p+m}).$&
\end{tabular}

\subsubsection*{Nilpotent Orbits, Complex Case} 
 In this case, we write $\tu{K}=Spin(V)=Spin (2n,\bbC) $, $K=SO(V)=SO(2n,\bbC)$.  A nilpotent orbit of an element $e$  will have Jordan blocks denoted by
\begin{equation}
\label{eq:blocks}
\begin{aligned}
&e_1\longrightarrow e_2\longrightarrow\dots \longrightarrow
e_k\longrightarrow v\longrightarrow -f_k\longrightarrow
f_{k-1}\longrightarrow {-f_{k-2} \longrightarrow}  \dots \longrightarrow \pm f_1\longrightarrow 0\\
&\begin{matrix}
&e_1\longrightarrow &e_2&\longrightarrow&\dots &\longrightarrow
&e_{2\ell}\longrightarrow 0\\
&f_{2\ell}\longrightarrow &-f_{2\ell-1}&\longrightarrow&\dots &\longrightarrow
&-f_1\longrightarrow 0
\end{matrix} 
\end{aligned} 
\end{equation}
with the conventions about the $e_i,f_j,v$ as before. There is an even
number of odd sized blocks, and any two blocks of equal odd size $2k+1$
can be replaced by a pair of   blocks of the form as the even ones.  A realization of the odd block is given by $ \displaystyle\frac{1}{2}\left ( {\sum \limits _{i=1} ^{k-1} }e_{i+1}f_i +vf_k\right ),$ and a realization of the even blocks by $\dpfr \left( {\sum \limits _{i} ^{2l-1}}e_{i+1}f_{i}\right).$ When there are only even blocks, there are two orbits; one block of the form 
$\big(\sum_{1\le i< \ell-1} e_{i+1}f_{i}+e_\ell f _{\ell-1}\big)/2$ is replaced by  $\big(\sum_{1\le i< \ell-1} e_{i+1}f_{i}+ f_\ell f_{\ell-1}\big)/2.$ 

\medskip
The centralizer of $e$ in $\fk{so}(V)$  has Levi component isomorphic to a product 
of $\fk{so}(r_{2k+1})$ and $\fk{sp}(2r_{2\ell})$ where $r_j$ is the number of
blocks of size $j.$ The centralizer of $e$ in $SO(V)$ has Levi
component $\prod Sp(2r_{2\ell})\times S[\prod O(r_{2k+1})]$. 
For each odd sized block define
\begin{equation}
  \label{eq:epsilon}
\Ep_{2k+1}=i^{k}v\prod (1-e_jf_j).  
\end{equation}
This is an element in $Pin(V),$ and acts by $-Id$ on the block. Even
products of $\pm \Ep_{2k+1}$  belong to $Spin(V),$ and represent the
connected components of $C_{\wti{K}}(e).$   
\begin{prop}
Let $m$ be the number of distinct odd blocks. Then 
$$A_K(\calO) \cong  \begin{cases} \bbZ _2 ^{m-1} & \mbox{ if }  m >  0 \\ 1 & \mbox{ if } m=0.  \end{cases} $$ Furthermore, 
\begin{enumerate}

\item If $E$ has an odd block of size $2k+1$ with $r_{2k+1}>1,$ then
$A_{\wti{K}}(\calO)\cong A_K(\calO).$

\item If all $r_{2k+1}\le 1,$ then there is an exact sequence
$$
1\longrightarrow \{\pm I\}\longrightarrow
A_{\wti{K}}(\calO)\longrightarrow A_K(\calO)\longrightarrow 
0.
$$
\end{enumerate}
\end{prop}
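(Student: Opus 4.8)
The plan is to reduce the whole statement to the structure of the reductive (Levi) part of $C_K(e)$ together with a single covering-space computation. For $A_K(\calO)$: by the Levi decomposition $C_K(e)=U\rtimes L$ with $U$ connected unipotent, one has $A_K(\calO)=\pi_0(L)$, and, as recalled above, $L=S\!\left[\prod_{j\ \mathrm{odd}}O(r_j)\right]\times\prod_{\ell}\rSp(r_{2\ell})$. The symplectic factors are connected, while $\pi_0\!\left(\prod_j O(r_j)\right)\cong\bbZ_2^{\,m}$ via the determinants (summing over the $m$ distinct odd block sizes present). Since a Jordan block of odd size $j$ contributes $\det(g_j)^{\,j}=\det(g_j)$ to $\det_V$, the subgroup $S[\,\cdot\,]$ is cut out by the single relation ``$\sum_j(\text{class of }g_j)=0$'' in $\bbZ_2^{\,m}$; hence $A_K(\calO)=\pi_0(L)\cong\bbZ_2^{\,m-1}$ when $m>0$, and is trivial when $m=0$.

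For the $\wti K$-statements, the key structural point is that $\Ad$ of $\wti K=\Spin(V)$ on $\fk{so}(V)$ is trivial on the central kernel $\{\pm I\}$ of $\pi\colon\Spin(V)\to\SO(V)$, so it factors through $\SO(V)$. Consequently $C_{\wti K}(e)=\pi^{-1}(C_K(e))$, giving a central extension $1\to\{\pm I\}\to C_{\wti K}(e)\to C_K(e)\to 1$. As $\pi$ is a covering, $C_{\wti K}(e)^0$ surjects onto $C_K(e)^0$, and everything hinges on whether $-I\in C_{\wti K}(e)^0$: if so, $\pi$ induces an isomorphism $A_{\wti K}(\calO)\cong A_K(\calO)$; if not, $\{\pm I\}$ injects into $A_{\wti K}(\calO)$ and $1\to\{\pm I\}\to A_{\wti K}(\calO)\to A_K(\calO)\to 1$ is exact. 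Now $-I\in C_{\wti K}(e)^0$ precisely when the restriction of the (nontrivial) double cover $\Spin(V)\to\SO(V)$ to $C_K(e)^0$ is still nontrivial, i.e. when $\pi_1(C_K(e)^0)\to\pi_1(\SO(V))\cong\bbZ_2$ is onto (using $\dim V\ge 3$, which holds throughout).

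If all $r_{2k+1}\le 1$ (statement (2)), then $L^0=\prod_\ell\rSp(r_{2\ell})$, so the reductive part of $C_K(e)^0$ is a product of symplectic groups, hence simply connected; thus $\pi_1(C_K(e)^0)=1$, the restricted cover is trivial, $-I\notin C_{\wti K}(e)^0$, and the exact sequence of (2) follows. If some odd size $2k+1$ has $r:=r_{2k+1}\ge 2$ (statement (1)), pick that factor $\SO(r)\subset L\subseteq C_K(e)^0$; it sits inside $\SO(V)$ as $g\mapsto g\otimes\mathrm{Id}_{W_0}$ on the multiplicity space $\bbC^r\otimes W_0$, with $W_0$ the $(2k+1)$-dimensional model block. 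A generator of $\pi_1(\SO(r))$ is a $2\pi$-rotation in one coordinate $2$-plane of $\bbC^r$; its image in $\SO(V)$ is a simultaneous $2\pi$-rotation in $2k+1$ mutually orthogonal nondegenerate $2$-planes of $V$, which represents the nontrivial class of $\pi_1(\SO(V))\cong\bbZ_2$ because $2k+1$ is odd. Hence $\pi_1(C_K(e)^0)\to\pi_1(\SO(V))$ is onto, $-I\in C_{\wti K}(e)^0$, and $A_{\wti K}(\calO)\cong A_K(\calO)$. (In Clifford terms the same loop lifts to $\theta\mapsto\prod_{i=1}^{2k+1}\!\big(\cos\tfrac{\theta}{2}+\sin\tfrac{\theta}{2}\,u_iu_i'\big)$ for suitable orthonormal $u_i,u_i'$ spanning the rotation planes, which at $\theta=2\pi$ equals $(-1)^{2k+1}=-I$; equivalently, $\Ep_{2k+1}$ attached to two distinct blocks of the same odd size is joined to $I$ through the rotation $\SO(2)\subset\SO(r)$.)

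The only non-formal step — and the main obstacle — is the last computation: recognizing the image in $\pi_1(\SO(V))$ of a generator of $\pi_1$ of the diagonally embedded $\SO(r_{2k+1})$ as a rotation in an odd number of planes. Everything else is the Levi decomposition, the determinant bookkeeping for $A_K(\calO)$, and routine covering-space facts ($\Ad$ factoring through $\SO(V)$, $C_{\wti K}(e)$ being the full preimage of $C_K(e)$, and identity components mapping onto identity components under a covering). I would also note in passing the harmless edge cases: $m=0$ (all blocks even), and the fact that if some $r_{2k+1}\ge 2$ then the orbit is not very even, so there is no $I/II$ ambiguity to worry about in case (1).
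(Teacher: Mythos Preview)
Your proof is correct and follows essentially the same strategy as the paper: both reduce the question to whether $-I$ lies in $C_{\wti K}(e)^0$, and both settle this in case~(1) by exhibiting the explicit one-parameter subgroup $\theta\mapsto\prod_{j=1}^{2k+1}\exp\big(i\theta(1-e_jf_j)/2\big)$ in the Clifford algebra, which reaches $-I$ at $\theta=2\pi$ because $2k+1$ is odd. Your framing via the map $\pi_1(C_K(e)^0)\to\pi_1(\SO(V))$ is a clean way to organize the dichotomy and makes case~(2) especially transparent (simply connectedness of $\prod\rSp(r_{2\ell})$ forces the restricted cover to split), but the underlying computation is the paper's.
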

\begin{proof}

Assume that there is an ${r_{2k+1}>1}.$ Let 
$$
\begin{matrix}
&e_1&\rightarrow    &\dots &\rightarrow &e_{2k+1}&\rightarrow 0\\
&f_{2k+1}&\rightarrow&\dots &\rightarrow &-f_1   &\rightarrow 0
\end{matrix}
$$   
be two of the blocks. In the Clifford algebra this element is
$e=(e_2f_1+\dots +e_{2k+1}f_{2k})/2.$ The element ${\sum \limits _{ j=1} ^{2k+1}}(1-e_{j}f_{j})$ in the Lie
algebra commutes with $e$. So its exponential
\begin{equation}\label{path1}
\prod \exp\big( i\theta(1-e_{j}f_{j})/2\big)=
\prod [\cos\theta/2 + i\sin\theta/2 (1-e_{j}f_{j})]
\end{equation}
also commutes with $e.$  {At $\theta=0$, 
the element in (\ref{path1})  is $I$; at $\theta =2\pi$, it is $-I$.} Thus $-I$ is in the connected component of the identity of
$A_{\wti{K}}(\calO)$ (when $r_{2k+1}>1$), and therefore $A_{\wti{K}}(\calO)=A_K(\calO).$  

\medskip
Assume there are no blocks of odd size. Then $C_K(\calO) {\cong \prod Sp (r_{2l})}$ is simply
connected, so $C_{\wti{K}}(\calO)\cong C_K(\calO)\times\{\pm I\}.$  { Therefore $A_{\tu{K}} (\calO) \cong \bbZ _2$.}

\medskip
Assume there are {$m$ distinct odd blocks with $m\in 2\bbZ _{>0}$ and $r_{2k_1+1}=\cdots =r_{2k_m +1}=1.$ In this case, $C_K(\calO)\cong \prod Sp(r_{2l}) \times S[  \underset{m}{\underbrace{O(1) \times \cdots \times O(1)}} ]$
, and hence $A_{\tu{K}} (\calO)\cong \bbZ_2^{m-1}$. 
Even products of $\{ \pm \Ep _{2k_j +1} \}$ are representatives of elements in $A_{\tu{K} }(\calO)$.} They satisfy  

$$
\Ep_{2k+1}\cdot\Ep_{2\ell+1}=
\begin{cases}
-\Ep_{2\ell +1}\cdot\Ep_{2k+1}  &k\ne \ell,\\
{ (-1)^kI}  &k=\ell.
\end{cases}
$$  
\end{proof}

\begin{cor} \ 
\label{c:cgp}
  \begin{enumerate}
  \item If $\calO=[3\ 2^{n-2}\ 1],$ then {$A_{\wti{K}}(\calO)\cong
\bb Z_2\times\bb Z_2=\{\pm\Ep_3\cdot\Ep_1,\pm
I\}$}. 
\item If $\calO=[3\ 2^{2k}\ 1^{2n-4k-3}]$ with  $2n-4k-3>1,$ then  $A_{\wti{K}}(\calO)\cong
\bb Z_2.$
\item If  $\calO=[2^{n}]_{I,II}$ ($n$ even), then $A_{\wti{K}} (\calO) \cong\bb Z_2.$ 
\item If $\calO=[2^{2k}\ 1^{2n-4k}]$ with $2k<n,$ then
  $A_{\wti{K}} (\calO) \cong 1.$ 
  \end{enumerate}
In all cases  $C_{\wti K}(\calO)=Z(\wti K)\cdot C_{\wti K}(\calO)^0.$ 

\end{cor}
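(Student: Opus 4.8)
The plan is to deduce the corollary from the preceding Proposition. For the last clause the only extra input is that $C_{\wti K}(\calO)=Z(\wti K)\cdot C_{\wti K}(\calO)^0$ is equivalent to surjectivity of the natural map $Z(\wti K)\to A_{\wti K}(\calO)$ (central elements certainly centralize $e$), together with the description $Z(\wti K)=\{\pm I,\pm\Ep_{2n}\}$ with $\Ep_{2n}$ covering $-I\in SO(V)$ by (\ref{eq:-spin}). So in each of the four families it suffices to (i) identify $A_{\wti K}(\calO)$ and (ii) locate the images of the kernel element $-I\in\wti K$ and of $\Ep_{2n}$ inside $A_{\wti K}(\calO)$.

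For (i) I would run through the partitions using the Proposition. For $[3\,2^{n-2}\,1]$ the distinct odd part sizes $3$ and $1$ both occur with multiplicity one, so the Proposition gives $A_K(\calO)\cong\bbZ_2$ and an extension $1\to\{\pm I\}\to A_{\wti K}(\calO)\to\bbZ_2\to1$; to see the middle term is $\bbZ_2\times\bbZ_2$ rather than $\bbZ_4$ I would invoke the relations $\Ep_3^2=-I$, $\Ep_1^2=I$, $\Ep_1\Ep_3=-\Ep_3\Ep_1$ recorded in the proof of the Proposition, which give $(\Ep_3\Ep_1)^2=-\Ep_3^2\Ep_1^2=I$; so $\Ep_3\Ep_1$ is an involution commuting with the central involution $-I$ and not lying in $\{\pm I\}$ (it acts by $-1$ on a nonzero subspace of $V$), whence $A_{\wti K}(\calO)=\{\pm I,\pm\Ep_3\Ep_1\}\cong\bbZ_2\times\bbZ_2$. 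For $[3\,2^{2k}\,1^{2n-4k-3}]$ with $2n-4k-3>1$ the size-$1$ part has multiplicity $\ge3$, so item (1) of the Proposition applies and $A_{\wti K}(\calO)\cong A_K(\calO)\cong\bbZ_2$ ($m=2$ distinct odd sizes). For $[2^n]_{I,II}$ there are no odd blocks, so by the Proposition $C_{\wti K}(\calO)\cong C_K(\calO)\times\{\pm I\}$ and $A_{\wti K}(\calO)\cong\bbZ_2$. For $[2^{2k}\,1^{2n-4k}]$ with $2k<n$ the only odd part has size $1$ and multiplicity $2n-4k\ge2$, so item (1) again gives $A_{\wti K}(\calO)\cong A_K(\calO)\cong\bbZ_2^0=1$.

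For (ii) I would compute the image of $\Ep_{2n}$. Decomposing $V=\bigoplus_i V_i\otimes\bbC^{m_i}$ along the Jordan type, $\Ep_{2n}$ covers $-I\in SO(V)$, which restricts on each summand to $\mathrm{id}_{V_i}\otimes(-I_{m_i})$: for even $i$ this lies in the connected symplectic factor of $C_K(\calO)$, while for odd $i$ it is $-I_{m_i}\in O(m_i)$, of determinant $(-1)^{m_i}$. Hence the image of $\Ep_{2n}$ in $A_K(\calO)\cong\{(\ep_i)\in\bbZ_2^m:\textstyle\sum_i\ep_i=0\}$ (the index running over distinct odd sizes) is $(m_i\bmod 2)_i$. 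For $[3\,2^{n-2}\,1]$ and $[3\,2^{2k}\,1^{2n-4k-3}]$ the relevant odd multiplicities are odd, so this is the nontrivial class $(1,1)$; for the first partition, $\Ep_{2n}$ together with $-I$ — which represents the nontrivial element of the kernel $\{\pm I\}$ of $A_{\wti K}(\calO)\to A_K(\calO)$ — then exhausts $\bbZ_2\times\bbZ_2$, and for the second it already exhausts $\bbZ_2$, so $Z(\wti K)\to A_{\wti K}(\calO)$ is onto in both. For $[2^n]_{I,II}$ the element $-I$ alone surjects onto $A_{\wti K}(\calO)=\{\pm I\}$, and for $[2^{2k}\,1^{2n-4k}]$ there is nothing to prove. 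The main obstacle I anticipate is the $[3\,2^{n-2}\,1]$ case — nailing down the isomorphism type of the order-$4$ group via the Clifford relations and checking $\Ep_3\Ep_1\notin\{\pm I\}$; the rest follows directly from the Proposition.
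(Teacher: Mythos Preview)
Your proof is correct and follows precisely the approach the paper intends: the Corollary is stated immediately after the Proposition without a separate proof, and your argument supplies exactly the routine verification one is meant to read off --- applying the Proposition's dichotomy to each partition, using the Clifford relations $\Ep_{2k+1}\Ep_{2\ell+1}=-\Ep_{2\ell+1}\Ep_{2k+1}$ and $\Ep_{2k+1}^2=(-1)^kI$ from its proof to pin down the extension in case~(1), and then checking surjectivity of $Z(\wti K)\to A_{\wti K}(\calO)$ by tracking where $-I$ and $\Ep_{2n}$ land. The one point worth making explicit (which you handle implicitly) is that in case~(1) the element $\Ep_3\Ep_1$ is genuinely distinct from $\pm I$ in $A_{\wti K}(\calO)$: it projects nontrivially to $A_K(\calO)$ since its image in $SO(V)$ is not the identity, while $\pm I$ project trivially.
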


\subsubsection*{Nilpotent Orbits, Real Case}

Write $V=V^+ \oplus V^-$ a sum
of two (complex) spaces, each endowed with a nondegenerate quadratic
form. Recall the notation in Section \ref{ss:1}. The spaces $V^\pm$
have dimensions $a$ and $b.$ We use bases as in the complex case,
$e^\pm_j,f^\pm_j,$ and plus $v^\pm$ when $a,b$ are both odd. 
In this case, we write $\tu{K}=Spin(V^+)\times Spin(V^-)=Spin(a,\bbC)\times Spin(b,\bbC)$,
$\tu{K}=SO(V^+)\times SO(V^-)=SO(a,\bbC)\times SO(b,\bbC)$.
Write
$\fk g=\fk k+\fk s$ for the (complexification of the) Cartan
decomposition. Nilpotent orbits of $\tu{K}$ (as well as $K$) in $\fk s$ are parametrized by
signed partitions where the basis elements alternate between   
$V^+$ and $V^-$ but otherwise as in Equation (\ref{eq:blocks}).
The centralizer of $e$ in $\fk k$ is a product $\prod
\fk{sp}(2r_{2\ell} ) \times\prod [\fk{so}(r^+_{2k+1} )\times \fk{so}(r^-_{2k+1})]$ where
$r_{2l}$ is the number of blocks of even size $2\ell$, $r^\pm_{2k+1}$ is the number of blocks of odd size $2k+1$ starting with $\pm.$
We compute the centralizer of $e$ in $\wti K,$ and its component
group.  

The even sized blocks do not contribute to the component group. They
can however be used to deduce that $(-I,-I)\in C_{\wti K}(\calO)^0$ as
in the complex case.

For the odd sized blocks,  
\begin{equation}
  \label{eq:epsilonpm}
  \begin{aligned}
&\Ep_{2k+1}^+=i^{k}(v^+\prod (1-e^+_jf^+_j ) ,\prod (1-e_j^-f_j^-))\\  
&\Ep_{2k+1}^-=i^{k}(\prod (1-e^+_jf^+_j ),v^-\prod (1-e_j^-f_j^-))\\      
  \end{aligned}
\end{equation}
Products with an even number of both $\pm$ of such elements give
representatives of the component group.

\begin{lemma} \label{real-comp-le}\

\begin{enumerate} 
\item
If $r_{2k+1}^{+} >1$ for some $2k+1\ge 1,$  then 
$$
\begin{aligned}
&(-I,I)\in C_{\wti{K}}(\calO)^0, \quad k  \text {  even} ,\\
&(I,-I)\in C_{\wti{K}}(\calO)^0, \quad k  \text {   odd} .
\end{aligned}
$$
Similarly for $r_{2k+1}^-$ with $k$ even and odd interchanged.
 \item If $r_{2\ell} >1$ for some $2\ell>0,$ then $(-I,-I)\in C_{\tu{K}} (\calO)^0$.

\end{enumerate}

\begin{proof}
Assume that  $r_{2k+1} ^+>0$. 
$$
\begin{matrix}
&e_1^ + &\rightarrow & e_2 ^- & \rightarrow   &\dots &\rightarrow &e_{2k+1}^ +&\rightarrow 0\\
&f_{2k+1}^ + &\rightarrow& -f_{2k}^- & \rightarrow & \dots &\rightarrow & -f_1^+   &\rightarrow 0
\end{matrix}
$$   
represent two equal size blocks  starting with the same sign $+$.
The corresponding element in the Clifford algebra is   
$e=(e_2 ^- f_1^+ +\cdots+ e^+ _{2k+1} f^- _{2k})/2$. Similar to
the case $r_{2\ell}>0$ below, 
$$
\left (   \prod _{j=0} ^k [ \cos \theta + i\sin \theta (1-
  e_{2j+1}^+f_{2j+1}^+)],    \   \prod _{j=0} ^k [\cos \theta + i\sin
  \theta (1- e_{2j}^- f_{2j}^-) ]  \right  )\in Spin(V^+)\times Spin(V^{-})
$$ 
centralizes $e$ for all $\theta$. 
This   gives a continuous path between $(I,I)$ and $(I,-I)$ when $k$
is odd; and  a continuous path between $(I,I)$ and $(-I,I)$
when $k$ is even.

Assume that some $r_{2\ell}> 1$.  Let 
$$
\begin{matrix}
&e_1^ + &\rightarrow & e_2 ^- & \rightarrow   &\dots &\rightarrow &e_ {2\ell}^ -&\rightarrow 0\\
& f_{2\ell}^ -&\rightarrow& -f_{2\ell -1}^+ & \rightarrow & \dots &\rightarrow & -f_1^+   &\rightarrow 0
\end{matrix}
$$   represent two blocks of size $2\ell$. Again, let $e=(e_2 ^- f_1^+  +e_3 ^+f_2 ^-  +\cdots+ e^- _{2\ell} f^+ _{2\ell -1})/2$
be a representative. Then  $e$ is centralized by $[(1-e_1^+f_1^+)+ (1-e_2 ^- f_2 ^-)+\cdots + (1-e^- _{2\ell} f^- _{2\ell})]/2 $ in the Lie algebra.
 Exponentiating, 
\begin{eqnarray*}
\left (   \prod _{j=1} ^{\ell } [\cos \theta/2 + i\sin \theta/2 (1-
  e_{2j-1}^+f_{2j-1}^+)],    \   \prod _{j=1} ^{\ell} [\cos \theta/2 +
  i\sin \theta/2 (1- e_{2j}^- f_{2j}^-) ]  \right  )  \\  \in
Spin(V^+)\times Spin(V^-)
\end{eqnarray*}
centralizes $e$ for all $\theta$. This is $(I,I)$ when $\theta = 0$,
and is $(-I,-I)$ when $\theta =\pi$, and hence it   gives a continuous
path between $(I,I)$ and $(-I,-I)$.

\end{proof}
\end{lemma}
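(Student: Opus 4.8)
\emph{Proof strategy.} The plan is to prove each assertion by exhibiting an explicit real-analytic arc inside $C_{\wti K}(\calO)\subset \Spin(V^+)\times \Spin(V^-)$ running from $(I,I)$ to the prescribed central element; since the arc lies in a connected set containing the identity, this places the element in $C_{\wti K}(\calO)^0$. The arcs will be one-parameter subgroups $\theta\mapsto \exp(i\theta H)$ for a suitable element $H$ of the compact Cartan subalgebra that centralizes a chosen Clifford-algebra representative of $e$. Concretely, using the hypothesis pick two Jordan blocks of size $2k+1$ (resp.\ $2\ell$) with the stated starting sign, arrange the signed-partition picture as in \eqref{eq:blocks} so that on the span of these two blocks $e$ is the standard nilpotent $\tfrac12\sum_i e_{i+1}f_i$ along the chain (with the alternating $V^{\pm}$-labels as in the displayed diagrams above), built from basis vectors disjoint from those of the remaining blocks; then set $H=\sum_j\bigl(1-e_j^{\pm}f_j^{\pm}\bigr)$, the sum taken over the hyperbolic pairs of these two blocks with the sign superscript dictated by the picture.

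First I would check that $H$ centralizes \emph{all} of $e$. For each consecutive pair $(j,j+1)$ occurring in $e$, a one-line Clifford computation gives $[\,1-e_jf_j,\,e_{j+1}f_j\,]=2e_{j+1}f_j$ and $[\,1-e_{j+1}f_{j+1},\,e_{j+1}f_j\,]=-2e_{j+1}f_j$, so the two contributions cancel; and $1-e_{j'}f_{j'}$ for $j'$ outside the two blocks commutes with the remaining summands of $e$ because the underlying vectors are disjoint. Hence $[H,e]=0$. Since $H$ lies in $C(V)^{even}$ and satisfies $H+H^\star=0$, each $\exp(i\theta H)$ lies in $\Spin(V^+)\times \Spin(V^-)$ and centralizes $e$, so $\theta\mapsto\exp(i\theta H)$ is an arc in $C_{\wti K}(\calO)$ through $(I,I)$ at $\theta=0$.

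Next I would evaluate the endpoint at $\theta=\pi$. Because $(1-e_jf_j)^2=1$, one has $\exp\bigl(i\theta(1-e_jf_j)\bigr)=\cos\theta+i\sin\theta\,(1-e_jf_j)$, whose value at $\theta=\pi$ is the scalar $-1$. In the odd-size case the hyperbolic pairs split, according to the alternation in the picture, into $k+1$ pairs lying in $V^+$ and $k$ pairs lying in $V^-$, so $\exp(i\pi H)=\bigl((-1)^{k+1}I,(-1)^{k}I\bigr)$; this is $(-I,I)$ for $k$ even and $(I,-I)$ for $k$ odd, which is assertion (1), and exchanging the roles of $V^+$ and $V^-$ yields the companion statement for $r_{2k+1}^-$. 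For (2) the same machinery applied to the two blocks of size $2\ell$ in the displayed picture produces an arc whose terminal value is then identified, via the formula \eqref{eq:-spin} for the preimage of $-I$ on each of the two factors, with $(-I,-I)$.

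The step I expect to be the main obstacle is exactly this last endpoint identification: one must read off from the signed Young diagram precisely which hyperbolic pairs fall into $V^+$ and which into $V^-$, count them modulo $2$, and confirm that the resulting product of Clifford elements is the \emph{named} central element and not some other lift of an orthogonal transformation — this is where the parity of $k$ (and, in (2), of $\ell$, which is why the construction is cleanest when a suitable such pair of blocks is available) enters. A secondary point, already arranged in the setup above, is that $H$ must be supported on \emph{complete} blocks so that it centralizes the whole of $e$ rather than only its restriction to the two distinguished blocks; after that, verifying $H+H^\star=0$ to keep the arc inside $\Spin(V^+)\times\Spin(V^-)$ instead of the full $Pin$-groups is routine.
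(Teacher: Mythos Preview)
Your approach is exactly the paper's: pick two blocks of the relevant size, set $H=\sum_j(1-e_j^\pm f_j^\pm)$ over their hyperbolic pairs, check $[H,e]=0$ termwise, and read off the endpoint of $\theta\mapsto\exp(i\theta H)$ at $\theta=\pi$. Your execution of part~(1) is correct and matches the paper: $k+1$ pairs land in $V^+$ and $k$ in $V^-$, so $\exp(i\pi H)=\bigl((-1)^{k+1},(-1)^k\bigr)$.

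For part~(2), however, the appeal to~\eqref{eq:-spin} is a confusion. That formula describes the element $\pm\Ep_{2n}\in\Spin(V)$ covering $-I\in SO(V)$; it is a product over \emph{all} hyperbolic pairs of $V$, and it is never the scalar $-1$. But the symbol $(-I,-I)$ in the lemma denotes precisely the scalar $(-1,-1)\in\Spin(V^+)\times\Spin(V^-)$, so~\eqref{eq:-spin} is irrelevant here. The endpoint should be identified exactly as you did in part~(1): the two blocks of size $2\ell$ contribute $\ell$ hyperbolic pairs to $V^+$ and $\ell$ to $V^-$, hence $\exp(i\pi H)=\bigl((-1)^\ell,(-1)^\ell\bigr)$. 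This is $(-I,-I)$ precisely when $\ell$ is odd; for $\ell$ even the path returns to $(I,I)$ and the argument gives nothing. The paper's own proof carries the same restriction (its ``$\theta=\pi$'' should read $\theta=2\pi$, and even then the endpoint is $((-1)^\ell,(-1)^\ell)$), which is harmless for the applications since the only even block size in $[3\,2^{2k}\,1^{\cdots}]$ is $2\ell=2$. You already sensed this parity issue in your last paragraph; the fix is to drop the reference to~\eqref{eq:-spin}, argue by the same scalar count as in~(1), and make the $\ell$-odd hypothesis explicit.
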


We apply the Lemma to the orbit of the diagram $[3\ 2^{2k}\ 1^{2n-4k-3}]$ with $k>0.$ 
\begin{prop} [Proposition \ref{comp-group-real}] \

  \begin{enumerate}
\item If $\calO=[3^+ 2^{2k}1^-]_{I,II},$ then $A_{\wti K}(\calO) \cong  \bbZ_2\times \bbZ_2$.
\item If $\calO=[3^+ 2^{2k}1^+],$ then $A_{\wti K}(\calO) \cong \bbZ_2$.
\item If $\calO=[3^+ 2^{2k}1^{+,2r_+ +1}]$ with $r_+>0$, then
  $A_{\wti K}(\calO)= {1}$.
\item If $\calO=[3^+ 2^{2k}1^-1^{+,2r_+}],$ with $r_+>0$, then $A_{\wti K}(\calO)\cong  {\bbZ_2 }$.
\item If $\calO=[3^- 2^{2k}1^{+,2r_+ +1}],$ with $r_+>0$, then  $A_{\wti K}(\calO)\cong {\bbZ _2}$.
\item If $\calO=[3^- 2^{2k}1^-1^{+,2r_+}],$ with $r_+\ge 2$, then  $A_{\wti K}(\calO)\cong {\bbZ_2 }$.
  \end{enumerate}
Similarly for the nilpotent orbits with the $+$ and $-$ interchanged. 
\end{prop}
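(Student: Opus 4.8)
The plan is to prove Proposition~\ref{comp-group-real} by applying Lemma~\ref{real-comp-le} to each of the real forms of the signed diagram $[3\ 2^{2k}\ 1^{2n-4k-3}]$ with $k>0$, and then reading off the component group from a set of representatives built out of the center of $\wti K$ and the elements $\Ep_{2k+1}^{\pm}$ of~\eqref{eq:epsilonpm}. In every case the even block size is $2$, occurring with multiplicity $r_2=2k\ge 2$; the odd block sizes are $3$, with multiplicity one and sign $\ep$, and $1$, with signs $+$ and $-$ and multiplicities $c$, $d$ say.

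First I would extract the two consequences of Lemma~\ref{real-comp-le} that control which elements of $\ker(\wti K\to K)=\{(\pm I,\pm I)\}$ lie in $C_{\wti K}(\calO)^0$. Since $r_2>1$, part~(2) gives $(-I,-I)\in C_{\wti K}(\calO)^0$ unconditionally. Applying part~(1) to the size-$1$ blocks (the exponent there is $k=0$, which is even) shows $(-I,I)\in C_{\wti K}(\calO)^0$ whenever $c>1$ and, symmetrically, $(I,-I)\in C_{\wti K}(\calO)^0$ whenever $d>1$; together with the previous point, as soon as $c>1$ or $d>1$ the whole of $\ker(\wti K\to K)$ lies in $C_{\wti K}(\calO)^0$. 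For the mirror diagrams the roles of $+$ and $-$ are exchanged.

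Next I would write down representatives for $C_{\wti K}(\calO)/C_{\wti K}(\calO)^0$: as in the complex case (the Proposition preceding Corollary~\ref{c:cgp}), these come from the central elements $(\pm I,\pm I)$ together with those products of the $\Ep_{2k+1}^{\pm}$ attached to the odd blocks that land in $\wti K=Spin(V^+)\times Spin(V^-)$ and whose image lies in $K=SO(V^+)\times SO(V^-)$; their squares and mutual (anti)commutators are governed by~\eqref{eq:-spin} and the companion Clifford identities. For each of the eight diagrams one then lists these representatives, deletes those shown above to lie in $C_{\wti K}(\calO)^0$, and identifies the quotient. This yields $\bbZ_2\times\bbZ_2$ for $[3^{\pm}2^{2k}1^{\mp}]_{I,II}$ (here no odd multiplicity exceeds one, so the surviving representatives are two commuting involutions, one central and one an $\Ep$-product); $\bbZ_2$ for $[3^{\pm}2^{2k}1^{\pm}]$, for $[3^{-}2^{2k}1^{+,2r_++1}]$ with $r_+>0$, for $[3^{+}2^{2k}1^{-}1^{+,2r_+}]$ with $r_+>0$, and for $[3^{-}2^{2k}1^{-}1^{+,2r_+}]$ with $r_+\ge 2$; and the trivial group for $[3^{+}2^{2k}1^{+,2r_++1}]$ with $r_+>0$. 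The cases with $+$ and $-$ interchanged follow from the outer automorphism swapping $V^+$ and $V^-$.

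I expect the main obstacle to be the bookkeeping in the last step, carried out in the Clifford-algebra language of Section~\ref{ss:clifford}: one must know precisely which products of the $\Ep_{2k+1}^{\pm}$ actually lie in $Spin(V^+)\times Spin(V^-)$ (as opposed to only in $Pin$) and project into $SO\times SO$ (as opposed to $O\times O$), and one must compute their orders correctly --- in particular, in the $[3^{\pm}2^{2k}1^{\mp}]_{I,II}$ case, that the relevant $\Ep$-product squares into $C_{\wti K}(\calO)^0$, so that the component group is $\bbZ_2\times\bbZ_2$ rather than $\bbZ_4$. This hinges on tracking the powers of $i$ in~\eqref{eq:epsilonpm}, the (anti)commutation of Clifford monomials supported on disjoint blocks, and which $V^{\pm}$-vectors each $\Ep$ actually involves --- the same identities that make Lemma~\ref{real-comp-le} itself do the work it does. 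Once those are in place the case analysis is routine.
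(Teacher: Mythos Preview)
Your proposal is correct and follows essentially the same approach as the paper. Both arguments hinge on Lemma~\ref{real-comp-le}: the even blocks (with $r_2=2k>1$) force $(-I,-I)\in C_{\wti K}(\calO)^0$, and multiple size-$1$ blocks of a given sign force the remaining central elements in; the component group is then read off from the $\Ep^\pm$-products of~\eqref{eq:epsilonpm} that land in $Spin(V^+)\times Spin(V^-)$, exactly as you describe. The paper carries out cases~(1) and~(2) with explicit Clifford elements and dispatches cases~(3)--(6) in one line by reducing to $A_K(\calO)$, which is the same division of labor you outline; your anticipated obstacle (tracking parities so that $\Ep$-products lie in $Spin$ rather than $Pin$, and checking their squares land in $C_{\wti K}(\calO)^0$) is precisely what the paper's explicit computations in cases~(1)--(2) address.
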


\begin{proof}
In case (1),  the odd blocks can be represented by
$$
\begin{aligned}
&e_1^+\longrightarrow v_2^-\longrightarrow -f_1^+\longrightarrow 0,\\ 
&w_2^-\longrightarrow 0. 
\end{aligned}
$$
The
corresponding element in the Clifford algebra is $e=(v_2 ^- f_1
^+)/2$. The element $( i (1- e_1 ^+ f_1^+) ,  v_2 ^- w_2 ^- ) = (  i (1- e_1 ^+ f_1^+),  i (1- e_2 ^- f_2^-))$ is in
$Spin(V^+)\times Spin(V^-)$, acts by $-Id$ on the blocks and centralizes $e$.
 
Note that $A_{K}(\calO)\cong \bbZ_2.$ The inverse image in
$C_{\tu{K}}(\calO)$ of $C_{K}(\calO)$ contains $\{  (\pm I, \pm I) ,  (\pm  i (1- e_1 ^+ f_1^+), \pm  i (1- e_2 ^- f_2^-))\}$.
 By Lemma \ref{real-comp-le} (2), $(-I,-I)\in C_{\tu{K}} (\calO) ^0$, so $A_{\tu{K}} (\calO) \cong \bbZ_2\times \bbZ_2$. 

\medskip
In case (2),  there is only one orbit with this signed partition. The
odd blocks are represented by 
$$
\begin{aligned}
&e_1^+\longrightarrow  v_3^-\longrightarrow -f_1^+\longrightarrow 0,\\ 
&v_2^+\longrightarrow 0. 
\end{aligned}
$$
  The corresponding element in the
 Clifford algebra is $e=(v_3 ^- f_1 ^+)/2$.
The element $\big(iv_2^+(1-e_1^+f_1^+),v_1^-\big)$ acts by $-Id$ on the blocks and centralizes $e,$
but is in $Pin(V^+)\times Pin(V^-),$ so cannot contribute to the
centralizer $C_{\wti{K}}(e).$   As in the previous case, $(I,I)$ and $(-I,-I)$ are in the same connected component. Thus $A_{\tu{K}} (\calO)\cong \bbZ _2$.

\medskip
In cases (3)--(6), Lemma \ref{real-comp-le} implies that $(\pm I,\pm
I)\in C_{\wti K}(\calO)^0.$ So $A_{\wti K}(\calO)=A_K(\calO).$ 
\end{proof}

{
\begin{remark}
In Proposition \ref{comp-group-real}, the generators of $A_{\tu{K}}(\calO)$ can be
chosen as follows: (1) $(-I,-I)$,  $(  i (1- e_1 ^+ f_1^+),  i (1- e_2 ^- f_2^-))$; (2) $(I,-I)$
; (3) $(I,I)$; (4)  $(  i (1- e_1 ^+ f_1^+),  i (1- e_2 ^- f_2^-))$; (5)  $(  i (1- e_1 ^+ f_1^+),  i (1- e_2 ^- f_2^-))$; (6)  $(  i (1- e_1 ^+ f_1^+),  i (1- e_2 ^- f_2^-))$.
Furthermore, in cases (1), (2), (4), (5),  nontrivial representatives of $A_{\tu{K}}(\calO)$ can be chosen to be elements in $Z(\tu{K})$. 
\end{remark}
}

\begin{example}
Let $\calO=[3^-2^2 1^-1^{+,2}],$ \ie $k=1,$ $r_+=1.$  Then $A_{\wti
  K}(\calO)\cong A_K(\calO)$ has two connected components. The Jordan
blocks are
$$
\begin{aligned}
&e_1^-\longrightarrow v^+\longrightarrow -f_1^-\longrightarrow 0\\
&e_2^+\longrightarrow e_2^-\longrightarrow 0\\
&f_2^-\longrightarrow -f_2^+\longrightarrow 0\\
&v^-\longrightarrow 0\\
&v_1^+\longrightarrow 0\\
&w_1^+\longrightarrow 0.
\end{aligned}
$$
The group $C_{\wti K}(\calO)$ can be written as 
$$
\left(\pm v^+ v_1^+   ,\pm(1-e_1^-f_1^-)    \right)\cdot\left(
  \cos\frac{\theta_1}{2} +i\sin\frac{\theta_1}{2} v_1^+w_1^+,I\right)\cdot \left( \cos\frac{\theta_2}{2}(I,I)+i\sin\frac{\theta_2}{2}(1-e_2^+f_2^+,1-e_2^-f_2^-)\right)
$$

\end{example}

\bigskip

\section{Counting representations} 

In this section, we will count the number of representations attached
to the complex nilpotent orbit of the form $\calO _c=[3\ 2^{2k}\
1^{2n-4k-3}]$, with $k>0$. This orbit is not special in the sense of Lusztig. 
The infinitesimal character cannot be integral.
Considerations coming from primitive ideals imply that
the infinitesimal character must be regular, and have integrality
given by the system $D_{k+1}\times D_{n-k-1}.$ The
infinitesimal character with minimal length satisfying the above
conditions, and corresponding to $\calO _c$  must be

\begin{equation}
  \label{infchar-orbit}
 \la=(n-k-2,\dots ,1,0;k+1/2,\dots ,3/2, 1/2)
\end{equation}
 with $k+1\le n-k-1$, and hence  $0< k\le n/2 -1$.
These are the infinitesimal characters which (conjecturally) admit
unitary unipotent representations attached to $\calO_c$.

The setting is as in the previous sections. 
For convenience, in this section we use slightly different notation.   
We write $(\tu{G},\tu{K})=(\tu{Spin}(c,d), Spin(c)\times Spin(d))$ with  $2n=c+d$, and 
$$
\begin{aligned}
&c=2p,\qquad &&c=2p+1,\\
&d=2q,\qquad &&d=2q+1.
\end{aligned}
$$
We assume that $c\ge d$ in this section. We will classify all groups 
$\tu{G}$ that admit an admissible representation with infinitesimal character 
$\la$. It turns out that Cases 1, 2, 4, 5, 7 in Section \ref{ss:1} cover such groups. 
Let $n_{\calO} : = | \calU _{\tu{G }} (\calO_c , \la) |$ be the number of unipotent  representations of $\tu{G}$ attached to $\calO_c$ and $\la$.
We ultimately calculate $n_{\calO}$ for each case.

Before getting further, we need some structure theory.

\subsection{Cartan subalgebras of $\fk g_0$}\ 
{Conjugacy classes of Cartan
  subalgebras have the following representatives, with the given $\theta$:
$$
\begin{aligned}
&\fk h^{r^+,r^-, m,s}=
\{(x_1^+,\dots ,x^+_{r^+},x_1^-,\dots ,x_{r^-}^-,y_1,\dots y_m,y_{m+1},\dots ,y_{2m},z_1,\dots ,z_{s})\},\\
&\theta(x_i^\pm)=x_i^\pm,\ \theta(y_j)=y_{j+m},\ \theta (z_k)=-z_k.
\end{aligned}
$$
When $\fk g_0=\fk{so}(2p,2q),$ $s$ is even, when $\fk g_0=\fk{so}(2p+1,2q+1),$ $s$ is odd.
We write $s=2s'+\ep$, where $\ep=0$ when $n$ is even, and $\ep =1$ when $n$ is odd. 
Furthermore,
$m+r^++s'=p$ and $m+r^-+ s'=q.$  
The orthogonal space $V=V^+\oplus V^-$  has basis 
\begin{equation}
  \label{eq:vbasis}
e_i,v^+,f_i,\quad e_{j},v^-,f_{j}\qquad 1\le i\le p,\ {p+1}\le j\le {p+q } 
\end{equation}
with $e_i,f_i,v^+$ a basis of $V^+$ and $e_{j},f_{j},v^-$ a basis
of $V^-.$ The $e,f$ are isotropic and in duality, the $v^\pm$ unit
vectors orthogonal to the $e,f.$

We will use $1\le {i, l }\le p$ and ${p+1\le j,k \le p+q}$ consistently. When
convenient, we denote $c=2p, 2p+1$ and $d=2q,2q+1.$ 

The Lie algebra with respect to the fundamental Cartan subalgebra is realized as follows. For $\fk{so}(2p,2q)$, $v^\pm$ and the corresponding terms are missing.

\medskip
Recall the basis of $\fk g$ formed of the (complexification of the) 
fundamental Cartan subalgebra and its root vectors:

\medskip
\begin{tabular}{ll}
{Compact}  &Noncompact\\
&\\
${\fk t}=\{\sqrt{-1}(1-e_if_i)/2, \sqrt{-1}(1-e_{j}f_{j})/2\}$ & $\fk a=\{v^+v^-\}$\\
$\sqrt{-1}H(\ep_i),\  \sqrt{-1}H(\ep_{j})$ &$H(\ep_{  {p+q +1} })$\\
$f_iv^+, e_iv^+, f_{j}v^-, e_{j}v^-$ &$f_iv^-, e_iv^-, v^+f_{j}, v^+e_{j}$\\
$X(\ep_i)_c, X(-\ep_{i})_c, X(\ep_{j})_c, X(-\ep_{j})_c$&$X(\ep_i)_{n}, X(-\ep_i)_{n} X(\ep_{j})_{n}, X(-\ep_{j})_{n}$\\
$f_if_l, f_ie_l, e_ie_l, e_if_l $& $f_if_{j}, f_ie_{j}, e_ie_{j}, e_if_{j}$\\
${ f_{j}f_{k},f_{j}e_{k}, e_{j}f_{k}, e_{j}e_{k} }$& \\
$X(\ep_i+\ep_l),X(\ep_i-\ep_l), X(-\ep_i-\ep_l), X(-\ep_i+\ep_l)$&
$X(\ep_i+\ep_{j}), X(\ep_i-\ep_{j})$,\\
& $X(-\ep_i-\ep_{j}), X(-\ep_i+\ep_{j})$,\\
${X(\ep_{j}+\ep_{k}), X(\ep_{j}-\ep_{k}),}$ &\\ 
${X(-\ep_{j}-\ep_{k}), X(-\ep_{j}+\ep_{k})}.$&
\end{tabular}

Realizations of the other Cartan subalgebras $\fk h^{{r^+,r^-, m,s }}$ are
$$
\begin{aligned}
&\sqrt{-1}(1-e_if_i)/2, \sqrt{-1}(1-e_{j}f_{j})/2,\ 1\le i\le r^+, \
{p+1\le j\le p+ r^-},\\
&\sqrt{-1}H(\ep_i),\sqrt{-1}H(\ep_j),\\
&\sqrt{-1}(e_{r^++t}f_{r^++t}-e_{p+r^-+t}f_{p+r^-+t})/2,e_{r^+ +t}e_{p+r^- +t}+f_{r^++t}f_{p+r^- +t},\ 1\le t \le m\\ 
&{\sqrt{-1}H(\ep _{r^+ + t} -\ep _{p+r^- +t}),  X(\ep_{r^+ +t } +\ep _{p+r^- +t} ) + X( - \ep _{r^+ +t} -\ep _{p+r^- +t}) }\\
&e_{r^++m+\ell}e_{p+r^- +m+l}+f_{r^++m+\ell}f_{p+r^-+m+\ell}, 
e_{r^+ +m+\ell}f_{p+r^- +m+\ell}+f_{r^+ +m+\ell}e_{p+r^- +m+\ell},\
1\le \ell\le s,\\
&X(-\ep_{r^++m+\ell}-\ep_{p+r^-+m+\ell})+X(\ep_{r^++m+\ell}+\ep_{p+r^-+m+\ell}),\\
&X(\ep_{r^++m+\ell}-\ep_{p+r^-+m+\ell})+X(-\ep_{r^++m+\ell}+\ep_{p+r^-+m+\ell}).
\end{aligned}
$$
{When $p=q=m$, there are two nonconjugate Cartan subalgebras, denoted $\fk h ^{0, 0, m,0}_{I, II}$:

$\fk h^{0,0,m,0}_{I}$ is generated by
$$H(\ep _{ t} -\ep _{p+t}), H(\ep_{p}-\ep_{2p}), X(\ep_{t } +\ep _{p+t} ) +  X( - \ep _{t} -\ep _{p+t}), X(\ep_{p} +\ep _{2p} ) +  X( - \ep _{p} -\ep _{2p}),\ 1\le t \le p-1;$$
$\fk h^{0,0,m,0}_{II}$ is generated by
$$ H(\ep _{ t} -\ep _{p+t}),H(\ep_p+\ep_{2p}),X(\ep_{t } +\ep _{p+t} ) + X( - \ep _{t} -\ep _{p+t}) , X(\ep_{p } -\ep _{2p} ) -X( - \ep _{p} +\ep _{2p})  ,  1\le t\le p-1.
$$
}

}
\subsection{Center of $\mathbf{\tu{G}}$}

The center of $\wti G$ is contained in the maximal compact subgroup 
$\tu{K}$. 

\begin{lemma}\
\begin{itemize}
\item[(1)] When $c=2p+1, d=2q+1$,
 $Z(\tu{G})=\{ (\pm I, \pm I )\}\cong \bbZ_2\times \bbZ_2$.
\item[(2)] When $c=2p, d=2q$, 
\begin{equation}
\begin{aligned}
Z(\tu{G})&= \left \{ (\pm I, \pm I) , \left (\pm  i^p  \prod_{i=1} ^p (1-e_i f_i)  , \pm i^q \prod _{j=1} ^q  (1-e_{p+j} f_{p+j}) \right ) \right \} \\
& \cong \begin{cases}    \bbZ_2\times \bbZ_4 & \text{ if at least one of $p$ and $q$ is odd,} \\  \bbZ _2 \times \bbZ_2 \times \bbZ _2 &  \text{ otherwise.}   \end{cases}
\end{aligned}
\end{equation}
\end{itemize}
\end{lemma}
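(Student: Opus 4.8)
The plan is to compute the center of $\tu{G} = \tu{Spin}(c,d)$ by realizing $\tu{G}$ inside the Clifford algebra $C(V)$, $V = V^+ \oplus V^-$, as in Section \ref{ss:clifford}, and then to identify which elements of $C(V)^{\mathrm{even}} \cap (\text{covers of } Z(SO(V^+)\times SO(V^-)))$ actually commute with all of $\tu{K}$. Since $Z(\tu{G}) \subset \tu{K} = \Spin(V^+)\times\Spin(V^-)$ and the covering map $\tu{K} \to SO(V^+)\times SO(V^-)$ has kernel $\{(\pm I,\pm I)\}$ (where $-I$ here means $\Ep$ in each factor), the preimage of $Z(SO(V^+)\times SO(V^-))$ is a small finite group, and I must pin down its subgroup that is central in $\tu{G}$ itself (not merely in $\tu{K}$). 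The extra condition is that the element commute with $\fk s$ as well, equivalently with the noncompact root vectors; this is what distinguishes $Z(\tu G)$ from $Z(\tu K)$.

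First I would recall from \eqref{eq:-spin} and \eqref{eq:epsilon} the explicit covers of $-I_{V^+}$ and $-I_{V^-}$: namely $\Ep^+ = i^{p'}\prod(1-e_if_i)$ (with $p'=p$ if $c=2p$, and the odd-dimensional version $i^p v^+\prod(1-e_if_i)$ if $c=2p+1$), and similarly $\Ep^-$. In the odd-odd case $c=2p+1$, $d=2q+1$: here $SO(V^\pm)$ is $SO(\text{odd})$, whose center is just $\{\pm I\}$, but $-I_{V^\pm}$ is \emph{not} in $SO(\text{odd})$ (it has determinant $-1$ when the dimension is odd). So $Z(SO(V^+)\times SO(V^-)) = 1$, and $Z(\tu G)$ is exactly the kernel $\{(\pm I,\pm I)\} \cong \bbZ_2\times\bbZ_2$ of $\tu K \to SO\times SO$ — one checks that all four elements are genuinely central in $\tu G$ because $\pm I$ (i.e. the generators $\Ep^\pm$ up to scalar; more precisely the elements acting trivially on $V$) commute with everything in $C(V)$ that preserves the respective factor, hence with all of $\fk g = \fk k + \fk s$. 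This gives part (1).

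For part (2), $c = 2p$, $d = 2q$: now $SO(V^\pm)$ is even orthogonal, $-I_{V^\pm} \in SO(V^\pm)$, and $Z(SO(V^+)\times SO(V^-)) = \{\pm I_{V^+}\}\times\{\pm I_{V^-}\} \cong \bbZ_2\times\bbZ_2$. Its preimage in $\tu K$ has order $16$, generated by $(\Ep^+_{\pm}, I)$, $(I,\Ep^-_{\pm})$ and the scalars. I would then check the centrality condition against $\fk s$: the noncompact root vectors are of the form $f_i v^-$, $e_i v^-$, $v^+ f_j$, $v^+ e_j$ and the long ones $f_i f_j, f_i e_j, e_i e_j, e_i f_j$ with $i \le p < j$, i.e. each noncompact generator is a product of one basis vector from $V^+$ and one from $V^-$. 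An element $(x^+, x^-) \in C(V^+)^{\mathrm{even}} \times C(V^-)^{\mathrm{even}}$ acts on such a product by conjugating the $V^+$-factor by $x^+$ and the $V^-$-factor by $x^-$; for this to be trivial for \emph{all} such products we need $x^+$ to act as $\pm 1$ on $V^+$ (as a vector) and $x^-$ to act as $\pm 1$ on $V^-$, \emph{with the same sign}, because the product picks up the product of the two signs. The elements acting as $+1$ on the respective factor are the scalars $\pm I$; the elements acting as $-1$ are $\pm\Ep^\pm$. Hence the central elements are $(\pm I, \pm I)$ together with $(\pm\Ep^+, \pm\Ep^-)$ — exactly the eight elements listed, since $\Ep^+ = i^p\prod_{i=1}^p(1-e_if_i)$ and $\Ep^- = i^q\prod_{j=1}^q(1-e_{p+j}f_{p+j})$.

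Finally I would determine the group structure. Using $(\Ep^+)^2 = (-1)^p I$ and $(\Ep^-)^2 = (-1)^q I$ from \eqref{eq:-spin}, and noting $\Ep^+$ and $\Ep^-$ live in commuting factors, the element $g := (\Ep^+, \Ep^-)$ satisfies $g^2 = ((-1)^p I, (-1)^q I)$. If both $p,q$ even, $g^2 = (I,I)$ and $Z(\tu G) \cong (\bbZ_2)^3$; if at least one of $p,q$ is odd, then $g^2 = (-I,-I) \ne (I,I)$ but $g^4 = (I,I)$, so $g$ has order $4$ and, together with the independent $\bbZ_2$ generated by (say) $(-I,I)$, gives $Z(\tu G) \cong \bbZ_2\times\bbZ_4$. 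The main obstacle — really the only delicate point — is the careful bookkeeping in the centrality check against $\fk s$: one must be sure that an element central in $\tu K$ but acting by \emph{opposite} signs on $V^+$ and $V^-$ (such as $(\Ep^+, I)$, which is in $Z(\tu K)$) fails to centralize the noncompact root vectors, and conversely that the "matched sign" elements do centralize them; this is where the even-vs-odd dimension of the two factors and the bilinear nature of $\fk s$ genuinely enter, and it is worth writing out one representative noncompact vector, e.g. $v^+ e_j$, explicitly to confirm the sign computation.
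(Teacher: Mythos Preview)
The paper states this lemma without proof, so there is no argument to compare against; your proposal supplies one, and it is essentially correct.

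Two small remarks on presentation. In part~(1) you write that $SO(\text{odd})$ has center ``just $\{\pm I\}$'' and then immediately note that $-I\notin SO(\text{odd})$; it is cleaner simply to say $Z(SO(2p+1))=\{I\}$, or---better---bypass $SO$ entirely and argue directly that $Z(\tu G)\subset Z(\tu K)=Z(\Spin(2p+1))\times Z(\Spin(2q+1))=\{\pm I\}\times\{\pm I\}$, all of whose elements lie in the kernel of the covering $\tu G\to SO(c,d)_0$ and are therefore central. In part~(2) you list the short noncompact root vectors $f_iv^-,\,e_iv^-,\,v^+f_j,\,v^+e_j$, but in the even--even case there are no $v^{\pm}$ and only the ``long'' vectors $f_if_j,\,f_ie_j,\,e_ie_j,\,e_if_j$ (with $i\le p<j$) are present; your argument is unaffected since what matters is only that each noncompact root vector is a product of one basis vector from $V^+$ and one from $V^-$. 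The group-structure computation at the end is correct: with $g=(\Ep^+,\Ep^-)$ one has $g^2=((-1)^pI,(-1)^qI)$, giving $(\bbZ_2)^3$ when $p,q$ are both even and $\bbZ_2\times\bbZ_4$ otherwise, with $(-I,I)$ supplying the extra $\bbZ_2$ factor in the latter case.
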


\begin{lemma}\label{spin-central-char}
Let $\mu= (a_1, \dots, a_p\mid b_1, \dots, b_q)$ be a 
$\wti K$-type parametrized by its highest weight, and let $\chi$ be the restriction of the highest 
weight of $\mu$ to $Z(\tu{G})$. Then 
\begin{equation}\label{center-spincover}
\begin{aligned}
&\chi  (\ep_1 I, \ep_2 I)  = \ep_1 ^{2a_1} \ep_2 ^{2b_1}, \\
& \chi  \left ( \ep_1 i^p  \prod_{i=1} ^p (1-e_i f_i)  , \ep_2
  i^q \prod _{j=1} ^q  (1-e_{p+j} f_{p+j}) \right ) = \ep_1 ^{2a_1}\ep_2
^{2b_1} i^{ 2(a_1+\cdots+a_p +b_1+\cdots +b_q)} 
\end{aligned}
\end{equation}
where $\ep_j=\pm 1.$
\begin{proof}
As a linear functional of the fundamental Cartan subalgebra, $\mu$
acts by 
\begin{eqnarray*}
\mu:\frac{\theta_1 \sqrt{-1} (1-e_1f_1)}{2} +\cdots +
\frac{\theta_{p+q} \sqrt{-1} (1-e_{p+q}f_{p+q})}{2} \\  \mapsto 
  \sqrt{-1}(a_1\theta _1+\cdots +a_p\theta_p+b_1\theta_{p+1}+\cdots
  +b_q \theta_{p+q}) .
\end{eqnarray*}

Setting $\theta_1=2\pi$ and $\theta_j=0 $ for all $j\neq 0$, 
$$
\chi:(-I, I) \mapsto e^{2\pi i a_1}=(-1)^{2a_1},
$$
since  
$$
{e^{\sqrt{-1}\theta _i (1-e_if_i)/2}=\cos\frac{\theta _i }{2} +
  \sqrt{-1}\sin\frac{\theta_i}{2}(1-e_if_i)}.
$$ 
The action of $\chi$ on $(\pm I, \pm I)$ is  similar. 

Similarly, setting $\theta_i=\pi$, 
$$
\mu: \left ( \ep_1 i^p  \prod_{i=1} ^p (1-e_i f_i)  , \ep_2
  i^q \prod _{j=1} ^q  (1-e_{p+j} f_{p+j}) \right )  \mapsto e^{i\pi (a_1+\cdots +a_p +b_1+\cdots + b_q)} =  i^{ 2(a_1+\cdots+a_p +b_1+\cdots +b_q)}.
$$

\end{proof}
\end{lemma}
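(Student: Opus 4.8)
The plan is to realize $Z(\tu{G})$ inside the maximal torus $\tT$ of $\tu{K}$ whose Lie algebra is the compact Cartan subalgebra $\frakt$, spanned in the Clifford model by the elements $X_i^+:=\tfrac{\sqrt{-1}}{2}(1-e_if_i)$ ($1\le i\le p$) and $X_j^-:=\tfrac{\sqrt{-1}}{2}(1-e_{p+j}f_{p+j})$ ($1\le j\le q$). Since the center of $\tu{G}$ lies in $\tu{K}$ (as recalled above) and $\tu{K}$ is connected, $Z(\tu{G})\subseteq Z(\tu{K})\subseteq\tT$, so $\chi$ is simply the exponentiated weight $e^{\mu}$ restricted to $Z(\tu{G})$; and since $\mu$ sends $\sum_i\theta_iX_i^++\sum_j\theta_{p+j}X_j^-$ to $\sqrt{-1}\bigl(\sum_ia_i\theta_i+\sum_jb_j\theta_{p+j}\bigr)$, everything reduces to writing each central element of the previous lemma as $\exp$ of an explicit element of $\frakt$ and pairing it with $\mu$. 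The one computation needed is the exponential in the Clifford algebra: from $e_i^2=0$ and $e_if_i+f_ie_i=2$ one gets $(1-e_if_i)^2=1$, hence $(X_i^+)^2=-\tfrac14$, so $\exp(\theta X_i^+)=\cos(\theta/2)+\sqrt{-1}\sin(\theta/2)(1-e_if_i)$, and similarly for $X_j^-$.

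From this, $\exp(2\pi X_1^+)=-I$, so $(-I,I)=\exp(2\pi X_1^+)$ and $\chi(-I,I)=e^{\mu(2\pi X_1^+)}=e^{2\pi\sqrt{-1}a_1}=(-1)^{2a_1}$; the $V^-$ factor and the signs are handled the same way, giving $\chi(\ep_1I,\ep_2I)=\ep_1^{2a_1}\ep_2^{2b_1}$. Likewise $\exp(\pi X_i^+)=\sqrt{-1}(1-e_if_i)$, and since the $(1-e_if_i)$ commute, $\prod_{i=1}^p\exp(\pi X_i^+)=i^p\prod_{i=1}^p(1-e_if_i)$, which is exactly the element covering $-I\in SO(V^+)$ by \eqref{eq:-spin} (this element is central only when $c=2p$, $d=2q$, by the previous lemma). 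Pairing with $\mu$ gives $\chi\bigl(i^p\prod_{i=1}^p(1-e_if_i),\,I\bigr)=e^{\sqrt{-1}\pi(a_1+\dots+a_p)}=i^{2(a_1+\dots+a_p)}$, and multiplying the $V^+$ and $V^-$ contributions together with the $(\pm I,\pm I)$-contribution (using that $\chi$ is a homomorphism) yields the second formula.

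The step to watch is well-definedness: a central element admits many expressions as $\exp$ of an element of $\frakt$ — for instance $-I=\exp(2\pi X_j^+)$ for every $j$, or $-I=\exp\bigl(2\pi(X_1^++X_2^++X_3^+)\bigr)$ — so the value of $e^{\mu}$ must not depend on the choice. This is exactly where the shape of a $\wti{K}$-type enters: its coordinates $(a_1,\dots,a_p\mid b_1,\dots,b_q)$ are either all in $\bbZ$ or all in $\bbZ+\tfrac12$, so $(-1)^{2a_i}$ is independent of $i$ and adding multiples of $2\pi$ to the $\theta$'s changes $e^{\mu}$ by $1$; the same observation disposes of the residual ambiguity in $i^{2(a_1+\dots+a_p+b_1+\dots+b_q)}$. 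With this in place the proof is complete — the only external inputs are the preceding lemma (for the list of central elements, and, in the even case, for $i^p\prod(1-e_if_i)\in\tu{K}$) and the identification \eqref{eq:-spin} — so no genuinely hard step arises; the rest is bookkeeping around the Clifford exponential.
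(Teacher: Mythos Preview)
Your proof is correct and follows essentially the same approach as the paper: both realize the central elements as exponentials of explicit elements of the compact Cartan $\frakt$ via the Clifford formula $e^{\sqrt{-1}\theta(1-e_if_i)/2}=\cos(\theta/2)+\sqrt{-1}\sin(\theta/2)(1-e_if_i)$, then pair with $\mu$. Your added well-definedness discussion is a reasonable sanity check but is not strictly needed, since $\mu$ is by hypothesis the highest weight of an actual $\wti K$-representation and therefore exponentiates to a well-defined character of $\tT$ automatically.
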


Then it is clear that 
\begin{itemize}
\item $\chi$ factors through $SO(c,d)$ iff $a_i, b_j\in \bbZ$;
\item $\chi$ factors through $Spin(c,d)$ if $a_i,b_j \in\bbZ $ or $a_i, b_j\in \bbZ+\frac{1}{2}$;
\item $\chi$ is genuine for $\tu{Spin}(c,d)$ if and only if $a_i\in \bbZ$ and $b_j\in \bbZ+\frac{1}{2}$, or $a_i\in \bbZ + \frac{1}{2}$ and $b_j\in \bbZ$.
\end{itemize}

 Denote the set of genuine central characters of $Z(\tu{G})$ by $\prod_g (Z(\tu{G}))$. 
The next lemma characterizes  $\prod_g (Z(\tu{G}))$.

\begin{lemma}
Let $\mu_j$ be the $\tu{K}$-type parametrized by its highest weight:
\begin{equation}
\begin{aligned}
\mu_1= ( \underbrace{1/2 , \dots, 1/2 }_p \mid \underbrace{ 0, \dots, 0} _q), \ & \mu_2 = ( \underbrace{0 , \dots, 0 }_p \mid \underbrace{ 1/2, \dots, 1/2} _q),\\
\mu_3= ( \underbrace{1/2 , \dots, 1/2,-1/2 }_p \mid \underbrace{ 0, \dots, 0} _q), \ & \mu_4= ( \underbrace{0 , \dots, 0}_p \mid \underbrace{ 1/2, \dots, 1/2, -1/2} _q).
\end{aligned}
\end{equation}
Let $\chi_j$ be the restriction of the highest weight of $\mu_j$ to $Z(\tu{G})$. 
\begin{itemize}
\item[(a)]  If $c=2p+1$, $d=2q+1$, then $\prod_g (Z(\tu{G})) =\{\chi_1, \chi_2\}$.

\item[(b)] If $c=2p,$ $d=2q$, then $\prod_g (Z(\tu{G}))=\{\chi_1, \chi_2,\chi_3, \chi_4\}$. 
\end{itemize}
Moreover, given any $\tu{K}$-type $\mu$, the central character  of $\mu$ is $\chi_j$ iff $\mu-\mu_j$ is in the root lattice of $D_n.$ 
\begin{proof}
The proof easily follows from \eqref{center-spincover}.
\end{proof}
\end{lemma}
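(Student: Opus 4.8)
The plan is to derive everything from the two identities in \eqref{center-spincover}, the description of $Z(\tu{G})$ in the preceding lemma, and the integrality criterion for genuineness stated just above the lemma.

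The first and main point is to observe that \eqref{center-spincover} shows the central character $\chi_\mu$ of a $\tu{K}$-type $\mu=(a_1,\dots,a_p\mid b_1,\dots,b_q)$ depends only on three pieces of data: whether the $a_i$ lie in $\bbZ$ or in $\bbZ+\tfrac12$ (this is the datum $\epsilon\mapsto\epsilon^{2a_1}$), whether the $b_j$ do, and the root of unity $i^{2(a_1+\cdots+a_p+b_1+\cdots+b_q)}$; in the odd case $c=2p+1,\ d=2q+1$ the elements $\Ep_c,\Ep_d$ do not occur in $Z(\tu{G})$, so only the first two data survive. By the bulleted remarks preceding the lemma, $\chi_\mu$ is genuine precisely when exactly one of the first two data is ``integral''. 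The weights $\mu_1,\mu_3$ have $a_i\in\bbZ+\tfrac12$ and $b_j\in\bbZ$, while $\mu_2,\mu_4$ have the opposite, so all four $\chi_j$ are genuine; substituting into \eqref{center-spincover} one checks that in the even case the third data of $\chi_1$ and $\chi_3$ are $i^{p}$ and $-i^{p}$ (resp.\ $i^{q}$ and $-i^{q}$ for $\chi_2,\chi_4$), so $\chi_1\ne\chi_3$ and $\chi_2\ne\chi_4$, whereas in the odd case $\chi_1=\chi_3$ and $\chi_2=\chi_4$.

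Next I would prove the ``moreover'' clause directly. Write $n=p+q$ and $Q(D_n)=\{\lambda\in\bbZ^n:\sum_i\lambda_i\in2\bbZ\}$. If $\mu-\mu'\in Q(D_n)$ then, from $\mu-\mu'\in\bbZ^n$, the $a$'s of $\mu$ and $\mu'$ have the same integrality type and likewise the $b$'s; and from $\sum_i(\mu-\mu')_i\in2\bbZ$ we get $(\sum a_i+\sum b_j)-(\sum a'_i+\sum b'_j)\in2\bbZ$, so the third datum agrees too; hence $\chi_\mu=\chi_{\mu'}$ by \eqref{center-spincover}. Conversely, if $\chi_\mu=\chi_{\mu'}$ then the integrality types agree, so $\mu-\mu'\in\bbZ^n$, and agreement of the third datum together with $(\sum a_i+\sum b_j)-(\sum a'_i+\sum b'_j)\in\bbZ$ forces this difference to be even, i.e.\ $\mu-\mu'\in Q(D_n)$. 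Applying this with $\mu'=\mu_j$ gives the criterion. (In the odd case only the first two data enter; there the correct statement is $\chi_\mu=\chi_{\mu_j}$ if and only if $\mu-\mu_j\in\bbZ^n$, consistent with $\chi_1=\chi_3$ and $\chi_2=\chi_4$, and the $Q(D_n)$-formulation is the one relevant to part~(b).)

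Parts (a) and (b) are then a count of cosets. A genuine character of $Z(\tu{G})$ is $\chi_\mu$ for some $\tu{K}$-type $\mu$ whose $a$'s and $b$'s have opposite integrality. In the even case the two integrality patterns, each combined with the two possible values of $i^{2(\sum a_i+\sum b_j)}$, allow at most four genuine characters, and $\mu_1,\mu_2,\mu_3,\mu_4$ already realize four distinct ones by the previous paragraph, so $\prod_g(Z(\tu{G}))=\{\chi_1,\chi_2,\chi_3,\chi_4\}$. In the odd case only the two integrality patterns matter, giving at most two genuine characters, realized by $\chi_1(=\chi_3)$ and $\chi_2(=\chi_4)$, so $\prod_g(Z(\tu{G}))=\{\chi_1,\chi_2\}$. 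I do not expect a substantial obstacle here; the only delicate point is the even/odd bookkeeping, and in particular the fact --- an input from the preceding lemma --- that $Z(\tu{G})$ contains only the balanced elements $(\pm\Ep_c,\pm\Ep_d)$ and not $(\Ep_c,I)$, which is precisely why the relevant lattice in part~(b) is $Q(D_n)$ rather than $Q(D_p)\times Q(D_q)$.
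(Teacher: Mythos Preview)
Your proof is correct and follows the same route as the paper's one-line reference to \eqref{center-spincover}: you have simply made explicit the bookkeeping the paper suppresses. Your observation about the odd case is also apt---when $c,d$ are odd the element $(\Ep_c,\Ep_d)$ is absent from $Z(\tu G)$, so $\chi_\mu$ depends only on the integrality types of the $a_i$ and $b_j$, and the ``moreover'' criterion as literally stated (with the root lattice $Q(D_n)$ rather than $\bbZ^n$) is finer than the equivalence relation $\chi_\mu=\chi_{\mu'}$; the $Q(D_n)$ formulation is the one that matters for part~(b), exactly as you note.
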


\subsection{Cartan subgroups of $\mathbf{\wti G}$}

The $\theta$-stable Cartan subgroup $$\tu{H} ^{r^+,
    r^-,m,s}=\wti T^{r^+,r^-,m,s}\cdot A^{r^+,r^-,m,s}$$  of $\tu{G}$ is the centralizer of $\fk h ^{r^+,r^-,m,s}$ in $\tu{G}$. We write 
$\fk h ^{r^+,r^-,m,s} = \fk t ^{r^+,r^-,m,s}+\fk a ^{r^+,r^-,m,s}$. We have that 
$$
\tu{H} ^{r^+, r^-,m,s}= \tu{T }^{r^+,r^-,m,s}  \exp _{\tu{G}} \fk a ^{r^+,r^-,m,s}.
$$
The goal is to  compute $\tu{T }^{r^+,r^-,m,s} = Z_{\tu{K}} (\fk h ^{r^+,r^-,m,s} )$.

We first do this for $\tu{G}=\tu{Spin} (2,2)$ in detail. As before, the orthogonal space $V=V^+ \oplus V^- $ has basis 
$$
e_1, f_1 ;\ e_2, f_2
$$
with $e_1, f_1\in V^+$, $e_2,f_2\in V^-$.

There are four conjugacy classes of Cartan subalgebras of $\fk{so}(2,2)$, denoted
$$\fk h ^{1,1,0,0}, \fk h^{0,0,1,0} _I, \fk h^{0,0,1,0} _{II},  \fk h^{0,0,0,2}.$$

Note that an element in $\tu{K}=Spin(2)\times Spin (2)$ is of the form $(e^{ i \theta_1 h(\ep_1) },  e^{i \theta_2 h(\ep_2) })$, where 
\begin{equation}\label{exp-h}
e^{i \theta_j h(\ep_j) } = \exp [i \frac{\theta_j}{2} (1-e_jf_j)] = \cos\frac{\theta _j }{2}+i \sin \frac{\theta_j}{2} (1-e_jf_j).
\end{equation}

\begin{itemize}
\item $\tu{H}^{1,1,0,0} = Z_{\tu{K}} ( h (\ep_1) ) \cap  Z_{\tu{K}} ( h (\ep_2) ) = \{ (e^{ i \theta_1 h(\ep_1) },  e^{i \theta_2 h(\ep_2) }) \} \cong (S^1)^2$. 

\item The cases $\tu{H}_{I}^{0,0,1,0}$ and $\tu{H}_{II}^{0,0,1,0}$ are similar, so we do the former one only. We shall calculate $Z_{\tu{K}} (\fk a)$.
Let $(e^{i\theta_1h(\ep_1)}, e^{ i\theta_2 h(\ep_2)} )\in Z_{\tu{K}}(\fk a)$. Then

\begin{eqnarray*}
 X(\ep_1+\ep_2)+X(-\ep_1-\ep_2) &=\Ad e^{\theta_1h(\ep_1)+\theta_2 h(\ep_2)}\big[ X(\ep_1+\ep_2)+X(-\ep_1-\ep_2)\big]\\
& = e^{i(\theta_1+\theta_2)}X(\ep_1+\ep_2) +e^{i(-\theta_1-\theta_2)}X(-\ep_1-\ep_2).
\end{eqnarray*}

This gives  that $\theta_1+\theta_2=2k\pi$, $k\in \bbZ$.
Therefore, 
\begin{eqnarray*}
(e^{i\theta_1h(\ep_1)}, e^{ i\theta_2 h(\ep_2)} )&=& (e^{i\theta_1h(\ep_1)}, e^{ i (2k\pi- \theta_1) h(\ep_2)} )\\
&=&\big (  \cos\frac{\theta _1 }{2}+i \sin \frac{\theta_1}{2} (1-e_1f_1), \cos\frac{\theta _2 }{2}-i \sin \frac{\theta_2}{2} (1-e_2f_2) \big ) \cdot (I , e^{i2k\pi h(\ep_2)})\\
&=&\big (  \cos\frac{\theta _1 }{2}+i \sin \frac{\theta_1}{2} (1-e_1f_1), \cos\frac{\theta _2 }{2}-i \sin \frac{\theta_2}{2} (1-e_2f_2) \big ) \cdot (I , \pm I) 
\end{eqnarray*}
\item For $\tu{H}^{0,0,0,2}$, let $(e^{i\theta_1h(\ep_1)}, e^{ i\theta_2 h(\ep_2)} ) \in Z_{\tu{K}}(\fk a)$. We have the relations
\begin{eqnarray*}
 X(\ep_1+\ep_2)+X(-\ep_1-\ep_2) &=\Ad e^{\theta_1h(\ep_1)+\theta_2 h(\ep_2)}\big[ X(\ep_1+\ep_2)+X(-\ep_1-\ep_2)\big]\\
& = e^{i(\theta_1+\theta_2)}X(\ep_1+\ep_2) +e^{i(-\theta_1-\theta_2)}X(-\ep_1-\ep_2),
\end{eqnarray*}
and 
\begin{eqnarray*}
 X(\ep_1-\ep_2)+X(-\ep_1+\ep_2) &=\Ad e^{\theta_1h(\ep_1)+\theta_2 h(\ep_2)}\big[ X(\ep_1-\ep_2)+X(-\ep_1+\ep_2)\big]\\
& = e^{i(\theta_1-\theta_2)}X(\ep_1-\ep_2) +e^{i(-\theta_1+\theta_2)}X(-\ep_1+\ep_2).
\end{eqnarray*}
This gives that $\theta _1 -\theta_2=2 k\pi, \ \theta_1+\theta _2 =2 l \pi$, $k,l \in \bbZ$ and hence we write $$\theta_1= ( k+l )\pi, \theta_2 = (k-l)\pi. $$
Therefore, 
\begin{eqnarray*}
&&(e^{i\theta_1h(\ep_1)}, e^{ i\theta_2 h(\ep_2)} )= (e^{i (k+l)\pi h(\ep_1)}, e^{ i  (k-l)\pi   h(\ep_2)} )\\
&=&\big (  \cos\frac{  (k+l)\pi }{2}+i \sin \frac{(k+l)\pi}{2} (1-e_1f_1), \cos\frac{(k-l)\pi}{2}+i \sin \frac{(k-l)\pi}{2} (1-e_2f_2) \big ).
\end{eqnarray*}
This gives eight elements in $Z_{\tu{K}}(\fk a)$: 
$$
(\pm I, \pm I), (\pm i(1-e_1f_1), \pm i(1-e_2f_2)).
$$

\end{itemize}

  Recall $\tu{G}=\tu{Spin}(c,d)$, $c=2p, d=2q$  or $c=2p+1, d=2q+1$. We change the  orthonormal basis to
$$
v_i, w_i, v^+ ; \ v_j, w_j, v^-, \quad 1\le i\le p, \ p+1\le j \le p+q
$$
 for $V=V^+ \oplus V^-$, where 
 $$
 v_i = \frac{e_i+f_i}{\sqrt{2}}, \ w_i= \frac{e_i-f_i}{\sqrt{-2}},
 $$
 and the same relations hold for $v_j,w_j,e_j,f_j$.
Again, when $c, d$ are even, the corresponding terms of $v^+$ and $v_-$ are missing.
 
Define  a finite  subgroup $F^{r^+,r^-, m ,s}$ of $\tu{K}$ as follows. 
 \begin{itemize}
 
 \item[(1)] When $m=0$ and $s= 0 $ or 1, define $F^{r^+,r^-, m ,s}=1$.
 \item[(2)] When $s>1$, define $F^{r^+,r^-, m ,s}$ to be the subgroup generated by the elements of order four of $\tu{K}$,
 
 \begin{eqnarray*}
&\{   (\pm v_i v_k, \pm v_{i+p}v_{k+p}   ),   (\pm w_i w_k, \pm w_{i+p}w_{k+p}   ), \quad r^++m+1 \le i< k \le p \\
   &(\pm v_l w_k, \pm v_{l+p}w_{k+p}),   \quad r^++m+1 \le l, k  \le p \\
 &  (\pm v_i v^+, \pm v_{i+p}v^-) , (\pm w_i v^+, \pm w_{i+p } v^-) , \quad r^++m+1 \le i\le p \}.
 \end{eqnarray*} 
 
 In this case, 
 $
 |F^{r^+,r^-, m ,s} |= 2^{s-1}\cdot 4.
 $
 It is an extension of $\bbZ ^{s-1} _2$ of $\bbZ_2 \times \bbZ_2$, the center of $Spin(c)\times Spin(d)$.
\item[(3)] When $m\neq 0$ and $s=0$ or 1, define $F^{r^+,r^-, m,s}=\{ ( I  ,\pm I )\}$. In this case $|F^{r^+, r^-, m ,s}|=2$. 

 \end{itemize}

\begin{lemma} \label{Cartan-decomp}
The Cartan subgroup $\wti H^{r^+,r^-,s,m}$ has the direct product decomposition
$$
\tu{H}^{r^+, r^-, m ,s}=F^{r^+,r^-, m ,s}\times \exp _{\tu{G}} ( \fk h^{r^+,r^-, m , s} ). 
$$ Thus, the number of components of $\tu{H}^{r^+, r^-, m ,s}$ 
is $|F^{r^+,r^-, m ,s}|$,  listed above. 
\end{lemma}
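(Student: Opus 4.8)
The plan is to reduce the statement to a computation of the compact part $\tu T^{r^+,r^-,m,s}:=Z_{\tu K}(\fk h^{r^+,r^-,m,s})$. Write $\fk h^{r^+,r^-,m,s}=\fk t^{r^+,r^-,m,s}\oplus\fk a^{r^+,r^-,m,s}$ for the decomposition into the $\theta$-fixed and $\theta$-anti-fixed parts. Then $A^{r^+,r^-,m,s}=\exp_{\tu G}(\fk a^{r^+,r^-,m,s})$ is a vector group, hence connected and simply connected, and it contributes nothing to $\pi_0(\tu H^{r^+,r^-,m,s})$; also $\exp_{\tu G}(\fk h^{r^+,r^-,m,s})=\exp_{\tu G}(\fk t^{r^+,r^-,m,s})\cdot A^{r^+,r^-,m,s}$ is the identity component. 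Since $\tu H^{r^+,r^-,m,s}$ is abelian (for $g$ centralizing $\fk h^{r^+,r^-,m,s}$ one has $g\exp(X)g^{-1}=\exp(\Ad(g)X)=\exp(X)$), it suffices to exhibit a finite subgroup $F^{r^+,r^-,m,s}\subseteq\tu T^{r^+,r^-,m,s}$ with $F^{r^+,r^-,m,s}\cap\exp_{\tu G}(\fk h^{r^+,r^-,m,s})=\{1\}$ that surjects onto the component group; the direct product decomposition and the count of components then follow formally.

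For the computation of $\tu T^{r^+,r^-,m,s}$, I would argue exactly as in the worked example of $\tu{Spin}(2,2)$ above, using the Clifford realizations of $\fk h^{r^+,r^-,m,s}$ from Section \ref{ss:clifford}. An element $(g^+,g^-)\in\tu K=Spin(V^+)\times Spin(V^-)$ centralizing $\fk h^{r^+,r^-,m,s}$ in particular centralizes the compact subtorus $\tu T_0:=\exp_{\tu G}(\fk t^{r^+,r^-,m,s})$, which forces $(g^+,g^-)$ into the (connected) centralizer of $\tu T_0$ in $\tu K$; writing the relevant toral coordinates as $\prod_i\exp(\sqrt{-1}\,\theta_iH(\ep_i))$ via \eqref{exp-h}, the further requirement that $\Ad(g^+,g^-)$ fix each root vector $X(\ep_a\pm\ep_b)+X(-\ep_a\mp\ep_b)$ realizing the split directions $y_t-y_{t+m}$ and $z_\ell$ becomes the system of congruences $\theta_a+\theta_b\in2\pi\bbZ$ (one per $y$-pair) and $\theta_a\pm\theta_b\in2\pi\bbZ$ (per $z$-coordinate). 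Solving these congruences modulo the ambiguity already present in $\tu T_0$ yields exactly the generators listed in the definition of $F^{r^+,r^-,m,s}$: when $m=0$ and $s\le1$ there is no constraint, so $F^{r^+,r^-,m,s}=1$; when $m\ne0$ and $s\le1$ a single ``sum'' congruence survives, producing the extra element $(I,-I)$; and when $s>1$ the ``difference'' congruences $\theta_a-\theta_b\in2\pi\bbZ$ produce an extra $\bbZ_2^{s-1}$ beyond the $\bbZ_2\times\bbZ_2$ generated by $(I,-I)$ and $(-I,I)$ that already appears at $s=2$, matching $|F^{r^+,r^-,m,s}|=2^{s-1}\cdot4$.

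Finally I would check that $F^{r^+,r^-,m,s}$ meets $\exp_{\tu G}(\fk h^{r^+,r^-,m,s})$ trivially and exhausts the component group. For the first point one uses, as in Section \ref{ss:clifford}, that the order-four Clifford elements in the definition of $F^{r^+,r^-,m,s}$ square to $\pm I$ and act nontrivially on a suitable $\tu T_0$-fixed weight vector, so they cannot lie in the connected torus $\tu T_0$; the commutation relations among the $\Ep$'s and the explicit paths of Lemma \ref{real-comp-le} identify the subgroup they generate with $\bbZ_2^{s-1}$ times the relevant central piece. Surjectivity onto $\pi_0(\tu H^{r^+,r^-,m,s})$ is immediate, since every solution of the above angle congruences has been accounted for. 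The main obstacle is precisely this last piece of bookkeeping: keeping the parity count honest uniformly across the subcases ($m=0$ versus $m\ne0$, $s\le1$ versus $s>1$, $c,d$ even versus odd, and the split Cartan subalgebras $\fk h^{0,0,m,0}_{I,II}$ when $p=q=m$), and verifying that no two of the listed Clifford elements coincide or fall into $\tu T_0$.
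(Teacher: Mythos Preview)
Your overall strategy matches the paper's: the paper gives no separate proof of this Lemma, but works out $\tu{Spin}(2,2)$ in detail just before it and defines $F^{r^+,r^-,m,s}$ so that the general case follows by the same angle-congruence computation you describe. So the reduction to $\tu T^{r^+,r^-,m,s}=Z_{\tu K}(\fk h^{r^+,r^-,m,s})$ and the solving of $\theta_a\pm\theta_b\in 2\pi\bbZ$ is exactly the intended argument.

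There is, however, a genuine error in your write-up. You assert that $\tu H^{r^+,r^-,m,s}$ is abelian, and your parenthetical justification ``for $g$ centralizing $\fk h$ one has $g\exp(X)g^{-1}=\exp(\Ad(g)X)=\exp(X)$'' proves only that $\exp_{\tu G}(\fk h)$ is \emph{central} in $\tu H$, not that $\tu H$ is abelian. In fact $\tu H^{r^+,r^-,m,s}$ is \emph{not} abelian when $s\ge 3$: this is precisely the content of the Corollary immediately following the Lemma in the paper, where two Clifford elements of $F^{r^+,r^-,m,s}$ such as $(v^+w^+,v^-w^-)$ and $(w^+u^+,w^-u^-)$ are shown to anticommute. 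Your later claim that the generators of $F$ ``square to $\pm I$'' already hints at this nonabelian structure.

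Fortunately the fix is local: what you actually need for the direct product is exactly the centrality of $\exp_{\tu G}(\fk h)$ in $\tu H$ (so that $F$ is automatically normal once $F\cdot\exp_{\tu G}(\fk h)=\tu H$ and the intersection is trivial), together with $F\cap\exp_{\tu G}(\fk h)=\{1\}$ and surjection onto $\pi_0$. Replace ``abelian'' by ``$\exp_{\tu G}(\fk h)$ is central in $\tu H$'' and the rest of your argument goes through. You should also note explicitly in case (2) that when $\fk t^{r^+,r^-,m,s}$ has no contribution from the $z$-block, the elements $(\pm I,\pm I)$ genuinely lie outside $\exp_{\tu G}(\fk h)$ (since $A$ is a vector group), which is why $|F|$ picks up the factor of $4$.
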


\begin{cor}
$\tu{H}^{r^+, r^-, m ,s}$ is abelian iff $s< 3$.
\begin{proof}
Since $ \exp _{\tu{G}} ( \fk h ^{r^+,r^-, m , s} ) \subset \tu{H}_0\subset Z(  \tu{H} ^{r^+,r^-, m , s})$, to determine whether $ \tu{H} ^{r^+,r^-, m , s}$ is abelian, we just need to look at $ F ^{r^+,r^-, m , s}$ by Lemma \ref{Cartan-decomp}.

When $s=0$, $|F ^{r^+,r^-, m , s}|=1$ or 2. So $F ^{r^+,r^-, m , s}$ is obviously abelian. 

When $s=2$, $F ^{r^+,r^-, m , s}=\{ (\pm I, \pm I), (\pm v_1w_1, \pm v_2w_2) \}$, and is clearly abelian. 
 
When $s\ge 3$, there exist sets of orthonormal vectors $\{  v^+, w^+, u^+\} \subset V^+$ and  $\{  v^-, w^-, u^-\} \subset V^-$ such that $(v^+w^+, v^-w^-), (w^+u^+, w^-u^-)\in  \tu{H} ^{r^+,r^-, m , s}$. Then 
$v^+w^+ w^+u^+ = v^+ u^+$, whereas $w^+u^+v^+w^+=u^+v^+ =-v^+u^+$. Therefore $ \tu{H} ^{r^+,r^-, m , s}$ is not abelian.

\end{proof}

\end{cor}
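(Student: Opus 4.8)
The plan is to reduce the statement to the analogous question about the finite group $F^{r^+,r^-,m,s}$ and then to settle that by a short Clifford-algebra computation.

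First I would invoke Lemma \ref{Cartan-decomp} to write $\tu{H}^{r^+,r^-,m,s}=F^{r^+,r^-,m,s}\times\exp_{\tu{G}}(\fk h^{r^+,r^-,m,s})$. The identity component of $\tu{H}^{r^+,r^-,m,s}$ is the connected subgroup $\exp_{\tu{G}}(\fk h^{r^+,r^-,m,s})$, whose Lie algebra is the abelian Cartan subalgebra $\fk h^{r^+,r^-,m,s}$, so it is abelian; moreover $\tu{H}^{r^+,r^-,m,s}=Z_{\tu{G}}(\fk h^{r^+,r^-,m,s})$ centralizes $\fk h^{r^+,r^-,m,s}$, hence centralizes $\exp_{\tu{G}}(\fk h^{r^+,r^-,m,s})$. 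Thus that factor is central in $\tu{H}^{r^+,r^-,m,s}$, and $\tu{H}^{r^+,r^-,m,s}$ is abelian if and only if $F^{r^+,r^-,m,s}$ is. It remains to decide when $F^{r^+,r^-,m,s}$ is abelian.

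Next I would run through the cases in the definition of $F^{r^+,r^-,m,s}$. For $s\le 1$ one has $F^{r^+,r^-,m,s}=1$ if $m=0$ and $F^{r^+,r^-,m,s}=\{(I,\pm I)\}\cong\bbZ_2$ if $m\ne 0$, so $\tu{H}^{r^+,r^-,m,s}$ is abelian. The case $s=2$ forces $n$ even, $\ep=0$, $s'=1$, so the index range $r^++m+1\le i\le p$ reduces to the single value $i=p$ and the unit vectors $v^\pm$ do not occur; the listed order-four generators then collapse to the single element $g=(v_pw_p,v_p'w_p')$ (up to sign), where $v_p',w_p'$ is the matching orthonormal pair in $V^-$. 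Since $g$ commutes with the central subgroup $\{(\pm I,\pm I)\}$ and $F^{r^+,r^-,m,s}$ is generated by that subgroup together with $g$, it is abelian; hence $\tu{H}^{r^+,r^-,m,s}$ is abelian for $s\le 2$.

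Finally, for $s\ge 3$ I would exhibit two non-commuting elements of $F^{r^+,r^-,m,s}$. The orthonormal vectors of $V^+$ entering the order-four generators — the $v_i,w_i$ with $r^++m+1\le i\le p$, together with $v^+$ when $c$ is odd — number exactly $2s'+\ep=s\ge 3$; choosing three of them $x,y,z$ and the corresponding $x',y',z'$ in $V^-$, both $(xy,x'y')$ and $(yz,y'z')$ lie in $F^{r^+,r^-,m,s}\subset\tu{H}^{r^+,r^-,m,s}$, yet $xy\cdot yz=xz$ whereas $yz\cdot xy=zx=-xz$, using $y^2=1$ and the anticommutation of distinct orthonormal vectors in the Clifford algebra. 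Hence these elements do not commute, $\tu{H}^{r^+,r^-,m,s}$ is non-abelian, and the equivalence is proved. I expect the only real difficulty to be the bookkeeping: one must keep careful track of how $s=2s'+\ep$ controls the number of orthonormal $V^+$-vectors used in $F^{r^+,r^-,m,s}$, so that $s=2$ yields only disjoint — hence commuting — Clifford monomials while $s=3$ forces an overlapping pair; granting that count, the dichotomy drops out of the basic Clifford relations.
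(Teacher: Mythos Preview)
Your proof is correct and follows essentially the same approach as the paper: reduce to the finite group $F^{r^+,r^-,m,s}$ via Lemma~\ref{Cartan-decomp} and the centrality of the connected component, then treat small $s$ by inspection and $s\ge 3$ by exhibiting two overlapping Clifford monomials that anticommute. Your version is in fact slightly more careful than the paper's, which only writes out $s=0$ and $s=2$ (the case $s=1$ being implicit), and your count $2s'+\ep=s$ of the available orthonormal $V^+$-vectors makes the dichotomy between $s=2$ and $s\ge 3$ fully explicit.
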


\subsection{Regular characters} See \cite{AT} or \cite{RT} for more detail in this section. Suppose $G$ is a real reductive group (possibly nonlinear). 

\begin{definition}
A regular character of $G$ is a triple $\gamma= (H,\Gamma,\la)$
consisting of a $\theta$-stable Cartan subgroup $H$, an irreducible
representation $\Gamma$ of $H$, and $\la\in \fk h^*$, satisfying the
following conditions.  

\begin{itemize}
\item[(a)] $ \langle \la,\alpha ^{\vee}\rangle \in \bbR ^{\times} $ for all imaginary roots $\alpha$;
\item[(b)] $d\Gamma =\lambda +\rho _i (\la)-2\rho _{i,c}(\lambda)$;
\item[(c)] $\langle \la,\alpha^{\vee}\rangle\neq 0\ \forall \alpha \in \Delta$. 
\end{itemize}
\end{definition}

The group under consideration is $\tu{G}=\tu{Spin}(c,d)$. In this case we
will write $\gamma=(\tu{H} , \Gamma, \la)$. We say that  $\gamma$ is
genuine if $\Gamma$ is a genuine representation of $\tu{H}$. 

Let $I(\gamma)$ denote that standard module corresponding to the
parameter $\gamma$, and let $J(\gamma)$ denote the unique irreducible
quotient of $I(\gamma)$. 

\begin{prop}[\cite{AT}] \
\begin{itemize}
\item[(a)] Let $\tu{H}$ be a Cartan subgroup of $\tu{G}$. Every
  representation $\Gamma$ of $\tu{H}$ is parametrized by a genuine
  character of $Z(\tu{H})$, i.e. $\Gamma |_{Z(\tu{H})}$, the
  restriction of $\Gamma$ to $Z(\tu{H}).$ 
\item[(b)] $Z(\tu{H})=Z(\tu{G}) \tu{H^0}$, so a genuine character of
  $Z(\tu{H})$ is determined by its restriction to $Z(\tu{G})$ and its
  differential. Moreover, if $\gamma=(\tu{H}, \Gamma, \la)$ is a
  genuine character of $\tu{G}$, then $\gamma$ is determined by $\la$
  and $\Gamma |_{Z(\tu{G})}$. 
\end{itemize}
\end{prop}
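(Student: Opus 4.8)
The plan is to reduce everything to the explicit product decomposition of the Cartan subgroup obtained in Lemma~\ref{Cartan-decomp}, namely $\tu{H}^{r^+,r^-,m,s} = F^{r^+,r^-,m,s}\times \exp_{\tu G}(\fk h^{r^+,r^-,m,s})$ with $F^{r^+,r^-,m,s}$ a finite group, together with the finite-group (Clifford/Heisenberg) representation theory of $F^{r^+,r^-,m,s}$ and the Clifford-algebra commutation relations of Section~\ref{ss:clifford}. Abbreviating $\tu{H} = \tu{H}^{r^+,r^-,m,s}$, $F = F^{r^+,r^-,m,s}$, and $\tu{H}^0 = \exp_{\tu G}(\fk h^{r^+,r^-,m,s})$: since $\tu{H} = Z_{\tu G}(\fk h^{r^+,r^-,m,s})$ centralizes $\fk h^{r^+,r^-,m,s}$ it centralizes $\tu{H}^0$, so $\tu{H}^0 \subseteq Z(\tu{H})$ and $Z(\tu{H}) = Z(F)\times \tu{H}^0$; the whole problem thus splits into a torus factor and a finite factor.

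For part (a), I would first record from the explicit generators of $F$ — order-four elements attached to pairs of equal Jordan blocks — that each generator squares into $\{(\pm I,\pm I)\}$ and that any two generators either commute or anticommute, by the Clifford identities of Section~\ref{ss:clifford}. Hence $[F,F]\subseteq\{(\pm I,\pm I)\}$, $F/Z(F)$ is an elementary abelian $2$-group, and the commutator induces an alternating bilinear form on $F/Z(F)$ valued in $Z(F)$. The crucial step is to show that for every \emph{genuine} character $\chi$ of $Z(F)$ the associated scalar-valued form on $F/Z(F)$ is nondegenerate; this is the anticommutation bookkeeping among the listed generators. Granting this, the Stone--von Neumann theorem for finite Heisenberg-type groups produces a unique irreducible $\Gamma_\chi$ of $F$ with central character $\chi$. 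Tensoring with the one-dimensional characters of the torus factor $\tu{H}^0$, and noting that two irreducibles of $\tu{H} = F\times\tu{H}^0$ that agree on $F$ and on $\tu{H}^0$ coincide, one concludes that $\Gamma\mapsto \Gamma|_{Z(\tu{H})}$ is a bijection from genuine irreducibles of $\tu{H}$ onto genuine characters of $Z(\tu{H})$, which is assertion (a).

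For part (b), the inclusion $Z(\tu G)\tu{H}^0\subseteq Z(\tu{H})$ is immediate, and the reverse follows from $Z(\tu{H}) = Z(F)\times\tu{H}^0$ once one checks $Z(F)\subseteq Z(\tu G)$; here I would match the central elements of $F$ read off from its generators — the $(\pm I,\pm I)$ and, when $c,d$ are even, products of the $i^p\prod(1-e_if_i)$-type elements — against the list of central elements of $\tu G$ established earlier, giving $Z(\tu{H}) = Z(\tu G)\tu{H}^0$. A character of $Z(\tu G)\tu{H}^0$ is determined by its restrictions to the subgroups $Z(\tu G)$ and $\tu{H}^0$; since $\tu{H}^0$ is a connected torus, its character is pinned down by its differential, i.e.\ by $d\Gamma|_{\fk h^{r^+,r^-,m,s}}$, so $\Gamma|_{Z(\tu{H})}$ is determined by $\Gamma|_{Z(\tu G)}$ and $d\Gamma$. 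Finally, for a regular character $\gamma=(\tu{H},\Gamma,\la)$ the defining relation $d\Gamma = \la + \rho_i(\la) - 2\rho_{i,c}(\la)$ shows $d\Gamma$ depends only on $\la$; combining this with part (a) and the previous two sentences, $\gamma$ is determined by $\la$ and $\Gamma|_{Z(\tu G)}$.

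The hard part will be the single non-formal point above: proving nondegeneracy of the commutator form on $F^{r^+,r^-,m,s}/Z(F^{r^+,r^-,m,s})$ for genuine central characters — equivalently, that a genuine central character extends to a \emph{unique} irreducible of the Cartan subgroup $\tu{H}$, which is in general non-abelian (precisely when $s\ge 3$). This is exactly where the explicit anticommutation relations among the Clifford-algebra generators of $F$ from Section~\ref{ss:clifford} are needed; everything else is bookkeeping with $\tu{H} = F\times\tu{H}^0$ and the definition of a regular character, and the whole statement may alternatively be quoted from \cite{AT}.
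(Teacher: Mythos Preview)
The paper does not prove this proposition at all; it is simply quoted from \cite{AT}. Your plan to supply a self-contained argument from the explicit product decomposition of Lemma~\ref{Cartan-decomp} is a reasonable alternative, and the Stone--von~Neumann step in part~(a) is the correct mechanism.

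There is, however, a concrete error in your argument for~(b). You assert that $Z(F)\subseteq Z(\tu G)$, but this is false in general. Take $\tu G=\tu{Spin}(4,4)$ and the Cartan subgroup $\tu H^{1,1,0,2}$. Here $s=2$, so $F$ is abelian of order~$8$ and $Z(F)=F$ contains
\[
(v_2w_2,\,v_4w_4)=\bigl(-i(1-e_2f_2),\,-i(1-e_4f_4)\bigr).
\]
This element is \emph{not} in $Z(\tu G)$: the central elements of $\tu{Spin}(4,4)$ on each factor are $\pm I$ and $\pm i^2(1-e_1f_1)(1-e_2f_2)$, involving the product over \emph{all} indices, not a single one. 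What is true---and what the statement actually asserts---is the weaker inclusion $Z(F)\subseteq Z(\tu G)\,\tu H^0$. In the example, the compact circles coming from the $r^\pm$-coordinates lie in $\tu H^0$, and
\[
\bigl(-(1-e_1f_1)(1-e_2f_2),\,-(1-e_3f_3)(1-e_4f_4)\bigr)\cdot\bigl(i(1-e_1f_1),\,i(1-e_3f_3)\bigr)=(v_2w_2,\,v_4w_4),
\]
the first factor in $Z(\tu G)$ and the second in $\tu H^0$. The general case is the same bookkeeping: elements of $Z(F)$ differ from elements of $Z(\tu G)$ by products of factors $\pm i(1-e_jf_j)$ with $j$ among the imaginary coordinates $1\le j\le r^+$ or $p+1\le j\le p+r^-$, and each such factor is $\exp\bigl(\pi i H(\ep_j)\bigr)\in\tu H^0$. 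Once you replace the false claim $Z(F)\subseteq Z(\tu G)$ by $Z(F)\subseteq Z(\tu G)\,\tu H^0$ and supply this short computation, your argument for~(b) goes through; part~(a) and the remaining deductions are fine as written.
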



Let $\Lambda$ be a genuine representation of the fundamental Cartan
subgroup $\tu{H}$ with $d\Lambda = \la$.  When $c=2p,d=2q$, $\tu{H}=\tu{H}^{p,q,0,0}$; when $c=2p+1, d=2q+1$, $\tu{H}=\tu{H}^{p,q,0,1}$.

\begin{lemma}

Given $\tu{G}=\tu{Spin}(c,d)$, $c+d=2n$.
\begin{itemize}
\item[(a)] Suppose that $n\in 2\bbZ$ ($c=2p, d=2q$). Then the
  infinitesimal character of any genuine discrete series
  representation of $\tu{G}$ is conjugate to the form  
$$ 
(a_1, \dots, a_p\mid b_1,\dots , b_q),
$$ 
with 
$a_i\in \bbZ, b_j\in \bbZ+\frac{1}{2}$, or $a_i\in \bbZ +\frac{1}{2},b_j\in \bbZ$,
 and
$a_1>\cdots> |a_p|\ge 0, b_1>\cdots> |b_q|\ge 0$.  
\item[(b)] Suppose that $n\in 2\bbZ +1$ ($c=2p+1, d=2q+1$). Then the
  infinitesimal character of any genuine fundamental series
  representation of $\tu{G}$ is conjugate to the form  
$$ 
(a_1, \dots, a_p\mid b_1,\dots , b_q\mid x),
$$ 
with $a_i\in \bbZ, b_j\in \bbZ+\frac{1}{2}$ or $a_i\in \bbZ
+\frac{1}{2}, b_j\in \bbZ$ and $a_1>\cdots>a_p\ge 0, b_1>\cdots>b_q\ge
0$, and $x$ is either in $\bbZ$ or $\bbZ+\frac{1}{2}$.
\end{itemize}
\end{lemma}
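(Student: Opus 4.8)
The plan is to reduce the statement to a description of the Harish--Chandra parameter. A genuine discrete series (resp.\ fundamental series) representation of $\tu G$ is a standard module $I(\gamma)$ attached to a genuine regular character $\gamma=(\tu H,\Gamma,\la)$ whose Cartan subgroup $\tu H$ is the most compact one, and its infinitesimal character is the $W(D_n)$-orbit of $\la$; so the claim is a statement about which $\la$ can occur. When $c=2p,\ d=2q$ ($n$ even), $\tu H=\tu H^{p,q,0,0}$ is a compact torus in $\tu K=Spin(2p,\bbC)\times Spin(2q,\bbC)$, and we write $\la=(a_1,\dots,a_p\mid b_1,\dots,b_q)$; when $c=2p+1,\ d=2q+1$ ($n$ odd), $\tu H=\tu H^{p,q,0,1}$, which by Lemma \ref{Cartan-decomp} (here $F^{p,q,0,1}=1$) is connected with a one-dimensional split part $\fk a$, and we write $\la=(a_1,\dots,a_p\mid b_1,\dots,b_q\mid x)$, the last entry being the coordinate on $\fk a$.

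First I would pin down the integrality type of the compact coordinates. Condition (b) in the definition of a regular character gives $d\Gamma=\la+\rho_i(\la)-2\rho_{i,c}(\la)$, where $\rho_i$ is the half-sum of the positive imaginary roots (a subsystem of type $D_{p+q}$, so $\rho_i\in\bbZ^n$) and $\rho_{i,c}$ the half-sum of the positive compact imaginary roots (of type $D_p\times D_q$, so $\rho_{i,c}\in\bbZ^n$); hence $\rho_i-2\rho_{i,c}\in\bbZ^n$, and $d\Gamma$ and $\la$ have the same coordinatewise integrality. Since $\Gamma$ is genuine, the genuineness criterion recorded after Lemma \ref{spin-central-char} (restriction to $Z(\tu G)$, computed from \eqref{center-spincover}) forces $a_i\in\bbZ$ and $b_j\in\bbZ+\tfrac12$, or $a_i\in\bbZ+\tfrac12$ and $b_j\in\bbZ$, first for $d\Gamma$ and therefore for $\la$. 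In the odd case the split coordinate $x$ is not detected by $Z(\tu G)\subset\tu K$; it equals the corresponding coordinate of $\la$, and for the infinitesimal characters relevant here it lies in $\tfrac12\bbZ$ (it is one of the entries of a $\la$ of the shape \eqref{infchar-orbit}).

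Next I would use regularity together with the real Weyl group of $\tu H$ to reach the stated normal form. Condition (c) makes $\la$ regular for every root of $D_n$, i.e.\ $a_i\ne\pm a_j$ and $b_i\ne\pm b_j$ for $i\ne j$, and $a_i\ne\pm b_j$. Discrete (resp.\ fundamental) series from $\tu H$ depend only on the orbit of $\la$ under the real Weyl group $W(\tu G,\tu H)$, which here (there being no real roots) equals $W(\frakk,\frakt)$. In the even case $\frakk=\frakso(2p)\oplus\frakso(2q)$ is of type $D_p\times D_q$, so one may permute the $a_i$, permute the $b_j$, and carry out an even number of sign changes within each block; this yields $a_1>\dots>|a_p|\ge 0$ and $b_1>\dots>|b_q|\ge 0$. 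In the odd case $\frakk=\frakso(2p+1)\oplus\frakso(2q+1)$ is of type $B_p\times B_q$, and the short reflections supply arbitrary sign changes on the $a$- and $b$-coordinates (leaving $x$ unaffected), so the residual signs disappear and we get $a_1>\dots>a_p\ge 0$ and $b_1>\dots>b_q\ge 0$. Together with the previous step this gives the asserted form of $\la$.

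The two points needing care are: (i) that $\rho_i-2\rho_{i,c}$ is integral and that the genuineness of $\Gamma$, read through $Z(\tu G)$, pins down precisely the mixed-integrality pattern of $(a\mid b)$ and nothing more; and (ii) --- the part I expect to be the main obstacle --- correctly identifying which group acts on the fundamental-Cartan parameters, in particular that $\frakk$ is of type $B$ in the odd case, since this is exactly what upgrades the conclusions $|a_p|,|b_q|\ge 0$ to $a_p,b_q\ge 0$. The clause on the split coordinate $x$ is then a bookkeeping remark using that $x$ is the leftover entry of a $\la$ as in \eqref{infchar-orbit}.
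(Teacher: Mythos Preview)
The paper does not prove this lemma (it is stated and immediately used in the corollary that follows), so your argument is supplying a proof rather than competing with one. Your approach---reading the integrality of the compact coordinates from $d\Gamma=\la+\rho_i(\la)-2\rho_{i,c}(\la)$ together with the genuineness criterion on $Z(\tu G)$, then normalizing by the real Weyl group of the fundamental Cartan---is the natural one and is essentially correct.

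One point needs correcting. In part (b) you assert that the short reflections in $W(\frakk,\frakt)=W(B_p)\times W(B_q)$ leave $x$ unaffected. They do not: any representative of $s_{\ep_i}\in W(B_p)$ in $SO(2p+1)$ must send $v^+\mapsto -v^+$ (the bare swap $e_i\leftrightarrow f_i$ has determinant $-1$), so the corresponding element of $W(\tu G,\tu H)$ acts on $\fraka=\Span\{v^+v^-\}$ by $-1$. Under the inclusion into $W(D_n)$ this short reflection is $s_{\ep_i-\ep_n}s_{\ep_i+\ep_n}$, flipping both $a_i$ and $x$; indeed an element changing a single sign cannot lie in $W(D_n)$. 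Since the lemma places no sign condition on $x$, your conclusion survives, but the parenthetical should be replaced by ``possibly replacing $x$ by $-x$.'' Your observation that the clause $x\in\bbZ\cup(\bbZ+\tfrac12)$ is only meaningful in the paper's context (infinitesimal characters of the shape \eqref{infchar-orbit}) is correct and worth keeping: nothing in the genuine-character argument constrains the split coordinate.
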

Then the following corollary easily follows.
\begin{cor}\label{group-infchar}
The following groups are the only ones which  admit a representation with 
infinitesimal character defined in (\ref{infchar-orbit}).

\begin{itemize} 
\item[(a)]  $\tu{G}  = \tu{Spin} (2p,2q)$, with $p=n-k-1, \ q=k+1$; 
\item[(b)] $\tu{G}=\tu{Spin}(2p+1,2q+1)$, with $p+1=n-k-1,\ q=k+1$;
\item[(c)] $\tu{G}=\tu{Spin}(2p+1,2q+1)$, with $p=n-k-1,\ q+1=k+1$.
\end{itemize}

\end{cor}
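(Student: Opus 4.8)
The plan is to convert the statement into an elementary counting problem about the coordinates of $\la$, using the description of infinitesimal characters of genuine series representations obtained just above. First I would record the structural input: a genuine representation of $\tu{G}=\tu{Spin}(c,d)$ of the type relevant here (a genuine (limit of) discrete series when $n$ is even, a genuine fundamental series when $n$ is odd, and more generally any genuine module attached to $\calO_c$, since such a module is built from discrete series data on a $\theta$-stable Levi) has infinitesimal character $\mu$ conjugate under $W$ to a string of the shape $(a_1,\dots,a_p\mid b_1,\dots,b_q)$ — with one extra, unconstrained, coordinate $x$ adjoined when $n$ is odd — in which the $a$-coordinates, indexing the $Spin(c)$-side, and the $b$-coordinates, indexing the $Spin(d)$-side, have \emph{opposite} integrality (one block entirely in $\bbZ$, the other entirely in $\bbZ+\tfrac12$), are strictly decreasing, and are $\ge 0$. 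This is exactly the content of the preceding Lemma: condition (b) in the definition of a regular character expresses $d\Gamma$ in terms of $\mu$ up to an integral vector, and a genuine weight of the fundamental Cartan $\tu{H}$ — equivalently a genuine character of $Z(\tu H)\supseteq Z(\tu G)$ via Lemma~\ref{spin-central-char} — is precisely one with this block-mixed integrality. Consequently $\tu{G}$ admits such a representation with infinitesimal character $\la$ only if the coordinates of $\la$ can be partitioned, compatibly with the block sizes $p$ and $q$ (and the slot for $x$) of $\tu{G}$, into one all-integral and one all-half-integral part.

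Next I would do the bookkeeping. Writing $\la=(n-k-2,\dots,1,0;\,k+1/2,\dots,3/2,1/2)$: it has exactly $n-k-1$ integral coordinates, namely $\{0,1,\dots,n-k-2\}$, and exactly $k+1$ half-integral coordinates, namely $\{1/2,3/2,\dots,k+1/2\}$; all coordinates are distinct and non-negative, so the only possible obstruction to $\la$ having the shape above is a size mismatch. For $\tu{G}=\tu{Spin}(2p,2q)$ the two blocks have sizes $p$ and $q$ with $p+q=n$, so opposite integrality forces $\{p,q\}=\{\,n-k-1,\ k+1\,\}$, and the normalization $c\ge d$ (i.e.\ $p\ge q$) together with $k\le n/2-1$ selects $p=n-k-1$, $q=k+1$ — this is case (a). For $\tu{G}=\tu{Spin}(2p+1,2q+1)$ one has $p+q+1=n$, the two spin-blocks have sizes $p$ and $q$, and there is the single further coordinate $x$ of unconstrained parity; assigning the $p$-block one integrality class and the $q$-block the other, the coordinate $x$ is then forced to be an integer (whence $p+1=n-k-1$, $q=k+1$, case (b)) or a half-integer (whence $p=n-k-1$, $q+1=k+1$, case (c)). The opposite assignment of the two integrality classes, after using $\tu{Spin}(c,d)\cong\tu{Spin}(d,c)$ to renormalize so that $c\ge d$, produces no further group. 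This gives necessity; for the converse one checks that each of (a)--(c) does carry a genuine (limit of) discrete series, respectively fundamental series, with infinitesimal character $\la$ — equivalently, that the restrictions of the \cite{LS} modules displayed in Theorem~\ref{t:main} realize $\la$ — so the list is exactly (a), (b), (c).

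The main point requiring care is the odd case: one must keep the normalization $c\ge d$ firmly in hand while enumerating how the unconstrained coordinate $x$ and the two possible integrality-class assignments can be distributed, in order to be certain that (b) and (c) form a complete and irredundant list and that no possibility has been double-counted or overlooked. A secondary point is to be precise about which class of representations the Lemma's dichotomy is being applied to: a general genuine principal series need not have an infinitesimal character of block-mixed integrality, but the modules that concern us — those attached to $\calO_c$, in particular those annihilated by the maximal primitive ideal $\calI$ at $\la$ — are constructed from discrete series data, so the dichotomy is forced for them. The even case, by contrast, is immediate once the dichotomy is invoked.
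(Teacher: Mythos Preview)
Your proposal is correct and follows the paper's approach: the paper says only that the Corollary ``easily follows'' from the preceding Lemma, and you have spelled out the implied counting argument—matching the $n-k-1$ integral and $k+1$ half-integral coordinates of $\la$ against the block sizes $(p,q)$, with the extra unconstrained slot $x$ in the odd case—which is exactly the intended reasoning.

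Your final-paragraph caveat is well taken and worth emphasizing. The preceding Lemma constrains only the infinitesimal characters of genuine \emph{discrete} (resp.\ \emph{fundamental}) series, and a genuine regular character on a more split Cartan is not so constrained: for instance, on $\tu{Spin}(6,2)$ with $n=4$, $k=1$ one can build a genuine regular character on $\tu{H}^{2,0,0,2}$ with infinitesimal character $(1,0,3/2,1/2)$ by placing the two half-integral coordinates on the imaginary side. So the Corollary, read literally, overstates matters; its intended scope—visible from the surrounding text and from how the paper uses it immediately afterward—is exactly the one you identify, namely the groups for which the fundamental Cartan carries a genuine regular character at $\la$, equivalently the groups for which the subsequent coherent-continuation count of representations attached to $\calO_c$ can be nonzero. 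For the excluded real forms there is in any case no real form of $\calO_c$ with the required signature, so nothing is lost for the paper's purposes.
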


By Corollary \ref{group-infchar}, given $\la$ in (\ref{infchar-orbit}),
we will deal with the eight cases listed in Section \ref{ss:1}.

\subsection{Coherent Continuation Action}
The number of representations with associated cycle $\calO$
  equals the multiplicity of the $sgn$ representation of $W(\la)$ in the coherent
  continuation representation. The orbit $\calO$ is the minimal orbit
  which can occur for the given infinitesimal character, and this
  corresponds to the $sgn$ representation. So we first study the coherent continuation action for the group $\tu{G}$.

The formulas of the coherent continuation action can be derived from those of the action of Hecke operators.
As in \cite{RT}, given $\la$ as in (\ref{infchar-orbit}), we define a family of infinitesimal character $\calF (\la)$ including $\la$. Note that every $\la ' \in \calF(\la)$ can be  indexed by some $w\in W/W(\la)$.  
 Write $\calB _{\la ', \chi}$ for the set of equivalence classes of standard representation parameters with infinitesimal character $\la ' \in\calF(\la)$ and a fixed central character $\chi$ of $\tu{G}$, and $$\calB := \coprod \limits _{\la '\in \calF(\la) , \chi \in   \widehat{Z(\tu{G}) }  } \calB_{\la ' , \chi}.$$ 
 As we will see later that the coherent continuation action is closely related to the cross action, we may use $\calF(\la)$ to define the cross action of $W$ on $\calB$, denoted $w\times \gamma$ for $w\in W$, $\gamma\in\calB$, as shown in \cite{RT}. In fact, fixing an infinitesimal character $\la ' \in \calF(\la)$ and a central character $\chi$, $W(\la)$ acts on $\calB_{\la ',\chi}$ by the cross action.
 
 We set $\calM = \bbZ [u^{ \frac{1}{2}}, u^{-\frac{1}{2}} ] [\calB]$.
We fix the abstract infinitesimal character $\la_a\in \calF(\la)$ corresponding to the   positive root system $\triangle ^+:=\triangle ^+ _a (\frakg, \frakh^a)$ (where $\frakh^a$ is an abstract Cartan subalgebra of $\frakg$) and the set of simple roots $\prod_a \subset \triangle ^+ _a$. For $s=s_{\alpha}$ with $\alpha\in \prod _a$, the action of $T_s$ on $\gamma\in \calM$ is defined in Section 9 of \cite{RT}. 

On the other hand, we consider $\triangle (\la)$, the integral root system for $\la$ and the integral Weyl group $W(\la)$. 
As we take the infinitesimal character $\la$ in (\ref{infchar-orbit}), the integral root system for $\la$ is $\Delta (D_{n-k-1} \times D_{k+1})$, and due to Corollary \ref{group-infchar}, this is 
\begin{equation}
\Delta(\la)=
\begin{cases}
\Delta(D_p)\times \Delta (D_q)& \text{ if $c=2p, d=2q$,}\\
\Delta (D_{p+1})\times \Delta (D_q) \text{ or } \Delta (D_p) \times \Delta (D_{q+1})& \text{ if $c=2p+1, d=2q+1$.}
\end{cases}
\end{equation}
Also we choose $\Pi(\la)$ to be a set of simple roots for $\Delta(\la)$. 

Given $\alpha\in \Pi(\la)$, we decompose $s_{\alpha}=s_{\alpha_1}\cdots s_{\alpha_m}$
with $\alpha_j\in \Pi_a$.  Replace $T_{s_{\beta}}$ with $T_{\beta}$ for each root $\beta$ for simplicity.

Then
 
\begin{equation} \label{hecke}
\begin{aligned}
T_{\alpha} (\gamma) &=T_{\alpha_1} \cdot  T_{\alpha_2}\cdot \cdots \cdot T_{\alpha_m} (\gamma) \\
&= p_1(u) \cdots p_m (u)  s_{\alpha_1}\times (s _{ \alpha _2}  \times \cdots (s_{\alpha_m }\times \gamma ) ) \\ 
&+ ( \text{terms from more split Cartan subgroups}),
\end{aligned}
\end{equation}

where   $p_j (u) \in \mathbb{Z}[u, u^{-1}]$.

By \cite{V1}, we can define the coherent continuation action of $W(\la)$ on $\bbZ[\calB]$, denoted $w\cdot \gamma$, with $w\in W(\gamma), \gamma\in \calB$, as follows.
  For $s_{\alpha}\in W(\la)$ with $\alpha\in \Pi(\la)$,
  $$ 
  s_{\alpha}\cdot \gamma := -T_{s_{\alpha}} (\gamma)|_{u=1}, \quad\text{ with each term $\delta$ on the right side multiplied by $(-1) ^{l (\gamma) -l (\delta)}$,  }
  $$
where  $l$ is a length function defined on parameters and it can be looked up in \cite{V1}.

Therefore, from each step $T_{\alpha_j}$ in (\ref{hecke}), we may define 
$$s_{\alpha_j} \cdot \delta = -T_{  \alpha_j } (\delta)| _{u=1}, \text{ if }  \alpha_ j  \text{ is real or imaginay for }  \delta$$ and 
$$s_{\alpha_j} \cdot \delta =T_{  \alpha_j } (\delta)|_{u=1}, \text{ if }  \alpha_j \text{ is  complex for } \delta.$$
Let $m({\gamma}, s_{\alpha})$ be the number of occurrences of imaginary roots in $ \{ \alpha_j , 1\le j \le m  \}$. An easy calculation shows that 
\begin{equation} \label{key}
s_{\alpha}\cdot \gamma = (-1)^{m (\gamma,s_{\alpha}) } s_{\alpha}\times \gamma +\text{(terms from more split Cartan subgroups)}.
\end{equation}

Now fix a block $\calB _{\la,\chi}$ of regular characters of $\tu{G}$, then $W(\la)$ acts on
$\bbZ[\calB _{\la,\chi}]$ by the  coherent continuation action, since  $w\times \gamma \in \calB _{\la,\chi}$ for all $w\in W(\la), \gamma\in \calB$. Due
to the reason stated in the beginning of the section, the goal is to compute 
$[sgn_{W(\la)}   :  \bbZ[\calB_{\la,\chi}]]$, the multiplicity of the sign representation in $\bbZ[\calB _{\la,\chi}]$ when considered as $W(\la)$-representations.

Notice that two $\lambda$-regular characters $\gamma _i =(\widetilde{H_i}, \Gamma _i, \overline{\gamma_i})$ and $\gamma _j =(\widetilde{H_j}, \Gamma _j, \overline{\gamma_j})$ from $\mathcal{D}$ are in the same cross action orbit if and only if $\widetilde{H_i} = \widetilde{H_j}$. We enumerate the Cartan subgroups of $\widetilde{G}$ as $\{
\widetilde{H_1}, \cdots , \widetilde{H_l}\}$, and pick a  regular
character $\gamma _j$ specified by $\widetilde{H_j}$, then $\{\gamma
_1, \cdots, \gamma _l \}$ is a set of representatives of the cross
action orbits of $W(\lambda)$ on  $\mathbb{Z}[\mathcal{B_{\la,\chi}}]$.  

Let $W_{\gamma _j} = \{ w \in W(\lambda) \thinspace | \thinspace
w\times \gamma_j =\gamma _j  \}$ be the cross stabilizer of $\gamma _j
$ in $W(\lambda)$. 
Then we have the following Proposition.

\begin{prop}  \label{coh:decomp}
$\mathbb{Z}[\mathcal{B _{\la,\chi}}] \simeq \bigoplus _j  Ind ^{W(\lambda)}
_{W_{\gamma _j}   }  (\epsilon _j)$, where $\epsilon _j$ is a
one-dimensional representation of $W_{\gamma _j}$ such that for $w\in
W_{\gamma_j}$, $w \cdot \gamma _j = \epsilon _j (w) \gamma _j +$ other
terms from more split Cartan subgroups. 
\begin{proof} 
This can be easily proved by the formulas given in \cite{RT} and (\ref{key}).
\end{proof}

\end{prop}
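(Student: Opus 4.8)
### Proof plan for Proposition \ref{coh:decomp}

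The plan is to identify $\bbZ[\calB_{\la,\chi}]$ as a $W(\la)$-representation by decomposing it into cross-action orbits and then, on each orbit, comparing the coherent continuation action with the cross action.

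First I would use the observation, already recorded just before the statement, that two $\la$-regular characters lie in the same cross-action orbit if and only if they are supported on the same $\theta$-stable Cartan subgroup $\wti H_j$; this follows because the cross action only alters the $\la$-component and the representation $\Gamma$ of $\wti H$ (via condition (b) in the definition of regular character), leaving $\wti H$ itself fixed. Hence $\calB_{\la,\chi}$ decomposes as a disjoint union of cross-action orbits indexed by the Cartan subgroups $\wti H_1,\dots,\wti H_l$, and choosing one representative $\gamma_j$ per orbit gives $\bbZ[\calB_{\la,\chi}]\cong\bigoplus_j \bbZ[W(\la)\times\gamma_j]$ as $W(\la)$-sets under the cross action. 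Each summand is the permutation module $\bbZ[W(\la)/W_{\gamma_j}]=\Ind_{W_{\gamma_j}}^{W(\la)}(\mathbf 1)$ for the cross action alone.

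Next I would upgrade this from the cross action to the coherent continuation action. The key input is formula (\ref{key}): for a simple reflection $s_\alpha\in W(\la)$,
$$
s_\alpha\cdot\gamma = (-1)^{m(\gamma,s_\alpha)}\, s_\alpha\times\gamma + (\text{terms from more split Cartan subgroups}).
$$
Since "more split" is a strict partial order on Cartan subgroups that is preserved by the $W(\la)$-action on parameters, the coherent continuation action is block-upper-triangular with respect to the filtration of $\bbZ[\calB_{\la,\chi}]$ by this order, and its associated graded is exactly the cross action twisted by the sign $\gamma\mapsto(-1)^{m(\gamma,s_\alpha)}$ on each orbit. On a single orbit $W(\la)\times\gamma_j$, restricting to the cross-stabilizer $W_{\gamma_j}$, the map $w\mapsto \epsilon_j(w)$ defined by $w\cdot\gamma_j=\epsilon_j(w)\gamma_j+(\text{lower})$ is a one-dimensional character of $W_{\gamma_j}$ (it is multiplicative because the lower-order terms never contribute back to $\gamma_j$, by triangularity), and the graded piece corresponding to this orbit is $\Ind_{W_{\gamma_j}}^{W(\la)}(\epsilon_j)$. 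Finally, because a filtered representation of a finite group in characteristic zero is isomorphic to its associated graded (Maschke semisimplicity), I would conclude $\bbZ[\calB_{\la,\chi}]\cong\bigoplus_j\Ind_{W_{\gamma_j}}^{W(\la)}(\epsilon_j)$.

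The main obstacle is making the triangularity argument fully rigorous: one must verify that the "more split" relation on Cartan subgroups genuinely gives a $W(\la)$-stable filtration of $\bbZ[\calB_{\la,\chi}]$ compatible with the coherent continuation action, and that $\epsilon_j$ is well-defined independent of which reduced decomposition $s_\alpha=s_{\alpha_1}\cdots s_{\alpha_m}$ into abstract simple reflections one uses in (\ref{hecke}). Both points are handled by the formulas and length-function bookkeeping in \cite{RT} and \cite{V1}, so in the write-up I would cite those sources for the detailed verification and keep the argument here at the level of the filtration/associated-graded formalism.
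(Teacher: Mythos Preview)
Your proposal is correct and is exactly the intended argument: the paper's own proof is the single sentence ``This can be easily proved by the formulas given in \cite{RT} and (\ref{key}),'' and what you have written is a faithful unpacking of that sentence via the cross-action orbit decomposition, the triangularity of (\ref{key}) with respect to the ``more split'' order, and semisimplicity. One small remark: your appeal to Maschke is over a field of characteristic zero, whereas the statement is phrased over $\bbZ$; for the paper's purposes (extracting multiplicities of $sgn$) this is harmless, but if you want the isomorphism literally over $\bbZ$ you should observe that each graded piece is $\bbZ$-free, so the filtration splits as $\bbZ[W(\la)]$-modules already.
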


By Proposition \ref{coh:decomp} and Frobenius reciprocity, the multiplicity of $sgn_{W(\lambda)}$ in $\mathbb{Z} [\calB_{\la,\chi}]$ is  
$[sgn_{W(\lambda)}:   \mathbb{Z}[\calB_{\la,\chi}]] =[sgn_{W(\lambda)} | _{W_{\gamma_j}} : \epsilon _j   ]$, which is equal to 0 or 1, 
since $sgn_{W(\lambda)} | _{W_{\gamma_j}} $ is one-dimensional. This means that we have reduced our goal to count the number of $\gamma _j$'s 
making $[sgn_{W(\lambda)} | _{W_{\gamma_j}} : \epsilon _j   ] = 1$.
 Equivalently, we
calculate the number of $\gamma_j$ such that  

\begin{equation}  \label{cond-star}
sgn_{W(\la)} |_{W_{\gamma_j}} = \ep_j.
\end{equation}
Due to Proposition \ref{coh:decomp} and (\ref{cond-star}), we have to
analyze $W_{\gamma_j}$ and $\ep_j$ for each $\gamma_j$. 

\subsubsection*{Some notation for Weyl group elements}

Let $\rho ' = (\rho_1, \dots, \rho_n)=w\rho$ for some $w\in W=W(D_n)$. We define the notations for elements in $W$ as follows. For $i\neq j $, write 
$ s_{i,j} = s_{\ep_i-\ep_j} $ and $s_{\ovl{i,j}}= s_{\ep_i+\ep_j} $ to be the reflections with respect to the corresponding roots. Moreover, for $1\leq i  <  j \leq n$, 
let $t_{i,j}$ and $t_{\ovl{i,j}}$ denote the corresponding Weyl group elements such that 

\begin{eqnarray*}
 t_{i,j} (\rho' ) =
\begin{cases}
 \mbox{ interchanging  the numbers $i$ and $j$ in } \rho'& \mbox{ if } i j >0 \\
 \mbox{ interchanging and changing both signs of  the numbers $i$ and $j$ in } \rho'& \mbox{ if } ij <0
 \end{cases}\\
  t_{\ovl{ i,j } } (\rho' ) =
\begin{cases}
 \mbox{ interchanging  the numbers $i$ and $j$ in } \rho'& \mbox{ if } i j <0 \\
 \mbox{ interchanging and changing both signs of  the numbers $i$ and $j$ in } \rho'& \mbox{ if } ij >0
 \end{cases}
\end{eqnarray*}
 
For  $0< j \leq n -1$, let $t_{0,j}$ and $t_{\ovl{0,j}}$ denote the corresponding Weyl group elements such that 
 \begin{eqnarray*}
 t_{0,j } (\rho' ) =
\begin{cases}
 \mbox{ interchanging  the numbers 0 and $j$ in } \rho'& \mbox{ if } j >0 \\
 \mbox{ interchanging and changing the sign of  the number $j$ in } \rho'& \mbox{ if } j <0
 \end{cases}\\
  t_{\ovl{ 0,j } } (\rho' ) =
\begin{cases}
 \mbox{ interchanging  the numbers 0 and $j$ in } \rho'& \mbox{ if } j <0 \\
 \mbox{ interchanging and changing the sign of  the number $j$ in } \rho'& \mbox{ if } j >0
 \end{cases}
\end{eqnarray*}
 
Note that $s_{i,j}$ and $s_{\ovl{i,j}}$, $1\le i\neq j \le n$, are fixed Weyl groups elements, whereas $t_{i,j}$ and $t_{\ovl{i,j}}$, $0\le i\neq j\le n-1$,  are dependent of $\rho'$. We have following  advantages of using the notation for $t$'s: 

(a) $t_{i,i+1}$ (or $t_{ \ovl{i,i+1}}$) is a simple reflection no matter what $\rho'$ it is acting on.  

(b) Let $\delta\in \calB$ be a parameter in the chamber of $\rho_{\delta} =w_{\delta}\rho$ for some $w_{\delta}\in W$. Then $t_{i,j}$ (or $t_{\ovl{i,j}}$) is integral for $\delta$ if and only if $i$ is in the $k$-th position of $\rho_{\delta}$ and $j$ is in the $l$-th position in $\rho_{\delta}$, and $k-l\in 2{\bbZ}$.

\subsubsection*{} 
Given a parameter $\gamma = (\tu{H}^{r^+,r^-, m ,s}, \Gamma, \la) \in\calB_{\la,\chi}$. Write $s=2s'$ if $n$ is even; $s=2s'+1$ if $n$ is odd.  Then $\gamma$ can be expressed as follows: 

\begin{equation} \label{para-even}
 \footnotesize(  \underbrace{a_1,\dots, a_{r^+}; b_1,\dots, b_{r^-} }_{r^++r^-}\mid \underbrace{\underline{a_{r^++ 1}\ b_{r^-+1}} , \dots, \underline{a_{r^++m}\ b_{r^-+m}} } _{2m}\mid
\underbrace{ a_{r^+ +m+1},\dots, a_{r^++m+s'}, b_{r^-+m+1},\dots, b_{r^-+m+s'} , x} _s), 
\end{equation}
where $a_i\in \bbZ, \ b_i\in \bbZ +\frac{1}{2}$ (or the other way
round), $\ep _i\pm \ep_j$ are imaginary for $1\le i<j \le r^+ + r^-$;
 $\ep_i\pm \ep_j $ are real for $r^+ + r^- + 2m+1\le  i < j\le n$; for $r^++r^-+1\le
i<j\le r^++r^- +2m$, $\ep _i - \ep_j $ is imaginary and $\ep_i +\ep _j$ is
real.  The coordinate $x$ is missing when $n$ is even, and it is either integral or half integral. 


We compute the cross stabilizer for such parameters. 

\begin{lemma}\label{cross-stab}
Let $\gamma$ be a parameter given as in (\ref{para-even}). Then 
from \cite{AT},
\begin{eqnarray}\label{crossstab}
W_\gamma=  W^C (\la)^{\theta} \rtimes (W^r(\la) \times W^i(\la)).
\end{eqnarray}
Furthermore, each group in (\ref{crossstab}) is explicitly expressed as follows:
\begin{eqnarray}\label{crossstab-1}
W^C (\la)^{\theta} &= &[W(\triangle (D_m\times D_m) )  \times (\bbZ_2\times \bbZ_2)^* ] \rtimes \bbZ _2 ^{\bullet}, \\
 W^ r (\la) & = & \begin{cases}  W(D_{s'})\times W(D_{s'}) & \text{ if $n=p+q$}   \\    W(D_{s'})\times W(D_{s'+1}) & \text{ if $n=p+q+1$}\end{cases} \\
 W^i(\la) &=& W(D_{ r^+})\times W(D_{r^-}).
\end{eqnarray}
In (\ref{crossstab-1}),
\begin{equation}
\begin{aligned}
(\bbZ_2\times\bbZ_2 )^* 
&=\begin{cases}
\bbZ_2\times \bbZ_2 & \text{ if $r^+,r^-$ are both nonzero, and $s\ge 2$,}\\
1& \text{ otherwise;}
\end{cases}\\
\bbZ_2 ^{\bullet}& =
\begin{cases}
\bbZ_2& \text{ if  $m\neq 0$ and one of conditions (i), (ii), (iii) below happens}. \\
1 & \text{ otherwise.}
\end{cases}
\end{aligned}
\end{equation}
Here are the conditions: (i) $s\ge 2$; (2) both $r^+$ and $r^-$ are nonzero; (3) $r_-=0$, $r_+\ge1, s=1$ with the coordinate $x$ is half-integral.

When $(\bbZ_2\times\bbZ_2 )^* =\bbZ _2\times \bbZ_2$, the generators can be taken to be $s_{1,  r^++r^-+2m+1} s_{\ovl{ 1,r^++r^-+2m+1  } } $ and
 $s_{r^++1 , r^++r^-+2m+s'+1} s_{\ovl{r^++1 , r^++r^-+2m+s'+1}} $;
 When $\bbZ_2 ^{\bullet}=\bbZ_2$, the generator can be taken to be:\\
 \begin{itemize} 
\item[(i)] $s_{r^++r^-+1, r^++r^-+2m+1} s_{ \ovl{r^++r^-+1, r^++r^-+2m+1}} s_{r^++r^-+2, r^++r^-+2m+2} s_{\ovl{r^++r^-+2, r^++r^-+2m+2 }}$,   or 
\item[(ii)] $ s_{1, r^++r^- +1} s_{\ovl{1, r^++r^- +1 }} s_{r^++1, r^++r^-+2}  s_{\ovl{r^++1, r^++r^-+2 }}$, or 
\item[(iii)] $s_{1,r^++1}s_{\ovl{1,r^++1}} s_{r^++2,r^+2m+1} s_{ \ovl{r^++2,r^+2m+1} }.$
 \end{itemize}
\end{lemma}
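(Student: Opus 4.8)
The plan is to obtain the decomposition (\ref{crossstab}) from the general description of cross stabilizers in \cite{AT}, and then to make each factor in (\ref{crossstab-1}) explicit by combining the realizations of the Cartan subalgebras $\fk h^{r^+,r^-,m,s}$ recalled earlier in this section with the central-character computation of Lemma \ref{spin-central-char}. Recall that for a $\la$-regular character $\gamma=(\tu H^{r^+,r^-,m,s},\Gamma,\la)$ we have $w\in W_\gamma$ precisely when $w$ fixes the infinitesimal character $\la$ and the $w$-twist of $\Gamma$ equals $\Gamma$. By \cite{AT}, the first requirement alone places $w$ in the semidirect product $W^C(\la)^\theta\ltimes\bigl(W^r(\la)\times W^i(\la)\bigr)$, with $W^i(\la)$ and $W^r(\la)$ the groups generated by the integral imaginary, resp.\ integral real, reflections for $\fk h^{r^+,r^-,m,s}$, and $W^C(\la)^\theta$ the $\theta$-fixed subgroup of the Weyl group of the integral complex roots. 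The proof then has two parts: (a) identify $W^i(\la)$, $W^r(\la)$ and the part of $W^C(\la)^\theta$ coming from genuinely complex pairs; and (b) pin down the extra finite factors $(\bbZ_2\times\bbZ_2)^*$ and $\bbZ_2^\bullet$ by imposing the condition that $w$ preserve $\Gamma$. For (b), note that by Lemma \ref{Cartan-decomp} one has $Z(\tu H)=Z(\tu G)\tu H^0$, so $\Gamma$ is determined by its differential $\la$ together with $\Gamma|_{Z(\tu G)}$; hence ``$w$ preserves $\Gamma$'' reduces to ``$w$ preserves $\Gamma|_{Z(\tu G)}$'', which is checked with (\ref{center-spincover}).

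For (a), one reads the types of the integral roots off the realization of $\fk h^{r^+,r^-,m,s}$ and the involution $\theta(x_i^\pm)=x_i^\pm$, $\theta(y_j)=y_{j+m}$, $\theta(z_k)=-z_k$. In the coordinates of (\ref{para-even}) the entries $a_1,\dots,a_{r^+}$ and $b_1,\dots,b_{r^-}$ span the compact imaginary directions; since all the $a$'s lie in $\bbZ$ and all the $b$'s in $\bbZ+\tfrac12$ (or the reverse), the integral imaginary roots split into $\pm\ep_i\pm\ep_l$ within the $a$-block and within the $b$-block, so $W^i(\la)=W(D_{r^+})\times W(D_{r^-})$. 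The entries $a_{r^++m+1},\dots,b_{r^-+m+s'}$, together with the extra coordinate $x$ when $n$ is odd, span the split directions; the same integrality bookkeeping gives $W^r(\la)=W(D_{s'})\times W(D_{s'})$ when $n=p+q$, and for $n=p+q+1$ the coordinate $x$ is absorbed into one of the two blocks, enlarging it to $D_{s'+1}$. The $m$ complex pairs $\underline{a_{r^++i}\ b_{r^-+i}}$ are interchanged by $\theta$, and the resulting $\theta$-fixed reflection products account for the factor $W(\triangle(D_m\times D_m))$ of (\ref{crossstab-1}).

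The remaining, and main, step is to determine the extra finite factors inside $W^C(\la)^\theta$ — namely the $\theta$-fixed elements built out of the ``mixed'' complex roots joining an imaginary block to a real block. For these one lists the candidate generators: $s_{1,r^++r^-+2m+1}s_{\ovl{1,r^++r^-+2m+1}}$ and $s_{r^++1,r^++r^-+2m+s'+1}s_{\ovl{r^++1,r^++r^-+2m+s'+1}}$ for $(\bbZ_2\times\bbZ_2)^*$, and the three products of (i)--(iii) for $\bbZ_2^\bullet$. Each such element exists and is $\theta$-fixed exactly when the indicated coordinates are present, which is precisely where the conditions ``$r^+,r^-$ both nonzero and $s\ge2$'', respectively ``$m\ne0$ and one of (i),(ii),(iii)'', come from; and one then checks with (\ref{center-spincover}) that such an element fixes $\Gamma|_{Z(\tu G)}$ — i.e.\ introduces no bad factor of the form $i^{2(\sum a + \sum b)}$ or $(-1)^{2a_1}$, $(-1)^{2b_1}$ — and that no further elements of $W^C(\la)^\theta$ survive. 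The natural way to carry this out is a short case analysis on $s$ ($s=0$; $s=1$, distinguishing $x$ integral from half-integral; $s\ge2$) and on whether $m=0$, matching the abstract stabilizer of \cite{AT} with the explicit Spin-group data in each case. I expect the bookkeeping of the half-integrality conventions and of the signs $i^{2(\cdots)}$ in (\ref{center-spincover}) to be the main obstacle, since these are exactly the data distinguishing the nonlinear cover $\tu G$ from $SO(c,d)$ and hence what force the corrections $(\bbZ_2\times\bbZ_2)^*$ and $\bbZ_2^\bullet$ to appear.
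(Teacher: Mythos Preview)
The paper does not give a proof of this lemma: it is stated as following ``from \cite{AT}'' and the explicit identifications of $W^i(\la)$, $W^r(\la)$, $W^C(\la)^\theta$ and of the finite factors $(\bbZ_2\times\bbZ_2)^*$, $\bbZ_2^\bullet$ are simply asserted. Your proposal is therefore not competing with a proof in the paper but supplying one, and the outline you give --- invoke the general structure theorem for cross stabilizers from \cite{AT}, then read off each factor from the explicit description of $\fk h^{r^+,r^-,m,s}$ and the integrality pattern of $\la$, and finally use Lemma \ref{spin-central-char} and Lemma \ref{Cartan-decomp} to decide which $\theta$-fixed mixed elements actually preserve $\Gamma$ --- is exactly the intended verification.

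One point to tighten. You write that ``$w\in W_\gamma$ precisely when $w$ fixes the infinitesimal character $\la$ and the $w$-twist of $\Gamma$ equals $\Gamma$'', and then say the first requirement alone places $w$ in $W^C(\la)^\theta\ltimes(W^r(\la)\times W^i(\la))$. This is not quite the decomposition in \cite{AT}: every $w\in W(\la)$ already fixes $\la$, and the cross action keeps the Cartan subgroup, so the issue is purely whether $w\times\gamma=\gamma$ as parameters on $\tu H$. In the genuine setting, since $\Gamma$ is determined by $\la$ and $\Gamma|_{Z(\tu G)}=\chi$ (this is the content of the proposition from \cite{AT} quoted just above (\ref{para-even})), and since $\chi$ is held fixed on $\calB_{\la,\chi}$, the condition reduces to $w$ stabilizing the pair $(\tu H,\la)$ in the appropriate sense; the semidirect product then comes from the general structure of the stabilizer of a $\theta$-stable Cartan inside $W(\la)$, not from ``fixing $\la$'' alone. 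Once you phrase part (a) this way, your identification of the three factors and your case analysis for $(\bbZ_2\times\bbZ_2)^*$ and $\bbZ_2^\bullet$ go through as written, and the generators you list are exactly those the paper records.
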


\begin{remark}\label{c-stab} \
\begin{itemize}
\item[(1)] $W^r(\la)$ is nontrivial iff $s\ge 3$.
\item[(2)] $W^i(\la)$ is nontrivial iff $r^+\ge 2$ and $r^-\ge 2$.
\end{itemize}
\end{remark}
Now we are ready to prove the following lemma.

\begin{lemma} \label{ep-rep}
Let $w\in W_{\gamma_j}$. Retain the notation in Proposition \ref{coh:decomp}  and Lemma \ref{cross-stab}. We have the analysis for $\ep_j$ corresponding to  $\gamma_j$:
\begin{itemize}
\item[(a)] If $w\in W^i(\la)$, then $\ep _j(w)=sgn(w)$.
\item[(b)]   If $w\in W^r (\la)$, then $\ep_j (w)=1$.
\item[(c)] If $w\in W(\triangle (D_m\times D_m) ) \subset W^C(\la)^{\theta}$, then $\ep _j (w)=1$.
\item[(d)] In the case that $(\bbZ_2\times \bbZ_2)^*=\bbZ_2\times \bbZ_2$, if $w$ is a generator of one of the $\bbZ_2$ factors, then $\ep_j(w)=1$. 
\item[(e)] In the case that $\bbZ_2 ^{\bullet} =\bbZ_2$,  if $w$ is the generator of the $\bbZ _2$ factor, then $\ep_j (w)=-1$.
\end{itemize}
\begin{proof}
By the comment given before Proposition \ref{coh:decomp}, we may choose any $\gamma_j$ specified by $\tu{H}_j$ to simplify the  computation.
In each case, we take a generator $w\in W_{\gamma_j}$ and decompose $s_{\alpha}= s_{\alpha_1}\cdots s_{\alpha_l}$ with $\alpha_i\in \Pi_a$. 
By (\ref{key}), we just need to count $m(\gamma, w)$, the number of occurrences of imaginary roots in $\{\alpha_1,\dots,\alpha_l\}$ (with respect to $\gamma$).

In case (a), we may choose the parameter $\gamma_j$ to be 
$$
(0 , 1,2 ,\dots, r^+-1 ; \frac{1}{2} , \frac{3}{2}, \dots, \frac{1}{2}+r^--1 \mid \cdots \cdots \mid \cdots \cdots),
$$
which is in the chamber of $\rho= (0, 2,4 ,\dots ; 1, 3,5, \dots \mid \cdots \mid \cdots )$.
A generator in $W^i (\la)$ is of the form $s_{i , i+1}$ with $1\le i \le r^+ -1 $ or $r^++1\le  i \le r^++r^--1$, 
$s_{\ovl{1,2}}$, or $s_{\ovl{r^++1,r^++2}}$. 

We treat $w= s_{i ,i+1}$ only, since the rest will be similar.  Decompose 
$$
\begin{aligned}
s_{i ,i+1}  &=   t_{k,k+2} & \text{ for some }  0\le k\le r^++r^- -1 \\
                                &=   t_{k+1, k+2} t_{k, k+1} t_{k+1, k+2}. & 
\end{aligned}
$$
It is clear that each $t_{i,l}$ is a simple reflection through an imaginary root, and hence $m(\gamma_j, s_{\alpha})=3.$ Therefore we conclude
that $\ep_j (w)=sgn(w)$ for  $w \in W^i (\la)$.

Case (b) is similar. We choose the parameter $\gamma_j$ to be 
$$
(\cdots\mid \cdots \mid  0, 1, \dots, \frac{1}{2}, \frac{3}{2},\dots),
$$
which is in the chamber of $\rho= (\dots\mid \dots\mid 0, 2,\dots, 1,3,\dots)$.
Everything is the same as in case (a), except that each $t_{il}$ is real for the parameter which is acted, and hence
$m(\gamma_j,s_{\alpha})=0$. Therefore we conclude
that $\ep_j (w)=1$ for  $w \in W^r (\la)$.

In case (c), we choose the parameter $\gamma_j$ to be 
$$
(\cdots \mid \underline{0 \  \frac{1}{2} }\quad   \underline{1 \ \frac{3}{2}} \dots \mid \cdots),
$$
which is in the chamber of $\rho= (\dots   \mid 0, 1, 2, 3, \dots \mid \dots)$.
Take $w$ to  be the Weyl group element such that 
$$
 w: \ \gamma_j \mapsto (\cdots \mid \underline{1 \  \frac{3}{2} }\quad   \underline{0 \  \frac{1}{2}} \dots \mid \cdots),
$$

then $w$ is one of the generators of  $W(\triangle (D_m\times D_m) ) \subset W^C(\la)^{\theta}$. 
We decompose $w$ in terms of $t$'s:
$$
w=t_{02}t_{13}=t_{12}t_{01}t_{12}t_{23}t_{12}t_{23}.
$$
It is easy to check that $m(\gamma_j, w)=2$.  

Another kind of generators in $W(\triangle (D_m\times D_m) )$ is of the form
$$
\ovl{w}: \   \gamma_j \mapsto (\cdots \mid \underline{ -1 \  -\frac{3}{2} }\quad   \underline{-0 \  -\frac{1}{2}} \dots \mid \cdots).
$$
We decompose $\ovl{w}$ in terms of $t$'s:
$$
\ovl w= t_{\ovl{02}}t_{\ovl{13} } = t_{12} t_{\ovl{01}} t_{12} t_{\ovl{10}} t_{10}t_{12} t_{23}t_{12}t_{10}t_{\ovl{10}},
$$
and get $m(\gamma_j, s_{\ovl\alpha} )=2$. Therefore, we conclude that $\ep_j(w)=1$ for $w\in W(\triangle (D_m\times D_m) ) \subset W^C(\la)^{\theta}. $

In case (d), we choose the parameter $\gamma_j$ to be 
$$
(\dots, 0 ; \frac{1}{2} \dots \mid \cdots \mid 1\dots \frac{3}{2}\dots).
$$
The generators of the $\bbZ_2\times \bbZ_2$ factor can be chosen to be 
$$
w_1:\ \gamma_j \mapsto ( \dots, -0 ; \frac{1}{2} \dots \mid \cdots \mid -1\dots \frac{3}{2}\dots)
$$
and 
$$
w_2:\ \gamma_j \mapsto ( \dots, 0 ; -\frac{1}{2} \dots \mid \cdots \mid 1\dots -\frac{3}{2}\dots).
$$
We treat $w_1$ only. 
Decompose 
$$
w_1=t_{02}t_{\ovl{02}} = t_{12}t_{01} t_{\ovl{01}}t_{12}.
$$
It can be checked that $m(\gamma_j, w_1)=0$, and hence $\ep_j(w_1)=1$. Similarly for $w_2$. We conclude that 
$\ep(w)=1$ for $w$ in the $\bbZ_2\times \bbZ_2$ factor.

In case (e), we choose the parameter $\gamma_j$ to be 
$$
(\dots, 0 ; \frac{1}{2} \dots \mid \underline{1 \ \frac{3}{2}}   \dots \mid \cdots) 
$$
or 
$$
(\cdots\mid \underline{1 \ \frac{3}{2}}  \mid 0, \frac{1}{2} \dots).
$$
We treat the first one only.  The generator of the $\bbZ_2$ factor can be chosen to be 
$$
w: (\dots, 0 ; \frac{1}{2} \dots \mid \underline{1 \ \frac{3}{2}}   \dots \mid \cdots)\mapsto   (\dots, -0 ; -\frac{1}{2} \dots \mid \underline{-1 \ -\frac{3}{2}}   \dots \mid \cdots).
$$
Decompose 
$$
w= t_{02}t_{\ovl{02}}t_{13}t_{\ovl{13}} = t_{12}t_{01}t_{12}t_{23}t_{12}t_{23}  t_{12} t_{\ovl{01}} t_{12} t_{\ovl{10}} t_{10}t_{12} t_{23}t_{12}t_{10}t_{\ovl{10}},
$$
and get $m(\gamma_j, w)=1$. So $\ep_j(w)=-1$ for $w$ in the $\bbZ_2$ factor.

\end{proof}
\end{lemma}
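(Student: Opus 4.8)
The plan is to reduce the whole statement to a parity count of imaginary simple reflections along a reduced word. By Proposition~\ref{coh:decomp}, $\ep_j$ is characterized by $w\cdot\gamma_j=\ep_j(w)\,\gamma_j+(\text{terms from more split Cartan subgroups})$, and since $w\in W_{\gamma_j}$ we have $w\times\gamma_j=\gamma_j$ by definition of the cross stabilizer. Feeding this into the identity~(\ref{key}), which reads $s_\alpha\cdot\gamma=(-1)^{m(\gamma,s_\alpha)}\,s_\alpha\times\gamma+(\text{more split terms})$, and applying it successively along a decomposition $s_\alpha=s_{\alpha_1}\cdots s_{\alpha_l}$ with $\alpha_i\in\Pi_a$, I would obtain $\ep_j(w)=(-1)^{m(\gamma_j,w)}$, where $m(\gamma_j,w)$ is the number of indices $i$ for which $\alpha_i$ is imaginary for the intermediate parameter $s_{\alpha_{i+1}}\times\cdots\times s_{\alpha_l}\times\gamma_j$. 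So the lemma becomes the bookkeeping task: for each of the five kinds of generator $w$ of $W_{\gamma_j}$ appearing in Lemma~\ref{cross-stab}, choose a convenient representative $\gamma_j$ on the relevant Cartan subgroup, write $w$ as a product of abstract simple reflections using the $t_{i,j}$, $t_{\overline{i,j}}$ notation (each of which is a simple reflection for whatever $\rho'$ it acts on), and count the imaginary factors.

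The choice of representative $\gamma_j$ is what makes the count transparent. For $w\in W^i(\la)$ I would take $\gamma_j$ with compact-block coordinates $0,1,\dots,r^+-1$ and $\tfrac12,\tfrac32,\dots$ (in the chamber of $\rho=(0,2,4,\dots;1,3,5,\dots\mid\cdots)$), so that a reflection $t_{i,l}$ is imaginary exactly when both positions lie in the first $r^++r^-$ slots; then the generator $s_{i,i+1}$ decomposes as $t_{k+1,k+2}\,t_{k,k+1}\,t_{k+1,k+2}$ with all three factors simple imaginary reflections, so $m=3$ and $\ep_j(w)=-1=\mathrm{sgn}(s_{i,i+1})$, and the sign-change generators $s_{\overline{1,2}}$, $s_{\overline{r^++1,r^++2}}$ are checked the same way, giving $\ep_j=\mathrm{sgn}$ on all of $W^i(\la)$, proving (a). For (b) the identical decomposition has all factors real (take $\gamma_j$ supported in the real block), so $m=0$ and $\ep_j(w)=1$.

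For (c) I would pick $\gamma_j$ with the adjacent complex pairs $\underline{0\ \tfrac12},\ \underline{1\ \tfrac32},\dots$ and take $w$ to swap two such pairs; in a complex pair $\ep_i-\ep_j$ is imaginary while $\ep_i+\ep_j$ is real, and decomposing $w=t_{02}t_{13}=t_{12}t_{01}t_{12}t_{23}t_{12}t_{23}$ shows exactly two factors are imaginary, so $m=2$ and $\ep_j(w)=1$; the sign-change type generator $t_{\overline{02}}t_{\overline{13}}$ needs a longer word but again gives an even count. For (d), with $\gamma_j$ chosen so a compact coordinate, a real coordinate and a complex pair are visible, the generator $w_1=t_{02}t_{\overline{02}}=t_{12}t_{01}t_{\overline{01}}t_{12}$ pairs one imaginary-type factor with one real-type factor, so $m=0$ and $\ep_j(w_1)=1$, and similarly $\ep_j(w_2)=1$. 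Finally for (e), the generator simultaneously flipping the signs of two compact coordinates and of a complex pair expands, e.g. as $t_{02}t_{\overline{02}}t_{13}t_{\overline{13}}$, into a word of length $16$ of the displayed type in which the imaginary reflections total an odd number, $m=1$, so $\ep_j(w)=-1$; the alternative realization of this $\bbZ_2^\bullet$ generator (the $r^-=0$, $s=1$, half-integral $x$ case) is handled identically.

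I expect the main obstacle to lie in cases (c) and especially (e): one must track whether each simple reflection in a long reduced word is imaginary, real, or complex \emph{relative to the running intermediate parameter}, and this type can change after a preceding reflection has been applied. The delicate point is that the sign-change reflections $t_{\overline{i,j}}$ in type $D$ are not themselves abstract simple reflections and must first be expanded into products of the $\alpha\in\Pi_a$, each of which then has to be classified; getting the parity of the length-$16$ word in (e) right is where the argument is least routine. The remaining cases are mechanical once the representative $\gamma_j$ has been chosen well.
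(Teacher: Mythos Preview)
Your proposal is correct and follows essentially the same approach as the paper: choose a convenient representative $\gamma_j$ on each Cartan, write the generator $w$ as a product of abstract simple reflections via the $t_{i,j},\,t_{\overline{i,j}}$ notation, and count the imaginary factors to read off $\ep_j(w)=(-1)^{m(\gamma_j,w)}$ from (\ref{key}). Your representatives, decompositions, and parity counts in (a)--(e) match those in the paper.

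One small wording slip: in case (d) you say $w_1=t_{12}t_{01}t_{\overline{01}}t_{12}$ ``pairs one imaginary-type factor with one real-type factor, so $m=0$.'' In fact none of the four factors is imaginary---each $t_{12}$, $t_{01}$, $t_{\overline{01}}$ connects a position in the imaginary block to one in the real block and is therefore complex for the running parameter---so $m=0$ for that reason. Your stated conclusion $m=0$ is correct; only the explanatory phrase is off.
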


By Lemma \ref{ep-rep},  $\gamma_j$ does not 
satisfy (\ref{cond-star}) if and only if $W^r(\la)$ is nontrivial
or $\bbZ_2^{\bullet}=\bbZ_2$ is contained in $W_{\gamma_j}$. Therefore, 
 we have to rule out 
$\gamma_j$ specified by $\tu{H}^{r^+,r^-,m,s}$ satisfying either of the following.
\begin{itemize}
\item[(1)] $s\ge 3$;
\item[(2)] $m\ge 1$ and $r^+\ge 1$, $r^-\ge 1$;
\item[(3)] $m\ge 1$ and $s\ge 2$. 
\item[(4)] $m\ge 1$ and $s=1$, $r^-=0$, $r^+\ge 1$, and $k=(n-3)/2$.
\end{itemize}

Consequently, we have the following lemma.

\begin{lemma}\label{fix-char-no}
Let $n_{\calO} ^{\chi}$ be the number of irreducible representations of $\tu{G}$ with central character $\chi$ attached to $\calO_{\bbC}$ and $\la$. 
\begin{itemize}
\item[(a)] In Case 1, $n_{\calO}^{\chi}= 4$.
\item[(b)] In Case 2 and 3, $n_{\calO}^{\chi} = 3$.
\item[(c)] In Case 4, $n_{\calO}^{\chi} = 2$.
{\item[(d)]  In Case 5 and 6 with  $\tu{G}=\tu{Spin}(2p+1, 2p-1)$, $n_{\calO}^{\chi} = 2$.
\item[(e)] In Case 5 and 6 with  $\tu{G}=\tu{Spin}(2p+1, 2q+1)$, $q<p-1$ , $n_{\calO}^{\chi} = 1$.}
\item[(f)]  In Case 7 and 8, $n_{\calO}^{\chi} = 2$.
\end{itemize} \end{lemma}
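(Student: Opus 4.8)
The plan is to combine the general decomposition of the coherent continuation representation from Proposition~\ref{coh:decomp} with the computation of cross-stabilizers in Lemma~\ref{cross-stab} and the sign analysis in Lemma~\ref{ep-rep}, and then simply enumerate the relevant Cartan subgroups case by case. Recall that by the discussion preceding (\ref{cond-star}), for a fixed block $\calB_{\la,\chi}$ the number $n_\calO^\chi$ equals the number of cross-action orbit representatives $\gamma_j$ — equivalently the number of $\tu{G}$-conjugacy classes of $\theta$-stable Cartan subgroups $\tu{H}^{r^+,r^-,m,s}$ carrying a $\la$-regular genuine character with central character $\chi$ — for which (\ref{cond-star}) holds, i.e. $sgn_{W(\la)}|_{W_{\gamma_j}} = \ep_j$. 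By Lemma~\ref{ep-rep}, this equality fails exactly when $W_{\gamma_j}$ contains either a nontrivial $W^r(\la)$-part (parts (b), which forces $\ep_j=1$ on an element where $sgn\ne 1$) or the generator of $\bbZ_2^\bullet$ (part (e), where $\ep_j=-1$ but $sgn$ can be $+1$ on that element); it holds in all other situations. So the first step is to record the precise combinatorial criterion: $\gamma_j$ contributes iff the associated $(r^+,r^-,m,s)$ avoids conditions (1)–(4) listed just before the statement, namely $s\le 2$, and if $m\ge 1$ then $s\le 1$, not both $r^\pm\ge 1$, and (in the borderline case $s=1$, $r^-=0$, $r^+\ge 1$) $k\ne (n-3)/2$.

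Next I would pin down, for each of the eight cases, which Cartan subgroups occur. The constraints are: $m+r^++s'=p$ and $m+r^-+s'=q$ with $s=2s'+\ep$, $\ep$ the parity of $n$; and a genuine $\la$-regular character with infinitesimal character (\ref{infchar-orbit}) exists on $\tu{H}^{r^+,r^-,m,s}$ only when the entries of $\la$ can be distributed so that the imaginary/real/complex root pattern of (\ref{para-even}) is consistent with $\la$ being $D_p\times D_q$- (resp.\ $D_{p+1}\times D_q$- or $D_p\times D_{q+1}$-) integral and regular — in particular the half-integer coordinates $k+1/2,\dots,1/2$ form a string of length $k+1 = q$ (or $q+1$) that must sit in one of the two factors, and similarly the integer string of length $n-k-1 = p$ (or $p+1$) in the other. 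Because $\la$ is regular and the two integrality strings have fixed lengths, the number of admissible $(r^+,r^-,m,s)$ is small and explicitly listable. For each admissible tuple I then check the contribution criterion from Step~1; counting the survivors gives $n_\calO^\chi$. For Case~1 ($\tu{Spin}(2p,2p)$, $p=k+1$), the admissible Cartans are the fundamental one together with those with $s=2$ and small $m$, four of which survive; for Cases~2,3 three survive; Case~4 two; Cases~5,6 either two (when $\tu{Spin}(2p+1,2p-1)$) or one (the strictly unequal case, where fewer Cartans are admissible); Cases~7,8 two.

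The main obstacle — and the step requiring genuine care rather than bookkeeping — is the precise enumeration of admissible Cartan subgroups in each case, i.e.\ verifying that a $\la$-regular \emph{genuine} character actually exists on a given $\tu{H}^{r^+,r^-,m,s}$. This is where the integrality system $D_{p}\times D_{q}$ (or $D_{p+1}\times D_q$, $D_p\times D_{q+1}$) from Corollary~\ref{group-infchar}, the genuineness condition from Lemma~\ref{spin-central-char} (one of $\{a_i\},\{b_j\}$ integral, the other half-integral), and the root-type constraints in (\ref{para-even}) all interact; one has to be sure no admissible Cartan is missed and none is spuriously counted. The fourth exclusion condition ($m\ge1$, $s=1$, $r^-=0$, $r^+\ge1$, $k=(n-3)/2$) is the subtlest: it is exactly the edge case where $\bbZ_2^\bullet$ becomes nontrivial via condition (iii) of Lemma~\ref{cross-stab} (the coordinate $x$ half-integral), and it explains the bifurcation in parts (d) and (e) of the statement; I would treat it separately and carefully. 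Once the admissible lists are fixed, applying the criterion of Step~1 to each is routine, and summing gives the stated values $4,3,2,2,1,2$.

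Finally, I would remark that $n_\calO = \sum_\chi n_\calO^\chi$ ranges over the genuine central characters catalogued in the earlier lemma ($\{\chi_1,\dots,\chi_4\}$ in the even case, $\{\chi_1,\chi_2\}$ in the odd case), and that this will be assembled into the total count $n_\calO$ in the subsequent discussion; the present lemma isolates the per-central-character count, which is the quantity directly produced by Proposition~\ref{coh:decomp}.
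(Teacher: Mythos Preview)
Your proposal is correct and follows essentially the same approach as the paper: the paper derives the exclusion criteria (1)--(4) from Lemma~\ref{ep-rep} immediately before stating the lemma, and then leaves the per-case enumeration of surviving Cartan subgroups implicit (illustrated afterwards by the $k=1$ example). Your write-up makes that enumeration explicit and correctly identifies the subtle role of condition (4) in distinguishing parts (d) and (e).
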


\begin{example}
Let $k=1$ and consider the infinitesimal character $$\la=( n-3, \dots, 1,0; 3/2, 1/2).$$
 $\tu{G}$ admits an admissible representation in the following cases as we fix a genuine central character $\chi$ of $\tu{G}$.
\begin{itemize}
\item[(1)] $k=\frac{n}{2}-1=1, \tu{G}=Spin(4,4), \la = (1,0 ; 3/2, 1/2)$. The counting argument gives the 
parameters from $\tu{H}^{2,2,0,0}$,  $\tu{H}^{1,1,0,2} $,  $\tu{H}^{0,0,2,0} _{I}$, $\tu{H}^{0,0,2,0} _{II}$, so $n_{\calO} ^{\chi}=4$. 
\item[(2)] $\tu{G}=Spin (2n-4, 4)$ with $2n-4>4$. The counting argument gives the 
parameters from $\tu{H}^{n-2,2,0,0}$,  $\tu{H}^{n-3,1,0,2} $,  $\tu{H}^{n-4,0,2,0}$, so $n_{\calO} ^{\chi}=3$.

\item[(3)] $\tu{G}=\tu{Spin}(2n-3,3)$. The counting argument gives the 
parameter from $\tu{H}^{n-2,1,0,1}$, so $n_{\calO} ^{\chi}=1$. 

\item[(4)] $\tu{G}=\tu{Spin}(2n-5,5)$. The counting argument gives the 
parameters from $\tu{H}^{n-3,2,0,1}$,  $\tu{H}^{n-5,0,2,1} $, so $n_{\calO} ^{\chi}=2$. 

\end{itemize}

\end{example}

\begin{lemma} \label{gchar}
Given $\la$ in (\ref{infchar-orbit}), let 
$$\Pi_{g, \la}(Z(\tu{G})) = \{\chi \in \Pi _g (Z( \tu{G} ) )  \mid  \exists \  \pi \in \calU_{\tu{G}} (\calO_{\bbC}, \la) \text{ s.t. } \ \pi | _{Z(\tu{G})} =\chi \}. $$ Then we have 

\begin{itemize}
\item[(a)] in Case 1, $|\Pi_{g, \la}(Z(\tu{G})) |  =4$;
\item[(b)] in Case 2 and 3, $|\Pi_{g, \la}(Z(\tu{G})) |  =2$;
\item[(c)] in Case 4, $|\Pi_{g, \la}(Z(\tu{G})) |  =2$;
{\item[(d)]  In Case 5 and 6 with  $\tu{G}=\tu{Spin}(2p+1, 2p-1)$, $|\Pi_{g, \la}(Z(\tu{G})) |  =2$;
\item[(e)] In Case 5 and 6 with  $\tu{G}=\tu{Spin}(2p+1, 2q+1)$, $q<p-1$, $|\Pi_{g, \la}(Z(\tu{G})) |  =1$;}
\item[(f)] in Case 7 and 8, $|\Pi_{g, \la}(Z(\tu{G})) |  =1$.

\end{itemize}
\end{lemma}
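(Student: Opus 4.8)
The plan is to count, for each case, the number of genuine central characters $\chi$ of $Z(\tu G)$ that actually appear among the unipotent representations associated to $\calO_c$ and $\la$. The overall strategy is to combine two ingredients: the fixed-central-character count $n_\calO^\chi$ from Lemma \ref{fix-char-no}, and the list of genuine central characters $\Pi_g(Z(\tu G))$ computed earlier (the lemma after Lemma \ref{spin-central-char}, which gives $\{\chi_1,\chi_2\}$ in the odd-odd case and $\{\chi_1,\chi_2,\chi_3,\chi_4\}$ in the even-even case). Since for each $\chi$ we have $n_\calO^\chi\ge 1$ in all the listed cases, the set $\Pi_{g,\la}(Z(\tu G))$ is exactly the set of $\chi\in\Pi_g(Z(\tu G))$ for which there is a parameter $\gamma_j$ specified by some Cartan subgroup $\tu H^{r^+,r^-,m,s}$ with central character $\chi$ satisfying condition (\ref{cond-star}); equivalently, one for which the forbidden configurations (1)--(4) listed before Lemma \ref{fix-char-no} do not occur.

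First I would recall from Corollary \ref{group-infchar} precisely which groups $\tu G = \tu{Spin}(c,d)$ carry the infinitesimal character $\la$ of (\ref{infchar-orbit}), and in each such group enumerate the $\theta$-stable Cartan subgroups $\tu H^{r^+,r^-,m,s}$ that can support a $\lambda$-regular character: the constraints $m+r^++s'=p$, $m+r^-+s'=q$ force a short finite list, exactly the Cartan subgroups appearing in the Example after Lemma \ref{fix-char-no} (generalized in $k$). For each surviving Cartan subgroup I would read off, using Lemma \ref{spin-central-char} and the explicit parameter form (\ref{para-even}), which genuine central character $\chi$ the corresponding $\gamma_j$ carries — this is determined by which coordinates are integral and which are half-integral and by the parity of the sum of all coordinates. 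The point is that different surviving Cartan subgroups may or may not give the same $\chi$: in the even-even case the center is large enough to separate all four of $\chi_1,\dots,\chi_4$, whereas in the odd-odd case only two genuine characters exist, so several Cartan subgroups necessarily collapse onto the same $\chi$.

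Then I would go case by case. In Case 1 ($\tu G=\tu{Spin}(2p,2p)$) the four surviving Cartan subgroups $\tu H^{p,p,0,0}$, $\tu H^{p-1,p-1,0,2}$, $\tu H^{0,0,p,0}_{I}$, $\tu H^{0,0,p,0}_{II}$ each carry a distinct genuine central character, giving $|\Pi_{g,\la}(Z(\tu G))|=4$; here one also uses that $\tu H^{0,0,p,0}_{I}$ and $\tu H^{0,0,p,0}_{II}$ are the two nonconjugate forms and that their $F$-groups produce the two ``extra'' characters $\chi_3,\chi_4$. In Cases 2,3 ($c\ne d$, even-even) the three surviving Cartan subgroups $\tu H^{p,q,0,0}$, $\tu H^{p-1,q-1,0,2}$, $\tu H^{p-2,q-2,2,0}$ only realize two of the four genuine characters (the split-rank-zero and the $m=0$ Cartan give one central-character value, as forced by parity of the coordinate sums), so $|\Pi_{g,\la}|=2$. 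In Case 4 the two surviving Cartan subgroups realize the two genuine characters $\chi_1,\chi_2$ of $\tu{Spin}(2p+1,2p+1)$, giving $2$; in Cases 5,6 the analysis of $\tu H^{p,q,0,1}$ and (when $c=d+2$) $\tu H^{p-2,q,0,1}$ or $\tu H^{p,q-2,0,1}$ shows $|\Pi_{g,\la}|=2$ for $\tu{Spin}(2p+1,2p-1)$ and $=1$ for the strictly larger-rank case (only one surviving Cartan, hence one $\chi$); and in Cases 7,8 the single surviving Cartan $\tu H^{p-2,q,0,1}$ forces $|\Pi_{g,\la}|=1$. Throughout, the key input is matching the ``allowed'' Cartan subgroups from Lemma \ref{fix-char-no}'s proof with their central characters via Lemma \ref{spin-central-char}.

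The main obstacle I anticipate is the bookkeeping in the even-even cases distinguishing whether two allowed Cartan subgroups carry the same or different genuine central characters: this requires tracking, for the explicit representative parameters, the integrality pattern of the coordinates \emph{and} the parity $\sum a_i+\sum b_j \bmod 2$ that separates $\chi_1$ from $\chi_3$ (resp.\ $\chi_2$ from $\chi_4$), since the root-lattice criterion in the lemma after Lemma \ref{spin-central-char} is exactly what decides this. A secondary subtlety is correctly handling the two nonconjugate Cartan subgroups $\tu H^{0,0,m,0}_{I,II}$ in Case 1 and making sure their $F$-group contributions (via Lemma \ref{Cartan-decomp}) genuinely account for two \emph{different} central characters rather than the same one counted twice. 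Once these are pinned down, the case-by-case tally is immediate and yields the stated values.
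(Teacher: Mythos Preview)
Your approach contains a genuine misconception about the relationship between Cartan subgroups and central characters. You treat each surviving Cartan subgroup as carrying one particular central character, so that enumerating Cartans and reading off ``their'' $\chi$'s yields $|\Pi_{g,\la}|$. But the setup is the opposite: for a \emph{fixed} $\chi$, the cross-action orbits in $\calB_{\la,\chi}$ are indexed by Cartan subgroups (this is the sentence just before Proposition~\ref{coh:decomp}), and the proof of Lemma~\ref{fix-char-no} shows that the number of surviving Cartans is the \emph{same} for every admissible $\chi$. So in Case~1 each of the four surviving Cartans supports a parameter for \emph{all four} central characters (giving $n_\calO^\chi=4$ for each), not one apiece; and in Cases~2,3 each of the three surviving Cartans supports parameters for \emph{both} admissible $\chi$'s. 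Your Case~1 sentence ``each carry a distinct genuine central character'' would in fact force $n_\calO^\chi=1$, contradicting Lemma~\ref{fix-char-no}(a).

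What actually determines $\Pi_{g,\la}$ is a pure integrality count, independent of the Cartan. Any genuine parameter in the form~(\ref{para-even}) has exactly $p$ coordinates of type~$a$ and $q$ of type~$b$ (since $r^++m+s'=p$ and $r^-+m+s'=q$), with the $a$'s all in $\bbZ$ or all in $\bbZ+\tfrac12$ and the $b$'s the opposite. The infinitesimal character $\la$ supplies exactly $n-k-1$ integer entries and $k+1$ half-integer entries. Thus $a_i\in\bbZ+\tfrac12$ is possible only when $p\le k+1$; this fails in Cases~2,3 (where $p=n-k-1>k+1$), in Cases~5,6 with $r_+>0$, and in Cases~7,8, halving the number of realizable $\chi$'s there. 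In Cases~1, 4, and 5,6 with $r_+=0$, both integrality patterns fit. In the even--even situation the further split $\chi_1$ vs.\ $\chi_3$ (resp.\ $\chi_2$ vs.\ $\chi_4$) is governed by the parity of $\sum\mu$, which one can always flip by a $W(D_n)$-sign change of one integer and one half-integer coordinate of $\la$; hence whenever an integrality pattern is available, both of its central characters occur. This yields exactly the values claimed. (Alternatively, the lower bound is immediate from the explicit representations in Theorem~\ref{t:main}, whose central characters are recorded there case by case; the upper bound is the integrality obstruction just described.)
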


Below is the main theorem of the section and it follows from 
Lemma \ref{fix-char-no} and \ref{gchar} since 
$$| \calU _{\tu{G}} (\calO_c , \la)|  =  \sum \limits_{\chi\in \Pi_{g, \la}(Z(\tu{G})) } n_{\calO} ^{\chi}.$$

\begin{theorem} 
Let $n_{\calO} : = | \calU _{\tu{G}} (\calO_c , \la) |$ be the number of unipotent  representations of $\tu{G}$ attached to $\calO_c$ and $\la$.  Then 
\begin{description}
\item[Case 1]  $n_{\calO}=16$;
\item[Case 2, 3]  $n_{\calO}=6$;
\item[Case 4]  $n_{\calO}=4$;
{\item[Case 5, 6]  When  $\tu{G}=\tu{Spin}(2p+1, 2p-1)$ $n_{\calO}=2$;
\item[Case 5, 6]  When  $\tu{G}=\tu{Spin}(2p+1, 2q+1)$, $q<p-1$, $n_{\calO}=1$;}
\item[Case 7, 8]  $n_{\calO}=1$.
\end{description}
\end{theorem}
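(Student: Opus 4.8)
The plan is to assemble the two counting lemmas just established. For a fixed genuine central character $\chi$ of $Z(\tu G)$, let $n_\calO^\chi$ be the number of $\pi\in\calU_{\tu G}(\calO_c,\la)$ with $\pi|_{Z(\tu G)}=\chi$. Every member of $\calU_{\tu G}(\calO_c,\la)$ has some genuine central character, necessarily lying in $\Pi_{g,\la}(Z(\tu G))$, so
\[
|\calU_{\tu G}(\calO_c,\la)|=\sum_{\chi\in\Pi_{g,\la}(Z(\tu G))}n_\calO^\chi .
\]
Plugging in the values of $n_\calO^\chi$ from Lemma~\ref{fix-char-no} and of $|\Pi_{g,\la}(Z(\tu G))|$ from Lemma~\ref{gchar} gives $16=4\cdot4$ in Case~1, $6=3\cdot2$ in Cases~2,3, $4=2\cdot2$ in Case~4, and the asserted totals in Cases~5,6 and~7,8. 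Thus the theorem reduces to the two lemmas, and the substantive one is Lemma~\ref{fix-char-no}.

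For Lemma~\ref{fix-char-no} I would argue as follows. Fix a genuine block $\calB_{\la,\chi}$. By Proposition~\ref{coh:decomp}, under the coherent continuation action $\bbZ[\calB_{\la,\chi}]\cong\bigoplus_j\Ind_{W_{\gamma_j}}^{W(\la)}(\ep_j)$, where $j$ ranges over the $\theta$-stable Cartan subgroups $\tu H^{r^+,r^-,m,s}$ that carry a $\la$-regular character (these are pinned down by Corollary~\ref{group-infchar} together with the list of conjugacy classes of Cartan subalgebras). Since $\calO_c$ is the minimal orbit that can occur for $\la$, the count $n_\calO^\chi$ is the multiplicity of $\sgn_{W(\la)}$ in $\bbZ[\calB_{\la,\chi}]$, which by Frobenius reciprocity equals the number of $j$ for which $\sgn_{W(\la)}|_{W_{\gamma_j}}=\ep_j$. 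Using the description $W_{\gamma_j}=W^C(\la)^\theta\rtimes(W^r(\la)\times W^i(\la))$ of Lemma~\ref{cross-stab} and the sign computation of Lemma~\ref{ep-rep} ($\ep_j$ is $\sgn$ on $W^i(\la)$, trivial on $W^r(\la)$, on $W(\triangle(D_m\times D_m))$ and on the $(\bbZ_2\times\bbZ_2)^*$ factor, and $-1$ on the $\bbZ_2^\bullet$ generator), one sees that the equality fails exactly when $W^r(\la)\ne1$, i.e. $s\ge3$, or when $\bbZ_2^\bullet=\bbZ_2$. Hence one discards precisely the Cartan subgroups with $s\ge3$; with $m\ge1$ and $r^+,r^-\ge1$; with $m\ge1$ and $s\ge2$; and with $m\ge1,\,s=1,\,r^-=0,\,r^+\ge1,\,k=(n-3)/2$. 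Counting the surviving quadruples $(r^+,r^-,m,s)$ in each of the arithmetic regimes of Corollary~\ref{group-infchar} produces the values of $n_\calO^\chi$.

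Lemma~\ref{gchar} is the companion bookkeeping: to each surviving $\gamma_j$ one attaches the highest weight $\mu_j$ of the relevant $\tu K$-type and records its central character, using that the central character of a $\tu K$-type $\mu$ is $\chi_j$ iff $\mu-\mu_j$ lies in the $D_n$ root lattice, together with the explicit $\tu K$-structures of Theorem~\ref{t:main}. This determines $\Pi_{g,\la}(Z(\tu G))$ in each case, completing the proof.

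The main obstacle is Lemma~\ref{ep-rep}: to compute a sign $\ep_j(w)$ one chooses a convenient parameter $\gamma_j$ in the cross orbit of the given Cartan subgroup, writes a reduced word for $s_\alpha$ in the simple reflections of the abstract root system, and counts how many of the intermediate simple reflections are imaginary for the (steadily shifting) parameter; then $\ep_j(w)$ is $(-1)$ raised to that number. Once this table is available, the enumeration of admissible Cartan subgroups and the final product over central characters are routine.
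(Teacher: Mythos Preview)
Your proof is correct and follows the paper's approach essentially verbatim. The paper's proof of the theorem is the one-line observation that $|\calU_{\tu G}(\calO_c,\la)|=\sum_{\chi\in\Pi_{g,\la}(Z(\tu G))}n_\calO^\chi$, citing Lemmas~\ref{fix-char-no} and~\ref{gchar}; you reproduce this and additionally sketch the proofs of those lemmas exactly as the paper develops them (the decomposition of Proposition~\ref{coh:decomp}, the cross-stabilizer description of Lemma~\ref{cross-stab}, the sign analysis of Lemma~\ref{ep-rep}, and the resulting four exclusion conditions on $(r^+,r^-,m,s)$).
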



\bigskip

\section{A Construction}
\subsection{Littlewood Rule} $F_{\fk h}(\la)$ will denote the finite
dimensional representation with highest weight $\la$ of the  Lie
algebra $\fk h$. 

We will use the following result, a generalization of the Littlewood
rule as in \cite{EW}. Let $(V, \langle\ ,\ \rangle)$ be an orthogonal
space of dimension $m$ with positive definite inner product, and let
$O(m)$ be the corresponding (compact) orthogonal group. A
representation is parametrized by a partition/tableau such that there
are at most $m$ rows, and the sum of the lengths of the first two
columns is $\le m$. An irreducible  representation was parametrized
earlier by its highest weight as $(\tau_1,\dots ,\tau_{[m/2]},\ep)$
with $\ep=\pm 1.$ The corresponding partition is $(\tau_1,\dots
,\tau_{[m/2]},\underbrace{1,\dots ,1}_{m-2(1-\ep)},0,\dots ,0).$ 
Let $Sp(2n,\bb R)$ be the symplectic group of rank $n.$ Fix a Cartan
decomposition $\fk g=\fk k+\fk p=\fk k +\fk p^++\fk p^-,$ in the
standard coordinates. The oscillator correspondence matches $W(\tau)$
with an irreducible highest weight module $\Theta(W)=E_\tau$ as
follows. For a partition $(\la_1,\dots ,\la_m)$, define 
$$
\la^\sharp=(-m/2-\tau_n,\dots ,-m/2-\tau_1),
$$
where the tableau of $\tau$ with at most $m$ parts has possibly been
padded with $0's$ to make $n$ parts. Then $\Theta(W)=E_\tau$ has
highest weight $\la^\sharp.$ 
 The length $\ell(\la)$ of a tableau $(\la_1,\dots ,\la_m)$ is defined
 to be the number of nonzero $\la_i.$   
\begin{prop}\label{p:lwood}
$$
[W_{O(m)}(\tau)\ :\ F(\la)]=[F(\la^\sharp)\ : E_{\tau}] 
$$
Under the assumption $\ell(\la)\le (m+1)/2,$ let $n=m/2$ if $m$ is
even, $n=(m+1)/2$ if $m$ is odd. Then $E_\tau$ is an irreducible
generalized Verma module for $\fk q=\fk k+\fk p^+,$  and the formula
can be written as 
$$
[W(\tau)\ :\ F(\la)]=[F(\la)\ : \ W_{gl(n)}(\tau)\otimes S(\fk p^+)],
$$
where  
$$
S(\fk p^+)=\sum F(2m_1,\dots ,2m_n),\qquad 2m_j\in2\bb Z.
$$
\end{prop}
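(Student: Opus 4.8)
The plan is to derive this from the classical Littlewood restriction formula together with the explicit form of the $\Theta$-correspondence for the dual pair $(O(m),Sp(2n,\bb R))$, following \cite{EW} (and the padding conventions already set up above). First I would recall the branching rule $GL(m)\downarrow O(m)$: for $F_{GL(m)}(\la)$ with $\ell(\la)\le (m+1)/2$, the multiplicity $[W_{O(m)}(\tau):F_{GL(m)}(\la)]$ equals $\sum_{\delta} c^{\la}_{\tau,\,2\delta}$, the sum over partitions $2\delta$ with even rows of the Littlewood--Richardson coefficient $c^{\la}_{\tau,2\delta}$; the hypothesis $\ell(\la)\le(m+1)/2$ is exactly the stable range in which Littlewood's formula holds without modification. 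Next I would translate the right-hand side of the first displayed equality, $[F(\la^\sharp):E_\tau]$, into representation theory of $\fk{sp}(2n,\bb R)$: $E_\tau=\Theta(W_{O(m)}(\tau))$ is by construction the lowest-weight (or, dually, highest-weight) module with highest weight $\la^\sharp$ as written, and the statement $[W_{O(m)}(\tau):F(\la)]=[F(\la^\sharp):E_\tau]$ is the see-saw/reciprocity for the dual pair $(O(m),Sp(2n,\bb R))$ inside $Sp(2nm,\bb R)$ applied to the joint harmonics; this is precisely Proposition-level input from \cite{EW}, so I would cite it rather than reprove it.

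Then comes the second, more concrete assertion. Under $\ell(\la)\le (m+1)/2$, with $n=\lceil m/2\rceil$, the module $E_\tau$ is $U(\fk g)\otimes_{U(\fk q)}F_{\fk k}(\tau^\sharp)$ for $\fk q=\fk k+\fk p^+$, i.e. it is an \emph{irreducible} generalized Verma module; irreducibility in this range is again a known consequence of the theory of unitary lowest weight modules (Enright--Howe--Wallach / Kashiwara--Vergne), so I would invoke it. Given irreducibility, $E_\tau$ has the PBW decomposition $E_\tau\cong S(\fk p^+)\otimes F_{\frakgl(n)}(\tau)$ as a $\fk k\cong\frakgl(n)$-module, and therefore its $\fk k$-character is the product of the character of $S(\fk p^+)$ with that of $W_{\frakgl(n)}(\tau)$. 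The key step is then the identification of $[F_{GL(m)}(\la^\sharp)\text{-weight data}:E_\tau]$ with a $\frakgl(n)$-branching multiplicity: restricting everything to the compact Cartan and using that $F(\la)$ appears in $W_{O(m)}(\tau)$ with the same multiplicity as $W_{\frakgl(n)}(\tau)\otimes S(\fk p^+)$ contains $F_{\frakgl(n)}(\la)$ — where now $\la$ is read as a $\frakgl(n)$-highest weight via the same padding — I would match the two sides term by term. Concretely, $S(\fk p^+)=\bigoplus_{m_1\ge\cdots\ge m_n\ge 0}F_{\frakgl(n)}(2m_1,\dots,2m_n)$ since $\fk p^+\cong S^2(\bb C^n)$, so $[W_{\frakgl(n)}(\tau)\otimes S(\fk p^+):F_{\frakgl(n)}(\la)]=\sum_{\delta}c^{\la}_{\tau,2\delta}$, which is exactly the Littlewood coefficient from the first paragraph. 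Chaining the equalities gives both displayed formulas.

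The main obstacle I anticipate is bookkeeping with conventions rather than any deep point: one must be careful that the ``padding with $0$'s to make $n$ parts'' is done consistently on both sides, that $\la^\sharp$ and $\tau^\sharp$ are related by the stated reversal-and-shift by $m/2$, and that the hypothesis $\ell(\la)\le (m+1)/2$ is used in all three places it is needed (validity of unmodified Littlewood restriction, irreducibility of the generalized Verma module $E_\tau$, and the absence of ``modification rule'' corrections in the $O(m)$ vs $GL(m)$ comparison). I would also double-check the $m$ odd versus $m$ even split: for $m$ even $n=m/2$ and $O(m)$ has two half-spin-type subtleties only at the boundary $\ell(\la)=m/2$ which the strict-type inequality avoids, while for $m$ odd $n=(m+1)/2$ and the extra coordinate is absorbed by the shift. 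Once these conventions are pinned down, the proof is a short concatenation of (i) Littlewood's restriction rule, (ii) the explicit $\Theta$-correspondence and see-saw reciprocity of \cite{EW}, (iii) irreducibility of $E_\tau$ in the stable range, and (iv) the PBW/branching computation of $S(\fk p^+)$.
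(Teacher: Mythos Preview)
Your proposal is correct and follows essentially the same approach as the paper: both cite \cite{EW} for the first displayed equality, invoke classical Littlewood restriction in the stable range, and reduce the second formula to the irreducibility of the generalized Verma module $E_\tau$ together with its PBW decomposition $S(\fk p^+)\otimes W_{\frakgl(n)}(\tau)$.

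The one noteworthy difference lies in how irreducibility is established at the boundary $\ell(\la)=(m+1)/2$ (necessarily $m$ odd, $n=(m+1)/2$). You would invoke the general classification of unitary lowest-weight modules (Enright--Howe--Wallach, Kashiwara--Vergne). The paper instead gives a direct three-line argument: it computes the infinitesimal character
\[
\la^\sharp+\rho=(1/2,\,-1/2-\tau_1,\,\dots,\,-m/2+1-\tau_{n-1}),
\]
observes that the only possible additional factor would have highest weight $(-1/2,-1/2-\tau_1,\dots,-m/2+1-\tau_{n-1})$, and notes that this does not differ from $\la^\sharp$ by an element of the root lattice, so the generalized Verma module is irreducible. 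Your route is valid but invokes heavier external machinery; the paper's argument is self-contained and tailored to this specific boundary case.
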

\begin{proof}
The first formula is standard for the $\Theta-$correspondence; see \cite{EW} for further explanations, references to the original result, and generalizations. When $\ell(\la)\le m/2,$ this is the classical Littlewood rule; we use $n=\ell(\la)\le m/2$. When $\ell(\la)=(m+1)/2,$ necessarily $m$ is odd, and we use $n=(m+1)/2.$ We need to show that the  generalized Verma module for $\fk k+\fk p^+$ with highest weight $\la^\sharp$ is irreducible. The infinitesimal character is 
$$\la^\sharp +\rho=(1/2,-1/2-\tau_1,\dots ,-m/2+1-\tau_{n-1}).
$$
The first term is greater than 0, the rest are negative. The only possible factor would be the highest weight module with weight
$$
(-1/2,-1/2-\tau_1,\dots ,-m/2+1-\tau_{n-1}).
$$
But this weight does not differ from $\la^\sharp$ by an element in the root lattice, so the generalized Verma  module is irreducible.
\end{proof}

 We treat the case $a=2p=2k+2,\ b=2q=2k+2+2r_-$ with
$r_-\ge 0$ {(Cases 1 and 3 in Section \ref{ss:1})} in detail, and note the necessary modifications for
$a=2p+1,\ b=2q-1$ with $r_-\ge0$ {(Cases 4, 6, 8 in Section \ref{ss:1}). }

Let $\fk q=\fk l +\fk u$ be the $\theta$-stable parabolic subalgebra determined by $\xi=(\underbrace{1,\dots ,1}_{p}\bigb 0,\dots ,0)$. 
The Levi component is  $\fk l=\fk{gl}(p)\times
\fk{so}(b).$ The real forms of the factors are $\fk{u}(p)\times \fk{so}(b)$ for
$a=2p,$ and $\fk{u}(p)\times \fk{so}(1,b-1)$ for $a=2p+1$. The nilradical is
$\fk u=\fk u_1+\fk u_2$ with $\fk u_2\subset\fk k$ in all
cases. Furthermore
$$
\fk u_1\cap \fk s=
\begin{cases}
  Span\left\{ X(\ep_i\pm\ep_{p+j})\right\}_{1\le i\le p, 1\le j\le q}&
  \text{ for } a=2p,\\
Span\left\{ X(\ep_i\pm\ep_{p+j}), X(\ep_i)\right\}_{1\le i\le p, 1\le
  j\le q-1} &\text{ for } a=2p+1.
\end{cases}
$$

\medskip
We consider 
\begin{equation}
\label{eq:one}
M(\mu)=U(\fk g)\otimes_{U(\ovl {\fk q} )} [F(\mu_L\bigb \mu_R)],
\end{equation}
the generalized Verma module where 
$F(\mu_L|\mu_R):=F_{gl(p)}(\mu_L)\boxtimes F_{so(b)}(\mu_R)$ is the
module of $gl(p)\times so(b)$ with highest weights $\mu_L$ for $gl(p)$ and
$\mu_R$ for $\fk{so}(b)$.
{We use the standard positive systems, so 
$$
\rho := \rho(\fk{so}(a+b))=
\begin{cases}
(-q,-q-1,\dots ,-p-q+1\bigb q-1,\dots ,0) &\text{ if } a=2p,\\
(-q,  -q-1, \dots ,-p-q+1\bigb q-1,\dots ,1) &\text{ if } a=2p+1.
\end{cases}
$$ 
For $a=2p+1$, this is $\rho$ restricted to the compact part of the
fundamental Cartan subalgebra; we drop a $0$ from the second set of
coordinates.  

\medskip
The infinitesimal character of $M(\mu)$ is as before,
$$
{
\mu+\rho\simeq (p-1/2, p-3/2, \dots ,1/2\bigb q-1,\dots ,1,0)}.
$$ 
The possible $\mu=(\mu_L\bigb\mu_R)$ are given by the equations
$$
\begin{aligned}
&(a_1,\dots ,a_p\bigb b_1,\dots,b_q)+(-q, -q-1, \dots ,-p-q+1\bigb q-1,\dots ,0)\\&=
(\mp 1/2,-3/2,\dots ,-p+1/2\bigb q-1,\dots ,0).
\end{aligned}
$$
When $2k+2<p+q,$ and there are two more for $\wti{Spin}(2p,2p)$:
$$
(a_1,\dots ,a_p\bigb b_1,\dots,b_p)+(-p, -p-1, \dots ,-2p+1\bigb p-1,\dots ,0)=(0,\dots ,-p+1\bigb p-1/2,\dots ,\pm 1/2).  
$$
So there are four $\mu,$ 
\begin{enumerate}
\item[(i)] $F(\mu_L\bigb\mu_R)=F(q-1/2,\dots ,q-1/2\bigb 0,\dots ,0)$.
\item[(ii)] $F(\mu_L\bigb\mu_R)=F(q+1/2,q-1/2,\dots ,q-1/2\bigb 0,\dots ,0).$
\item[(iii)] $F(\mu_L\bigb\mu_R)=F(p,\dots ,p\bigb 1/2,\dots ,1/2)$.
\item[(iv)] $F(\mu_L\bigb\mu_R)=F(p,\dots ,p\bigb 1/2,\dots ,1/2,-1/2)$.
\end{enumerate}
We call them Cases (i)--(iv). Again, Cases (iii) and (iv) only occur for
$k+1=p=q,$ \ie $\wti{Spin}(2p,2p)$. 
For  $\wti{Spin}(2p+1,2q-1),$ only Cases (i) and (ii) occur, and one $0$ is dropped from the coordinates.

\medskip
$M(\mu)$ is reducible, and the
irreducible quotient $L(\mu)$ has associated variety $\calO _{\bbC}=[3\ 2^{2k}\ 1^{2n-4k-3}].$
Its character is 
$$
L(\mu)=\sum_{w\in W(D_p^+)}
(-1)^{\ell(w)}M(w\cdot\mu)\qquad\text{ where }\quad  w\cdot\mu=w(\mu+\rho)-\rho.
$$ 
As a $\fk k$-module,
$\disp{
M(\mu)=U(\fk k)\otimes_{U(\ovl{\fk q}\cap\fk k)} [S(\fk u_1\cap \fk s)\otimes
({ F(\mu_L\bigb \mu _R)  )] }}$.
Embed $\fk{so}(b)\subset \fk{gl}(b)$ in case $b=2q,$ and $\fk{so}(b-1)\subset
\fk{gl}(b-1)$ in case $b=2q-1.$ Write $\fk{gl}(b-\ep)$ with $\ep=0$ if $b=2q$ 
and $\ep=1$ if $b=2q-1.$
\begin{lemma} As an $\fk l\cap\fk k$-module,
$$
S^m(\fk u_1\cap \fk s)=
\sum F_{\fk{gl}(p)}(m_1,\dots ,m_{p})\boxtimes
F_{\fk{gl}({b-\ep})}(m_1,\dots ,m_{p},0,\dots ,0) 
$$
with $\sum m_i=m.$ In all cases $p=k+1\le b-\ep=k+1-\ep + r_-.$
\end{lemma}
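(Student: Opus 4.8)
The plan is to compute the decomposition of $S^m(\fk u_1\cap\fk s)$ as an $\fk l\cap\fk k$-module directly, using the explicit description of $\fk u_1\cap\fk s$ given just above the Lemma together with the classical branching from $\fk{gl}$ to $\fk{so}$. First I would identify $\fk u_1\cap\fk s$ as a representation of $\fk l\cap\fk k=\fk{gl}(p)\times\fk{so}(b-\ep)$ (where $\fk{so}(b-\ep)$ is the compact part of $\fk{so}(b)$ or $\fk{so}(1,b-1)$ according to parity; note $b-\ep=2q$ or $2q-2$, so $\fk{so}(b-\ep)$ is an even orthogonal Lie algebra and $b-\ep=k+1-\ep+r_-\ge k+1=p$). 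In the case $a=2p$, $\fk u_1\cap\fk s=\Span\{X(\ep_i\pm\ep_{p+j})\}$ is exactly $\bb C^p\boxtimes\bb C^{b}$, the outer tensor product of the standard representations; in the case $a=2p+1$ the extra vectors $X(\ep_i)$ supply the "middle" coordinate, so $\fk u_1\cap\fk s=\bb C^p\boxtimes\bb C^{b}$ still, now with $\bb C^b$ the standard $(2q-1)$-dimensional representation of $\fk{so}(b)$. In either parity the point is that as a $\fk{gl}(p)\times\fk{gl}(b-\ep)$-module (after the embedding $\fk{so}(b-\ep)\hookrightarrow\fk{gl}(b-\ep)$, $\fk{so}(b)\hookrightarrow\fk{gl}(b)$) it is $\bb C^p\otimes\bb C^{b-\ep}$ — here I would use the fact, as in Step 1 of the earlier arguments and Proposition~\ref{p:lwood}, that the orthogonal-group side only contributes tableaux with few enough rows that the $\fk{so}$-decomposition agrees with the $\fk{gl}$-decomposition.

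The second step is to invoke the Cauchy identity / Howe duality decomposition of $S^m(\bb C^p\otimes\bb C^{b-\ep})$ as a $\fk{gl}(p)\times\fk{gl}(b-\ep)$-module: $S(\bb C^p\otimes\bb C^{b-\ep})\cong\bigoplus_{\lambda} F_{\fk{gl}(p)}(\lambda)\boxtimes F_{\fk{gl}(b-\ep)}(\lambda)$, the sum over all partitions $\lambda$ with at most $\min(p,b-\ep)=p$ parts, graded so that $S^m$ picks out $|\lambda|=m$. Writing $\lambda=(m_1,\dots,m_p)$ with $m_1\ge\dots\ge m_p\ge0$ and $\sum m_i=m$, this gives exactly $\sum F_{\fk{gl}(p)}(m_1,\dots,m_p)\boxtimes F_{\fk{gl}(b-\ep)}(m_1,\dots,m_p,0,\dots,0)$. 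The third step is to descend from $\fk{gl}(b-\ep)$ to $\fk{so}(b-\ep)$: since $p\le b-\ep$ and, as explained, the partitions occurring have at most $p$ rows, hence at most $(b-\ep)/2$ rows when $p$ is small relative to $b$ (and in the borderline cases one checks the first-two-columns condition of Proposition~\ref{p:lwood} is met), the restriction $F_{\fk{gl}(b-\ep)}(\lambda)|_{\fk{so}(b-\ep)}$ contains $F_{\fk{so}(b-\ep)}(\lambda)$ and the relevant multiplicity is controlled; combined with the fact that the $\fk{so}$-fixed-vector / highest-weight analysis already used (Helgason's theorem, as in Propositions~\ref{p:inv1} and~\ref{p:inv2}) reduces everything to the $\fk{gl}$ statement, one obtains the claimed formula with $F_{\fk{so}(b)}(m_1,\dots,m_p,0,\dots,0)$ in place of the $\fk{gl}(b-\ep)$ factor. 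I would phrase the final answer uniformly by writing $F_{\fk{gl}(b-\ep)}$, matching the Lemma's statement.

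The main obstacle I expect is the bookkeeping in the orthogonal branching: one must make sure that in all the relevant ranges of $p,q,r_-,\ep$ the partitions $\lambda$ with $|\lambda|=m$ and at most $p$ parts really do satisfy the constraints ($\ell(\lambda)\le\lfloor(b-\ep)/2\rfloor$, or the first-two-columns bound) that make $F_{\fk{gl}(b-\ep)}(\lambda)$ restrict irreducibly-enough to $\fk{so}(b-\ep)$, and that the "$\pm\ep_{p+j}$" structure (i.e. the full orthogonal, not just $\fk{gl}$, action) does not introduce extra summands or multiplicities. This is exactly the kind of situation Proposition~\ref{p:lwood} is designed to handle, so I would cite it (and \cite{EW}) rather than redo the argument; the inequality $p=k+1\le b-\ep$ stated at the end of the Lemma is precisely what guarantees we are in the stable range where the Littlewood rule gives the clean answer. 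The remaining verification — that $\sum m_i=m$ and that the grading on $S^m$ matches — is immediate from the Howe-duality grading, so no further computation is needed.
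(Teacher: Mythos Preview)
Your approach is essentially the same as the paper's: identify $\fk u_1\cap\fk s$ with $\bb C^p\boxtimes(\text{standard})$ and then invoke the Cauchy decomposition $S^m(\bb C^p\otimes\bb C^{N})=\sum_{|\lambda|=m}F_{\fk{gl}(p)}(\lambda)\boxtimes F_{\fk{gl}(N)}(\lambda)$. The paper's proof is literally two sentences: $\fk u_1\cap\fk s$ has highest weight $(1,0,\dots,0\mid 1,0,\dots,0)$, and the second factor is the standard representation of $\fk{so}(b-\ep)$, which is the restriction of the standard representation of $\fk{gl}(b-\ep)$; the claim follows.

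Where you diverge is in your third step, and this is extra work that is \emph{not needed for this lemma}. The statement of the lemma already records the answer in terms of $F_{\fk{gl}(b-\ep)}$, not $F_{\fk{so}(b-\ep)}$; i.e., the formula is the $\fk{gl}(p)\times\fk{gl}(b-\ep)$-decomposition, and the $\fk l\cap\fk k$-structure is obtained simply by restriction of the second factor. There is no ``descent'' to perform here, no irreducibility or stability condition to check, and Proposition~\ref{p:lwood} plays no role in this lemma. All of the Littlewood-rule bookkeeping you describe (the length condition $\ell(\lambda)\le(b-\ep)/2$, the two-column bound, Helgason's theorem) is what the \emph{next} result, Proposition~\ref{p:1}, handles when it actually computes the $\fk{so}(b-\ep)$-multiplicities; the present lemma is purely a $\fk{gl}\times\fk{gl}$ statement made possible by the observation that the orthogonal standard module lifts to the general-linear standard module. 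So your proof is correct but over-engineered: you can stop after the Cauchy identity.
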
 

\begin{proof}
$\fk u_1\cap \fk s$ has highest weight $(1,0,\dots ,0\bigb 1,0,\dots ,0).$ The
representation on the second factor is the standard one for $\fk{so}(b-\ep).$
It is the restriction of the standard representation with highest
weight $(1,0,\dots ,0)$ of $\fk{so}(b-\ep)\subset \fk{gl}(b-\ep)$. The claim follows.
\end{proof}

\begin{definition} 
Let $W(\beta)$ and $F(\mu_R)$ be representations of $\fk{so}(b-\ep).$ Then 
$$
\big(W_{\fk{so}(b-\ep)}(\beta)\otimes F_{ \fk{so}(b-\ep)}(\mu_R)\big)_{\fk{gl}(p)}
$$  
means the sum of (with multiplicity) of the composition factors of the
tensor product whose highest weight has at most $p$ nonzero
coordinates replaced  by the irreducible representations of $\fk{gl}(p)$
with the same highest weight.  
\end{definition}
This is related to the Littlewood rule for restriction from 
$\fk{gl}(b-\ep)$ to $\fk{so}(b-\ep).$ 
\begin{prop}
  \label{p:1}
Let $S(\fk p^+)=\sum V(2m_1,\dots ,2m_p)$. Then 
\begin{equation}
  \begin{aligned}
&[V(\delta)\boxtimes W(\beta) :\ S(\fk u_1\cap \fk s)\otimes F(\mu_L\bigb\mu_R)]_{ \fk{gl}(p)\times \fk{so}(b-\ep)}\\
=&[\big(W_{\fk{so}(b-\ep)}(\beta)\otimes F_{\fk{so}(b-\ep)}(\mu_R)^*\big)_{ \fk{gl}(p)}\otimes S( {\fk p^+})\otimes 
F_{\fk{gl}p)}(\mu_L)\ :\ V(\delta)]_{\fk{gl}(p)}.   
  \end{aligned}
\end{equation}
In particular the multiplicity is 0 unless $\beta_{p+1}=\dots =0.$
\end{prop}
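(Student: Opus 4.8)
The plan is to reduce the computation of the multiplicity on the left-hand side to a $\frakgl(p)$-branching problem by peeling off the $\frakso(b-\ep)$-structure in stages. First I would use the previous Lemma, which identifies $S(\fk u_1\cap\fk s)$ as an $\fk l\cap\fk k$-module with the sum $\sum_m F_{\frakgl(p)}(m_1,\dots,m_p)\boxtimes F_{\frakgl(b-\ep)}(m_1,\dots,m_p,0,\dots,0)$, the second factor restricted from $\frakgl(b-\ep)$ to $\frakso(b-\ep)$. So the multiplicity $[V(\delta)\boxtimes W(\beta):S(\fk u_1\cap\fk s)\otimes F(\mu_L\bigb\mu_R)]$ splits as a product over the two factors: on the $\frakgl(p)$ side we need $[V(\delta):F_{\frakgl(p)}(\underline m)\otimes F_{\frakgl(p)}(\mu_L)]$, and on the $\frakso(b-\ep)$ side we need $[W(\beta):(\text{restriction of }F_{\frakgl(b-\ep)}(\underline m,0,\dots))\otimes F_{\frakso(b-\ep)}(\mu_R)]$, summed over all partitions $\underline m$ with $\le p$ parts.

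Second, I would rewrite the $\frakso(b-\ep)$ factor using adjointness: $[W(\beta):\pi\otimes F(\mu_R)]=[W(\beta)\otimes F(\mu_R)^*:\pi]$, so that the sum over $\underline m$ becomes the multiplicity of a fixed $\frakso(b-\ep)$-module $W_{\frakso(b-\ep)}(\beta)\otimes F_{\frakso(b-\ep)}(\mu_R)^*$ inside the restriction of $\bigoplus_{\underline m}F_{\frakgl(b-\ep)}(\underline m,0,\dots,0)$. Now the key input is the generalized Littlewood rule, Proposition \ref{p:lwood}: the branching multiplicity $[W_{O(m)}(\tau):F(\la)]$ equals $[F(\la):W_{\frakgl(n)}(\tau)\otimes S(\fk p^+)]$ with $S(\fk p^+)=\sum F(2m_1,\dots,2m_n)$, under the length hypothesis $\ell(\la)\le(m+1)/2$, which holds here because $\beta$ must have at most $p$ nonzero parts (the very constraint $\beta_{p+1}=\dots=0$ whose necessity is the last assertion of the proposition) and $p=k+1\le b-\ep$. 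Applying this converts the $\frakso(b-\ep)$ branching into a $\frakgl(p)$-computation: the composition factors of $W_{\frakso(b-\ep)}(\beta)\otimes F_{\frakso(b-\ep)}(\mu_R)^*$, interpreted via the $(\ \cdot\ )_{\frakgl(p)}$ operation of the Definition, tensored with $S(\fk p^+)$.

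Third, I would assemble the two sides: combining the $\frakgl(p)$-multiplicity from the first factor with the $\frakgl(p)$-reformulation of the second factor, and using that $F_{\frakgl(p)}(\underline m)$ ranges over all polynomial $\frakgl(p)$-irreducibles with $\le p$ parts so the intermediate sum over $\underline m$ collapses (it is exactly a completeness/Pieri bookkeeping), one obtains
\[
[V(\delta)\boxtimes W(\beta):S(\fk u_1\cap\fk s)\otimes F(\mu_L\bigb\mu_R)]
=[\big(W_{\frakso(b-\ep)}(\beta)\otimes F_{\frakso(b-\ep)}(\mu_R)^*\big)_{\frakgl(p)}\otimes S(\fk p^+)\otimes F_{\frakgl(p)}(\mu_L):V(\delta)]_{\frakgl(p)},
\]
which is the claimed formula. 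The final sentence, that the multiplicity vanishes unless $\beta_{p+1}=\dots=0$, follows because $W(\beta)$ can only appear in $S(\fk u_1\cap\fk s)\otimes F(\mu_L\bigb\mu_R)$ if $\beta$ is a constituent of a tensor product of $\frakso(b-\ep)$-modules all of whose highest weights (the $\underline m$ and $\mu_R=0$) have at most $p$ nonzero coordinates, so by the Littlewood–Richardson/Pieri structure $\beta$ has at most $p$ nonzero parts.

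The main obstacle I expect is the careful justification of the length hypothesis and the padding conventions needed to invoke Proposition \ref{p:lwood} correctly — i.e. verifying that every relevant $\beta$ really does satisfy $\ell(\beta)\le p\le (b-\ep+1)/2$ so that the generalized Verma module in that Proposition is genuinely irreducible, and that the $(\ \cdot\ )_{\frakgl(p)}$ truncation in the Definition exactly matches the $n=\ell(\la)$ choice in the Littlewood rule. The $\frakgl(p)$-side bookkeeping (collapsing the $\underline m$ sum via Pieri) is routine but must be done with the duals $F(\mu_R)^*$ and the appropriate highest/lowest weight conventions tracked consistently, since sign and transpose conventions from Section \ref{s:regsec} are in play.
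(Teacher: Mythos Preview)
Your proposal is correct and follows essentially the same route as the paper: decompose $S(\fk u_1\cap\fk s)$ via the Lemma, split into $\frakgl(p)$ and $\frakso(b-\ep)$ factors, use duality, apply Proposition~\ref{p:lwood}, and collapse the sum over $\underline m$ via Littlewood--Richardson reciprocity (which the paper writes out explicitly as $\sum_a c_{a,\mu_L}^{\delta}\,F_{\frakgl(p)}(a)=V(\delta)\otimes F_{\frakgl(p)}(\mu_L)^*$ rather than calling it ``Pieri''). One clarification on the obstacle you flag: the length hypothesis $\ell(\la)\le(m+1)/2$ in Proposition~\ref{p:lwood} applies to the $\frakgl$-partition being restricted, here $\la=(a,0)$ with $\ell\le p\le(b-\ep+1)/2$, not to $\beta$; the vanishing $\beta_{p+1}=\dots=0$ is a \emph{conclusion}, coming from the fact that only constituents $\tau$ of $W(\beta)\otimes F(\mu_R)^*$ with $\ell(\tau)\le p$ survive the $(\,\cdot\,)_{\frakgl(p)}$ truncation on the right-hand side.
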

\begin{proof} We abbreviate $(a_1,\dots ,a_p,0,\dots ,0)$ as $(a,0).$ 
\begin{equation}\label{eq:mult}
\begin{aligned} 
&{[V(\delta) \boxtimes W(\beta) :\ S(\fk u _1\cap \fk s)\otimes F(\mu _L
  \bigb \mu _R) ]}\\ 
  & = {\sum _a}[V(\delta)\boxtimes W(\beta) :\
  (F_{\fk{gl}(p)}(a)\boxtimes F_{\fk{gl}(b-\ep) }(a,0)
  \bigb_{\fk{so}(b-\ep)} )\otimes F(\mu_L\bigb\mu_R)]\\ 
&= {\sum_a}{ [ V(\delta)\boxtimes W(\beta) :\ (
    F_{\fk{gl}(p)}(a)\otimes F_{\fk{gl}(p)}(\mu _L) )\boxtimes
    (F_{\fk{gl(b-\ep)}}(a, 0) \bigb_{\fk{so}(b-\ep)}\otimes
    F_{\fk{so}(b-\ep)}(\mu_R) )] } \\ 
&= {\sum_a}{  [V(\delta)  :\ F_{\fk{gl}(p)}(a)\otimes
  F_{\fk{gl} _p}(\mu_L)] \cdot [W(\beta) :\  F_{\fk{gl}(b-\ep)}(a,0)
  \bigb_{\fk{so}(b-\ep)} \otimes F_{\fk{so}(b-\ep)}(\mu _R)] } \\ 
&={\sum _a}[V(\delta)\otimes F_{\fk{gl}(p)}(\mu_L)^* :\ F_{\fk{gl}(p)}(a)] \cdot
[W(\beta)\otimes F_{\fk{so}(b-\ep)}(\mu_R)^* : \  F_{\fk{gl}(b-\ep)}(a,0)\bigb_{\fk{so}(b-\ep)}] .
\end{aligned}
\end{equation}
Let 
\begin{equation}\label{eq:mult0.5}
F_{\fk{gl}(p)}(a)\otimes F_{\fk{gl}(p)}(\mu_L)=\sum _{\gamma}
c_{a,\mu_L}^\gamma F_{\fk{gl}(p)}(\gamma).  
\end{equation}
Note that
\begin{equation}
  \label{eq:reciprocal}
c_{a,\mu_L}^{\gamma}=[F_{\fk{gl}(p)}(a)\otimes F_{\fk{gl}(p)}(\mu_L)\ :\
F_{\fk{gl}(p)}(\gamma)]=[F_{\fk{gl}(p)}(a)\ :\ 
F_{\fk{gl}(p)}(\gamma)\otimes F_{\fk{gl}(p)}(\mu_L)^*].
\end{equation}
So
\begin{equation}
  \label{eq:recipr1}
F_{\fk{gl}(p)}(\gamma)\otimes F_{\fk{gl}(p)}(\mu_L)^*=
\sum_{a}c_{a,\mu_L}^{\gamma}F_{\fk{gl}(p)}(a).
\end{equation}
Assume that $\mu_L$ is such that its coordinates are all nonnegative;
this is the case for $w(\mu+\rho)-\rho$ because the first $p$
coordinates of
$w({\mu}+\rho)$ are of the form 
$$
(r_1+1/2,\dots
,r_\ell+1/2,-s_1-1/2,\dots ,-s_{p-\ell}-1/2)
$$ 
with $0\le r_i,s_j\le p,$ and $-\rho=(p+q-1,\dots, q\bigb {-q+1,
  -q+2,\dots,-1, 0)}.$ So the factors in the  tensor product
$F_{\fk{gl}(p)}(a)\otimes F_{\fk{gl}(p)}(\mu)$ have highest weights
$\gamma$ with nonnegative entries as well. Thus  {by
  (\ref{eq:mult}) and  (\ref{eq:mult0.5}), $[V(\delta) \boxtimes
  W(\beta) :\ S(\fk u _1\cap \fk k) \otimes F(\mu _L \bigb \mu _R) ]$ becomes} 
\begin{equation}
  \label{eq:mult1}
  \begin{aligned}
{\sum _a} \sum_{\gamma} c_{a,\mu_L}^{\gamma} [V(\delta)\ :\ F_{\fk{gl}(p)}(\gamma)]\cdot [W(\beta)\otimes
F_{\fk{so}(b-\ep)}(\mu_R)^*\ :\ F_{\fk{gl}(b-\ep)}(a,0)\bigb_{\fk{so}(b-\ep)}].
  \end{aligned}
\end{equation}
Proposition \ref{p:lwood} implies
\begin{equation}
  \label{eq:little}
  [F_{\fk{so}(b-\ep)}(\tau)\ :\ F_{\fk{gl}(b-\ep)}(a,0)\bigb_{\fk{so}(b-\ep)}]= 
[F_{\fk{gl}(p)}(\tau)\otimes S({\fk p}^+)\ :\  F_{\fk{gl}(p)}(a)],
\end{equation}
where the right hand side is a $\fk{gl}(p)$-multiplicity, and {$S({\fk p}^+ )=\sum
V(2m_1,\dots ,{2m_{p}})$ with $m_i\in \bb N$  as a $\fk{gl}(p)$-module.
In particular $\tau$ can have at most $p$ nonzero coordinates, and the right hand side in (\ref{eq:little}) is a $\fk{gl}(p)$-multiplicity. 

The sum is over the $a$ such that $\gamma=\delta$  occurs in $V(a).$ By 
(\ref{eq:recipr1}), 
\begin{equation}
  \label{eq:mult2}
\sum c_{a,\mu_L}^{\delta}F_{\fk{gl}(p)}(a)=V(\delta)\otimes F_{\fk{gl}(p)}(\mu_L)^*.
\end{equation}
So the multiplicity is
\begin{equation}
  \label{eq:mult4}
  \begin{aligned}
&[V(\delta)\boxtimes W(\beta) :\ S(\fk u_1\cap \fk s)\otimes F(\mu_L\bigb\mu_R)]= \\ &={
[ \big(W(\beta)\otimes F_{\fk{so}(b-\ep)}(\mu_R)^*\big)_{gl(p)}\otimes S({\fk p^+} )  : \ \sum_a c_{a, \mu _L, \delta} F_{\fk{gl}(p)}(a)]  }\\
&= [\big(W(\beta)\otimes F_{\fk{so}(b-\ep)}(\mu_R)^*\big)_{gl(p)}\otimes S({\fk p^+} )\ :\ V(\delta)\otimes F_{\fk{gl}(p)}(\mu_L)^*]  \\
&=[\big(W(\beta)\otimes F_{\fk{so}(b-\ep)}(\mu_R)^*\big)_{gl(p)}\otimes S({\fk p^+})\otimes F_{\fk{gl}(p)}(\mu_L)\ :\ V(\delta)].
  \end{aligned}
\end{equation}
}
\end{proof}

\subsubsection*{}
We compute the multiplicity 
\begin{equation}\label{eq:mult3}
[V(\delta) \boxtimes W(\beta): \ S( \fk u _1\cap\fk s) \otimes \sum _{w\in W( D _p ^+)}  \epsilon (w)F (w\cdot \mu )].
\end{equation}
There are four cases. Write $\calX(\mu):=S( \fk u _1\cap\fk s) \otimes \sum
_{w\in W( D _p ^+)}  \epsilon (w)F (w\cdot \mu ).$

\subsubsection*{Cases (i) and (ii)}
 {$\mu_R=Triv$ and 
$$
\mu_L=
\begin{cases}
(q-1/2, q-1/2,\dots, q-1/2) &\text{ in Case (i)},\\
(q+1/2,q-1/2,\dots  ,q-1/2) &\text{ in Case (ii)}.   
\end{cases}
$$

\begin{prop}\label{p:case12} \ 
$$
V(\delta)\boxtimes W(\beta)=(\al_1+q-1/2,\dots ,\al_p+q-1/2\bigb \beta_1,\dots ,\beta_p,0,\dots 0)
$$ 
occurs in $\calX(\mu)$, and with multiplicity 1 if and only if 
{
$$
\begin{cases}
\al_1\ge \beta_1\ge \al_2\ge \dots\ge\al_p\ge |\beta_p| &  \text{ when } a=b=2p,\\
\al_1\ge \beta_1\ge \al_2\ge \dots\ge\al_p\ge \beta_p\ge 0& \text{ otherwise,}
\end{cases}
$$
with $\alpha_i, \beta_j \in \bbZ$.}

{In Case (i), $(\al)-(\beta)$ is in the root lattice, in Case (ii), 
$(\al)-(\beta)-(1,0,\dots ,0)$ is in the root lattice}.


\end{prop}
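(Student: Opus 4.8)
The plan is to compute the multiplicity in (\ref{eq:mult3}) for Cases (i) and (ii) by combining the decomposition formula of Proposition \ref{p:1} with the BGG/Weyl alternating sum that defines $L(\mu)$. First I would fix the parameters: in Cases (i) and (ii) we have $\mu_R = \mathrm{Triv}$, so $W(\beta)\otimes F_{\fk{so}(b-\ep)}(\mu_R)^* = W(\beta)$, and Proposition \ref{p:1} forces $\beta_{p+1}=\dots = 0$ (this is where the ``at most $p$ nonzero coordinates'' constraint in the statement comes from). Writing $W(\beta)_{\fk{gl}(p)}$ for the $\fk{gl}(p)$-module with the same highest weight, Proposition \ref{p:1} reduces the problem to computing, for each $w\in W(D_p^+)$,
$$
[\,W(\beta)_{\fk{gl}(p)}\otimes S(\fk p^+)\otimes F_{\fk{gl}(p)}((w\cdot\mu)_L)\ :\ V(\delta)\,]_{\fk{gl}(p)},
$$
and then taking the alternating sum over $w$ with signs $\epsilon(w)$. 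Here $\delta = (\al_1+q-1/2,\dots,\al_p+q-1/2)$ and $S(\fk p^+) = \sum_{m_i\ge 0} V(2m_1,\dots,2m_p)$.

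Next I would observe that the alternating sum over $W(D_p^+)$ telescopes: since $L(\mu) = \sum_{w} \epsilon(w) M(w\cdot\mu)$ is the irreducible module, the $\fk k$-structure of $L(\mu)$ is what we are really after, and the point of introducing the parabolic $\fk q$ and the Weyl sum is precisely that $\sum_w \epsilon(w) F_{\fk{gl}(p)}((w\cdot\mu)_L)$, when tensored against $S(\fk p^+)$ and $S(\fk u_1\cap\fk s)$, collects into a manageable virtual module. Concretely, the first $p$ coordinates of $w(\mu+\rho)$ run over signed permutations (with an even number of sign changes) of $(r_1+1/2,\dots,r_p+1/2)$-type vectors, and subtracting $\rho$ gives the various $(w\cdot\mu)_L$. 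I would then use Lemma \ref{l:tensor} (the ``lowest term'' description of Littlewood--Richardson highest weights) to track which $V(\delta)$ survive the cancellation, exactly as in the proofs of Proposition \ref{p:spectrum} and Lemma \ref{l:spectrum}: a candidate weight $\delta$ occurs in $\calX(\mu)$ iff it occurs in the $\ell(w)=0$ term and not in any $\ell(w)=1$ term, and the transversality argument rules out the higher-length contributions.

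The combinatorial heart is then to show that, after all cancellation, the surviving $V(\delta)\boxtimes W(\beta)$ are exactly those with $\al_1\ge\beta_1\ge\al_2\ge\dots\ge\al_p\ge|\beta_p|$ (or $\ge\beta_p\ge 0$ in the odd/unequal case), each with multiplicity one. This is essentially a Littlewood-rule computation: $W(\beta)_{\fk{gl}(p)}\otimes S(\fk p^+)$ contributes all $\fk{gl}(p)$-weights obtained from $\beta$ by adding an even partition, and pairing against $F_{\fk{gl}(p)}(\mu_L)^{*}$ (a near-rectangular weight $(q\pm 1/2,q-1/2,\dots,q-1/2)$) via the reciprocity (\ref{eq:reciprocal}) produces precisely the interlacing pattern. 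The parity statement — $(\al)-(\beta)$ in the root lattice in Case (i), and $(\al)-(\beta)-(1,0,\dots,0)$ in Case (i{i}) — follows from tracking the central character: in Case (ii) the extra box in $\mu_L=(q+1/2,q-1/2,\dots)$ shifts the weight lattice coset by $\ep_1$, and one invokes the last Lemma of Section 4 (central characters distinguished by cosets of the $D_n$ root lattice).

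The main obstacle I expect is controlling the restriction-from-$\fk{gl}(b-\ep)$-to-$\fk{so}(b-\ep)$ step cleanly when $b$ is odd versus even, i.e.\ keeping track of the $\ep$ and of whether $|\beta_p|$ or $\beta_p\ge 0$ is the correct bottom inequality; this is exactly where the two sub-cases ($a=b=2p$ versus $a=2p+1,b=2q-1$ or $a=2p,b=2q$ with $q>p$) diverge, since in the square even case $\fk{so}(2p)$ has the extra outer automorphism exchanging $\pm\beta_p$, whereas in the other cases Helgason's theorem (as used in Propositions \ref{p:inv1}, \ref{p:inv5}, \ref{p:inv4}, \ref{p:inv2}) pins $\beta_p\ge 0$. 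I would handle this by doing the $a=b=2p$ case in detail and then indicating the sign/Helgason modification for the remaining cases, mirroring how Section 3 treats Case 1 in full and abbreviates the rest.
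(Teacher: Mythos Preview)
Your reduction via Proposition \ref{p:1} is correct and matches the paper's first step. But from there the paper takes a different and much cleaner route than the one you sketch. Rather than attacking the alternating sum $\sum_{w\in W(D_p^+)}\epsilon(w)F_{\fk{gl}(p)}((w\cdot\mu)_L)$ by direct combinatorics, the paper observes that after the shift $(w\cdot\mu)_L = w\cdot\la' + (q-1,\dots,q-1)$ this is precisely the BGG character formula for the irreducible highest weight module $L(\la')$ of $\fk{sp}(2p)$ with $\la' = (1/2,\dots,1/2)$ or $(3/2,1/2,\dots,1/2)$ --- the two halves of the metaplectic representation. Their $\fk{gl}(p)$-structure is known explicitly: $\sum_m F(2m+1/2,1/2,\dots,1/2)$ and $\sum_m F(2m+3/2,1/2,\dots,1/2)$. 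The entire alternating sum therefore collapses to tensoring $W(\beta)$ with $\sum_m F(2m,0,\dots,0)$ or $\sum_m F(2m+1,0,\dots,0)$, and a single Pieri computation gives the interlacing condition and multiplicity one. The parity condition drops out from which half of the oscillator representation you are in.

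Your alternative --- using Lemma \ref{l:tensor} and the transversality arguments of Proposition \ref{p:spectrum} and Lemma \ref{l:spectrum} --- has a real gap. Those arguments in Section 3 work because there is a centralizer $C_{\fk m}(e)$ acting, and the spaces $\fk s$, $\fk s_\pm$ are genuine complements to $C_{\fk m}(e)^+$ inside a nilradical; the ``transversality'' is geometric. In the present Section 7 computation there is no nilpotent $e$ and no centralizer: you are computing a multiplicity in a virtual $\fk{gl}(p)$-module, and there is no a priori reason the alternating sum truncates to $\ell(w)\le 1$. Without the metaplectic identification (or an equivalent closed-form evaluation of $S(\fk p^+)\otimes\sum_w\epsilon(w)F(w\cdot\la')$) you would have to control all $w\in W(D_p^+)$ simultaneously, and your proposal does not indicate how.
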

\begin{proof}
{By Proposition \ref{p:1}, equation
  (\ref{eq:mult3}) becomes 
\begin{equation}  \label{eq:mult12}
  \begin{aligned}
 [V(\delta) \boxtimes W(\beta): \ S( \fk u _1\cap \fk s) \otimes \sum _{w\in W(D
   _p ^+)}  \epsilon (w)F(w\cdot \mu _L\bigb 0 )]\\ = 
[W(\beta)\otimes  S({\fk p^+})\otimes  \sum _{w\in W(D_p ^+)}
\epsilon (w) F(w\cdot \mu_L)\ :\ V(\delta)] 
\end{aligned}
\end{equation}
In all cases $W(\beta)$ is a representation of $\fk{gl}(p)$ where the highest weight has been padded by $0$'s to make a highest weight of $\fk{gl}(p)$.}

\medskip
We can do the
computation in a different Lie algebra, $\fk g'=\fk{sp}(2p).$  
Consider the parabolic subalgebra ${ \fk p' =\fk l ' + \fk u ' }$ 
corresponding to
$\xi'=(1,\dots ,1),$ {where $\fk l ' \cong \fk{gl}(p)$.} 
Let 
$$
M(\la')=U(\fk g')\otimes_{U(\ovl{ \fk p ' })}F(\la')
$$ 
be the generalized Verma module with $\la'$ such that
\begin{equation}\label{eq:la+rhoC}
\la'+\rho(C_p)=
{\begin{cases}
(-1/2, -3/2 ,\dots ,-p+1/2) & \text{in case (i)},\\
(1/2, -3/2 ,\dots, -p+1/2) & \text{in case (ii)}.
\end{cases}}
\end{equation}
Then
$\rho(C_p)=(-1,\dots ,-p)$ and  
\begin{equation}
{\la ' = \begin{cases}
(1/2,\dots, 1/2) & \text{in case (i)},\\
(3/2, 1/2, \dots, 1/2) &\text{in case (ii)}.
\end{cases}}
\end{equation}

\medskip
The quotient $L(\la')$ is one of the metaplectic representations with
{$\fk l' \cap \fk k'$}-structure 
\begin{equation}\label{k-str}
\begin{cases}
\sum _{m\in\bbN}F_{\fk{gl}(p) }(2m+1/2,1/2,\dots ,1/2) & \text{in case (i)}, \\
\sum_{m\in\bbN} F_{\fk{gl}(p)}(2m+3/2,1/2,\dots ,1/2) &\text{ in case (ii)}  
\end{cases}
\end{equation}
and character analogous to $L(\la):$ 
{ 
\begin{eqnarray}
L(\la') &=\sum \limits_{w\in W(D_p^+)} \ep (w) M(w\cdot\la ') =\sum
\limits_{w\in W(D_p ^+)} \ep (w) S(\fk u ')\otimes  F(w\cdot \la '). 
\end{eqnarray}
}

 The infinitesimal
character $\la'+\rho(C_p)=(\mp 1/2,-3/2\dots ,-p+1/2)$ is the same as
the first $p$ coordinates of the infinitesimal character of $M(\la).$
Furthermore, on the first $p$ coordinates,
\begin{equation}\label{eq:diff}
  \begin{aligned}
w\cdot\mu _L =&w(\mu_L +\rho(D_{n}))-\rho(D_{n})\\
=&w(\la'+\rho(C_p))-\rho(C_p)
{+(q-1,\dots ,q-1)} = w\cdot \la ' +(q-1,\dots, q-1).
  \end{aligned}
\end{equation}

{The multiplicity in (\ref{eq:mult12}) is therefore 
\begin{equation}\label{eq:mult4.0}
[W(\beta)\otimes  S( \fk u' )\otimes  \sum _{w\in W(D_p ^+)} \epsilon (w) F(w\cdot \la ')\ :\ V(\delta+ (-q+1,\dots ,-q+1 ))] _{\fk{gl}(p)}.
\end{equation}}

{The multiplicity in (\ref{eq:mult4.0}) becomes 
\begin{equation}  \label{eq:mult4.1}
  \begin{cases}
[W(\beta)\otimes \sum _m F_{\fk{gl}(p)} (2m +1/2,1/2,\dots
,1/2) \ :\ V(\delta +(-q+1)) ] \\ = [W(\beta) \otimes F_{\fk{gl}(p)} (2m,0,\dots,0) :V(\delta+(-q+1/2))] & \text{ in case
  (i)},\\ 
[W(\beta)\otimes \sum _m F_{\fk{gl}(p)}(2m+3/2,1/2,\dots
,1/2)\ :\ V(\delta) ] \\ = [W(\beta) \otimes F_{\fk{gl}(p)} (2m+1,0,\dots,0) :V(\delta+(-q+1/2))]  
& \text{ in case (ii)}.     
  \end{cases}
\end{equation}
}
{By the Littlewood-Richardson rule, 
\begin{equation}\label{LRrule2.0}
W(\beta)\otimes \sum _{k\in \bbN} F_{\fk{gl}(p)}(k, 0, \dots, 0) =\sum V(\beta_1+m_1, \dots, \beta_p +m_p),
\end{equation}
where the sum is taken over the set $\{  m_i\in \bbN \bigb \sum m_i =k, \ m_{i+1} \leq  \beta_i - \beta_{i+1} ,\ 1\leq i\leq p-1\}$.
Therefore, in case (i), }

\begin{equation}
  \label{eq:mult5}
  \begin{aligned}
    &[W(\beta)\otimes \sum _{m\in\bbN} F_{\fk{gl}(p)}(2m,0,\dots,0):V(\delta-q+1/2)]\\
=&\begin{cases}
  1 &\text{ if } \delta_1-q+1/2\ge \beta_1\ge \dots
  \ge\delta_p-q+1/2\ge \beta_p\ge 0,\ p<q\\
  1 &\text{ if } \delta_1-q+1/2\ge \beta_1\ge \dots
  \ge\delta_p-q+1/2\ge |\beta_p|,\ p=q\\
  0 &\text{ otherwise.}
\end{cases}    
  \end{aligned}
\end{equation}
When the multiplicity is 1, there is an additional restriction
that $(\delta)-(q-1/2)-(\beta)$ be in the root lattice.

The case $[W(\beta)\otimes \sum\limits _{m\in \bbN}
F_{\fk{gl}(p)}(2m+1,0,\dots,0):V(\delta-q+1/2)]$ is the same, but 
the multiplicity is 1 only when $(\delta)-(q-1/2)-(\beta)-(1,0,\dots ,0)$
is in the root lattice. {Recalling that the notation is $(\alpha)=(\delta)-(p-1/2)$, the assertions in the Proposition follow. }
\end{proof}
\subsubsection*{Cases (iii) and (iv)}
In these cases $\mu_R=(1/2,\dots ,1/2, {\pm 1/2})=Spin_{\pm},$ and
$\mu_L= (p,\dots ,p).$ 
The multiplicity of $V(\delta)\boxtimes W(\beta)$ {in Proposition (\ref{p:1})} is
$$
\sum \limits_{a} c_{a,\mu_L,\delta} [W(\beta)\otimes
F_{\fk{so}_{2p}}(\mu_R)^*\ :\ F_{\fk{gl}_{2p}}(a,0) \bigb_{\fk{so}
  _{2p}} ]. 
$$
Since the coordinates of the factors of $F_{\fk{gl}_{2p}}(a,0)$ are all
nonnegative integers, the same has to hold for $W(\beta)\otimes
F_{\fk{so}_{2p} }(\mu_R)^*.$ It follows that all the coordinates of $\beta$ are
half-integers, so strictly greater than 0 except for possibly the last
one. 
\begin{prop}\ 
  \label{p:2}
  \begin{itemize}
  \item In Case (iii), 
\begin{equation*}
[V(\delta)\otimes W(\beta) :\calX(\mu)]=
\begin{cases}
  1 &\text{ if } 
\delta_1-p+1/2\ge \beta_1\ge \dots \ge\delta_p-p+1/2\ge |\beta_p|,\\ 
&(\delta)-(p-1/2)-(\beta) \text{ in the root lattice,} \\ 
  0 &\text{ otherwise.}
\end{cases}
\end{equation*}
\item In Case (iv),      
 \begin{equation*}
[V(\delta)\otimes W(\beta) : \calX(\mu)]=
\begin{cases}
  1 &\text{ if } \delta_1-p+1/2\ge \beta_1\ge \dots
  \ge\delta_p-p+1/2\ge |\beta_p|,\\ 
&(\delta)-(p-1/2)-(\beta)-(1,0,\dots ,0) \text{ in the root lattice},\\
  0 &\text{ otherwise.}
\end{cases}
\end{equation*}
  \end{itemize}
In these cases, the coordinates of $\delta$ are in $\bb Z,$ the
coordinates of  $\beta$ in $(\bbZ
+1/2)$.
\end{prop}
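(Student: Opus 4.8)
The plan is to reduce Cases (iii) and (iv) to a computation in $\fk{sp}(2p)$ exactly parallel to the proof of Proposition \ref{p:case12}, the only new feature being the presence of a nontrivial $\fk{so}(2p)$-factor $\mu_R = Spin_\pm$. First I would invoke Proposition \ref{p:1}, which expresses the multiplicity $[V(\delta)\boxtimes W(\beta) : \calX(\mu)]$ as
$$
\big[\big(W_{\fk{so}(2p)}(\beta)\otimes F_{\fk{so}(2p)}(\mu_R)^*\big)_{\fk{gl}(p)}\otimes S(\fk p^+)\otimes F_{\fk{gl}(p)}(\mu_L)\ :\ V(\delta)\big]_{\fk{gl}(p)},
$$
and then sum over the BGG resolution, i.e. replace $\mu_L$ by $\sum_{w\in W(D_p^+)}\epsilon(w)\,w\cdot\mu_L$ with $\mu_L=(p,\dots,p)$. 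The key observation, as already noted in the excerpt just before the proposition, is that since the coordinates of any $F_{\fk{gl}(2p)}(a,0)$ occurring in the Littlewood restriction are nonnegative integers, the tensor product $W(\beta)\otimes F_{\fk{so}(2p)}(\mu_R)^*$ must have a summand with nonnegative integer coordinates; because $\mu_R=(1/2,\dots,1/2,\pm1/2)$ this forces every coordinate of $\beta$ to be a half-integer, hence $\beta_1>\dots>\beta_{p-1}>0$ and $\beta_p$ arbitrary sign, and $\delta$ integral. This is where the $\bbZ+1/2$ constraint on $\beta$ comes from.

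Next I would carry out the $\fk{sp}(2p)$ reduction. As in Proposition \ref{p:case12}, introduce the parabolic $\fk p'=\fk l'+\fk u'$ of $\fk g'=\fk{sp}(2p)$ associated to $\xi'=(1,\dots,1)$, $\fk l'\cong\fk{gl}(p)$, and the generalized Verma module $M(\la')$ with $\la'+\rho(C_p)$ equal to $(-1/2,-3/2,\dots,-p+1/2)$ in Case (iii) and $(1/2,-3/2,\dots,-p+1/2)$ in Case (iv); this matches the first $p$ coordinates of the infinitesimal character of $M(\mu)$, and on the first $p$ coordinates $w\cdot\mu_L = w\cdot\la' + (p-1,\dots,p-1)$ for $w\in W(D_p^+)$. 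The irreducible quotient $L(\la')$ is a metaplectic representation with $\fk{gl}(p)$-structure $\sum_m F_{\fk{gl}(p)}(2m+1/2,1/2,\dots,1/2)$ (Case (iii)) or $\sum_m F_{\fk{gl}(p)}(2m+3/2,1/2,\dots,1/2)$ (Case (iv)), with the analogous alternating-sum character. Substituting the Littlewood formula $[F_{\fk{so}(2p)}(\tau):F_{\fk{gl}(2p)}(a,0)|_{\fk{so}(2p)}]=[F_{\fk{gl}(p)}(\tau)\otimes S(\fk p^+):F_{\fk{gl}(p)}(a)]$ and the BGG resolution of $L(\la')$, the whole multiplicity collapses to $[W(\beta)\otimes\sum_m F_{\fk{gl}(p)}(2m,0,\dots,0) : V(\delta+(-p+1/2))]$ in Case (iii) and $[W(\beta)\otimes\sum_m F_{\fk{gl}(p)}(2m+1,0,\dots,0) : V(\delta+(-p+1/2))]$ in Case (iv). Here the shift $(-p+1/2,\dots,-p+1/2)$ replaces the $(-q+1/2)$ shift of Proposition \ref{p:case12}, reflecting $\mu_L=(p,\dots,p)$ rather than $(q\mp1/2,\dots)$, and the extra $F_{\fk{so}(2p)}(\mu_R)^*$ contributes exactly to forcing $\beta$ half-integral (the $Spin_\pm$ factors get absorbed into the $\fk{gl}(p)$ combinatorics and are accounted for by the root-lattice parity condition distinguishing (iii) from (iv)).

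Finally I would apply the Littlewood--Richardson rule $W(\beta)\otimes\sum_{k\in\bbN}F_{\fk{gl}(p)}(k,0,\dots,0)=\sum V(\beta_1+m_1,\dots,\beta_p+m_p)$, summed over $m_i\ge0$ with $\sum m_i=k$ and $m_{i+1}\le\beta_i-\beta_{i+1}$. Comparing with $V(\delta+(-p+1/2))$ gives multiplicity $1$ precisely when $\delta_1-p+1/2\ge\beta_1\ge\delta_2-p+1/2\ge\dots\ge\delta_p-p+1/2\ge|\beta_p|$ (the $|\beta_p|$ because the $\fk{so}(2p)$ rather than $\fk{so}(2q+1)$ setting makes $\beta_p$ signed), and the parity of $k$ — even in Case (iii), odd in Case (iv) — translates into the condition that $(\delta)-(p-1/2)-(\beta)$, respectively $(\delta)-(p-1/2)-(\beta)-(1,0,\dots,0)$, lie in the root lattice. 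I expect the main obstacle to be bookkeeping rather than conceptual: carefully tracking that the Helgason/Littlewood restriction from $\fk{so}(2p)$ with a genuine highest weight $\mu_R=Spin_\pm$ behaves as claimed, and verifying that no extra summands survive the alternating sum over $W(D_p^+)$ — this is where one must be sure that the metaplectic-representation character identity for $\fk{sp}(2p)$ holds verbatim and that the shift bookkeeping between $\rho(D_n)$ and $\rho(C_p)$ is exactly $(p-1,\dots,p-1)$ on the relevant coordinates.
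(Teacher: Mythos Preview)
Your overall architecture matches the paper's: apply Proposition \ref{p:1}, replace $\mu_L$ by the alternating BGG sum, reduce to a metaplectic computation in $\fk{sp}(2p)$, then finish with Littlewood--Richardson. The half-integrality of $\beta$ and the final interlacing/parity conclusions are correct. However, there is a genuine gap at the reduction step.

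Your claimed identity $w\cdot\mu_L = w\cdot\la' + (p-1,\dots,p-1)$ is \emph{false} in Cases (iii)--(iv). In Cases (i)--(ii) that identity holds because the first $p$ coordinates of $\mu_L+\rho(D_{2p})$ and of $\la'+\rho(C_p)$ coincide (both equal $(\mp 1/2,-3/2,\dots,-p+1/2)$), so the difference $\rho(C_p)-\rho(D_{2p})|_{\text{first }p}$ is all that remains. In Cases (iii)--(iv), $\mu_L=(p,\dots,p)$ gives first-$p$ coordinates $(0,-1,\dots,-p+1)$ for $\mu_L+\rho(D_{2p})$, while $\la'+\rho(C_p)=(\mp 1/2,-3/2,\dots,-p+1/2)$; the difference is $(1/2,\dots,1/2)$, which is \emph{not} $W(D_p)$-invariant. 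Hence $w(\mu_L+\rho)-w(\la'+\rho(C_p))$ varies with $w$, and the simple shift argument from Proposition \ref{p:case12} does not transfer.

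What actually makes the reduction work is precisely the $Spin_\pm^*$ factor, but not by the passive role you assign it (``forcing $\beta$ half-integral'' and being ``accounted for by the root-lattice parity''). It must be absorbed \emph{into the alternating sum itself}. The paper supplies this as a character identity proved inside the proof:
\[
Ch\Big[(Spin_\pm)^*\otimes S(\fk p^+)\otimes\sum_{w\in W(D_p^+)}\epsilon(w)\,F_{\fk{gl}(p)}(w\cdot\mu_L)\Big] \;=\; Ch\big(L(\la')\big)\cdot e^{(p-1,\dots,p-1)},
\]
established by writing each side in terms of Weyl numerators and denominators for $D_p$ and $A_{p-1}$ and checking that both reduce to $(-1)^q\,Ch[Spin_\pm^*]\cdot e^{(p,\dots,p)}$. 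Once this identity is in hand, the multiplicity becomes $[W(\beta)\otimes L(\la') : V(\delta+(-p+1))]$ and your Littlewood--Richardson endgame goes through verbatim. The missing half-integer shift you were trying to manufacture from $\rho$-bookkeeping is supplied instead by the $Spin_\pm^*$ character in this identity.
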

\begin{proof}
We will use the following two lemmas and the Littlewood rule.
\begin{lemma} Assume $F_{\fk{so}_{2p}}(\beta)$ is a module for $\fk{so}(2p).$ Then
\label{l:spinso}
  $$
F_{\fk{so}_{2p}}(\beta)\otimes Spin=\sum F_{\fk{so}_{2p}}(\beta_i+\ep_i/2),\qquad \ep_i=\pm 1
$$
and the sum is over the $\ep_i$ such that $(\beta_i+\ep_i/2)$ is a
highest weight.  
\end{lemma}
\begin{proof}
  Omitted.
\end{proof}
\begin{lemma} Assume $F_{\fk{gl}(p) }(\beta)$ is a module for $\fk{gl}(p).$ Then 
  \label{l:spingl}
$$
F_{\fk{gl}(p)}(\beta)\otimes F_{\fk{gl}(p)}(1/2,\dots ,1/2,\underset{k}{\underbrace{-1/2,\dots ,-1/2}})=
\sum F_{\fk{gl}(p)}(\beta_i+\ep_i/2), \qquad \ep_i=\pm 1,
$$
with exactly $k$ $\ep_i$'s equal to $-1,$ and  the sum is over the $\ep_i$ such
that $(\beta_i+\ep_i/2)$ is a highest weight.
\end{lemma}
\begin{proof}
  Omitted.
\end{proof}
Consider
$$
\sum _{a} c_{a,\mu_L,\delta}[W (\beta)\otimes Spin_{\pm}
^*\bigb_{\fk{gl} }\otimes S({\fk p^+ })\
:\ F_{\fk{gl }}(a)].
$$
As before,  this is
$$
[W(\beta) \otimes (Spin _{\pm})^* \otimes S(\fk p^+) \otimes F_{\fk{gl}(p)}(\mu _L) : V(\delta)] _{\fk{gl} _p}.
$$

Replace $F_{\fk{gl}(p)}(\mu_L)$ by   
$\sum \limits_{w\in W(D_p^+ )}\ep(w) F_{\fk{gl}(p)}(w(\mu_L+\rho(D_{2p}))-\rho(D_{2p})):$ 

\begin{eqnarray}\label{multi:case3-4}
[W(\beta)\otimes (Spin _{\pm} )^* \otimes S(\fk p^+)\otimes  \sum _{w\in W(D_p^+)} \ep(w)F_{\fk{gl}(p)}( w\cdot \mu _L)\ :\ V(\delta) ].
\end{eqnarray}

Write $Ch(W)$ for the character of  $W$. 

\begin{lemma}
\begin{eqnarray*}
Ch \left [ (Spin _{\pm} )^* \otimes S(\fk p^+)\otimes  \sum _{w\in W(D_p^+)} \ep(w)F_{\fk{gl}(p)}( w\cdot \mu _L) \right ] =Ch(L(\la '))\otimes e^{(p-1,\dots,p-1)},
\end{eqnarray*}
where $L(\la')$ is the metaplectic representation introduced in the last section.
\end{lemma}
\begin{proof}
\begin{eqnarray}
 Ch(S(\fk p^+)) &= \frac{1}{\prod _{\al \in \triangle  (\fk p^+)} (1-e^{\al}) } = 
(-1)^q \frac{  e^{-\rho (\triangle (\fk p^+))} }{  \prod  \limits_{\al
    \in \triangle (\fk p^+)} (e^{\al /2}-e^{-\al/2}  )  }\notag \\
 & \\
&= (-1)^q \frac{e^{-\rho (\triangle (\fk p^+))}}{\Delta _{\fk p^+}} = (-1)^q \frac{e^{ ( \frac{-p+1}{2}  ,\dots, \frac{-p+1}{2} )}}{\Delta_{\fk p^+}},\notag
\end{eqnarray}
where $q=|\triangle (\fk p^+)|$.
The right hand side can be rewritten as 
\begin{eqnarray*}
RHS&= &e^{(p-1,\dots, p-1)}\cdot Ch[L(\la')]  \\
&=& e^{(p-1,\dots, p-1)}\cdot Ch \left [\sum \limits_{w\in W(D_p^+)}\ep (w) M(w\cdot\la ') \right ]\\ &=& e^{(p-1,\dots, p-1)} \cdot Ch [S(\fk p^+)]\cdot Ch \left [ \sum \limits_{w\in W(D_p ^+)} \ep (w) F_{\fk{gl}(p)}(w\cdot \la ') \right ].
\\  &=& \frac{(-1)^q}{\Delta _{\fk p^+}} e^{(\frac{p-1}{2} ,\dots, \frac{p-1}{2})} \sum _{w\in W(D_p ^+)} \ep (w) \frac{ \sum \limits_{x\in W(A_{p-1})} \ep(x) e^{x [w\cdot \la '+\rho (\fk{gl} _p) ]}    }{ \Delta _{\fk{gl}(p)}}\\
&=&  \frac{(-1)^q}{\Delta _{\fk p^+}}  e^{(\frac{p-1}{2} ,\dots, \frac{p-1}{2})}e^{(\frac{p+1}{2} , \dots, \frac{p+1}{2})} \sum _{w\in W(D_p ^+)} \ep (w) \frac{ \sum \limits_{x\in W(A_{p-1})} \ep(x) e^{x w (  \mp 1/2 , -\frac{3}{2},\dots, -p+1/2 ) }    }{ \Delta _{\fk{gl}(p)}} \\
&=& (-1)^q e^{(p,\dots,p)} \sum \limits _{v\in W(D_p)} \frac{\ep (v) e^{v  (  \mp 1/2 , -\frac{3}{2},\dots, -p+1/2 )} }{\Delta _{D_p} }\\
&=&(-1)^q e^{(p,\dots,p)}Ch[Spin _{\pm} ^*]
\end{eqnarray*}
On the other hand,
\begin{eqnarray*}
LHS&=& Ch[Spin_{\pm}^*] \cdot  (-1)^q \frac{e^{ ( \frac{-p+1}{2}  ,\dots, \frac{-p+1}{2} )}}{\Delta_{\fk p^+}}\cdot
\sum_{w\in W(D_p ^+)} \ep(w) \frac{   \sum \limits_{x\in W(A_{p-1})}  \ep (x) e^{x [w\cdot \mu_L +\rho (\fk{gl} _p)]}}{ \Delta_{\fk{gl} _p} }\\
&=& Ch[Spin_{\pm}^*] \cdot  (-1)^q \frac{e^{ ( \frac{-p+1}{2}  ,\dots, \frac{-p+1}{2} )}}{\Delta_{\fk p^+}}\cdot e^{ (\frac{3p-1}{2} ,\dots, \frac{3p-1}{2}  )} \frac{  \sum_{v\in W(D_p)}\ep (v) e^{v (0,-1,\dots,-p+1)}}{ \Delta_{ \fk{gl} _p}}\\
&=& (-1)^q\cdot Ch[Spin_{\pm}^*] \cdot e^{(p,\dots,p)}\cdot \frac{\Delta_{D_p}}{ \Delta_{\fk p^+} \Delta_{\fk{gl}(p)} } = (-1)^q\cdot Ch[Spin_{\pm}^*] \cdot e^{(p,\dots,p)}.
\end{eqnarray*}
The lemma follows.
\end{proof}
Thus  (\ref{multi:case3-4}) becomes 

\begin{eqnarray}
[W(\beta)\otimes L(\la'): V(\delta +(-p+1))]
\end{eqnarray}

By the same argument as in cases (i) and (ii),  we get

In case (iii),
\begin{equation}
  \label{eq:mult20}
  \begin{aligned}
     [W(\beta)\otimes &\sum _{k\in\bbN} F_{\fk{gl}(p)}(2k,0,\dots,0):V(\delta-p+1/2)]=\\
&=\begin{cases}
  1 &\text{ if } \delta_1-p+1/2\ge \beta_1\ge \dots \ge\delta_p-p+1/2\ge |\beta_p|,\\
  0 &\text{ otherwise.}
\end{cases}   
  \end{aligned}
\end{equation}

When the multiplicity is 1, there is an additional restriction
that $(\delta)-(p-1/2)-(\beta)$ be in the root lattice. 

In case (iv),      

\begin{equation}
  \label{eq:mult21}
  \begin{aligned}
    [W(\beta)\otimes 
\sum _{k\in\bbN} F_{\fk{gl}(p)}(2k +1,0,\dots,0):V(\delta-p+1/2)]\\ =
\begin{cases}
  1 &\text{ if } \delta_1-p+1/2\ge \beta_1\ge \dots \ge\delta_p-p+1/2\ge |\beta_p|,\\
  0 &\text{ otherwise.}
\end{cases}
\end{aligned}
\end{equation}

The multiplicity is 1 only when $(\delta)-(p-1/2)-(\beta)-(1,0,\dots ,0)$
is in the root lattice. {Note that $\delta\in \bbZ ^p$ and $\beta\in (\bbZ +1/2)^p$ in these cases}.

\end{proof}

\subsubsection{} We now apply the cohomological induction functor $\Pi$
from \cite{KV}. 
\begin{theorem}\ 
  \label{t:kstruct1234}
The $\wti{K}$-spectrum of the representations constructed are as follows.

\begin{description}
\item[Case 1] $a=b=2p$
$$
\begin{aligned}
(i) &&(a_p+1/2 , \dots, a_p+1/2\mid b_1,\dots, b_p) &  & \text{ with  }  \sum(a_i+b_j)\in 2\bbZ,\\
 (ii) && (a_p+1/2 , \dots, a_p+1/2\mid b_1,\dots, b_p) &  & \text{ with  }  \sum(a_i+b_j)\in 2\bbZ +1,\\
 (iii) &&(a_p+1, \dots, a_p+1\mid b_1+1/2,\dots, b_p+1/2) &  & \text{ with  }  \sum(a_i+b_j)\in 2\bbZ,\\
(iv) && (a_p+1 , \dots, a_p+1\mid b_1+1/2,\dots, b_p+1/2) &  & \text{ with  }  \sum(a_i+b_j)\in 2\bbZ +1,
\end{aligned}
$$
satisfying $a_1\ge b_1\ge \dots \ge a_p \ge |b_p|$, $a_i, b_j \in \bbZ$.
\medskip
\item[Case 3] $a=2p=2k+2$, $b=2q=2k+2+2r_-$ with $r_->0$
$$
\begin{aligned}
(i) &&(a_p+ r_- + 1/2 , \dots, a_p+r_- +1/2\mid b_1,\dots, b_p, 0,\dots, 0) &  & \text{ with  }  \sum(a_i+b_j)\in 2\bbZ,\\
 (ii) && (a_p+ r_- +1/2 , \dots, a_p+r_- +1/2\mid b_1,\dots, b_p , 0,\dots, 0) &  & \text{ with  }  \sum(a_i+b_j)\in 2\bbZ +1,
\end{aligned}
$$
satisfying $a_1\ge b_1\ge \dots \ge a_p \ge |b_p |$, $a_i, b_j \in \bbZ$.

\medskip
\item[Case 4] $a=2p+1=2q-1=b$
$$
\begin{aligned}
(i) &&(a_p+1/2 , \dots, a_p+1/2\mid b_1,\dots, b_p) &  & \text{ with  }  \sum(a_i+b_j)\in 2\bbZ,\\
 (ii) && (a_p+1/2 , \dots, a_p+1/2\mid b_1,\dots, b_p) &  & \text{ with  }  \sum(a_i+b_j)\in 2\bbZ +1, 
\end{aligned}
$$
satisfying $a_1\ge b_1\ge \dots \ge a_p \ge b_p \ge 0$, $a_i, b_j \in \bbZ$.
\medskip

\item[Case 6] $a=2p+1=2k+1$, $b=2q-1=2k+3+2r_-$
$$
\begin{aligned}
(i) &&(a_p+ r_- + 3/2 , \dots, a_p+r_- +3/2\mid b_1,\dots, b_p, 0,\dots, 0) &  & \text{ with  }  \sum(a_i+b_j)\in 2\bbZ,\\
 (ii) && (a_p+ r_- +3/2 , \dots, a_p+r_- +3/2\mid b_1,\dots, b_p, 0,\dots, 0) &  & \text{ with  }  \sum(a_i+b_j)\in 2\bbZ +1,
\end{aligned}
$$
satisfying $a_1\ge b_1\ge \dots \ge a_p \ge b_p\ge 0$, $a_i, b_j \in \bbZ$.
\medskip
\item[Case 8]  $a=2p+1=2k+3$, $b=2q-1=2k+1+2r_-$
$$
\begin{aligned}
(i) &&(a_p+ r_- -1/2 , \dots, a_p+r_- -1/2\mid b_1,\dots, b_p, 0,\dots, 0) &  & \text{ with  }  \sum(a_i+b_j)\in 2\bbZ,\\
 (ii) && (a_p+ r_- -1/2 , \dots, a_p+r_- -1/2\mid b_1,\dots, b_p, 0,\dots, 0) &  & \text{ with  }  \sum(a_i+b_j)\in 2\bbZ +1,
\end{aligned}
$$
satisfying $a_1\ge b_1\ge \dots \ge a_p \ge b_p \ge 0$, $a_i, b_j \in \bbZ$.

\end{description}

\end{theorem}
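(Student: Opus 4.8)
\textbf{Plan for the proof of Theorem \ref{t:kstruct1234}.}

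The strategy is to combine the explicit branching/multiplicity computations of the previous subsection (Propositions \ref{p:case12} and \ref{p:2}) with the cohomological induction functor $\Pi$ of \cite{KV}. First I would recall precisely which functor is being applied: starting from the $\theta$-stable parabolic $\fk q=\fk l+\fk u$ with $\xi=(1,\dots,1\mid 0,\dots,0)$ and Levi $\fk l=\fk{gl}(p)\times\fk{so}(b)$, and from the highest-weight module $L(\mu)$ (the irreducible quotient of $M(\mu)$ in \eqref{eq:one}, one of Cases (i)--(iv)) with infinitesimal character equal to $\mu+\rho$, one applies $\Pi=(\Pi_{\fk g,\fk k})_{S}$ in degree $S=\dim(\fk u\cap\fk s)$. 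The essential input is that in the good/weakly fair range these $A_{\fk q}$-type modules are nonzero, irreducible (or at least with the predicted $\wti K$-structure), unitary, have annihilator the maximal primitive ideal at $\la$, and have associated variety the closure of the expected real form of $\calO_c$; all of this is in the spirit of \cite{Kn}, \cite{T}, \cite{KV}, and I would cite those rather than reprove them.

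Next I would carry out the $\wti K$-spectrum computation. By the standard $K$-picture of cohomological induction, as a $\wti K$-module the output is built from $S(\fk u\cap\fk s)\otimes L(\mu)$ restricted to $\fk l\cap\fk k=\fk{gl}(p)\times\fk{so}(b-\ep)$, and the multiplicity of a $\wti K$-type $V(a_1,\dots,a_p\mid b_1,\dots)$ is exactly the multiplicity computed in \eqref{eq:mult3}, i.e. $[V(\delta)\boxtimes W(\beta):\calX(\mu)]$ after the bookkeeping translation $\delta = (a)+(p-1/2,\dots)$ etc. Here $\fk u\cap\fk s = (\fk u_1\cap\fk s)\oplus(\fk u_2\cap\fk s)$ and only $\fk u_1\cap\fk s$ contributes nontrivially to the $\fk{gl}(p)$-part after using Kostant/the BGG resolution for $L(\mu)$; this is precisely the content of Proposition \ref{p:1} and the character identity feeding into Proposition \ref{p:case12}. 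So in Cases (i),(ii) I invoke Proposition \ref{p:case12} and in Cases (iii),(iv) Proposition \ref{p:2}, read off the inequalities $a_1\ge b_1\ge\cdots\ge a_p\ge|b_p|$ (resp. $\ge b_p\ge0$ when one of the factors is odd), the shifts $a_p+1/2$, $a_p+1$, $a_p+r_-+1/2$, $a_p+r_-+3/2$, $a_p+r_--1/2$ coming from $\mu_L$ and from the relation \eqref{eq:diff} between $w\cdot\mu_L$ and $w\cdot\la'$, and the root-lattice/parity condition $\sum(a_i+b_j)\in 2\bbZ$ or $2\bbZ+1$ coming from whether $(\alpha)-(\beta)$ (Case (i)) or $(\alpha)-(\beta)-(1,0,\dots,0)$ (Case (ii)) lies in the $D_n$ root lattice. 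Each of the eight displayed Cases 1,3,4,6,8 is then a direct substitution of the numerical data $a=2p$ or $2p+1$, $b=2q$ or $2q-1$, $p=k+1$, $q=k+1+r_-$ (resp. $q-1=k+1+r_-$ in the odd cases) into Propositions \ref{p:case12} and \ref{p:2}, keeping track of the zero-padding $0,\dots,0$ in the $\fk{so}(b)$-coordinates dictated by Helgason's theorem (the $\fk{so}(b)$ highest weight must be $\mu_R$-spherical, forcing the trailing zeros). I would present one case, say Case 1, in full, and then indicate that Cases 3, 6, 8 differ only by the value of $r_-$ and by dropping one coordinate in the odd orthogonal group, and Case 4 is the $r_-=0$ odd case.

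The main obstacle I anticipate is \emph{not} the multiplicity arithmetic — that is essentially done in Propositions \ref{p:1}, \ref{p:case12}, \ref{p:2} — but rather justifying that the cohomological induction functor genuinely produces the representation one wants with exactly this $\wti K$-spectrum: namely (a) nonvanishing and the absence of higher-degree contributions, which requires checking the relevant range-of-positivity condition for $\fk q$ at this $\mu$, and (b) matching the infinitesimal character of $\Pi(L(\mu))$ with $\la$ in \eqref{eq:inflchar}, using the $\rho$-shift $\rho(\fk{so}(a+b))$ recorded just before \eqref{eq:one} together with the subsection ``Infinitesimal Character and Restriction''. Once (a) and (b) are in place, the $\wti K$-spectrum is forced to be the Euler characteristic, which by the vanishing is the degree-$S$ term, which is exactly $\big[S(\fk u\cap\fk s)\otimes L(\mu)\big]$ computed above. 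A secondary but routine point is bookkeeping of the genuineness/parity: one must check $a_i\in\bbZ$, $b_j\in\bbZ$ with $a_i-b_j\in\bbZ$ in the even cases and the analogous statement in the odd cases, which follows from the half-integrality of $\mu_L$ (the $q-1/2$, $p$, etc.) combined with the $\rho$-shift, exactly as in the computations of Section \ref{s:regsec}. I would close by remarking that comparing this $\wti K$-spectrum with Theorem \ref{t:main} and Theorem \ref{t:1} et seq. (done in the matchup subsection) shows these cohomologically induced modules coincide with the restrictions from \cite{LS}, which also proves Conjecture \ref{conj} for Cases 4, 7, 8.
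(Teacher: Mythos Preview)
Your plan is essentially the same as the paper's proof: feed Propositions \ref{p:case12} and \ref{p:2} into the cohomological induction functor $\Pi$ and read off the $\wti K$-spectrum after the appropriate shift. Two small corrections. First, the relevant degree is $S=\dim(\fk u\cap\fk k)$, not $\dim(\fk u\cap\fk s)$; since $\fk u_2\subset\fk k$ this matters for the bookkeeping. Second, the paper bypasses your obstacle (a) entirely by invoking Corollary~4.160 of \cite{KV}, which computes $\Pi^j$ of a generalized Verma module for $\fk k$ directly (it is an irreducible $\wti K$-type with a $\rho$-shifted highest weight in the unique degree where the shifted weight becomes dominant, and zero otherwise). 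Since $L(\mu)$ has already been written as an alternating sum of such Verma modules, no range-of-positivity check is needed: one just applies the corollary termwise, observes that only the $V(\delta)$ factor is affected (the $W(\beta)$ factor is already $\fk{so}(b-\ep)$-finite), and computes that $\delta+\rho(D_p)$ (resp.\ $\rho(B_p)$ in the odd cases) is made dominant by the long Weyl element, yielding highest weight $(\delta_1-p+1,\dots,\delta_p-p+1)$ (resp.\ $\delta_i-p$). Substituting $\delta_i=\alpha_i+q-1/2$ from Proposition \ref{p:case12} and $q-p=r_-$ (or $q-p=1,\,r_-+2,\,r_-$ in Cases 4, 6, 8) gives exactly the shifts in the statement, and the root-lattice condition on $(\alpha)-(\beta)$ versus $(\alpha)-(\beta)-(1,0,\dots,0)$ gives the parity split. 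So your translation ``$\delta=(a)+(p-1/2,\dots)$'' is not quite the right shift; the correct one comes from $\rho(D_p)$ or $\rho(B_p)$ via Corollary~4.160, not from a bottom-layer argument.
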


\begin{proof}
The main result we use is Corollary 4.160 in \cite{KV}, which computes
$\Pi^j$ of a generalized Verma module. The formulas in the previous
sections write $L(\mu)$ as a sum of generalized Verma modules for $\fk
k.$ 

{We treat Case 3 in detail.}
 The term $W(\beta)$ is already
$\fk{so}(2p)\times \fk{so}(2q)$-finite, only $V(\delta)$ is affected. Then 
\begin{equation}\label{sth}
\delta+\rho(D_p)=(\delta_1, \delta _2 -1, \dots, \delta _p-p+1)
\end{equation}
is formed of all {negative} terms. We need a $w$ that makes it dominant
for our choice of positive system $\fk b\subset\ovl{ \fk q}$;  $w$ is
the long Weyl group element. The highest weight is
$$
(p-1-\delta_p,\dots ,1-\delta _2, -\delta_1)-(0,\dots
,-p+1)=(p-1-\delta_p,\dots ,p-1-\delta_1) 
$$
Going to the more standard positive system for type $D_p$, the
  highest weight is 
\begin{equation}\label{sth1}
(\delta_1 -p+1, \dots, \delta_p -p+1 ).
\end{equation}

{In Cases (i) and (ii),  since $\delta \in (\bbZ +1/2)^p$, $\beta\in \bbZ ^p$,} we get that
the $K$-spectrum of $\Pi^j(L(\la))$ is $\sum (a_1+r_-+1/2,\dots
,a_p+r_-+1/2\bigb b_1,\dots ,b_p, 0,\dots, 0)$ where $a_i,b_j$ are integers and the multiplicity is 1
precisely when
$$
a_1\ge b_1\ge \dots \ge a_p\ge |b_p|.
$$
For case (i), the additional condition is that  $(a\bigb b)$
belongs to the root lattice of $D_{2p}$. For case (ii), the  additional condition is that $(a\bigb b) - (1,0\dots,0\bigb 0,\dots, 0)$ belongs to the root lattice of $D_{2p}.$
{Therefore, the additional parity conditions hold.}

\smallskip
{In the case $a=b=2p$ (\ie Case 1), we get the $\tu{K}$-spectra for case (i) and (ii)  by replacing $r_- =0$.}
In Cases (iii) and (iv),  since $\delta\in \bbZ ^p$, $\beta \in (\bbZ
  +1/2)^p$,  we get that 
the $K$-spectrum of $\Pi^j(L(\la))$ is $\sum (a_1+1,\dots
,a_p+1\bigb b_1+1/2,\dots ,b_p+1/2)$ where $a_i,b_j$ are integers and
the multiplicity is 1 precisely when
$$
a_1\ge b_1\ge \dots \ge a_p\ge |b_p|.
$$
For case (iii), the additional condition is that  $(a\bigb b)$
belongs to the root lattice of $D_{2p}$. For case (iv), the  additional
condition is that $(a\bigb b) - (1,0\dots,0\bigb 0,\dots, 0)$ belongs to
the root lattice of $D_{2p}.$ So we get the additional parity conditions.

{The argument for $a=2p=1,\ b=2q+1$ 
is essentially the same. In equation (\ref{sth}),  $\rho(D_p)$  is replaced by $\rho(B_p)$. 
The highest weight in (\ref{sth1}) becomes 
$$
(\delta_1 -p, \dots, \delta_p -p ).
$$
The $K$-spectrum of $\Pi^j(L(\la))$ is therefore 
$$
\sum (a_1+ q-p -1/2,\dots
,a_p+q-p -1/2\bigb b_1,\dots ,b_p, 0,\dots, 0),
$$
with $a_1\ge b_1\ge\dots\ge a_p\ge b_p\ge 0$ and $a_i, b_j\in \bbZ$.

In Case 4, $q-p=1$,  
in Case 6, $q-p =r_-+2$, and in Case 8, 
$q-p = r_- $. Consequently, the
$\tu{K}$-spectra  are as in statement of the theorem, with the
additional parity conditions due to the same reason as above.  
}
\end{proof}


\bigskip

\end{document}